\documentclass[reqno]{amsart}
\usepackage{latexsym,amssymb}             
\usepackage{amsmath}                      
\usepackage{amsthm}                       
\usepackage{amscd}                        
\usepackage{mathtools, bbm,enumerate}
\usepackage[usenames,dvipsnames]{xcolor}
\usepackage[colorlinks=true,linkcolor=Blue,citecolor=Green]{hyperref}
\usepackage{stmaryrd}

\makeatletter
\@addtoreset{figure}{section}
\def\thefigure{\thesection.\@arabic\c@figure}
\def\fps@figure{h, t}
\@addtoreset{equation}{section}

\makeatother

\theoremstyle{plain} 
\newtheorem{theorem}             {Theorem}  [section]
\newtheorem{lemma}      [theorem]{Lemma}
\newtheorem{corollary}  [theorem]{Corollary}
\newtheorem{proposition}[theorem]{Proposition}

\newtheorem{conjecture} [theorem]{Conjecture}
\newtheorem{question}   [theorem]{Question}

\theoremstyle{definition}
\newtheorem{definition} [theorem]{Definition}

\theoremstyle{remark}
\newtheorem{remark}     [theorem]{Remark}

\newcommand{\mc}{\mathcal}

\usepackage{todonotes}
\usepackage{verbatim}
\usepackage{tikz}
\usepackage{graphicx}
\usepackage{subcaption}
\usepackage{hyperref}
\usetikzlibrary{patterns}

\usepackage[normalem]{ulem} 

\usepackage[style=alphabetic, 
    backend=biber, eprint=true, 
    eprint=true,        
    doi=false,          
    isbn=false,          
    url=false
]{biblatex}
\allowdisplaybreaks

\title{Resonances on geometrically finite graphs}
\author{Christian Arends}
\address{Department of Mathematics \\ Aarhus University}
\email{arends@math.au.dk}

\author{Carsten Peterson}
\address{IMJ-PRG \\ Sorbonne Universit\'e}
\email{peterson@imj-prg.fr}

\author{Tobias Weich}
\address{Institute of Mathematics and Institute for Photonic Quantum Systems (PhoQS) \\ Paderborn University}
\email{weich@math.upb.de}

\thanks{C.A. was supported by a research grant from the Aarhus University Research Foundation (Grant No. AUFF-E-2022-9-34). C.P. received funding from the European Union’s Horizon 2020 research and innovation programme under the Marie Sk\l{}odowska-Curie grant agreement No 101034255 and from the U.S. National Science Foundation Grant DMS-2503324. T.W. acknowledges funding from the Deutsche Forschungsgemeinschaft (DFG) Grant No. SFB-TRR 358/1 2023 - 491392403 (CRC “Integral Structures in Geometry and Representation Theory”).}

\begin{document}

\title{Resonances on geometrically finite graphs}

\begin{abstract}
  In analogy with the spectral theory of geometrically finite hyperbolic manifolds, we initiate the study of resonances on geometrically finite $(q+1)$-regular graphs of groups. We prove the meromorphic continuation of the resolvent of the adjacency operator on such spaces and give a geometric characterization of the resonant states. In contrast to the hyperbolic surfaces setting, geometrically finite graphs have only finitely many resonances, and these resonances may be computed explicitly, yet they exhibit many of the same qualitative phenomena as in the hyperbolic manifolds setting. Particularly interesting examples arise from algebraic curves over finite fields.
\end{abstract}

\maketitle
\tableofcontents

\section{Introduction}

\textit{Resonances} of the Laplacian, also called quantum resonances, are a well-established spectral invariant on certain classes of noncompact Riemannian manifolds. On closed manifolds, the Laplacian has discrete $L^2$-spectrum; resonances, which are also discrete, serve as a suitable replacement for the failure of this fact to remain true in the setting of infinite volume. A prototypical and yet extremely fruitful setting is that of \textit{geometrically finite} hyperbolic surfaces, where the study of resonances dates back to Patterson \cite{Pat75,Pat76a}. We refer to \cite{GZ97, Zwo99,BJP05,BD18, MN20} for a small selection of papers studying resonances on geometrically finite, or more particularly, convex cocompact hyperbolic surfaces.

Over the past decades, the study of the spectral geometry of the Laplace–Beltrami operator on hyperbolic surfaces has been paralleled by the spectral theory of the adjacency operator on $(q+1)$-regular graphs. From an algebraic perspective, there is already an evident parallel by viewing hyperbolic surfaces as the locally symmetric space of the group $\textnormal{SL}(2, \mathbb{R})$, and viewing $(p+1)$-regular graphs as the ``locally symmetric spaces'' of $\textnormal{SL}(2, \mathbb{Q}_p)$ (via the Bruhat-Tits tree). Additionally, there is by now a long tradition of developing ideas in one setting and subsequently adapting them to the other setting. A notable example of great interest is optimal spectral gaps on random regular graphs and hyperbolic surfaces. Friedman \cite{friedman} proved that a random $(q+1)$-regular graph asymptotically almost surely has adjacency operator spectral gap below $2 \sqrt{q} + \varepsilon$ for every $\varepsilon > 0$ as the number of vertices goes to infinity; this had been previously conjectured by Alon \cite{alon}. A new proof was later given by Bordenave \cite{bordenave}. Follow-up work of Bordenave-Collins \cite{bordenave_collins} pioneered the use of a powerful new tool for tackling such questions known as strong convergence. The techniques of these papers were highly influential for tackling the analogous question of optimal Laplacian spectral gap $\frac{1}{4} - \varepsilon$ for random hyperbolic surfaces, which was subsequently proven by Hide-Magee \cite{hide_magee} and Magee-Puder-van Handel \cite{magee_puder_van_handel} in the setting of random covers of hyperbolic surfaces, and Anantharaman-Monk \cite{anantharaman_monk1, anantharaman_monk2, anantharaman_monk3} and Hide-Macera-Thomas \cite{hide_macera_thomas} in the Weil-Petersson model of random hyperbolic surfaces. See also the recent survey article by Monk-Naud \cite{monk_naud}.

The aim of this article is to establish a notion of resonances for the adjacency operator on a certain class of infinite graphs which are the ``right'' analogues of geometrically finite manifolds in the setting of graphs. In fact, the objects we study must be properly understood not as graphs in the usual sense, but as \textit{graphs of groups}. We defer a detailed discussion of the precise definition of geometrically finite graphs (and the justification for why they deserve to be named as such) to Sections \ref{sec_graph_of_groups} and \ref{sec_graph_of_groups_2}. However, for now we state that, analogously to the setting of hyperbolic surfaces, geometrically finite graphs are those which can be decomposed into finitely many pieces, one of which is a finite graph called the \textit{compact core}, which we denote by $\mathcal{L}$, and the rest of which are either \textit{(orbifold) funnels} or \textit{cusps}. We say that such a graph of groups is \textit{asymptotically $(q+1)$-regular} if it is $(q+1)$-regular outside of a compact set.

We now briefly summarize some foundational results regarding resonances of the Laplacian on a geometrically finite hyperbolic surface $X$. One begins by studying the resolvent $R_X(s) \coloneqq (\Delta - s(1-s))^{-1}$, initially only defined for $\textnormal{Re}(s) > 1$ as a holomorphic family of maps $L^2(X) \to L^2(X)$. Following Mazzeo-Melrose \cite{mazzeo_melrose} and Guillopé-Zworski \cite{guillope_zworski_95}, we know that $R_X(s)$ admits a finitely meromorphic extension to all of $\mathbb{C}$ as a family of operators $L^2_{\textnormal{comp}}(X) \to L^2_\textnormal{loc}(X)$. More precisely, for any $N > 0$, the resolvent $R_X(s)$ extends to a finitely meromorphic family of operators $\rho^N L^2(X) \to \rho^{-N} L^2(X)$ for $\textnormal{Re} (s) > \frac{1}{2} - N$, and where $\rho$ is a \textit{boundary defining function} (see Theorem 6.11 of Borthwick \cite{Bor16}). On each cusp we may speak about the height of a point as its distance from the unique associated horocycle of length 1, and similarly on each funnel we have a notion of height in terms of the distance from the unique closed simple geodesic separating the funnel from the compact core. Then the function $\rho$ is $\frac{1}{\cosh(t)}$ for points at height $t$ on each funnel, and $e^{-t}$ for points at height $t$ on each cusp (and it is bounded and non-zero on the compact core). Because $R_X(s)$ is \textit{finitely} meromorphic, near each resonance $s_0$, we have an expansion
\begin{gather*}
    R_X(s) = \sum_{j = 1}^k \frac{A_j}{(s - s_0)^j} + R_{\textnormal{hol}}(s),
\end{gather*}
where each $A_j$ is a finite-rank operator, and $R_{\textnormal{hol}}(s)$ is holomorphic near $s_0$. The space of \textit{generalized resonant states} is defined as
\begin{gather} 
    \textnormal{grst}(X, s_0) := \textnormal{span}(\textnormal{im} A_1 \cup \dots \cup \textnormal{im} A_k). \label{eqn_gen_resonant_state}
\end{gather}
The multiplicity of the resonance $s_0$ is defined as
\begin{gather}
    \textnormal{mult}(s_0) := \textnormal{dim}(\textnormal{grst}(X, s_0)). \label{eqn_mult}
\end{gather}

The generalized resonant states also admit a more geometric characterization. First off, a function $\psi$ on $X$ is called \textit{purely outgoing with parameter $s_0$} if high up on the funnels it behaves like $\rho^{s_0}$, and high up on the cusps it behaves like $\rho^{s_0-1}$, possibly with additional pre-factors of size $\log(\rho)^m$. For $s_0 \neq \frac{1}{2}$, the generalized resonant states are exactly the purely outgoing functions with parameter $s_0$ which are annihilated by $(\Delta - s_0 (1-s_0))^J$ for some $J$. See e.g. Proposition 4.3 of \cite{pbhr-GHW18} and Section 8.2 of Borthwick \cite{Bor16} for more precise statements. The case of $s_0 = \frac{1}{2}$ is more subtle, but can also be understood. See Section 8.5 of Borthwick \cite{Bor16}.

\begin{table}[ht]
    \centering
    \begin{tabular}{|p{0.48\textwidth}|p{0.48\textwidth}|} 
        \hline
        \textbf{Hyperbolic surfaces} & \textbf{$(q+1)$-regular graphs} \\ \hline
        spectral parameter $s = \frac{1}{2} + i r$     & Satake parameter $\mu = q^{i \theta}$     \\ \hline
        $\Delta$-eigenvalue $s(1-s) = (\frac{1}{2} + i r)(\frac{1}{2} - i r) = \frac{1}{4} + r^2$     & $A$-eigenvalue $\sqrt{q}(\mu + \mu^{-1}) = q^{\frac{1}{2} + i \theta} + q^{\frac{1}{2} - i \theta}$      \\ \hline
        tempered spectrum $\textnormal{Re}(s) = \frac{1}{2}$ or $\textnormal{Im}(r) = 0$      & tempered spectrum $|\mu| = 1$ or $\textnormal{Im}(\theta) = 0$   \\ \hline
        ramification point $s = \frac{1}{2}$ or $r = 0$      & ramification points $\mu = \pm 1$ or $\theta \in \frac{\pi}{\ln q} \mathbb{Z}$      \\ \hline
        boundary defining function $\rho \approx e^{- d(o, \cdot)}$ with $o$ in compact core    & boundary defining function $q^{-d(o, \cdot)}$ with $o$ in compact core     \\ \hline
        outgoing condition $\rho^{s} = \rho^{\frac{1}{2} + i r}$ on funnels and $\rho^{s-1} = \rho^{-\frac{1}{2} + i r}$ on cusps     & outgoing condition $\Big(\frac{1}{\sqrt{q}{\mu}} \Big)^{d(o, \cdot)} = q^{-d(o, \cdot)(\frac{1}{2} + i \theta)}$ on funnels and $ \Big(\frac{\sqrt{q}}{\mu} \Big)^{d(o, \cdot)} = q^{-d(o, \cdot)(-\frac{1}{2} + i \theta)}$ on cusps     \\ \hline
    \end{tabular}
    \caption{Analogous spectral quantities between geometrically finite hyperbolic surfaces and regular graphs.}
    \label{table_hyperbolic_regular}
\end{table}

Two of our main results are the analogues of the results in the previous paragraph. Let $A$ denote the adjacency operator on an asymptotically $(q+1)$-regular geometrically finite graph of groups $\mathcal{G}$. Let $z(\mu) \coloneqq \frac{\mu + \mu^{-1}}{2}$ with $\mu \in \mathbb{C}^\times$. Table \ref{table_hyperbolic_regular} shows what the analogues of certain spectral quantities in the hyperbolic surfaces setting are in the regular graph setting. In the hyperbolic surfaces setting if we write $s = \frac{1}{2} + i r$, and in the regular graphs setting we write $\mu = q^{i \theta}$, then the parameter $r$ plays the analogous role to the parameter $\theta$. However, for various reasons, we prefer to use the variable $\mu$. From a representation-theoretic perspective, $\mu$ (or really $(\mu, \mu^{-1})$) corresponds to the Satake parameters of the spherical representation on which the adjacency operator acts via multiplication by $2 \sqrt{q} z(\mu)$ on the one-dimensional subspace consisting of the spherical vectors.

\begin{theorem}\label{thm:merom_resolvent}
      Let $\mathcal{G}$ be an asymptotically $(q+1)$-regular graph of groups with vertex set $\mathcal{V}$. Then the resolvent
      \begin{equation*}
        R_{\mathcal{G}}(\mu) =\left(\frac{A}{2 \sqrt{q}} - z(\mu) \right)^{-1} \colon \ell^2(\mathcal{V}) \rightarrow \ell^2(\mathcal{V})
      \end{equation*}
      is defined for $\lvert z(\mu) \rvert \gg \frac{\|A\|}{2 \sqrt{q}}$, holomorphic in that range, and admits a finitely meromorphic continuation as a family of operators
      \begin{equation*}
        R_{\mathcal{G}}(\mu) =\left(\frac{A}{2 \sqrt{q}} - z(\mu)\right)^{-1} \colon q^{- N d(o, \cdot)}\ell^2 (\mathcal{V}) \rightarrow q^{N d(o, \cdot)}\ell^2 (\mathcal{V}),
      \end{equation*}
      for $\lvert \mu \rvert > q^{\frac{1}{2} - N}$, where $o$ denotes an arbitrary base point in the compact core of $\mathcal{G}$.
    \end{theorem}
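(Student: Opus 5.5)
The plan is a gluing (parametrix) construction in the spirit of Guillopé–Zworski. We decompose $\mathcal{G}$ into the compact core $\mathcal{L}$ and finitely many ends, each a funnel or a cusp. On each end we write down the resolvent explicitly: it is $(q+1)$-regular outside a compact set, so each end is a quotient of a half-tree (funnel) or a ray with growing vertex groups (cusp). In both cases the radial part of $A$ is a Jacobi/difference operator in the height variable $n = d(o,\cdot)$.

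\textbf{Model resolvents.} On a funnel we first use the free resolvent of the $(q+1)$-regular tree $T_{q+1}$, whose kernel is explicitly
\[
R_{T}(\mu)(x,y) = c(\mu)\,\bigl(\sqrt q\,\mu\bigr)^{-d(x,y)}
\]
for $|\mu|>1$, with $c(\mu)$ rational in $\mu$ (up to normalization with $\mu\mapsto\mu^{-1}$ chosen so that the kernel decays). On the funnel we take a model operator with Dirichlet-type boundary condition at the bottom layer, obtained from the tree resolvent by a method of images or by solving the radial recursion. On a cusp, functions orthogonal to the radial ones are finitely supported in the transversal direction and decouple, so the model reduces to a one-dimensional three-term recurrence with constant coefficients far out. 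Its resolvent kernel is built from the two solutions $(\sqrt q/\mu)^n$ and $(\sqrt q\mu)^n$ (weighted appropriately by vertex-group orders), divided by a Wronskian. In each case the kernel is a polynomial/rational expression in $\mu^{\pm1}$ times $(\sqrt q\mu)^{-|n-m|}$-type factors. Hence it extends meromorphically (in fact holomorphically away from finitely many points) to all $\mu\in\mathbb{C}^\times$. It is bounded as a map $q^{-Nd}\ell^2\to q^{Nd}\ell^2$ as soon as $|\mu|>q^{1/2-N}$, which we check by a Schur test using the growth $q^{n}$ of the number of vertices at height $n$ in a funnel (or the counting with vertex-group weights in a cusp).

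\textbf{Parametrix.} Choose cutoffs $\chi_0$ supported near $\mathcal{L}$ and $\chi_j$ on the ends, forming a partition of unity, and slightly larger cutoffs $\tilde\chi_j$ equal to $1$ on their support. Set
\[
M(\mu)=\tilde\chi_0 R_0(\mu)\chi_0+\sum_j\tilde\chi_j R_j(\mu)\chi_j,
\]
where $R_0$ is the resolvent of a finite graph (or of $A$ restricted to a finite set, meromorphic by finite-dimensionality) for some fixed value. Since $A$ has range $1$, $(A/2\sqrt q - z)M(\mu)=I+K(\mu)$, where $K(\mu)=\sum_j[A,\tilde\chi_j]R_j\chi_j/2\sqrt q$. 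The commutator $[A,\tilde\chi_j]$ is supported on finitely many vertices, so $K(\mu)$ is a finite-rank operator. It is meromorphic in $\mu$ and maps $q^{-Nd}\ell^2\to q^{-Nd}\ell^2$ (its range is finitely supported). We arrange the cutoffs so that $\chi_j$ and $[A,\tilde\chi_j]$ have disjoint supports in the interior piece; then $K(\mu)\to0$ as $|\mu|\to\infty$, i.e. $\|K\|<1$ for $|z(\mu)|$ large, using the decay factor $(\sqrt q\mu)^{-d}$ of the model kernels.

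\textbf{Conclusion.} The analytic Fredholm theorem, in its finite-rank version where $\det(I+K(\mu))$ is a meromorphic function not identically zero, gives that $(I+K(\mu))^{-1}$ is finitely meromorphic on $\{|\mu|>q^{1/2-N}\}$. Then $R_{\mathcal{G}}(\mu)=M(\mu)(I+K(\mu))^{-1}$, which agrees with the $\ell^2$ resolvent for large $|z|$ by uniqueness.

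\textbf{Main obstacle.} I expect the hardest step to be the cusp model. There the vertex groups grow, and the relevant $\ell^2$ is weighted by $1/|G_v|$, so the weights must be tracked carefully. One must also show that the non-radial part of $A$ on a cusp contributes nothing singular, and check the exact threshold $|\mu|>q^{1/2-N}$ for the weighted Schur bound. For this I would first write the cusp recurrence explicitly in the weighted inner product, where it becomes symmetric and constant-coefficient, and then repeat the funnel computation.
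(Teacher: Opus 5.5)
Your proposal is correct and is essentially the paper's proof. It builds explicit model resolvents on the (finite-group quotient of the) tree for funnels and on the recurrence $qf(k-1)+f(k+1)$ for cusps, glues them with cutoffs and a finite core resolvent into a parametrix whose error is finite rank with range near the core, and closes with the analytic Fredholm theorem; a plain Schur test is enough to reach the threshold $|\mu|>q^{\frac{1}{2}-N}$.

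A few small points to tidy up: a cusp of a graph of groups is a ray, so there is no non-radial part to control. Finite meromorphy of $M(\mu)$ also needs the observation that the polar parts of the model resolvents are finite rank, which is immediate from the explicit kernels (rank one for the tree and cusp kernels). Finally, your formula for $K(\mu)$ and the limit $K(\mu)\to 0$ require $R_0=R_0(\mu)$ to vary with $\mu$: freezing it at $\mu_0$, as the paper does, adds the term $(z(\mu_0)-z(\mu))\tilde\chi_0R_0(\mu_0)\chi_0$, and invertibility must then be checked near $\mu=\mu_0$ instead.
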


    \noindent The function $q^{-d(o, \cdot)}$ is the analogue of the boundary defining function $\rho$ in this setting.

    We may similarly define generalized resonant states $\textnormal{grst}(\mathcal{G}, \mu_0)$ for a resonance $\mu_0$. First off, we say that a function is purely outgoing of parameter $\mu_0$ if high enough on each cusp it behaves like $(\frac{\mu_0}{\sqrt{q}})^{d(o, \cdot)}$, and high enough on each funnel it behaves like $(\mu_0 \sqrt{q})^{-d(o, \cdot)}$, possibly with additional pre-factors which are polynomial in $d(o, \cdot)$ (see Definition \ref{defn_outgoing} for the precise definition); when this polynomial prefactor is constant, we say that it is of \textit{constant type}. 

    \begin{theorem} \label{thm_resonant_states}
    Let $\mu_0$ be a resonance with $\mu_0 \neq \pm 1$. Then
    \begin{gather*}
        \textnormal{grst}(\mathcal{G}, \mu_0) = \{u \textnormal{ purely outgoing for which } \exists \ J \in \mathbb{N} \textnormal{ s.t. } \Big(\frac{A}{2 \sqrt{q}} - z(\mu_0) \Big)^J u = 0 \}.
    \end{gather*}
    If instead $\mu_0 = \pm 1$, then
    \begin{gather*}
        \textnormal{grst}(\mathcal{G}, \mu_0) = \{u \textnormal{ purely outgoing of constant type s.t. } \Big(\frac{A}{2 \sqrt{q}} - z(\mu_0) \Big) u = 0 \}.
    \end{gather*}
\end{theorem}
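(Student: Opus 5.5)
Our plan is to deduce Theorem \ref{thm_resonant_states} from an explicit description of the continued resolvent coming out of the proof of Theorem \ref{thm:merom_resolvent}. We decompose $\mathcal{G}$ into the compact core $\mathcal{L}$ and finitely many cusps and funnels. On each end, which is $(q+1)$-regular, we write down the model resolvent by hand. On a funnel, radial functions satisfy a three-term recurrence, and the outgoing solution is $(\mu\sqrt{q})^{-d(o,\cdot)}$. On a cusp, the graph of groups is a ray with one step up and $q$ steps down, so the recurrence has solutions $(\mu/\sqrt{q})^{n}$ and $(\mu\sqrt{q})^{-n}$; here the outgoing one is $(\mu/\sqrt{q})^n$. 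In both cases the model resolvent kernel is $R_{\mathrm{end}}(\mu)(x,y)=c(\mu)\,\phi_{\mathrm{in}}(\min)\phi_{\mathrm{out}}(\max)$, with Wronskian factor $c(\mu)=O((\mu-\mu^{-1})^{-1})$. We glue these with cutoffs $\chi_j$ in the standard parametrix way, $R(\mu)=M(\mu)(I+K(\mu))^{-1}$, where $K(\mu)$ is finite rank (supported near $\mathcal{L}$) and holomorphic in $\mu\neq \pm1$. Analytic Fredholm theory then shows that $R(\mu)$ is finitely meromorphic. Every function in the range of $R(\mu)$ applied to compactly supported data is, high on each end, a multiple of the outgoing solution, because $K$ is finite rank with compact support.

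\textbf{Inclusion $\subseteq$ for $\mu_0\neq\pm1$.} We take the Laurent expansion $R(\mu)=\sum_{j} A_j(\mu-\mu_0)^{-j}+R_{\mathrm{hol}}(\mu)$ and expand the identity $(\tfrac{A}{2\sqrt q}-z(\mu))R(\mu)=I$ in powers of $(\mu-\mu_0)$, using $z(\mu)-z(\mu_0)=z'(\mu_0)(\mu-\mu_0)+\dots$ with $z'(\mu_0)\neq0$. This shows that $(\tfrac{A}{2\sqrt q}-z(\mu_0))^{j}A_j=0$ on compactly supported data, with the images forming a Jordan-chain structure. For $f$ compactly supported, $R(\mu)f$ is outgoing with an $x$-dependence of the form $\mu^{\pm n}$ times a meromorphic coefficient. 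Taking Laurent coefficients of this expression brings down derivatives $\partial_\mu^k \mu^{\pm n}$, which produces exactly the polynomial-in-$d(o,\cdot)$ prefactors allowed in Definition \ref{defn_outgoing}. Hence each $\mathrm{im}\,A_j$ consists of purely outgoing generalized eigenfunctions.

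\textbf{Inclusion $\supseteq$.} Given $u$ purely outgoing with $(\tfrac{A}{2\sqrt q}-z(\mu_0))^Ju=0$, we cut off: set $f(\mu)=(\tfrac{A}{2\sqrt q}-z(\mu))\tilde u(\mu)$, where $\tilde u(\mu)$ is the holomorphic family of outgoing functions deforming $u$. This is obtained by replacing $n^k(\mu_0^{\pm1}\cdots)^n$ on each end by suitable $\partial_\mu^k$ of the outgoing solution at $\mu$, multiplied by a cutoff. Then $f(\mu)$ is compactly supported and holomorphic near $\mu_0$. Uniqueness of outgoing solutions for $\mu$ near $\mu_0$, $\mu\neq\mu_0$ (by the resolvent identity on the weighted spaces; an outgoing element of the kernel is zero off resonances) gives $\tilde u(\mu)=R(\mu)f(\mu)$ for $\mu\neq\mu_0$. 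Since $(\tfrac{A}{2\sqrt q}-z(\mu))$ applied to $\tilde u(\mu)$ vanishes to order $J$ at $\mu_0$ away from the cutoff region, we can arrange that $u$ is a polar coefficient, i.e. $u\in\operatorname{span}\bigcup_j \mathrm{im}\,A_j$. Equivalently, we write $u=\frac{1}{2\pi i}\oint R(\mu)g(\mu)\,d\mu$ with $g$ compactly supported.

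\textbf{The case $\mu_0=\pm1$.} This is the main obstacle. Here $z'(\mu_0)=0$, and the two end solutions $(\mu/\sqrt q)^n$ and $(\mu\sqrt q)^{-n}$ differ from their incoming counterparts only in a ramified way: on the funnel, $\mu\mapsto\mu^{-1}$ exchanges incoming and outgoing, and the Wronskian degenerates. We plan to work in the local uniformizing variable $\mu$ (not $z$), and to show directly from the explicit kernel $c(\mu)\phi_{\mathrm{in}}\phi_{\mathrm{out}}$ that, after gluing, the pole at $\pm1$ is at most simple in $\mu$. Moreover, its residue has the form $\sum \langle\cdot,\psi_i\rangle\psi_i$ with $\psi_i$ annihilated by $\tfrac{A}{2\sqrt q}\mp1$ and of constant type, because the residue evaluates $\phi_{\mathrm{out}}$ at $\mu_0$ with no $\mu$-derivative. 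The converse inclusion then follows as before with $J=1$ and no derivatives. The delicate step is ruling out a double pole. For this we will use self-adjointness of $A$ on $\ell^2$: the bound $\|R(\mu)\|\le 1/\operatorname{dist}(z(\mu),\mathrm{spec})$ together with $|z(\mu)\mp1|\sim|\mu\mp1|^2$ forces at most a simple pole in $\mu$ when approaching from $|\mu|>1$.
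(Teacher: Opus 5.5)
Your inclusion $\subseteq$ for $\mu_0\neq\pm1$ (compact support of $(I+K(\mu))^{-1}f$, then differentiating the outgoing profiles in $\mu$) is essentially the paper's argument. The case $\mu_0=\pm1$, however, rests on a false claim.

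\textbf{The pole at $\mu_0=\pm1$ is not simple in general.} The bound $\|R(\mu)\|\le 1/\operatorname{dist}(z(\mu),\mathrm{spec})$, together with $|z(\mu)-z(\mu_0)|=|\mu-\mu_0|^2/(2|\mu|)$, gives at best $\|R(\mu)\|\lesssim|\mu-\mu_0|^{-2}$. That is a pole of order at most two in $\mu$, not a simple one. Order two genuinely occurs: the range of the $(\mu-\mu_0)^{-2}$ coefficient is exactly the space of $\ell^2$-eigenfunctions with eigenvalue $z(\mu_0)$ (Proposition~\ref{prop_mu_pm1}). For example, take a finite $(q+1)$-regular graph with adjacency eigenvalue $2\sqrt q$, such as the Cartesian product of $C_4$, $C_6$ and $K_2$ with $q=4$. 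Then $R(\mu)=-2\mu\,\Pi\,(\mu-1)^{-2}+(\text{holomorphic})$, where $\Pi$ is the eigenprojection.

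Once such a double pole is present, or merely a pole of order $\ge2$ of $(I+K(\mu))^{-1}=\sum_k B_k(\mu-\mu_0)^k$, your constant-type argument breaks. With $P(\mu)=\sum_{k\ge-1}P_k(\mu-\mu_0)^k$, the residue $\sum_{k\ge-1}P_kB_{-1-k}$ contains terms like $P_1B_{-2}$. These involve $\partial_\mu\phi_{\mathrm{out}}$ and a priori carry degree-one prefactors. So ``the residue evaluates $\phi_{\mathrm{out}}$ at $\mu_0$ with no $\mu$-derivative'' is unjustified.

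What is missing is the mechanism that kills these terms. Split $P(\mu)=Q+F(\mu)+C(\mu)$ into its core, funnel and cusp parts, which have disjoint supports, with Laurent coefficients $F_j$, $C_j$ at $\mu_0$.
\begin{itemize}
\item[(i)] The range of the $(\mu-\mu_0)^{-2}$ coefficient consists of $\ell^2$-eigenfunctions that are outgoing with $|\mu_0|=1$. Hence they vanish identically on every funnel and cusp (Proposition~\ref{prop_eigenfunction_zero}).
\item[(ii)] For $E$ with range in $C(\mathcal{L}'')$, $F_0E=0$ forces $F_jE=0$ for all $j$, and $C_{-1}E=0$ forces $C_jE=0$ for all $j$. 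The reason is that on each end the model resolvent applied to such data is $\phi_{\mathrm{out}}(\mu,\cdot)$ times a combination of the first-layer values that is nondegenerate at $\mu_0=\pm1$. For funnels this is the invertibility of $(1-(\sqrt q\mu_0)^{-2})I+(\sqrt q\mu_0)^{-2}M$, with $M$ the all-ones matrix.
\end{itemize}
Using (i), (ii), the disjoint supports, and the order-two bound inductively from the most negative power, the residue collapses to $QB_{-1}+F_0B_{-1}+C_{-1}B_0$, which is of constant type. The correct order-two bound together with $z'(\pm1)=0$ is also what makes both polar coefficients genuine eigenfunctions.

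\textbf{The converse inclusion for $\mu_0\neq\pm1$ fails for states not of constant type.} With a single factor, $f(\mu)=(\tfrac{A}{2\sqrt q}-z(\mu))\tilde u(\mu)$ cannot be compactly supported unless $u$ is of constant type. By the computation in Lemma~\ref{lemma:cq4flfr8j6}, for $\mu\neq\pm1$ each application of $\tfrac{A}{2\sqrt q}-z(\mu)$ lowers the degree of the polynomial prefactor of an outgoing profile by exactly one. So compact support of $f(\mu)$ forces $\tilde u(\mu)$, and hence $u=\tilde u(\mu_0)$, to have constant prefactors. Without compact support, the identity $\tilde u(\mu)=R(\mu)f(\mu)$ is unavailable.

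The remedy is the paper's:
\begin{itemize}
\item Let $u_\mu$ equal $u$ on the exceptional locus and be outgoing of parameter $\mu$ with the same polynomials. Then $f(\mu)=(\tfrac{A}{2\sqrt q}-z(\mu))^{J}u_\mu$ is compactly supported and vanishes at $\mu_0$.
\item $u_\mu=R(\mu)^Jf(\mu)$ holds by continuation from $|\mu|\gg1$, where $u_\mu\in\ell^2$.
\item The identity $R(\mu)^J=\frac{1}{(J-1)!}\partial_z^{J-1}R(\mu(z))$ shows that the polar part of $R(\mu)^J$ has range in $\mathrm{grst}(\mathcal{G},\mu_0)$.
\end{itemize}
Your contour formula $u=\frac{1}{2\pi i}\oint(\mu-\mu_0)^{-1}R(\mu)^Jf(\mu)\,d\mu$ then finishes the argument.

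\textbf{Two smaller slips.} On a cusp, the recurrence $qf(n-1)+f(n+1)=2\sqrt q\,z(\mu)f(n)$ has solutions $(\sqrt q\mu)^n$ and $(\sqrt q/\mu)^n$. The outgoing one is $(\sqrt q/\mu)^n$, as in Definition~\ref{defn_outgoing}, not $(\mu/\sqrt q)^n$. Also, if $A_j$ is the coefficient of $(\mu-\mu_0)^{-j}$ in a pole of order $m$, the Jordan relation is $(\tfrac{A}{2\sqrt q}-z(\mu_0))^{m-j+1}A_j=0$, not a power $j$.
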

\noindent The values $\mu_0 = \pm 1$ are the analogues of the special value $s_0 = \frac{1}{2}$ for hyperbolic surfaces. In the former case, $\pm 1$ are the ramification points of the branched double cover $\mu \mapsto z(\mu)$, and in the latter case $\frac{1}{2}$ is the ramification point of the branched double cover $s \mapsto s(1-s)$. Theorem \ref{thm_resonant_states} is proven in the sequel by combining Theorems \ref{thm_resonant_state_good} and \ref{thm_bad_points}, along with Proposition \ref{prop_always_outgoing}.

In contrast to the setting of hyperbolic manifolds, geometrically finite $(q+1)$-regular graphs of groups have only finitely many resonances.

\begin{theorem} \label{thm_multiplicity}
    The number of resonances is finite. More specifically:
    \begin{enumerate}
        \item The number of resonances, counted without multiplicity, is bounded by $2 |\mathcal{L}|$, where $|\mathcal{L}|$ is the number of vertices in the compact core.
        \item The multiplicity of a given resonance is bounded by $3(|\mathcal{L}| + 2 n_c + (q+1) n_f)^2$, where $n_c$ and $n_f$ are the number of cusps and funnels, respectively.
    \end{enumerate}
\end{theorem}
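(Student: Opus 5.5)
Our plan is to reduce the meromorphic continuation to a finite-dimensional problem on the compact core, so that resonances become zeros of a determinant of a matrix whose entries are polynomial (or Laurent polynomial) in $\mu$.

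\textbf{Step 1: explicit resolvents on the ends.} On each funnel, which is a quotient of a $q$-regular rooted tree, and on each cusp, which is a ray with stabilizers growing so that the quotient is $(q+1)$-regular as a graph of groups, we write down the resolvent explicitly. For a radial function on an end attached at a boundary vertex, the equation $(\frac{A}{2\sqrt q}-z(\mu))u=f$ is a second-order linear recurrence in the height $n$. Its characteristic roots give solutions of the form $(\mu\sqrt q)^{-n}$ and $(\mu^{-1}\sqrt q)^{-n}$ on funnels, and $(\mu/\sqrt q)^{n}$ and $(\mu^{-1}/\sqrt q)^{n}$ on cusps. Selecting the outgoing solution gives Green's functions that are explicit rational expressions in $\mu$, holomorphic on $\mathbb{C}^\times$ apart from finitely many poles. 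Non-radial parts on funnels behave like the tree resolvent and contribute nothing new on the core.

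\textbf{Step 2: reduction to the core.} A resonance is a pole of the continued resolvent. By the standard Schur complement / Grushin argument, $u$ is determined on the core $\mathcal L$ together with the finitely many attaching vertices by a linear system
\[
M(\mu)\,u|_{\mathcal L}=\text{data}.
\]
Here $M(\mu)$ is the $|\mathcal L|\times|\mathcal L|$ matrix $\frac{A_{\mathcal L}}{2\sqrt q}-z(\mu)+E(\mu)$, and $E(\mu)$ encodes the Dirichlet-to-Neumann contributions of the ends. By Step 1, each boundary contribution is proportional to the ratio of consecutive terms of the outgoing solution, namely $\frac{1}{\mu\sqrt q}$ for funnels. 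Hence every entry of $\mu M(\mu)$ is a polynomial in $\mu$ of degree at most $2$. It follows that $\det(\mu M(\mu))$ is a polynomial of degree at most $2|\mathcal L|$, and it is not identically zero because the resolvent exists for large $|\mu|$.

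\textbf{Step 3: counting.} The resonances are among the zeros of this determinant, together with possibly the special points $\mu=\pm1$, which must be handled via Theorem~\ref{thm_resonant_states}. This yields part (1).

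For part (2) we bound the multiplicity using Theorem~\ref{thm_resonant_states}: generalized resonant states are purely outgoing and are annihilated by $(\frac{A}{2\sqrt q}-z(\mu_0))^J$. Such a state is determined by finitely many data: its values on $\mathcal L$, plus two coefficients per cusp, plus about $q+1$ coefficients per funnel (one outgoing coefficient for each branch at the funnel's attaching point). Call this number $D=|\mathcal L|+2n_c+(q+1)n_f$. The outgoing polynomial prefactor has degree at most a bound linear in $J$, and $J$ itself can be bounded by the order of the pole, which is at most $D$. We expect this to give a dimension count of order $D\cdot J\le 3D^2$, where the constant $3$ comes from the degree bookkeeping.

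\textbf{Main obstacle.} We expect the hardest part to be the rigorous handling of the ramification points $\pm1$ and the degree/Jordan-chain bound in part (2). Near these points the two recurrence roots coincide, so Step 2 is best done in the $\mu$ variable rather than $z$. We would first attempt to bound the pole order of the continued resolvent in $\mu$ by the vanishing order of $\det(\mu M(\mu))$, which is at most $2|\mathcal L|$.
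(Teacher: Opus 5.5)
Your Part (1) follows the paper in outline: $M(\mu)$ is the resonance matrix $H(\mu)$ of Section~\ref{sec_examples}, and counting zeros of $\mu^{|\mathcal L|}\det M(\mu)$ is Corollary~\ref{cor_num_resonances}. What is missing is a proof of ``$\mu_0$ is a resonance $\Rightarrow \det M(\mu_0)=0$'', and this fails to be automatic exactly at the points you set aside. A reduction that feeds data on the ends through the model resolvents of Section~\ref{sec_model_resolvent} inherits their poles at $\pm1$ \emph{and} at $\pm\frac{1}{\sqrt q}$; you only flag $\pm1$. Adjoining these points to the list gives $2|\mathcal L|+4$, not $2|\mathcal L|$.

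The paper handles every $\mu_0$ uniformly. At a resonance the leading singular Laurent coefficient is nonzero with range in $\ker(\frac{A}{2\sqrt q}-z(\mu_0))$ (Propositions~\ref{prop:cq4hwetoj6}(1) and~\ref{prop_mu_pm1}(2)), so a nonzero resonant state exists. By Theorem~\ref{thm_resonant_states} and Lemma~\ref{lemma:cq4flfr8j6} it is purely outgoing of constant type. By Proposition~\ref{prop_always_outgoing} it is outgoing on the \emph{whole} of every funnel and cusp, so its restriction to $\mathcal L$ is a nonzero vector of $\ker M(\mu_0)$.

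Your own route can also be completed, with no exceptional points at all. For $f$ supported in a finite connected $\mathcal H\supseteq\mathcal L$, $R_{\mathcal G}(\mu)f$ equals the outgoing extension of $M_{\mathcal H}(\mu)^{-1}f|_{\mathcal H}$ for $|\mu|$ large, hence on all of $\mathbb C^\times$ by continuation. That extension is holomorphic on $\mathbb C^\times$. Moreover, Schur-eliminating an outermost end vertex (whose diagonal entry is $-\mu/2$) reproduces the outgoing boundary term at its parent, so $\det M_{\mathcal H}=(-\mu/2)^{|\mathcal H\setminus\mathcal L|}\det M_{\mathcal L}$.

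Part (2) is where the genuine gap lies. The data count ``values on $\mathcal L$, two coefficients per cusp, about $q+1$ per funnel'' is not derived and does not match the objects. A purely outgoing generalized state carries one polynomial of degree at most $J-1$ on each end of $\mathcal G\setminus\mathcal H$. The number $D$ in the statement is the paper's bound for $\dim C(\mathcal L'')$ coming from the parametrix, not a count of end data.

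More seriously, Definition~\ref{defn_outgoing} imposes the outgoing form only outside an exceptional locus $\mathcal H$ depending on $u$, and the number of ends grows with $\mathcal H$. So you have no parametrization whose size is controlled by $|\mathcal L|$, $n_c$, $n_f$ until you prove that every element of $\mathrm{grst}(\mu_0)$ is outgoing on the whole of each funnel and cusp. Proposition~\ref{prop_always_outgoing} gives this only for constant-type eigenfunctions; for $J>1$ a further induction on $J$ and on height is needed.

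Three further steps are unsupported:
\begin{enumerate}
\item ``Pole order $\le D$'' is asserted without proof.
\item You also need that the nilpotency order of $\frac{A}{2\sqrt q}-z(\mu_0)$ on $\mathrm{grst}(\mu_0)$ is at most the pole order (Proposition~\ref{prop:cq4hwetoj6}).
\item ``The constant $3$ comes from the degree bookkeeping'' is not an argument.
\end{enumerate}
With these pieces supplied you would get $\dim\mathrm{grst}(\mu_0)\le|\mathcal L|+J(n_c+n_f)$. Together with a pole-order bound such as $J\le 2D+1$, that does imply the stated bound, but none of these ingredients is in the proposal.

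The paper avoids parametrizing resonant states here. Since $K(\mu)$ has range in $C(\mathcal L'')$, $(I+K(\mu))^{-1}$ is block upper triangular, and all its singular Laurent coefficients have range in $C(\mathcal L'')$. Its pole order is bounded via the $C(\mathcal L'')$-block (by $2D$ in the paper). Since $P(\mu)$ has at most a simple pole, every singular coefficient of $R_{\mathcal G}=P(I+K)^{-1}$ has range in $\sum_i P_i\,C(\mathcal L'')$ over at most $2D+1$ indices $i$. This gives $(2D+1)D\le 3D^2$ (Proposition~\ref{prop_bound_multiplicity}).
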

\noindent The bound on the number of resonances without multiplicity is essentially sharp. On the other hand, the bound on the multiplicity is likely far from sharp; in this work we simply sought to get an explicit bound. Generically we should expect resonances to have multiplicity one. Theorem \ref{thm_multiplicity} is proven by combining Proposition~\ref{prop_bound_multiplicity} (where a slightly sharper bound is given than in (2) above) and Corollary~\ref{cor_num_resonances}.

It is a consequence of our results that resonances can be computed explicitly via reduction to a finite-dimensional linear algebra problem. In Section \ref{sec_examples} we compute resonances explicitly for several examples. A particularly interesting class of examples is constructed via algebraic curves over finite fields; we summarize this construction in Section \ref{sec_arithmetic_lattice}. In Section \ref{sec_elliptic_curves}, we compute the resonances explicitly for two examples constructed from elliptic curves. In particular, we observe that the resonances correspond to the ``trivial'' $\ell^2$-eigenvalues ($\pm \sqrt{q}$), the remaining $\ell^2$-eigenvalues (which are known to all lie on the unit circle as a result of Drinfeld's proof of the Langlands correspondence and Petersson conjecture for $\textnormal{GL}(2)$ over function fields \cite{drinfeld2, drinfeld1}), the square roots of the zeros of the Hasse-Weil zeta function (which are known to all lie on the circle of radius $\frac{1}{\sqrt{q}}$ as a result of Weil's proof of the Riemann hypothesis in this setting \cite{weil}), and additional ``topological'' resonances at the special values $\pm 1$. We discuss this further in Section \ref{sec_arithmetic2}.

In the setting of geometrically finite hyperbolic manifolds, the theory of resonances is very closely connected with the Lax-Phillips scattering theory \cite{pbhr-LP76}. Roughly speaking, the resonances correspond to the poles of the scattering operator. The Lax-Phillips scattering theory is itself defined in terms of the (automorphic) wave equation, and in general the resonances control the long-term behavior of waves on the manifold. In follow-up work, we plan to connect the theory of resonances on geometrically finite graphs developed in this paper with the wave equation on regular trees and graphs \cite{brooks_lindenstrauss1, brooks_lindenstrauss2, anker_wave}, and in particular with the Lax-Phillips scattering theory in this setting studied by the second author in \cite{peterson}. This should in turn be very closely related to the analogue of Eisenstein series in this setting which we plan to also develop; this should also allow us to rigorously describe in representation-theoretic terms the structure of the resonances for arithmetic examples mentioned in the previous paragraph.

More generally, there is a plethora of natural follow-up questions by analogy with the manifolds setting. One such question is the connection to Ruelle resonances of the geodesic flow (see Hilgert's recent review article, where the missing notion of resonances for the averaging operator on infinite graphs was marked as an important open question \cite[Problem 6.31]{Hil24}).
Such a quantum-classical correspondence should lead to a  connection between resonances and zeros of zeta functions (in analogy to the correspondence with zeros of the Selberg zeta function obtained in \cite{BJP05} for geometrically finite hyperbolic surfaces, see also \cite{AP26, BHW25} for recent results on quantum-classical correspondence for finite graphs).
Furthermore, the distribution of resonances on large random geometrically finite graphs can be expected to have interesting properties.
In addition to its intrinsic interest, we hope that this setting may act as a testing ground for existing conjectures and new approaches to the study of resonances on geometrically finite hyperbolic manifolds. Prominent examples of such longstanding open conjectures are the fractal Weyl law \cite{GLZ04}, the Phillips-Sarnak conjecture on the paucity of embedded $L^2$-eigenvalues for generic cusped surfaces \cite{phillips_sarnak1, phillips_sarnak2, phillips_sarnak3}, and the Jakobson-Naud conjecture \cite{jakobson_naud} on the essential spectral gap for convex cocompact surfaces (see also \cite{DZ16, BD18}). In Section \ref{sec_large_random_cusps}, we show numerical experiments for large random geometrically finite graphs with cusps, and we discuss the analogue of the Phillips-Sarnak conjecture in this setting.
In Section \ref{sec_large_random_funnel} we propose an analogue of the Jakobson-Naud conjecture for convex cocompact graphs and we provide strong numerical evidence for its veracity. It would be very interesting to try to prove these analogous conjectures in this setting as they seem more approachable than their hyperbolic surfaces counterparts. 
For example, it would be very interesting to adapt and extend the techniques of Calderón-Magee-Naud \cite{calderon_magee_naud} regarding spectral gaps for random covers of convex cocompact hyperbolic surfaces.

Our results are the first systematic study of resonances on general geometrically finite regular graphs of groups. However, related questions have been considered previously in the literature.
Sanghoon Kwon and his collaborators have studied geometrically finite graphs, and in particular properties of the Ihara-Selberg zeta function \cite{kwon, kwon_lim, hong_kwon}; see also Deitmar-Kang \cite{deitmar_kang}. Efrat has studied the connection between the spectral theory of certain geometrically finite arithmetic graphs of groups and automorphic forms \cite{efrat1, efrat3, efrat2}.
Some aspects of the spectral theory of geometrically finite graphs with only funnels have been studied by Colin de Verdière-Truc \cite{colin_de_verdiere_truc}. Spectral properties of unbounded Laplacians on much more general infinite graphs have been studied in \cite{KL12}, but none of these works gives a comprehensive notion of resonances.
The scattering theory and resonances of quantum graphs, i.e. graphs with semi-infinite leads with a one-dimensional Schrödinger equation on the edges and Kirchhoff boundary conditions on the vertices, have been studied by various authors, e.g. \cite{kottos_smilansky, kuchment, exner_lipovsky, berkolaiko_kuchment, BBS12}. Finally, we mention that in \cite{SIB} resonances for certain degenerating families of convex cocompact surfaces have been studied via Ihara-Selberg zeta functions of certain finite limiting graphs related to the nature of the degeneration, thus providing a direct link between the hyperbolic surfaces setting and the graph setting.

Let us briefly outline the article. In Section \ref{sec_graph_of_groups} we discuss graphs of groups and give the precise definition of geometric finiteness in this setting.
In Section \ref{sec_graph_of_groups_2} we give further justification for why the class of objects we study is the natural analogue of geometric finiteness in this setting;
this section is primarily expository and not strictly necessary for the later sections.
In this section we also discuss a particularly interesting class of examples of geometrically finite graphs which arise from algebraic curves over finite fields.
In Section \ref{sec_model_resolvent} we study the resolvent on our model geometries, i.e.\@ on funnels and cusps. In Section \ref{sec_geometrically_finite_resolvent} we use these results to prove Theorem \ref{thm:merom_resolvent}. In Section \ref{sec_resonant_states}, we study generalized resonant states and prove Theorem \ref{thm_resonant_states}. Finally, in Section \ref{sec_examples}, we discuss the explicit computation of resonances and provide many examples, including several arithmetic examples whose resonance structure has deep connections to arithmetic, and show numerics for the distribution of resonances for large random geometrically finite graphs.

\section{Geometrically finite graphs of groups: definitions} \label{sec_graph_of_groups}

\subsection{Graphs of groups}

In order to properly discuss our results, we must utilize the language of \textit{graphs of groups}, which lies in the domain of \textit{Bass-Serre theory}. The standard reference for this theory is Serre \cite{serre_trees}. When a group $\Gamma$ acts on a tree $\mathcal{T}$, we want to construct a quotient ``graph'' $\Gamma \backslash \mathcal{T}$ which will allow us to in turn recover $\Gamma$. In case $\Gamma$ acts freely on $\mathcal{T}$, then we can make sense of $\Gamma \backslash \mathcal{T}$ as a graph in the usual sense, and we can recover $\Gamma$ from the quotient via the usual covering space theory. We shall however be particularly interested in the case that $\Gamma$ does not act freely.

Arguably the most important example of a discrete subgroup of $\textnormal{SL}(2, \mathbb{R})$ is $\Gamma = \textnormal{SL}(2, \mathbb{Z})$. However, seeing as this group has torsion elements, the quotient $\Gamma \backslash \mathbb{H}$ has orbifold points. We naturally would want to view $\Gamma$ as an example of a geometrically finite group, but this would require in general working with orbifolds rather than manifolds. On the other hand, the Selberg lemma tells us that $\Gamma$ (and more generally any finitely generated subgroup of a linear group over a field of characteristic zero) has a finite index torsionfree subgroup $\Gamma'$. Consequently $\Gamma' \backslash \mathbb{H}$ is a genuine manifold. In light of the Selberg lemma, we see that in the manifold setting, there is essentially no need to work with orbifolds since, up to finite (orbifold) covering (which from a spectral theoretic perspective is easy to handle), we can always assume we are working with a genuine manifold.

In the setting of discrete subgroups of the isometry group of the tree there is no analogue of the Selberg lemma. In fact, torsion is an essential part of the theory and, in particular, the function field theoretic analogue of the modular curve, which is the discrete subgroup $\Gamma = \textnormal{SL}(2, \mathbb{F}_q[T^{-1}])$ inside of $\textnormal{SL}(2, \mathbb{F}_q((T)))$, contains no finite index subgroup which is torsionfree; see Section \ref{sec_arithmetic_lattice} for further discussion of this example. Note that $\Gamma$ naturally acts on the Bruhat-Tits tree of $\textnormal{SL}(2, \mathbb{F}_q((T)))$, which is a $(q+1)$-regular tree $\mathcal{T}$, but we cannot (and should not) interpret the quotient $\Gamma \backslash \mathcal{T}$ as a graph in the usual sense because we are then losing crucial torsion information about $\Gamma$.

We now give the definition of a graph of groups. Let $\mathcal{G} = (\mathcal{V}, \mathcal{E})$ be a graph consisting of a set of vertices $\mathcal{V}$ and edges $\mathcal{E}$. We shall allow for the possibility of loops and multiple edges. Let $\vec{\mathcal{E}}$ denote the set of oriented edges; note that we treat each loop as having two distinct orientations. An oriented edge $\overrightarrow{e} \in \vec{\mathcal{E}}$ gives rise to a tuple $(v_1, v_2)$ of vertices consisting of an \textit{origin} $o(\overrightarrow{e}) = v_1$ and a \textit{terminus} $t(\overrightarrow{e}) = v_2$ of the oriented edge. Let $\{G_v\}_{v \in \mathcal{V}}$ and $\{G_e\}_{e \in \mathcal{E}}$ be collections of groups; these are called the \textit{vertex groups} and \textit{edge groups}, respectively. Suppose for each oriented edge $\overrightarrow{e}$ with terminus $t(\overrightarrow{e})$ we have a monomorphism $\phi_{\overrightarrow{e}}: G_e \hookrightarrow G_{t(\overrightarrow{e})}$. The ensemble of data $(\mathcal{G}, \{G_v\}, \{G_e\}, \{\phi_{\overrightarrow{e}}\})$ is called a \textit{graph of groups}; we shall often abuse notation and simply refer to $\mathcal{G}$ as a graph of groups.

Suppose $\Gamma$ is a group acting on a tree $\mathcal{T}$ without edge inversions. From this we may then naturally form a corresponding \textit{quotient graph of groups} $\Gamma \backslash \backslash \mathcal{T}$. Vertices $\mathcal{V}$ and edges $\mathcal{E}$ correspond to orbits of vertices and edges, respectively, of $\mathcal{T}$ under $\Gamma$. All vertices (resp.\@ edges) in the same orbit have conjugate (and thus isomorphic) stabilizer subgroups. Thus for each vertex $v \in \mathcal{V}$, we set $G_v$ as the stabilizer of any lift of $v \in \mathcal{T}$, and similarly for the edge subgroups (viewed as abstract groups). Finally, given an oriented edge $\overrightarrow{f}$ in $\mathcal{T}$ with terminus $w$, we naturally have the stabilizer of $f$, denoted $\Gamma_f$, as a subgroup of the stabilizer of $w$, denoted $\Gamma_w$. This provides a monomorphism $\phi_{\overrightarrow{f}}: \Gamma_f \hookrightarrow \Gamma_w$ abstractly as groups; any other choice of $\overrightarrow{f}$ in the same orbit would give rise to the same monomorphism. Thus for the directed edge $\overrightarrow{e}$ in $\mathcal{E}$ which $\overrightarrow{f}$ projects to, we define $\phi_{\overrightarrow{e}} \coloneqq \phi_{\overrightarrow{f}}$.

The fundamental theorem of Bass-Serre theory says that there is a natural one-to-one correspondence between groups acting on trees without inversion (and up to conjugacy by the automorphism group of the tree), and graphs of groups. The construction in the previous paragraph provides a map from the former set to the latter. The map going the other way involves the construction of the \textit{fundamental group of a graph of groups}. However, we will not have need for this construction, so we do not describe it further here. We remark that the quotient graph of groups $\Gamma \backslash \backslash \mathcal{T}$ contains complete information about the $\Gamma$ action on $\mathcal{T}$. Under this correspondence, given a graph of groups $\mathcal{G}$, we refer to the corresponding tree $\mathcal{T}$ as the \textit{Bass-Serre tree} of $\mathcal{G}$, and the corresponding group $\Gamma$ as the \textit{fundamental group of $\mathcal{G}$}.

Let $\mathcal{G}$ be a graph of groups, and let $\mathcal{T}$ and $\Gamma < \textnormal{Aut}(\mathcal{T})$ be the corresponding Bass-Serre tree and fundamental group. In case $\mathcal{G}$ is such that $\mathcal{V}$ and $\mathcal{E}$ are countable, and all vertex and edge groups are finite, then $\mathcal{T}$ is locally finite, and $\Gamma$ is discrete (in fact, the converse is also true). From now on we shall only be interested in this case, and these conditions shall be assumed without further mention. We define the \textit{degree} of $v \in \mathcal{V}$ as the degree of any of its lifts in $\mathcal{T}$. This can be readily read off from the data of $\mathcal{G}$:
\begin{gather}
    \textnormal{deg}(v) := \sum_{\overrightarrow{e} \in \vec{\mathcal{E}}: t(\overrightarrow{e}) = v} \frac{|G_v|}{|G_e|}. \label{eqn_degree}
\end{gather}
We shall be particularly interested in $(q+1)$-regular graphs of groups in the sense that every vertex has degree $q+1$; this is equivalent to $\mathcal{T}$ being an infinite $(q+1)$-regular tree.

The vertices of $\mathcal{T}$ carry a natural measure just assigning mass one to each vertex. The natural quotient measure $\nu$ on $\mathcal{V}$ is defined as $\nu(v) = \frac{1}{|G_v|}$. This can be explained as follows. If $G_v = 1$, then elements in the orbit of $\Gamma$ of any lift of $v$ in $\mathcal{T}$ are in bijection with elements in $\Gamma$. However, more generally elements in the orbit of any lift $\tilde{w}$ of a vertex $w \in \mathcal{V}$ are in bijection with cosets of $G_w = \Gamma_{\tilde{w}}$ inside of $\Gamma$, each of which is of ``size'' $\frac{1}{|G_w|}$ relative to $\Gamma$. Thus we should have that $\nu(w) = \frac{1}{|G_w|}$.

\subsection{Geometrically finite graphs of groups}

We are now ready to define the class of graphs of groups which we will study, namely \textit{geometrically finite graphs of groups}. These are graphs of groups which can be ``cut'' into finitely many pieces, each of which is of a specified form. To make precise the notion of ``cutting'' a graph of groups, we slightly extend the notion of graphs of groups to that of \textit{graphs of groups with boundary}. This means that we allow some vertices to have ``half-edges'' $e$ which are ``edges'' with specified terminus vertex $v$, but not origin vertex, and with $G_e$ and $\phi_{\overrightarrow{e}}: G_e \hookrightarrow G_v$ specified. We call such half-edges \textit{prongs emanating from $v$}. Heuristically these are the objects one obtains when one ``cuts'' a graph of groups along a collection of edges, and one just considers some subcollection of the resulting connected components. Clearly two graphs of groups with boundary can be ``glued together'' along a prong from each, assuming the edge groups are the same (and upon choosing an isomorphism between these edge groups).

We can now define the main constituents of geometrically finite graphs of groups, namely orbifold funnels and cusps.

\begin{definition} \label{defn_funnel}
    Let $\mathcal{G} = (\mathcal{V}, \mathcal{E})$ be a locally finite rooted tree with root $o$. An \textit{orbifold funnel} is a graph of groups with boundary whose underlying vertex and edge set are the same as $\mathcal{G}$, but with a prong $h$ emanating from $o$. The vertex and edge groups are such that $G_o = G_h$ and more generally $G_v = G_{\{v, w\}} \leq G_w$ for every vertex $v$ such that there is an edge $\{v, w\}$ with vertex $w$ closer to $o$ (and $h$) than $v$. In case $G_o = 1$, and thus all other $G_v = 1$, we simply call it a \textit{funnel}. See Figure \ref{fig_funnels}.
\end{definition}
Note that an orbifold funnel is simply the quotient of a funnel by a finite group (which necessarily preserves the root and the prong).

\begin{figure}[htbp]
    \centering
    \resizebox{0.8\textwidth}{!}{%
      \begin{tikzpicture}[x=0.52cm,y=1cm,line cap=round,line join=round]
        \definecolor{b}{RGB}{73,88,235}
        \definecolor{o}{RGB}{247,158,22}
        \definecolor{p}{RGB}{186,96,225}
        \tikzset{
          vb/.style={circle,fill=b,draw=black,minimum size=6.2pt,inner sep=0pt},
          vo/.style={circle,fill=o,draw=black,minimum size=6.2pt,inner sep=0pt},
          vp/.style={circle,fill=p,draw=black,minimum size=6.2pt,inner sep=0pt},
          vh/.style={circle,fill=white,draw=b,line width=1.2pt,minimum size=7.2pt,inner sep=0pt}
        }

        \coordinate (topo) at (0,0);
        \coordinate (h) at (0,1.05);

        \draw[dashed, very thick,darkgray] (topo)--(h);
        \node[vh] at (h) {};
        \node[right=2pt] at (0,0.55) {$h$};

        \foreach \i/\x in {0/-8,1/8}{
          \coordinate (d1-\i) at (\x,-1);
          \draw[thick,gray!68!black] (topo)--(d1-\i);
        }

        \foreach \i/\x in {0/-12,1/-4,2/4,3/12}{
          \coordinate (d2-\i) at (\x,-2);
        }
        \draw[thick,gray!68!black] (d1-0)--(d2-0);
        \draw[thick,gray!68!black] (d1-0)--(d2-1);
        \draw[thick,gray!68!black] (d1-1)--(d2-2);
        \draw[thick,gray!68!black] (d1-1)--(d2-3);

        \foreach \i/\x in {0/-14,1/-10,2/-6,3/-2,4/2,5/6,6/10,7/14}{
          \coordinate (d3-\i) at (\x,-3);
        }
        \foreach \p/\l/\r in {0/0/1,1/2/3,2/4/5,3/6/7}{
          \draw[thick,gray!68!black] (d2-\p)--(d3-\l);
          \draw[thick,gray!68!black] (d2-\p)--(d3-\r);
        }

        \foreach \i/\x in {0/-15,1/-13,2/-11,3/-9,4/-7,5/-5,6/-3,7/-1,8/1,9/3,10/5,11/7,12/9,13/11,14/13,15/15}{
          \coordinate (d4-\i) at (\x,-4);
        }
        \foreach \p/\l/\r in {0/0/1,1/2/3,2/4/5,3/6/7,4/8/9,5/10/11,6/12/13,7/14/15}{
          \draw[thick,gray!68!black] (d3-\p)--(d4-\l);
          \draw[thick,gray!68!black] (d3-\p)--(d4-\r);
        }

        \foreach \i in {0,...,15}{
          \draw[dotted,thick,gray!68!black] (d4-\i)--++(-0.8,-0.9);
          \draw[dotted,thick,gray!68!black] (d4-\i)--++(0.8,-0.9);
        }

        \foreach \i in {0,1}{\node[vb] at (d1-\i) {};}
        \foreach \i in {0,1,2,3}{\node[vb] at (d2-\i) {};}
        \foreach \i in {0,...,7}{\node[vb] at (d3-\i) {};}
        \foreach \i in {0,...,15}{\node[vb] at (d4-\i) {};}

        \node[vp] at (d2-0) {};
        \node[vp] at (d2-2) {};
        \node[vo] at (topo) {};
        \node[below=2pt] at (topo) {$o$};
        \node[left=3pt] at (d2-0) {$u_1$};
        \node[left=3pt] at (d2-2) {$u_2$};
      \end{tikzpicture}
    }


    \vspace{0cm} 

    \hspace*{3.3cm} \includegraphics[width=0.7\textwidth]{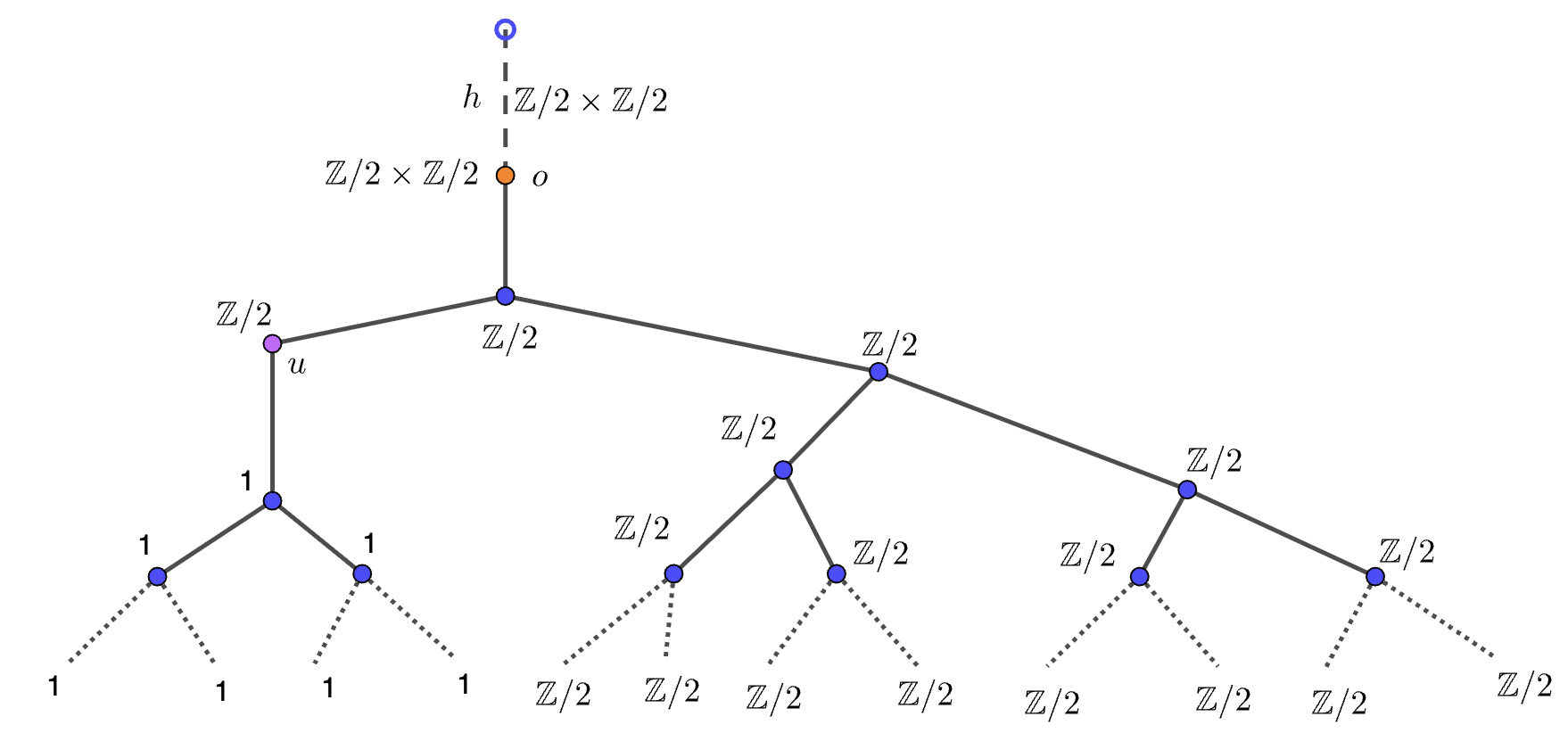}
    \caption{The top image above shows a 3-regular funnel. The bottom image above shows a 3-regular orbifold funnel arising as the quotient of the funnel on the LHS by $\mathbb{Z}/2 \times \mathbb{Z}/2$. The first factor of $\mathbb{Z}/2$ swaps the two descendant branches of $o$. The second factor of $\mathbb{Z}/2$ swaps the two descendant branches of $u_1$ and the two descendant branches of $u_2$ (which are the two lifts of $u$ under the quotient mapping). Only vertex groups are shown; each edge group is equal to the vertex group of the vertex of the edge further away from $o$. Each label $\mathbb{Z}/2$ on the bottom image really corresponds to the subgroup $1 \times \mathbb{Z}/2 < \mathbb{Z}/2 \times \mathbb{Z}/2$. Note that the bottom image is not 3-regular as a graph, but it is 3-regular as a graph of groups.}
     \label{fig_funnels}
\end{figure}

\begin{definition}
    Let $\mathcal{G}$ be a ray graph, whose vertices are labelled by $\mathbb{Z}_{\geq 0}$. We call the vertex labelled by $0$ the \textit{root}, and denote it by $o$. A \textit{cusp} is a graph of groups whose underlying vertex and edge set is the same as $\mathcal{G}$, but with a prong $h$ emanating from $o$, and such that $G_i = G_{\{i, i+1\}}$ for every $i \in \mathbb{Z}_{\geq 0}$. See Figure \ref{fig_cusp}.
\end{definition}

\begin{figure}[ht]
    \centering
    \includegraphics[width=0.7\textwidth]{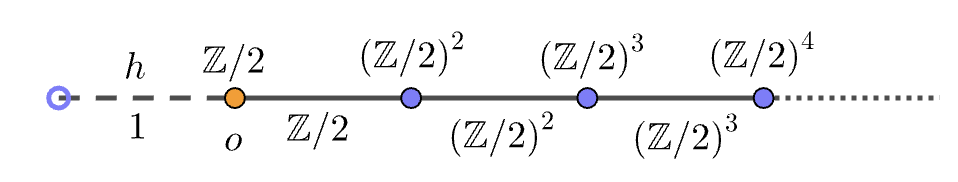}
    

    \caption{An example of a 3-regular cusp. See also Figure \ref{fig_nagao}.}
    \label{fig_cusp}
\end{figure}

If we ignore the prong, then a cusp consists of an infinite sequence of inclusions $G_{i} \hookrightarrow G_{i+1}$. This is also called a \textit{Nagao ray} in the graph of groups literature.

\begin{definition} \label{defn_geom_fin}
    A graph of groups $\mathcal{G}$ is called \textit{geometrically finite} if it can be cut into finitely many pieces, one of which is a finite graph of groups with boundary (called the \textit{compact core}), and the rest of which are either orbifold funnels or cusps. See Figure \ref{fig_geometrically_finite}.
\end{definition}

\begin{figure}[ht]
    \centering
    


    \resizebox{0.6\textwidth}{!}{%
      \begin{tikzpicture}[x=1.1cm,y=1.05cm,line cap=round,line join=round]
        \definecolor{b}{RGB}{83,88,220}
        \definecolor{p}{RGB}{226,149,188}
        \definecolor{g}{RGB}{122,192,92}
        \def\TreeLevelDist{0.42}
        \def\TreeBaseSep{0.6}
        \tikzset{tree node/.style={circle,draw=black,thin,fill=b,inner sep=0pt,minimum size=4.6pt}}
        \newcommand{\ThreeRegularTree}[2]{
          \coordinate (#1-0) at (0,0);
          \foreach \i in {0,...,7} {
            \pgfmathsetmacro{\x}{(\i - 3.5) * \TreeBaseSep}
            \coordinate (#1-3-\i) at (\x,{#2*3*\TreeLevelDist});
          }
          \foreach \i in {0,...,3} {
            \pgfmathsetmacro{\x}{(\i*2 + 0.5 - 3.5) * \TreeBaseSep}
            \coordinate (#1-2-\i) at (\x,{#2*2*\TreeLevelDist});
            \pgfmathtruncatemacro{\childA}{2*\i}
            \pgfmathtruncatemacro{\childB}{2*\i+1}
            \draw[thick,b] (#1-2-\i)--(#1-3-\childA);
            \draw[thick,b] (#1-2-\i)--(#1-3-\childB);
          }
          \foreach \i in {0,...,1} {
            \pgfmathsetmacro{\x}{(\i*4 + 1.5 - 3.5) * \TreeBaseSep}
            \coordinate (#1-1-\i) at (\x,{#2*\TreeLevelDist});
            \pgfmathtruncatemacro{\childA}{2*\i}
            \pgfmathtruncatemacro{\childB}{2*\i+1}
            \draw[thick,b] (#1-1-\i)--(#1-2-\childA);
            \draw[thick,b] (#1-1-\i)--(#1-2-\childB);
            \draw[thick,b] (#1-0)--(#1-1-\i);
          }
          \foreach \i in {0,...,7}{
            \draw[dash pattern=on 1.3pt off 1.5pt,thick,b] (#1-3-\i)--++(-0.11,{#2*0.42});
            \draw[dash pattern=on 1.3pt off 1.5pt,thick,b] (#1-3-\i)--++(0.11,{#2*0.42});
          }

          \node[tree node] at (#1-0) {};
          \foreach \i in {0,...,7} {
            \node[tree node] at (#1-3-\i) {};
          }
          \foreach \i in {0,...,3} {
            \node[tree node] at (#1-2-\i) {};
          }
          \foreach \i in {0,...,1} {
            \node[tree node] at (#1-1-\i) {};
          }
        }

        \coordinate (p1) at (-2,1.2);
        \coordinate (p2) at (-0.1,0.8);
        \coordinate (p3) at (1.2,0.4);
        \coordinate (p4) at (1.5,-1.2);
        \coordinate (p5) at (-1,- 0.3);
        \coordinate (p6) at (-1.3,-1.4);
        \coordinate (p7) at (-3,-1);
        \coordinate (p9) at (0.5,-0.2);

        \draw[thick,p!50] (p1)--(p5);
        \draw[thick,p!50] (p1)--(p4);
        \draw[thick,p!50] (p2)--(p7);
        \draw[thick,p!50] (p2)--(p9);
        \draw[thick,p!50] (p3)--(p4);
        \draw[thick,p!50] (p3)--(p5);
        \draw[thick,p!50] (p3)--(p9);
        \draw[thick,p!50] (p5)--(p6);
        \draw[thick,p!50] (p6)--(p7);
        \draw[thick,p!50] (p7)--(p9);

        \coordinate (u) at (0.5,1.5);
        \draw[dash pattern=on 4pt off 4pt,thick,darkgray] (u)--(p2);
        \begin{scope}[shift={(u)}]
          \ThreeRegularTree{tu}{1}
        \end{scope}

        \coordinate (d) at (- 1,-2);
        \draw[dash pattern=on 4pt off 4pt,thick,darkgray] (d)--(p6);
        \begin{scope}[shift={(d)}]
          \ThreeRegularTree{td}{-1}
        \end{scope}

        \draw[dash pattern=on 1.3pt off 1.5pt,thick,g] (-4.4,2.8)--(-3.8,2.4);
        \draw[thick,g] (-3.8,2.4)--(-3.2,2.0)--(-2.6,1.6);
        \draw[dash pattern=on 4pt off 4pt,thick,darkgray] (-2.6,1.6)--(p1);

        \draw[dash pattern=on 4pt off 4pt,thick,darkgray] (p4)--(2.3,-0.8);
        \draw[thick,g] (2.3,-0.8)--(3.1,-0.4)--(3.9,0);
        \draw[dash pattern=on 1.3pt off 1.5pt,thick,g] (3.9,0)--(4.7,0.4);

        \foreach \pt in {p1,p2,p3,p4,p5,p6,p7,p9}{
          \filldraw[fill=p,draw=black,thin] (\pt) circle (0.08);
        }
        \foreach \x/\y in {-3.8/2.4,-3.2/2.0,-2.6/1.6,2.3/-0.8,3.1/-0.4,3.9/0}{
          \filldraw[fill=g!70!white,draw=black,thin] (\x,\y) circle (0.08);
        }
      \end{tikzpicture}
    }

    \caption{An example of a 3-regular geometrically finite graph with compact core (in pink), two funnels (in blue), and two cusps (in green).}
    \label{fig_geometrically_finite}
\end{figure}

In Section \ref{sec_graph_of_groups_2}, we further motivate and justify calling this class of graphs of groups \textit{geometrically finite} (and our use of the terms funnel and cusp). However, we already remark briefly that geometrically finite hyperbolic surfaces can be defined as those which can be cut into finitely many pieces, one of which is a compact manifold with boundary, called the compact core, and the rest of which are either funnels or cusps, in the usual sense of hyperbolic geometry. Moreover, Paulin \cite{paulin} has shown that if one uses the more general definition of geometrically finite in variable negative curvature given by Bowditch \cite{bowditch}, then, specialized to the context of groups acting on locally finite trees, geometrically finite graphs of groups are exactly those described in the above definition. We discuss this further in Section \ref{sec_geom_fin_general}. 

\subsection{Adjacency operator on graphs of groups}

We shall be interested in the spectral theory of the \textit{adjacency operator} $A$ acting on graphs of groups. It is defined in such a way that it ``lifts'' to the adjacency operator on the corresponding Bass-Serre tree. The specific structure of the vertex and edge groups is not relevant for the spectral theory; we only need the data of their sizes. We thus define a \textit{stabilizer function} $\mathcal{S}: \mathcal{V} \sqcup \mathcal{E} \to \mathbb{N}$ with $\mathcal{S}(v) = |G_v|$ and $\mathcal{S}(e) = |G_e|$ for $v \in \mathcal{V}$ and $e \in \mathcal{E}$. More generally, we define a \textit{graph with stabilizer function} as the data of a graph and a function $\mathcal{S}: \mathcal{V} \sqcup \mathcal{E} \to \mathbb{R}_+$. We can always convert a graph of groups into a graph with stabilizer function, and it is only this data which is relevant for the spectral theory.

We introduce the following function spaces:

\begin{align*}
  C(\mathcal{V}) &\coloneqq \{f \colon \mathcal{V} \rightarrow \mathbb{C}\},\\
  C_c(\mathcal{V}) &\coloneqq \{f \in C(\mathcal{V}) \mid f(v) \neq 0 \text{ for finitely many } v \in \mathcal{V}\},\\
  \ell^2(\mathcal{V}) &\coloneqq \{f \colon \mathcal{V} \rightarrow \mathbb{C} \mid \sum_{v \in \mathcal{V}} |f(v)|^2\frac{1}{\mathcal S(v)}  < \infty\},\\
  \rho \ell^2(\mathcal{V}) &\coloneqq \left\{ f \colon \mathcal{V} \rightarrow \mathbb{C} \mid \sum_{v \in \mathcal{V}}
  \lvert f(v)\rvert^2\frac{1}{\rho^2(v)\mathcal S(v)} < \infty \right\},
\end{align*}
where $\rho: \mathcal{V} \to \mathbb{R}_+$ is an arbitrary function. Note that $\ell^2(\mathcal{V}) = \ell^2(\mathcal{V}, \nu)$ where $\nu$ is the measure on vertices discussed in the previous section. The adjacency operator acting on functions $f \in C(\mathcal{V})$ is given by 
\begin{equation}\label{eq:average}
 Af(v) = \sum_{\overrightarrow{e} \in \vec{\mathcal{E}} : t(\overrightarrow{e}) = v} \frac{\mathcal{S}(v)}{\mathcal{S}(e)}f(o(\overrightarrow{e})).
\end{equation}
The adjacency operator and the volume measure on graphs of groups are compatible in the following sense:
\begin{proposition} \label{prop_self_adjoint}
 If $\sup_{v \in \mathcal{V}} \textnormal{deg}(v) \leq R$, then $A:\ell^2(\mathcal V)\to \ell^2(\mathcal V)$ is a bounded self-adjoint operator, with $\|A\| \leq R$. 
\end{proposition}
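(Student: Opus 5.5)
The plan is to write $A$ as an integral operator with a kernel and check two things: symmetry of the kernel with respect to the measure $\nu$, and a Schur-test bound. For each pair $v,w\in\mathcal{V}$, set
\begin{gather*}
    a(v,w) := \sum_{\overrightarrow{e}:\, t(\overrightarrow{e})=v,\ o(\overrightarrow{e})=w} \frac{\mathcal{S}(v)}{\mathcal{S}(e)},
\end{gather*}
so that $Af(v)=\sum_w a(v,w) f(w)$. Reversing orientation gives a bijection between oriented edges from $w$ to $v$ and oriented edges from $v$ to $w$; a loop contributes both of its orientations on each side. This bijection yields the key identity
\begin{gather*}
    \frac{a(v,w)}{\mathcal{S}(v)}=\sum_{e}\frac{1}{\mathcal{S}(e)}=\frac{a(w,v)}{\mathcal{S}(w)}.
\end{gather*}
Equivalently, the kernel $k(v,w):=a(v,w)\mathcal{S}(w)$ of $A$ with respect to $\nu$ satisfies $k(v,w)=k(w,v)$, since $Af(v)=\sum_w k(v,w)f(w)\nu(w)$.

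For boundedness, I will apply the Schur test (or Cauchy--Schwarz directly). By \eqref{eqn_degree}, the row sums are $\sum_w a(v,w)=\deg(v)\le R$. By the symmetry identity, the column sums weighted by $\nu$ are
\begin{gather*}
    \sum_v a(v,w)\frac{\mathcal{S}(w)}{\mathcal{S}(v)} = \sum_v a(w,v)=\deg(w)\le R.
\end{gather*}
Then Cauchy--Schwarz gives $|Af(v)|^2\le \deg(v)\sum_w a(v,w)|f(w)|^2$. Multiplying by $1/\mathcal{S}(v)$ and summing over $v$, using the column bound, yields
\begin{gather*}
    \|Af\|^2\le R\sum_w |f(w)|^2\,\frac{1}{\mathcal{S}(w)}\sum_v a(v,w)\frac{\mathcal{S}(w)}{\mathcal{S}(v)}\le R^2\|f\|^2.
\end{gather*}
Local finiteness guarantees that all sums are finite, and the interchange of sums is justified by Tonelli since all terms are nonnegative.

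For self-adjointness, I will compute $\langle Af,g\rangle=\sum_{v,w} a(v,w)f(w)\overline{g(v)}/\mathcal{S}(v)$; this double series converges absolutely by the bound just established, applied to $|f|$ and $|g|$. The symmetry identity lets me rewrite the summand as $a(w,v)f(w)\overline{g(v)}/\mathcal{S}(w)$, and swapping the sums gives $\langle f,Ag\rangle$.

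The only real subtlety, and the main obstacle, is getting the edge bookkeeping right. Loops must be counted with both orientations on both sides, and the edge group of an edge must be the same regardless of orientation, since it is a single group with two embeddings. With these conventions the reversal bijection is exact, and everything else is routine.
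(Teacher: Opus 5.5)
Your proposal is correct and follows essentially the same route as the paper: identify the kernel of $A$ with respect to $\nu$, observe that it is symmetric, and apply the Schur test. You carry the Schur test out explicitly via Cauchy--Schwarz, with the row sums and the $\nu$-weighted column sums both equal to vertex degrees, and your kernel $a(v,w)$ sums over all oriented edges from $w$ to $v$. That bookkeeping is slightly more careful than the paper's formula $\mathcal{S}(v)\mathcal{S}(w)/\mathcal{S}(\{v,w\})$, which tacitly ignores multiple edges between the same pair of vertices.
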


\begin{proof}
    From the formula \eqref{eq:average}, it is clear that the kernel function for $A$ is given by
    \begin{gather*}
        K(v, w) = \frac{\mathcal{S}(v) \cdot \mathcal{S}(w)}{\mathcal{S}(\{v, w\})},
    \end{gather*}
    if $\{v, w\}$ is an edge with $v \neq w$, twice this expression if $\{v, w\}$ is an edge with $v = w$, and zero otherwise. The bound $\| A \| \leq R$ then follows immediately from the Schur test. Self-adjointness is also clear from this formula as $K(v, w) = K(w, v)$.
\end{proof}
If $\mathcal{T}$ is the corresponding Bass-Serre tree, and $\Gamma$ the corresponding subgroup of $\textnormal{Aut}(\mathcal{T})$, then we can simply interpret $A$ as the adjacency operator on $\mathcal{T}$ restricted to $\Gamma$-invariant functions (which we can naturally view as functions on $\mathcal{V}$).

\section{Geometrically finite graphs of groups: motivations} \label{sec_graph_of_groups_2}

The purpose of this section is to motivate the term ``geometrically finite'' for the graphs defined in Definition~\ref{defn_geom_fin}.\footnote{The content of this section is not necessary for understanding the analysis in subsequent sections, but it is an important motivation for studying the geometric objects treated in this article and provides rich sources of such graphs. However, a reader who is already satisfied with Definition~\ref{defn_geom_fin} and wishes to focus on the analysis can safely skip this section.} We begin by discussing a result of Paulin \cite{paulin} defining and characterizing geometrically finite graphs of groups by appropriately adapting a general definition of geometric finiteness given by Bowditch \cite{bowditch}. We then motivate the definition of funnels and cusps in the setting via explicit analogies with hyperbolic geometry. We also prove that convex cocompact graphs of groups are exactly those which are geometrically finite but without any cusps. Finally we discuss a rich source of geometrically finite graphs of groups which are the analogues of arithmetic hyperbolic surfaces. Their construction utilizes algebraic curves defined over finite fields.

\subsection{Geometric finiteness in negative curvature} \label{sec_geom_fin_general}
Originally, a hyperbolic 2- or 3-manifold was called geometrically finite if it admitted a finite-sided convex polyhedral fundamental domain in the corresponding hyperbolic space. However, this definition no longer makes sense if one works more generally with negatively curved manifolds; in fact this definition is no longer even suitable if one restricts to higher dimensional hyperbolic manifolds.

Let $X$ be a Hadamard manifold with pinched negative curvature. Let $\Gamma$ be a group acting discretely and isometrically on $X$. Bowditch \cite{bowditch} discussed several competing definitions of geometric finiteness in this setting (and which were all known to be equivalent to the classical definition of geometric finiteness for hyperbolic 2- and 3-manifolds), and he proves that these definitions are all equivalent. We shall focus on one particular definition which ultimately also makes sense more generally in the setting of groups acting on negatively curved metric spaces (such as trees). 

The space $X$ has a natural \textit{boundary at $\infty$} denoted $\partial X$. Its points may be defined as classes of asymptotically equivalent geodesic rays. The space $X \sqcup \partial X$ naturally compactifies $X$. The \textit{limit set} $\Lambda_\Gamma$ of $\Gamma$ is defined as the collection of accumulation points in $\partial X$ of $\Gamma.x$ for any point $x \in X$ (the choice of $x$ ends up being irrelevant). 

A point $\lambda \in \Lambda_\Gamma$ is said to be \textit{conical} if for some (any) $x \in X$ and some (any) geodesic ray $\beta$ tending towards $\lambda$, there is an $r \geq 0$ such that $\Gamma.x \cap N_r(\beta)$ accumulates at $\lambda$, where $N_r(\beta)$ is a neighborhood of radius $r$ around $\beta$. This definition is essentially saying that there exists $\gamma_n \in \Gamma$ such that $\gamma_n.x$ follows along the geodesic $\beta$ to within a bounded distance.

A subgroup $G < \Gamma$ is called \textit{parabolic} if there is a unique point $p \in \partial X$ which is fixed by all of $G$, and $G$ setwise preserves horocycles with respect to $p$ (see, e.g., Bowditch \cite{bowditch} for the definition of horocycles; see also Section \ref{sec_cusps} below). We say that $p$ is a \textit{bounded parabolic fixed point} if the subgroup $\Gamma_p < \Gamma$ fixing $p$ is parabolic, and $(\Lambda_\Gamma \setminus \{p\})/\Gamma_p$ is compact.

Bowditch proves that one (equivalent) definition of the geometric finiteness of $\Gamma$ is that every element of $\Lambda_\Gamma$ is either a conical limit point or a bounded parabolic fixed point.

The analogue of a Hadamard manifold in the setting of graphs is a locally finite tree $\mathcal{T}$ such that every vertex has degree at least 2. The tree also has a natural boundary $\partial \mathcal{T}$ which can also be defined in terms of classes of asymptotically equivalent geodesic rays. Suppose $\Gamma$ is a group acting discretely on $\mathcal{T}$. Then we may again naturally speak of the limit set $\Lambda_\Gamma \subseteq \partial \mathcal{T}$. The previously given definitions of conical limit point and bounded parabolic fixed point thus also apply in this setting. Paulin proved that
\begin{theorem}[Paulin \cite{paulin}, Théorème 1.1] \label{thm_paulin}
    Suppose $\Gamma < \textnormal{Aut}(\mathcal{T})$ is a discrete subgroup. Then the following are equivalent:
    \begin{enumerate}
        \item Every element of the limit set $\Lambda_\Gamma$ is either a conical limit point or a bounded parabolic fixed point.
        \item The quotient graph of groups $\Gamma \backslash \backslash \mathcal{T}$ is geometrically finite in the sense of Definition \ref{defn_geom_fin}.
    \end{enumerate}
\end{theorem}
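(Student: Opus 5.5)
Paulin's theorem is cited, not proved, so the plan below shows how the equivalence would be established in the tree setting by comparing the limit set with the structure of the quotient. Throughout, we pass to the minimal $\Gamma$-invariant subtree $\mathcal{T}_\Gamma$, namely the union of the geodesics joining points of $\Lambda_\Gamma$. This changes neither $\Lambda_\Gamma$ nor the conical or parabolic nature of its points, and $\Gamma\backslash\backslash\mathcal{T}$ is geometrically finite if and only if $\Gamma\backslash\backslash\mathcal{T}_\Gamma$ is compact-core-plus-cusps. Indeed, the complement $\mathcal{T}\setminus\mathcal{T}_\Gamma$ is a union of subtrees hanging off $\mathcal{T}_\Gamma$ by single edges. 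These hanging trees project to orbifold funnels: the stabilizer of a hanging subtree acts on it with edge groups equal to the stabilizers of the vertices further from the attaching point, which is exactly Definition \ref{defn_funnel}. So the content of the theorem is: every limit point is conical or bounded parabolic if and only if $\Gamma\backslash\mathcal{T}_\Gamma$ is a finite graph with finitely many Nagao rays attached.

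For (2) $\Rightarrow$ (1), take $\lambda\in\Lambda_\Gamma$ and a geodesic ray $\beta$ in $\mathcal{T}_\Gamma$ converging to it. The projection of $\beta$ to the quotient is an infinite path in the compact core together with the cusps.
\begin{itemize}
\item If the projection returns to the compact core infinitely often, then $\beta$ passes through lifts of finitely many core vertices infinitely often. These lifts lie within bounded distance of $\Gamma.o$, so $\lambda$ is conical.
\item Otherwise the projection eventually goes monotonically up a single cusp. Lift that cusp ray to a ray $\tilde c$ in $\mathcal{T}$ with endpoint $p$. The increasing union $\bigcup_i G_i$ fixes $p$. Because $G_i=G_{\{i,i+1\}}$, the group $G_i$ acts transitively on the $q$ upward neighbours of each lift, so the union preserves horospheres centred at $p$ and is parabolic.
\end{itemize}
For boundedness, we show that $\Lambda_\Gamma\setminus\{p\}$ modulo $\Gamma_p$ is compact. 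A fundamental domain for this action is given by the limit points seen from a fixed finite ball near the bottom of the cusp, and this set is compact.

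For (1) $\Rightarrow$ (2), the main point is to show that $\Gamma\backslash\mathcal{T}_\Gamma$ has finitely many ends, each of which is a Nagao ray.
\begin{enumerate}
\item \textbf{Parabolic points.} For each bounded parabolic point $p$, compactness of $(\Lambda_\Gamma\setminus\{p\})/\Gamma_p$ yields a horoball $H_p$ such that $(\mathcal{T}_\Gamma\cap H_p)/\Gamma_p$ is a ray. Inside the horoball, $\mathcal{T}_\Gamma$ consists only of geodesics towards points of $\Lambda_\Gamma$, and $\Gamma_p$ permutes the branches transitively level by level. The stabilizers along the ray are therefore increasing, with edge groups equal to the lower vertex group, which gives a Nagao ray.
\item \textbf{Finitely many cusps.} We show that the parabolic points fall into finitely many $\Gamma$-orbits.
\item \textbf{Compactness of the rest.} After removing disjoint equivariant horoballs, the remainder of $\Gamma\backslash\mathcal{T}_\Gamma$ must be finite. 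Suppose it is not. Then, by local finiteness of the quotient, there are vertices arbitrarily far from $o$ outside the cusps, and a diagonal argument (König's lemma on the quotient) produces a ray in $\mathcal{T}_\Gamma$ whose projection escapes to infinity while avoiding the horoballs. Its endpoint lies in $\Lambda_\Gamma$ because it is in $\mathcal{T}_\Gamma$. It is not conical, since the projection leaves every compact set, and it is not parabolic, since it avoids the horoballs. This contradicts (1).
\end{enumerate}

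The main obstacle is step 2, together with the disjointness of the horoballs used in step 3. Here I would first try Bowditch's standard argument: choose the horoballs $\Gamma$-equivariantly and small enough to be pairwise disjoint, using discreteness. Finiteness of the number of orbits then follows from the same escape argument, since infinitely many inequivalent cusps would yield a sequence of vertices at the bottoms of horoballs running off to infinity in the quotient, and hence a non-conical, non-parabolic limit point. Handling the stabilizer bookkeeping so that the resulting pieces are exactly of the form in Definitions \ref{defn_funnel} and \ref{defn_geom_fin} is routine once this geometry is in place.
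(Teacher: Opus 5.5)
The step that fails is the reduction in your first paragraph, in the direction ``if $\Gamma\backslash\backslash\mathcal{T}_\Gamma$ is a finite graph of groups with finitely many Nagao rays, then $\Gamma\backslash\backslash\mathcal{T}$ is geometrically finite''. You are right that each component of $\mathcal{T}\setminus\mathcal{T}_\Gamma$ projects to an orbifold funnel. However, nothing forces these components into finitely many $\Gamma$-orbits: they can hang off vertices arbitrarily high up the lifted cusp rays, and then the statement as formulated fails.

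Here is an example. Take $q=3$ and a ray $0,1,2,\dots$ with $G_k=(\mathbb{Z}/2)^k$ and $G_{\{k,k+1\}}=G_k$. Give every $k\geq 1$ one further edge, with edge group $G_k$, to an orbifold funnel with root group $G_k$, so that $\deg(k)=2+1+1=4$. Complete this near $0$ by a finite core (say a vertex with trivial group and a loop, joined to $0$) and trivial funnels, so that all degrees are $4$. Then:
\begin{itemize}
\item $\mathcal{T}_\Gamma$ projects onto the core and the ray;
\item every limit point is conical or a translate of the endpoint $p$ of the lifted ray;
\item $\Gamma_p=\bigcup_k G_k$ is parabolic and $(\Lambda_\Gamma\setminus\{p\})/\Gamma_p$ is compact.
\end{itemize}
So (1) holds. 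But $\Gamma\backslash\backslash\mathcal{T}$ has no decomposition as in Definition~\ref{defn_geom_fin}. A vertex $k\geq 1$ has three edges, so it lies in no cusp piece. In an orbifold funnel the groups can only shrink away from the root, so a funnel piece containing $k$ must be rooted at some $m\geq k$ with $\{m,m+1\}$ cut, and this forces infinitely many pieces.

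Hence (1)$\Rightarrow$(2) is false for the full quotient, and no completion of your plan can prove it. What your strategy can establish is the convex-core version, where (2) says that $\Gamma\backslash\backslash\mathcal{T}_\Gamma$ is a finite graph of groups plus finitely many Nagao rays. Alternatively, you can prove the stated version under the extra hypothesis that $\mathcal{T}\setminus\mathcal{T}_\Gamma$ has finitely many $\Gamma$-orbits of components. Also, $\mathcal{T}_\Gamma$ as you define it is empty when $|\Lambda_\Gamma|\leq 1$, so those cases need separate treatment.

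Even for the convex-core version, (1)$\Rightarrow$(2) is not yet a proof, and the gap comes earlier than where you locate it. That $(\mathcal{T}_\Gamma\cap H_p)/\Gamma_p$ is a ray says nothing yet about $\Gamma\backslash\backslash\mathcal{T}_\Gamma$. For Step 1 you need the following: for a ray $\tilde c=(\tilde 0,\tilde 1,\dots)$ towards $p$ and all large $k$, the full stabilizer $\Gamma_{\tilde k}$ fixes $p$, and the projection of $\tilde c$ above height $k$ is injective.

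This does not follow from discreteness. The missing ingredient is that a bounded parabolic point is not conical, which on a tree is easy. $\Gamma_p$ consists of elliptic elements and is infinite, so it is the increasing union of the finite groups $P_k=\Gamma_p\cap\Gamma_{\tilde k}$, with $|P_k|\to\infty$. Conicality would give $\gamma_n$ with $\gamma_n^{-1}\tilde k_n$ in a fixed ball, so $|\Gamma_{\tilde k_n}|$ would stay bounded along some $k_n\to\infty$, a contradiction.

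Now take a compact $K\subset\Lambda_\Gamma\setminus\{p\}$ with $\Gamma_pK=\Lambda_\Gamma\setminus\{p\}$; its geodesics to $p$ join $\tilde c$ below some height $k_0$. Suppose $g\in\Gamma_{\tilde m}$ with $gp\neq p$, or $g\tilde m=\tilde m'$ with $k_0<m<m'$. Comparing $g[\tilde m,p)$ with the geodesic to $gp\in\Gamma_pK$ shows that $g^{-1}$ sends some $h\tilde j$ (with $h\in\Gamma_p$, $j\leq k_0$) to a vertex of $\tilde c$ of height at least $2m-k_0$. If this happened for arbitrarily large $m$, some orbit $\Gamma\tilde j$ would meet $\tilde c$ at unbounded heights, and $p$ would be conical. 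This yields the Nagao structure in Step 1 and, after shrinking, the horoball disjointness in Step 3; your escape argument then goes through.

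Finally, in (2)$\Rightarrow$(1), bounded parabolicity concerns the full stabilizer, so you must also check $\Gamma_p=\bigcup_iG_i$. This holds because no hyperbolic element fixes $p$: distinct heights of a cusp lie in distinct orbits. The transitivity is on the neighbours of $\widetilde{i+1}$ pointing \emph{away} from $p$, and it gives uniqueness of the fixed point. Horosphere invariance comes instead from every element fixing a subray towards $p$.
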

We remark to the reader that, in case one were to consult Théorème 1.1 of Paulin, one would need to also read the comment on the top of page 6 of the same article to make the direct connection with Definition \ref{defn_geom_fin}. We also remark that, whereas geometrically finite groups acting on Hadamard manifolds are finitely generated and have finitely many conjugacy classes of finite subgroups, neither of these facts are true in the setting of geometrically finite groups acting on trees.

\subsection{Funnels and convex cocompact groups}
Next we want to give an independent motivation for the notion of funnels introduced above, based on analogies with hyperbolic geometry. Suppose $\gamma \in \textnormal{SL}(2, \mathbb{R})$ is a hyperbolic element in the sense that it fixes exactly two points $\alpha, \beta \in \partial \mathbb{H} \simeq S^1$. Then $\gamma$ acts as a translation by some length $\ell$ along the unique geodesic connecting $\alpha$ and $\beta$. Consider the hyperbolic surface $\langle \gamma \rangle \backslash \mathbb{H}$. This is called a hyperbolic cylinder. The geodesic connecting $\alpha$ and $\beta$ projects to a closed geodesic of length $\ell$. If we remove this geodesic, we obtain two disconnected ``funnels''. In general in the context of hyperbolic geometry, a funnel is any hyperbolic surface with boundary which arises in this way; see, e.g., \cite{Bor16}.

We now describe the analogous situation for trees. Suppose $\mathcal{T}$ is a locally finite tree. Let $\gamma \in \textnormal{Aut}(\mathcal{T})$ be an element which fixes exactly two points $\alpha, \beta$ of the boundary $\partial \mathcal{T}$. Then $\gamma$ translates by some length $\ell$ along the unique geodesic connecting $\alpha$ and $\beta$, and $\langle \gamma \rangle \backslash \mathcal{T}$ is isomorphic to a cycle graph of length $\ell$ (the image of the geodesic under the projection) with each vertex on the cycle having several attached rooted trees. See Figure \ref{fig_cylinders}. Let the vertices of the cycle graph be denoted $v_1, \dots, v_\ell$, with edges $\{v_1, v_2\}, \{v_2, v_3\}, \dots, \{v_\ell, v_1\}$. If we ``cut'' along all edges containing some $v_i$ but which are not part of the cycle, then in addition to the cycle graph (which now is a graph (of groups) with boundary), we obtain finitely many funnels in the sense of Definition \ref{defn_funnel}. 

More generally suppose $\Gamma$ is a discrete subgroup of $\textnormal{Aut}(\mathcal{T})$ which setwise preserves some geodesic. Then $\Gamma$ is either finite or contains a finite index subgroup isomorphic to $\mathbb{Z}$. Let $\ell$ be the smallest non-zero translation length of any element in $\Gamma$ (or set $\ell = 0$ in case $\Gamma$ is finite). Then, as a graph of groups $\Gamma \backslash \backslash \mathcal{T}$ consists of a cyclic graph of length $\ell$ together with several attached orbifold funnels.

A subgroup $\Gamma < \textnormal{SL}(2, \mathbb{R})$ is called \textit{convex cocompact} if $\Gamma$ acts cocompactly on the convex hull of the limit points of $\Gamma$, denoted $\textnormal{Conv}(\Lambda_{\Gamma})$. It turns out that $\Gamma$ is convex cocompact if and only if $\Gamma \backslash \mathbb{H}$ is an orbifold consisting of a compact core (which contains all of the orbifold points) and finitely many attached funnels (i.e. geometrically finite with no cusps). We now prove the analogous statement in the tree setting, further strengthening the analogy between funnels in the two settings.

\begin{proposition} \label{prop_convex_cocompact}
    Suppose $\Gamma < \textnormal{Aut}(\mathcal{T})$ is discrete. Then the following are equivalent:
    \begin{enumerate}
        \item $\Gamma$ is convex cocompact,
        \item All limit points of $\Gamma$ are conical,
        \item $\Gamma \backslash \backslash \mathcal{T}$ is geometrically finite with only funnels (possibly none) and no cusps.
    \end{enumerate}
\end{proposition}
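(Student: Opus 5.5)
I will prove the cycle of implications $(1)\Rightarrow(3)\Rightarrow(2)\Rightarrow(1)$, using Theorem \ref{thm_paulin} to reduce the hardest direction to statements about the shape of the quotient graph of groups. Throughout, let $C = \textnormal{Conv}(\Lambda_\Gamma) \subseteq \mathcal{T}$ denote the subtree spanned by all bi-infinite geodesics with both endpoints in $\Lambda_\Gamma$. This subtree is $\Gamma$-invariant. I will assume $|\Lambda_\Gamma| \geq 2$; the cases $|\Lambda_\Gamma| \le 1$ are handled separately. If $\Lambda_\Gamma=\emptyset$ then $\Gamma$ is finite, so it fixes a vertex and the quotient is a finite core with funnels. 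If $|\Lambda_\Gamma| = 1$, the group either is elliptic (finite, giving no limit points) or fixes a single end parabolically; in the latter case (1) and (2) fail, and I check that (3) fails as well.

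\textbf{Step 1: $(1)\Rightarrow(3)$.} Since $\Gamma$ acts cocompactly on $C$, the quotient $\Gamma\backslash\backslash C$ is a finite graph of groups with boundary. I take it as the compact core. Every vertex of $\mathcal{T}\setminus C$ lies in a unique component of $\mathcal{T}\setminus C$. Such a component is a rooted tree $B$ attached to $C$ at a single edge $e$. I claim its image in the quotient is an orbifold funnel in the sense of Definition \ref{defn_funnel}.

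The key facts are the following.
\begin{enumerate}
\item The boundary of $B$ contains no limit points.
\item If $\gamma B \cap B \neq \emptyset$, then $\gamma$ fixes $e$, since $\gamma$ preserves $C$ and hence preserves the attaching edge.
\end{enumerate}
Therefore the image of $B$ is $\Gamma_e\backslash\backslash B$. This is the quotient of a rooted tree by a finite group fixing the root and the prong, which is exactly an orbifold funnel. There are only finitely many $\Gamma$-orbits of attaching edges, because they are edges leaving the finite quotient of $C$ and $\mathcal{T}$ is locally finite. Hence there are finitely many funnels.

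\textbf{Step 2: $(3)\Rightarrow(2)$.} By Theorem \ref{thm_paulin}, every limit point is either conical or a bounded parabolic fixed point. It therefore suffices to show that bounded parabolic fixed points correspond to cusps in Paulin's decomposition, so that the absence of cusps forces every limit point to be conical.

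I would argue this directly as follows. Suppose $p$ is a parabolic fixed point and $\beta$ is a ray toward $p$. The image of $\beta$ in the quotient cannot eventually lie in a funnel. Indeed, a funnel lifts to trees $B$ whose stabilizer is the finite group $\Gamma_e$, and $\partial B$ contains no limit points, since the orbit of a basepoint enters $B$ only in a bounded region. The image also cannot remain in the finite core infinitely often while $p$ fails to be conical: if the projection of $\beta$ returns to the finite core infinitely often, then translates of a fixed vertex lie within bounded distance of $\beta$ along a sequence tending to $p$, which is precisely the definition of conical. Hence, with no cusps, each ray toward a limit point projects to a path that returns to the compact core infinitely often. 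Every limit point is therefore conical.

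\textbf{Step 3: $(2)\Rightarrow(1)$.} This is the main obstacle. Suppose all limit points are conical. Using Theorem \ref{thm_paulin}, the quotient is geometrically finite, and I must rule out cusps. Suppose there were a cusp. Its lift is a ray whose stabilizers $G_i$ increase, and the end of that lifted ray is a limit point $p$ fixed by $\bigcup G_i$. A ray toward $p$ projects into the cusp eventually, and its distance to the core tends to infinity. Points of $\Gamma x$ near this ray would project near the ray's image. Hence $\Gamma x$ cannot stay within bounded distance of the ray, so $p$ is not conical, which is a contradiction.

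With no cusps, the quotient is a finite core together with funnels. I then show that $C$ projects into the compact core, which gives cocompactness. This holds because any bi-infinite geodesic between limit points never enters a funnel lift beyond its attaching edge, since funnel lifts contain no limit points in their boundary.

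The delicate points are the careful use of the definition of limit points for funnel lifts, and the verification that a cusp end is genuinely a limit point. The latter follows because the infinite increasing union $\bigcup G_i$ is infinite and moves a basepoint toward $p$.
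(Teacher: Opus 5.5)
Your proof is correct, but it runs the cycle in the opposite direction from the paper, which proves $(1)\Rightarrow(2)\Rightarrow(3)\Rightarrow(1)$. In the paper:
\begin{itemize}
\item $(1)\Rightarrow(2)$ is the elementary observation that a ray from $x\in\textnormal{Conv}(\Lambda_\Gamma)$ to $\alpha$ stays in the hull, and hence within the diameter of a compact fundamental domain of $\Gamma x$;
\item $(2)\Rightarrow(3)$ is delegated to the internals of Paulin's proof;
\item $(3)\Rightarrow(1)$ identifies $\textnormal{Conv}(\Lambda_\Gamma)$ exactly with the lifts $\tilde F$ of vertices lying on closed geodesics of the quotient, with convexity of $\tilde F$ proved by concatenating closed geodesics.
\end{itemize}

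Your route rests instead on one structural fact. A lift of a funnel is the shadow of a single edge with finite stabilizer, so the orbit of a core vertex never enters it and its boundary contains no limit points. You use this twice. In $(3)\Rightarrow(2)$ it gives conicality: a ray toward a limit point must revisit the preimage of the finite core infinitely often. In $(2)\Rightarrow(1)$ it shows that $\textnormal{Conv}(\Lambda_\Gamma)$ lies in the preimage of the core. Your $(1)\Rightarrow(3)$ exhibits the decomposition directly, with core $\Gamma\backslash\backslash\textnormal{Conv}(\Lambda_\Gamma)$ and the components of $\mathcal{T}\setminus\textnormal{Conv}(\Lambda_\Gamma)$ giving the funnels.

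The gain is that Paulin's theorem is used only as a black-box statement, namely to get geometric finiteness from (2). The exclusion of cusps, which the paper extracts from Paulin's proof, is established by your explicit argument that a cusp end is a limit point that is not conical. The paper's route, for its part, yields the sharper description of the convex hull via closed geodesics.

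Three small points:
\begin{itemize}
\item In Step 2 the preliminary reduction to bounded parabolic fixed points is superfluous. Your direct argument never uses parabolicity and applies to every limit point.
\item When $|\Lambda_\Gamma|=1$, your literal definition gives $C=\emptyset$, so (1) holds vacuously rather than failing. You need the convention that convex cocompactness requires a nonempty hull; the paper tacitly assumes this too when it picks $x\in\textnormal{Conv}(\Lambda_\Gamma)$.
\item The claim that a cusp end is a limit point uses that $\bigcup G_i$ is infinite. This is automatic in the $(q+1)$-regular setting, since regularity forces $|G_{i+1}|=q|G_i|$.
\end{itemize}
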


\begin{proof}
    We show that (1) $\implies$ (2) $\implies$ (3) $\implies$ (1). Showing (1) $\implies$ (2) is straightforward. Suppose $\Gamma$ is convex cocompact. Let $\alpha \in \Lambda_{\Gamma}$. Let $x \in \textnormal{Conv}(\Lambda_{\Gamma})$ and let $\gamma$ be the geodesic ray from $x$ to $\alpha$. By convexity, $\gamma$ always lies in $\textnormal{Conv}(\Lambda_{\Gamma})$. By convex cocompactness, we know that we can choose a compact fundamental domain for the action of $\Gamma$ on $\textnormal{Conv}(\Lambda_{\Gamma})$. Let $R$ be the greatest distance between any two points in this fundamental domain. Then every point along $\gamma$ lies within some translate of this fundamental domain, and hence is within distance at most $R$ of $\Gamma.x$. This proves that $\alpha$ is a conical limit point.

    We now show (2) $\implies$ (3). This follows from (the proof of) Theorem \ref{thm_paulin} of Paulin \cite{paulin}. More specifically, if all limit points are conical, then $\Gamma$ is necessarily geometrically finite, but Paulin's proof also implies that in the absence of bounded parabolic fixed points, $\Gamma \backslash \backslash \mathcal{T}$ contains no cusps.

    Finally we show (3) $\implies$ (1). We begin by clarifying that, in the context of graphs of groups, geodesics correspond to the images of geodesics under the natural map $\pi: \mathcal{T} \to \Gamma \backslash \backslash \mathcal{T}$. This corresponds to geodesics on the underlying graph of $\Gamma \backslash \backslash \mathcal{T}$ except that a geodesic which has a step from $v_1 \to v_2$ is allowed to backtrack back to $v_1$ if (and only if) $G_{\{v_1, v_2\}}$ is strictly smaller than $G_{v_2}$.
    
    Let $F \subset \Gamma \backslash \backslash \mathcal{T}$ denote those vertices which lie on some closed geodesic.  This is clearly a finite set since no point on any funnel can lie in $F$. Furthermore, if we remove $F$ and all edges connecting points in $F$, then the resulting graph must not contain any loops. Let $\tilde{F}$ denote all lifts of elements in $F$ to $\mathcal{T}$. Clearly $\Gamma$ acts cocompactly on $\tilde{F}$. We claim that in fact $\tilde{F} = \textnormal{Conv}(\Lambda_\Gamma)$. First off note that clearly $\tilde{F} \subset \textnormal{Conv}(\Lambda_\Gamma)$ since the lifts of closed geodesics in $\Gamma \backslash \backslash \mathcal{T}$ cover all of $\tilde{F}$ and each such lifted geodesic is the geodesic connecting two distinct limit points. We now show the reverse inclusion. 
    
    We first claim that $\tilde{F}$ is convex. Suppose $\tilde{x}, \tilde{y} \in \tilde{F}$ are lifts of $x, y \in \Gamma \backslash \backslash \mathcal{T}$. Let $\gamma$ be the geodesic connecting them, and let $\pi(\gamma)$ be the projection of $\gamma$ to $\Gamma \backslash \backslash \mathcal{T}$. Let $\omega_x$ be some closed geodesic starting at $x$ and $\omega_y$ some closed geodesic starting at $y$. Then the sequence $\omega_x, \pi(\gamma), \omega_y, \pi(\gamma)^{-1}$ is a closed geodesic from $x$ to $x$ which contains all points on $\pi(\gamma)$. Therefore $\gamma \in \tilde{F}$, and thus $\tilde{F}$ is convex. Suppose $\alpha$ and $\beta$ are in $\Lambda_\Gamma$. Let $\tilde{x} \in \tilde{F}$. We can represent $\alpha$ as the limit of some $\gamma_n.\tilde{x}$ with $n \to \infty$ and $\gamma_n \in \Gamma$. Similarly, we can represent $\beta$ as some $\eta_n.\tilde{x}$. The geodesic from $\tilde{x}$ to $\gamma_n.\tilde{x}$ always lies in $\tilde{F}$, and as $n \to \infty$ these geodesics converge to the geodesic from $\tilde{x}$ to $\alpha$, which must also lie in $\tilde{F}$. Similarly, the geodesic from $\tilde{x}$ to $\beta$ must lie in $\tilde{F}$. The geodesic from $\alpha$ to $\beta$ traces out a path which is a subset of the union of the geodesics from $\tilde{x}$ to $\alpha$ and from $\tilde{x}$ to $\beta$; hence this geodesic lies in $\tilde{F}$. Thus $\textnormal{Conv}(\Lambda_\Gamma) \subseteq \tilde{F}$.
\end{proof}

\begin{figure}[ht]
     \centering
     \begin{subfigure}[c]{0.49\textwidth}
         \centering

         \resizebox{\textwidth}{!}{%
           \begin{tikzpicture}[
             x=1.1cm,y=1.05cm,
             line cap=round,
             line join=round
             ]
           \definecolor{b}{RGB}{83,88,220}
           \definecolor{p}{RGB}{226,149,188}
           \tikzset{
             blueN/.style={circle,draw=black,thin,fill=b,inner sep=0pt,minimum size=4.6pt},
             blueE/.style={thick,b},
             stub/.style={dash pattern=on 1.3pt off 1.5pt,thick,b},
             dsh/.style={dash pattern=on 4pt off 4pt,thick,darkgray},
             sq/.style={thick,p!50}
           }

           \newcommand{\stubs}[1]{%
             \draw[stub] (#1) ++( 0, 0.15) -- ++(0, 0.30);
             \draw[stub] (#1) ++( 0,-0.15) -- ++(0,-0.30);
             \draw[stub] (#1) ++( 0.15,0) -- ++(0.30,0);
             \draw[stub] (#1) ++(-0.15,0) -- ++(-0.30,0);
           }

           \coordinate (v1) at (0,0);
           \coordinate (v2) at (4,0);
           \coordinate (v3) at (4,-4);
           \coordinate (v4) at (0,-4);

           \draw[sq] (v1)--(v2)--(v3)--(v4)--cycle;

           \coordinate (T1) at (0,2);
           \coordinate (T2) at (4,2);
           \coordinate (L1) at (-2,0);
           \coordinate (L2) at (-2,-4);
           \coordinate (R1) at (6,0);
           \coordinate (R2) at (6,-4);
           \coordinate (B1) at (0,-6);
           \coordinate (B2) at (4,-6);

           \foreach \x/\y in {0/2,4/2}{
             \draw[blueE] (\x-1,\y)--(\x,\y)--(\x+1,\y);
             \draw[blueE] (\x,\y)--(\x,\y+1);
           }
           \foreach \x/\y in {0/-6,4/-6}{
             \draw[blueE] (\x-1,\y)--(\x,\y)--(\x+1,\y);
             \draw[blueE] (\x,\y)--(\x,\y-1);
           }
           \foreach \x/\y in {-2/0,-2/-4}{
             \draw[blueE] (\x,\y+1)--(\x,\y)--(\x,\y-1);
             \draw[blueE] (\x,\y)--(\x-1,\y);
           }
           \foreach \x/\y in {6/0,6/-4}{
             \draw[blueE] (\x,\y+1)--(\x,\y)--(\x,\y-1);
             \draw[blueE] (\x,\y)--(\x+1,\y);
           }

           \draw[dsh] (T1)--(v1);
           \draw[dsh] (T2)--(v2);
           \draw[dsh] (L1)--(v1);
           \draw[dsh] (L2)--(v4);
           \draw[dsh] (R1)--(v2);
           \draw[dsh] (R2)--(v3);
           \draw[dsh] (B1)--(v4);
           \draw[dsh] (B2)--(v3);


           \foreach \x/\y in {0/2,4/2}{
             \node[blueN] at (\x,\y) {};      
             \node[blueN] at (\x-1,\y) {};    \stubs{\x-1,\y}
             \node[blueN] at (\x+1,\y) {};    \stubs{\x+1,\y}
             \node[blueN] at (\x,\y+1) {};    \stubs{\x,\y+1}
           }

           \foreach \x/\y in {0/-6,4/-6}{
             \node[blueN] at (\x,\y) {};
             \node[blueN] at (\x-1,\y) {};    \stubs{\x-1,\y}
             \node[blueN] at (\x+1,\y) {};    \stubs{\x+1,\y}
             \node[blueN] at (\x,\y-1) {};    \stubs{\x,\y-1}
           }

           \foreach \x/\y in {-2/0,-2/-4}{
             \node[blueN] at (\x,\y) {};
             \node[blueN] at (\x,\y+1) {};    \stubs{\x,\y+1}
             \node[blueN] at (\x,\y-1) {};    \stubs{\x,\y-1}
             \node[blueN] at (\x-1,\y) {};    \stubs{\x-1,\y}
           }

           \foreach \x/\y in {6/0,6/-4}{
             \node[blueN] at (\x,\y) {};
             \node[blueN] at (\x,\y+1) {};    \stubs{\x,\y+1}
             \node[blueN] at (\x,\y-1) {};    \stubs{\x,\y-1}
             \node[blueN] at (\x+1,\y) {};    \stubs{\x+1,\y}
           }

           \filldraw[fill=p,draw=black,thin] (v1) circle (0.08);
           \filldraw[fill=p,draw=black,thin] (v2) circle (0.08);
           \filldraw[fill=p,draw=black,thin] (v3) circle (0.08);
           \filldraw[fill=p,draw=black,thin] (v4) circle (0.08);
           \node[below right] at (v1) {$v_1$};
           \node[below left]  at (v2) {$v_2$};
           \node[above left]  at (v3) {$v_3$};
           \node[above right] at (v4) {$v_4$};

         \end{tikzpicture}
       }

     \end{subfigure}
     \hfill 
     \begin{subfigure}[c]{0.49\textwidth}
         \centering

         \resizebox{\textwidth}{!}{%
           \begin{tikzpicture}[x=1.1cm,y=1.1cm,line cap=round,line join=round]
             \definecolor{b}{RGB}{86,110,230}
             \definecolor{g}{RGB}{90,220,70}

             \coordinate (r) at (0,0);
             \coordinate (a1) at (-2,-1.6);
             \coordinate (a2) at (2,-1.6);
             \coordinate (b1) at (-3.6,-3.1);
             \coordinate (b2) at (-1,-3.1);
             \coordinate (b3) at (1,-3.1);
             \coordinate (b4) at (3.6,-3.1);
             \coordinate (c1) at (-4.6,-4.2);
             \coordinate (c2) at (-2.9,-4.2);
             \coordinate (c3) at (-1.8,-4.2);
             \coordinate (c4) at (-0.5,-4.2);
             \coordinate (c5) at (0.5,-4.2);
             \coordinate (c6) at (1.8,-4.2);
             \coordinate (c7) at (2.9,-4.2);
             \coordinate (c8) at (4.6,-4.2);

             \draw[dotted,thick,g] (0,3) -- (0,4);
             \draw[thick,g] (0,2) -- (0,3);
             \draw[thick,g] (0,1) -- (0,2);
             \draw[dash pattern=on 4pt off 4pt,thick,darkgray] (0,0) -- (0,1);
             \foreach \y in {1,2,3}{\filldraw[fill=g,draw=black,thin] (0,\y) circle (0.08);}

             \draw[thick,b] (r)--(a1)--(b1);
             \draw[thick,b] (a1)--(b2);
             \draw[thick,b] (r)--(a2)--(b3);
             \draw[thick,b] (a2)--(b4);
             \foreach \u/\v in {b1/c1,b1/c2,b4/c7,b4/c8,b2/c3,b2/c4,b3/c5,b3/c6}{
               \draw[dotted,thick,b] (\u)--(\v);
             }

             \foreach \p in {r,a1,a2,b1,b2,b3,b4}{
               \filldraw[fill=b,draw=black,thin] (\p) circle (0.09);
             }

             \node[right=6pt] at (0,3) {$\left[\begin{matrix}1 & \mathbb{F}_2T^{-2}+\mathbb{F}_2T^{-1}+\mathbb{F}_2\\0&1\end{matrix}\right]$};
             \node[right=6pt] at (0,2) {$\left[\begin{matrix}1 & \mathbb{F}_2T^{-1}+\mathbb{F}_2\\0&1\end{matrix}\right]$};
             \node[right=6pt] at (0,1) {$\left[\begin{matrix}1 & \mathbb{F}_2\\0&1\end{matrix}\right]$};
             \node[below=2pt] at (0,-0.05) {$1$};

             \foreach \x/\y in {
               -2/-1.6,2/-1.6,-3.6/-3.1,-1/-3.1,1/-3.1,3.6/-3.1
             }{
               \node[above=2pt] at (\x,\y) {$1$};
             }
             \foreach \x/\y in {
               -4.6/-4.2,-2.9/-4.2,-1.8/-4.2,-0.5/-4.2,0.5/-4.2,1.8/-4.2,2.9/-4.2,4.6/-4.2
             }{
               \node[below=1pt] at (\x,\y) {$1$};
             }
           \end{tikzpicture}
         }
     \end{subfigure}
    \caption{The LHS shows the hyperbolic cylinder corresponding to an element $\gamma$ with translation length 4 acting on the 4-regular tree. The RHS shows the parabolic cylinder for the Bruhat-Tits tree of $\textnormal{SL}(2, \mathbb{F}_2((T)))$.} \label{fig_cylinders}
\end{figure}

\subsection{Cusps} \label{sec_cusps}

We again start in the setting of hyperbolic surfaces. Suppose $\gamma \in \textnormal{SL}(2, \mathbb{R})$ is parabolic, i.e.\@ fixes exactly one point at infinity. Thus $\gamma$ is conjugate to $\begin{bmatrix} 1 & 1 \\ 0 & 1 \end{bmatrix}$. The hyperbolic manifold $\langle \gamma \rangle \backslash \mathbb{H}$ is called a parabolic cylinder. This surface is foliated by closed horocycles. One end of this surface is ``cusped'' and the other end is ``flared''. If we cut this surface along some closed horocycle and take the ``cusped'' end, we call the resulting surface a cusp. Notice that a cusp is non-compact but has finite volume.

It is well known that for geometrically finite $\Gamma$, we have that $\Gamma \backslash \mathbb{H}$ is compact if and only if it has no cusps and no funnels, and it has finite volume if and only if it has (necessarily finitely many) cusps and no funnels.

On trees, in contrast to the setting of hyperbolic geometry, any element $\gamma \in \textnormal{Aut}(\mathcal{T})$ which fixes only one point on the boundary necessarily also fixes some vertex (in fact infinitely many vertices).
In particular, the group generated by such an element cannot possibly act freely on $\mathcal{T}$, so the quotient cannot possibly be a $(q+1)$-regular graph in the usual sense. Furthermore, and relatedly, we know that a cusp has finite volume but is not compact. However, a finite-volume graph can only have finitely many vertices, at least if all vertices have the same mass. However, using the volume form described in Section \ref{sec_graph_of_groups}, we can indeed have finite volume non-compact quotient graphs of groups.

In order to most clearly highlight the similarities between the setting of hyperbolic surfaces and regular graphs, and in particular to motivate the definition of cusp in the latter setting, we make use of the Bruhat-Tits tree. We shall briefly recall the basic facts that we need, but we encourage the reader to consult Serre \cite{serre_trees} for further details (we also recommend \cite{Cas}). Let $F$ be a non-archimedean local field whose residue field has order $q$. Let $\mathcal{T}$ be the associated Bruhat-Tits tree, which is an infinite $(q+1)$-regular tree. 

For $\mathbb{H}$, the parabolic cylinder arose by quotienting by the additive group $\begin{bmatrix} 1 & n \\ 0 & 1 \end{bmatrix}$ with $n \in \mathbb{Z}$. The fact that each horocycle was closed arose from the fact that $\mathbb{Z}$ is a discrete cocompact subgroup of $\mathbb{R}$. We may thus seek to define a parabolic cylinder in the graph setting by quotienting $\mathcal{T}$ by the subgroup $\begin{bmatrix} 1 & x \\ 0 & 1 \end{bmatrix}$ with $x$ lying in some discrete cocompact additive subgroup of $F$.
If $F$ has characteristic zero (such as $\mathbb{Q}_p$), then no such subgroup exists. For related reasons, any lattice in $\textnormal{SL}(2, F)$ with $F$ of characteristic zero is cocompact (i.e.\@ the quotient has no cusps). On the other hand, if $F$ has positive characteristic, then $F$ looks like $\mathbb{F}_q((T))$ and it is a valued field with valuation
\[
v(a_\ell T^\ell+a_{\ell+1}T^{\ell+1}+\ldots) = \ell \text{ if } a_\ell\neq 0.
\]
Let us denote the ring of integers in $F$ by $\mathcal{O} := \mathbb{F}_q[[T]] = \{ x \in F : v(x)\geq 0 \}$.
The field $F$ contains the discrete cocompact additive subgroup
\[
Z := \mathbb{F}_q[T^{-1}] \simeq F/T\mc O.
\]
The latter is the direct limit of the finite additive subgroups $Z_j = \{a_{-j}T^{-j}+\ldots +a_{-1}T^{-1}+a_0\}$, which have order $q^{j+1}$.
Let $G = \textnormal{PGL}(2, F)$ and $K = \textnormal{PGL}(2, \mathcal{O})$. The vertices of the Bruhat–Tits tree are naturally in bijection with cosets $G/K$. Consider the subgroup
\begin{gather*}
    U := \begin{bmatrix}
        1 & Z \\
        0 & 1
    \end{bmatrix}
\end{gather*}
which is a discrete subgroup of $\textnormal{PGL}(2, F)$. We wish to understand the quotient $U \backslash G / K$; this is the analogue of the parabolic cylinder.
\begin{proposition}[Iwasawa decomposition, see {\cite[Corollary 3.3]{Cas}}]
    Every $g \in G$ can be expressed as
    \begin{gather*}
        g = \begin{bmatrix}
            1 & x \\
            0 & 1
        \end{bmatrix} \begin{bmatrix}
            T^m & 0 \\
            0 & 1
        \end{bmatrix} k,
    \end{gather*}
    with $k \in K$, $m \in \mathbb{Z}$ unique, and $x \in F$ unique mod $T^m\mathcal O$.
\end{proposition}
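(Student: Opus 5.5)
We prove the Iwasawa decomposition for $G = \textnormal{PGL}(2,F)$ with $K = \textnormal{PGL}(2,\mathcal{O})$ in two steps: existence by column operations, then uniqueness by comparing two decompositions. It is convenient to work in $\textnormal{GL}(2,F)$ and pass to the quotient by scalars at the end. Since $F^\times = T^{\mathbb{Z}} \mathcal{O}^\times$, a scalar can be absorbed into $k$ after adjusting by a power of $T$, so the normalization with lower-right entry $1$ is harmless.

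\emph{Existence.} Given $g = \begin{bmatrix} a & b \\ c & d\end{bmatrix} \in \textnormal{GL}(2,F)$, we act on the right by $\textnormal{GL}(2,\mathcal{O})$, which allows column operations over $\mathcal{O}$.
\begin{enumerate}
    \item Swap columns if necessary so that $v(c) \le v(d)$; here $v(0) = \infty$, and not both of $c,d$ vanish.
    \item Subtract $(d/c)$ times the first column from the second. Since $d/c \in \mathcal{O}$, this is an allowed operation, and the bottom row becomes $(c, 0)$.
    \item Swap the columns again, so the matrix is upper triangular: $\begin{bmatrix} b' & a \\ 0 & c\end{bmatrix}$.
    \item Scale by $\mathcal{O}^\times$ in each column. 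This makes the diagonal entries $T^{m_1}$ and $T^{m_2}$.
\end{enumerate}
Modulo scalars the diagonal becomes $(T^{m},1)$ with $m = m_1 - m_2$. An upper triangular matrix $\begin{bmatrix} T^m & y \\ 0 & 1 \end{bmatrix}$ factors as $\begin{bmatrix} 1 & y \\ 0 & 1\end{bmatrix}\begin{bmatrix} T^m & 0 \\ 0 & 1\end{bmatrix}$, which gives existence.

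\emph{Uniqueness.} Suppose two decompositions agree. Then
\[
\begin{bmatrix} T^{-m'} & 0 \\ 0 & 1\end{bmatrix}\begin{bmatrix} 1 & x-x' \\ 0 & 1\end{bmatrix}\begin{bmatrix} T^{m} & 0 \\ 0 & 1\end{bmatrix} = \begin{bmatrix} T^{m-m'} & T^{-m'}(x-x') \\ 0 & 1\end{bmatrix}
\]
lies in $K$, i.e. it equals $\lambda k_0$ with $\lambda \in F^\times$ and $k_0 \in \textnormal{GL}(2,\mathcal{O})$. The key invariant is that $\textnormal{GL}(2,\mathcal{O})$ consists of integral matrices with unit determinant, so
\[
v(\det) \equiv 0 \pmod 2, \qquad \min v(\text{entries}) = v(\lambda).
\]
Since the lower-right entry is $1$, we get $v(\lambda) \le 0$. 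Since $v(\det) = m - m' = 2v(\lambda)$, the diagonal entries $T^{m-m'}$ and $1$ force $m - m' \ge v(\lambda)$. Combining,
\[
2v(\lambda) = m - m' \ge v(\lambda),
\]
so $v(\lambda) \ge 0$, hence $v(\lambda) = 0$ and $m = m'$. Then the off-diagonal entry must satisfy $v(T^{-m}(x-x')) \ge 0$, i.e. $x \equiv x' \pmod{T^m\mathcal{O}}$. Conversely, changing $x$ by an element of $T^m\mathcal{O}$ changes $g$ only by a right factor in $K$, by the same computation.

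The main obstacle is this uniqueness step. One has to be careful that $K$ is taken modulo scalars, which is why we track $v(\lambda)$ and use the determinant-parity and minimal-entry-valuation invariants rather than simply requiring integral entries.
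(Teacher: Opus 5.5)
Your proof is correct. The paper does not prove this proposition at all: it only cites Casselman, Corollary 3.3. So the comparison is between a citation and your self-contained elementary argument.

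\textbf{Existence.} The column reduction over $\mathcal{O}$ is valid. The only tacit point is that $b'\neq 0$ in your Step 3, which follows from $\det g\neq 0$.

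\textbf{Uniqueness.} This step deals correctly with the one real subtlety, namely that $K=\textnormal{PGL}(2,\mathcal{O})$ is only defined modulo scalars. You write the element as $\lambda k_0$ with $k_0\in\textnormal{GL}(2,\mathcal{O})$ and play off two facts: $v(\det(\lambda k_0))=2v(\lambda)$, and the minimal entry valuation of $\lambda k_0$ equals $v(\lambda)$. The second fact holds because $\det k_0\in\mathcal{O}^\times$ forces some entry of $k_0$ to be a unit. Together these give $v(\lambda)=0$, then $m=m'$, then $T^{-m}(x-x')\in\mathcal{O}$.

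\textbf{The converse remark.} Your observation that $\begin{bmatrix}T^m & x\\ 0&1\end{bmatrix}\begin{bmatrix}1&y\\0&1\end{bmatrix}=\begin{bmatrix}T^m & x+T^my\\ 0&1\end{bmatrix}$ with $y\in\mathcal{O}$ is worth keeping. It shows precisely that the ambiguity in $x$ is $T^m\mathcal{O}$, and this is the normalization the paper uses afterwards in its lemma on $U$-orbits of points of level $m$.

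The citation buys the paper brevity. Your version verifies the statement in exactly the $\textnormal{PGL}$ normalization the paper needs.
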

For a given $gK\in G/K$ expressed in the Iwasawa decomposition above, we call $m$ its \emph{level} and $\min(v(x), m)$ its branch point (note that because $x$ is only defined up to $T^m\mc O$, its valuation is only well defined for values $\leq m$). Let us describe the geometric meaning of these notions by decomposing $G/K$ into horocycles, i.e.\@ $U$-orbits (see also Figure~\ref{fig_horocycles}).
First we have the ``standard geodesic'' whose vertices correspond to the cosets $\begin{bmatrix}
    T^n & 0 \\
    0 & 1
\end{bmatrix} K$. We wish to ``fold'' onto this geodesic ``from the perspective'' of the point we approach as $n \to -\infty$. Thus, for example, of the $q+1$ neighbors of $1 K$, $q$ of them fold down onto $\begin{bmatrix}
    T & 0 \\
    0 & 1
\end{bmatrix} K$, and the remaining point $\begin{bmatrix}
    T^{-1} & 0 \\
    0 & 1
\end{bmatrix} K$ of course folds onto itself.
In terms of the Iwasawa decomposition, the unique $m$ in the representation of $g$ tells us that $gK$ folds down onto the vertex $\begin{bmatrix}
    T^m & 0 \\
    0 & 1
\end{bmatrix} K$. However, the valuation of $x$ in the Iwasawa decomposition also carries geometric information: it tells us at which point the geodesic from $g K$ to $-\infty$ first begins to intersect the standard geodesic, i.e., if the valuation is $\ell$, then it begins to intersect at $\begin{bmatrix}
  T^\ell & 0 \\
  0 & 1
\end{bmatrix} K$. Notice that, because we have chosen a uniformizer $T$, we can write $x$ uniquely as a Laurent series in $T$ and, since $x$ is only unique modulo $T^m$, we can choose for $x$ the corresponding finite Laurent series that has all coefficients of $T^n$ with $n \geq m$ equal to zero.

\begin{figure}[ht]
    \centering
    \includegraphics[width=0.8\textwidth]{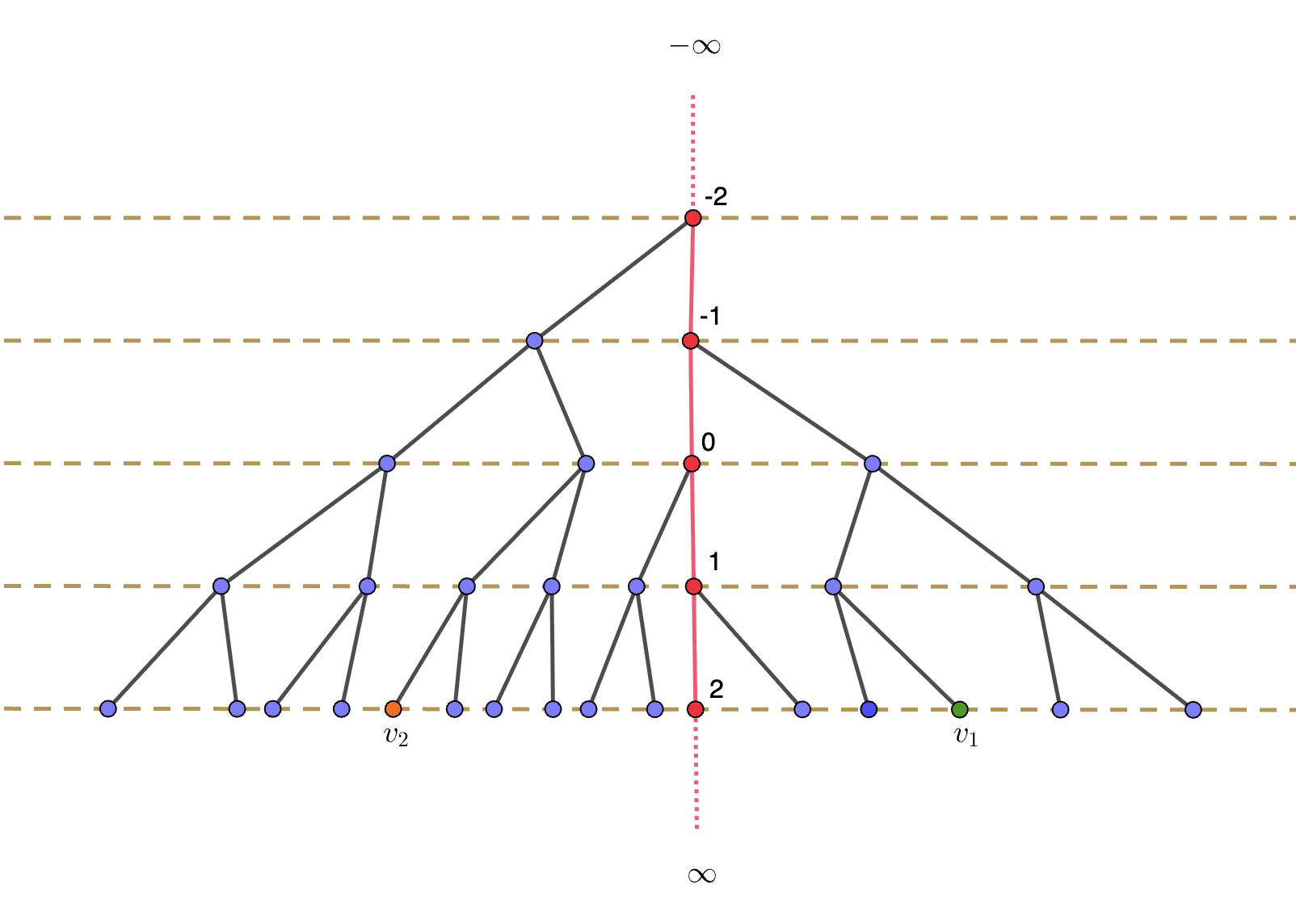}
    

    \caption{Example of the Iwasawa folding for the Bruhat-Tits tree of $\textnormal{PGL}(2, \mathbb{F}_2((T)))$. The red vertical line represents the central geodesic that we are folding onto ``from the perspective of $-\infty$''. Each brown horizontal line represents points on the same horocycle. The point $v_1$ has level 2 and branch point $-1$, so for example it might correspond to $\begin{bmatrix}
        T^2 & T^{-1} \\
        0 & 1
    \end{bmatrix} K$. The point $v_2$ has level 2 and branch point $-2$, so for example it might correspond to $\begin{bmatrix}
        T^2 & T^{-2} \\
        0 & 1
    \end{bmatrix}K$.}
    \label{fig_horocycles}
\end{figure}

\begin{lemma}~\label{lem:horo_quotient}
    \begin{enumerate}
        \item Let $m \leq 1$ be fixed. All points $gK$ with level $m$ lie in the same orbit under $U$, and $U$ acts transitively on points of level $m$ with stabilizer $\begin{bmatrix}
            1 & Z_{-m} \\
            0 & 1
        \end{bmatrix}$
        (with $Z_{-1} := 0$).
        \item Any point with level $m \geq 2$ has a unique point in its $U$-orbit with branch point at least $1$, and $U$ acts freely on these points.
    \end{enumerate}
\end{lemma}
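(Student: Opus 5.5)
The plan is to translate everything into the coordinates provided by the Iwasawa decomposition, where the action of $U$ becomes a translation. By the Iwasawa decomposition, the vertices of level $m$ are in bijection with $F/T^m\mathcal{O}$ via
\begin{gather*}
    gK = \begin{bmatrix} 1 & x \\ 0 & 1 \end{bmatrix}\begin{bmatrix} T^m & 0 \\ 0 & 1 \end{bmatrix} K \;\longleftrightarrow\; x + T^m\mathcal{O}.
\end{gather*}
For $z \in Z$ we have
\begin{gather*}
    \begin{bmatrix} 1 & z \\ 0 & 1 \end{bmatrix}\begin{bmatrix} 1 & x \\ 0 & 1 \end{bmatrix} = \begin{bmatrix} 1 & x+z \\ 0 & 1 \end{bmatrix}.
\end{gather*}
So $U$ preserves the level, and on level $m$ it acts by $x + T^m\mathcal{O} \mapsto x + z + T^m\mathcal{O}$. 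Consequently:
\begin{itemize}
    \item the $U$-orbits at level $m$ correspond to $F/(Z + T^m\mathcal{O})$;
    \item the stabilizer of any point of level $m$ is $\begin{bmatrix} 1 & Z \cap T^m\mathcal{O} \\ 0 & 1 \end{bmatrix}$, independently of $x$, since the additive group is abelian.
\end{itemize}
Both parts of the lemma therefore reduce to computing $Z + T^m\mathcal{O}$ and $Z \cap T^m\mathcal{O}$ inside $F = \mathbb{F}_q((T))$.

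The key algebraic input is the direct sum decomposition $F = Z \oplus T\mathcal{O}$. It holds because every Laurent series splits uniquely into its part with exponents $\le 0$, a finite polynomial in $T^{-1}$ lying in $Z$, and its part with exponents $\ge 1$, lying in $T\mathcal{O}$.

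For part (1), take $m \le 1$. Then $T^m\mathcal{O} \supseteq T\mathcal{O}$, so $Z + T^m\mathcal{O} = F$ and $U$ is transitive on level $m$. The stabilizer is determined by $Z \cap T^m\mathcal{O}$, which consists of polynomials $\sum_{j \ge 0} a_{-j}T^{-j}$ with all exponents $-j \ge m$, i.e.\@ $0 \le j \le -m$. This is exactly $Z_{-m}$ for $m \le 0$, and it is $\{0\} = Z_{-1}$ for $m = 1$.

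For part (2), take $m \ge 2$. Then $T^m\mathcal{O} \subseteq T\mathcal{O}$, so $Z \cap T^m\mathcal{O} \subseteq Z \cap T\mathcal{O} = 0$, and $U$ acts freely on level $m$. A point $x + T^m\mathcal{O}$ has branch point $\min(v(x), m) \ge 1$ precisely when $x \in T\mathcal{O}$. This condition is well defined modulo $T^m\mathcal{O}$ because $T^m\mathcal{O} \subseteq T\mathcal{O}$; this is the one small subtlety to check.
\begin{itemize}
    \item Existence: writing $x = z + y$ with $z \in Z$ and $y \in T\mathcal{O}$, the translate by $-z$ has branch point $\ge 1$.
    \item Uniqueness: if $y, y' \in T\mathcal{O}$ represent points in the same orbit, then $y - y' \in Z + T^m\mathcal{O}$, so $y - y' - t \in Z \cap T\mathcal{O} = 0$ for some $t \in T^m\mathcal{O}$. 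Hence $y \equiv y' \bmod T^m\mathcal{O}$, i.e.\@ they are the same point.
\end{itemize}
Freeness on these particular points is then a special case of freeness on the whole level.

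I expect no serious obstacle. The only care needed is in the bookkeeping of what "branch point" means for a class modulo $T^m\mathcal{O}$, and in the boundary case $m = 1$, where the convention $Z_{-1} = 0$ is what makes (1) hold.
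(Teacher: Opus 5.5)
Your proposal is correct and follows essentially the same route as the paper: both identify the $U$-action on level-$m$ vertices with the translation action of $Z$ on $F/T^m\mathcal{O}$, and read off transitivity, the stabilizers, and the canonical representative by splitting a Laurent series into its part with exponents $\le 0$ (which lies in $Z$) and its part with exponents $\ge 1$. Your phrasing via $F = Z \oplus T\mathcal{O}$, computing $Z + T^m\mathcal{O}$ and $Z \cap T^m\mathcal{O}$, simply spells out the stabilizer and uniqueness claims that the paper states without detail.
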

\begin{proof}
   For points $gK$ of level $m$, the $x$ in the Iwasawa decomposition is given by
    \[
    x = a_{\ell} T^\ell + \ldots +  a_{m-1}T^{m-1} \in F/(T^m\mc O).
    \]
    Furthermore, the $U$-action on $gK$ corresponds to an additive $Z$-action on $x\in F/(T^m\mc O)$.
    For $m\leq 1$, $Z$ acts transitively on $F/(T^m\mc O)$ with stabilizer $Z_{-m}$; thus $U$ acts transitively on points of level $m$ with stabilizer group $\begin{bmatrix}
            1 & Z_{-m} \\
            0 & 1
        \end{bmatrix}$.
    For $m\geq 2$, $Z$ acts freely on $F/(T^m\mc O)$. Furthermore, by subtracting the element $a_{\ell} T^\ell + \ldots + a_0 \in Z$ from $x$, we can map $x$ to $a_1T +\ldots +a_{m-1} T^{m-1}$, which is the unique element with valuation $\geq 1$ in the $Z$-orbit of $x$. Translating this back to the $U$-action on $gK$, we see that it acts freely and its orbit contains a unique point with branch point at least $1$.
\end{proof}

\begin{corollary}
    The quotient graph of groups $U \backslash \backslash \mathcal{T}$ consists of a cusp with vertex groups
    \begin{gather*}
        \begin{bmatrix}
            1 & Z_{0} \\
            0 & 1
        \end{bmatrix} \hookrightarrow \begin{bmatrix}
            1 & Z_{-1} \\
            0 & 1
        \end{bmatrix} \hookrightarrow 
        \begin{bmatrix}
            1 & Z_{-2} \\
            0 & 1
        \end{bmatrix} \hookrightarrow \dots.
    \end{gather*}
    glued together along the prong emanating from the root to a funnel (with the edge group of the prong being the trivial group). 
\end{corollary}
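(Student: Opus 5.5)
We will read off the quotient graph of groups $U \backslash\backslash \mathcal{T}$ directly from Lemma~\ref{lem:horo_quotient}, taking orbit representatives level by level. For each $m \leq 1$, part (1) says the vertices of level $m$ form a single $U$-orbit, with stabilizer $\begin{bmatrix} 1 & Z_{-m} \\ 0 & 1 \end{bmatrix}$. We take the representative $\begin{bmatrix} T^m & 0 \\ 0 & 1 \end{bmatrix}K$ on the standard geodesic. This gives one vertex of the quotient per level $m \leq 1$. Setting $i = 1-m \geq 0$, its vertex group is $\begin{bmatrix} 1 & Z_{i-1}\\ 0 & 1\end{bmatrix}$, with $Z_{-1}=0$. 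So level $1$ carries the trivial group, level $0$ carries $Z_0$, and so on.

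Next we determine the edges between consecutive levels $m$ and $m-1$, with $m \leq 1$. Every neighbor of a level-$m$ vertex has level $m\pm1$: $q$ neighbors have level $m+1$ and one has level $m-1$. So the $q+1$ neighbors of $\begin{bmatrix} T^{m-1} & 0 \\ 0 & 1\end{bmatrix}K$ consist of one vertex of level $m-2$ and $q$ vertices of level $m$. These $q$ vertices are permuted by the stabilizer $Z_{1-m}$ of the level-$(m-1)$ representative. That stabilizer has order $q^{2-m}$, while the stabilizer $Z_{-m}$ of each of them has order $q^{1-m}$, so the action is transitive. Hence exactly one edge orbit joins levels $m$ and $m-1$. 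Its edge group is the stabilizer of the edge, namely $Z_{-m}$, which is the vertex group of the level-$m$ (higher) endpoint. This is precisely the cusp condition $G_i = G_{\{i,i+1\}}$, with inclusions $Z_{-m} \hookrightarrow Z_{1-m}$, and it gives the displayed Nagao ray rooted at level $1$. One can also check the degree formula \eqref{eqn_degree}: at level $m-1$ we get $1 + q^{2-m}/q^{1-m} = q+1$. At the root (level $1$), the edge to level $0$ contributes $|G|/|G_e| = 1$; the remaining $q$ neighbors have level $2$.

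For levels $m \geq 2$, part (2) says $U$ acts freely and each orbit has a unique representative with branch point $\geq 1$. These representatives are exactly the vertices $\begin{bmatrix} T^m & x \\ 0 & 1 \end{bmatrix}K$ with $x = a_1T+\dots+a_{m-1}T^{m-1}$. Together with the level-$1$ vertex $\begin{bmatrix} T & 0 \\ 0 & 1\end{bmatrix}K$, they form the subtree of $\mathcal{T}$ consisting of the $q$ branches hanging above that vertex away from $-\infty$. We need to show this subtree maps isomorphically onto its image, with all groups trivial. Freeness gives trivial groups. The representatives are unique, and the tree is connected upward since the parent of a branch-point-$\geq1$ vertex again has branch point $\geq 1$; together these show the map is bijective on vertices and on edges. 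So the level-$\geq 1$ part is a rooted tree with trivial groups, i.e.\ a funnel in the sense of Definition~\ref{defn_funnel}. Its root is the level-$1$ vertex, where every vertex after the root has $q$ children.

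Finally, we cut along the edge between level $1$ and level $0$. Its edge group is $Z_{-1}=0$, which is trivial, matching the statement. The piece at levels $\le 0$ is the cusp, and the piece at levels $\geq 1$ is the funnel, glued along this prong. The main subtlety is the bookkeeping at the junction vertex, level $1$. We must check that its trivial stabilizer is consistent with both descriptions. We must also check that no further edge orbits arise, in particular none joining a level-$\geq2$ orbit to a level-$\leq 1$ vertex other than through the level-$1$ vertex. This follows because edges only change level by one, and because every level-$2$ representative with branch point $\ge1$ has as its parent the level-$1$ representative itself.
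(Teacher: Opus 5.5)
Your argument is correct and follows the paper's proof: take as fundamental domain the standard-geodesic points of level $\le 1$ together with the points of branch point $\ge 1$, and read off the vertex and edge stabilizers from Lemma~\ref{lem:horo_quotient}; your explicit checks of edge orbits, edge groups and degrees fill in what the paper calls immediate. One bookkeeping slip: what you derive is the ray $Z_0 \hookrightarrow Z_1 \hookrightarrow Z_2 \hookrightarrow \cdots$ rooted at level $0$, as in Figure~\ref{fig_cylinders}, so ``rooted at level $1$'' in your second paragraph should read ``rooted at level $0$'' (matching your final cut), and the subscripts in the displayed statement should be read as $0,1,2,\dots$, since by the paper's definition $Z_{-1}=0$ and $Z_{-2}$ is undefined.
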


\begin{proof}
    By the previous lemma, we can take as our fundamental domain in the Bruhat–Tits tree the points $\begin{bmatrix}
        T^{-i} & 0 \\
        0 & 1
    \end{bmatrix} K$ with $i\in\mathbb N$ together with all points with branch point at least $1$ (which then necessarily have level at least $1$; the unique such point with level exactly $1$ is the point $\begin{bmatrix}
        T & 0 \\
        0 & 1
    \end{bmatrix} K$). The data of the stabilizer groups follow immediately from Lemma~\ref{lem:horo_quotient}. See Figure \ref{fig_cylinders}.
\end{proof}
Analogously to the setting of the hyperbolic plane, we can cut our parabolic cylinder along the edge connecting vertices at levels $0$ and $1$ and obtain a cusp (and a funnel). It is also clear that cusps have finite volume despite being non-compact.

\subsection{Arithmetic lattices} \label{sec_arithmetic_lattice}

A lattice in a topological group is a discrete subgroup with co-finite volume. If we take our group to be $\textnormal{Aut}(\mathcal{T})$ for some locally finite tree $\mathcal{T}$, then a discrete subgroup $\Gamma < \textnormal{Aut}(\mathcal{T})$ is a lattice if and only if $\Gamma \backslash \backslash \mathcal{T}$ has finite volume. However, as discussed in Bass-Lubotzky \cite{bass_lubotzky}, lattices in $\textnormal{Aut}(\mathcal{T})$ are not necessarily geometrically finite, even if one assumes that $\mathcal{T}$ is regular. The corresponding quotient graphs of groups may have infinitely many ends, and the underlying graph may have fundamental group which is a free group on infinitely many generators. This is in stark contrast to the setting of lattices in $\textnormal{SL}(2, \mathbb{R})$ which are always geometrically finite.

On the other hand, if one restricts to the setting of lattices in $\textnormal{PGL}(2, F)$ (or more generally a semisimple algebraic group over $F$ with $F$-rank one, which in turn has an associated Bruhat-Tits tree which is either a regular or biregular tree), then any lattice $\Gamma$ in such a setting is necessarily geometrically finite, and the quotient has finitely many cusps (possibly none) and no funnels. See Theorem 6.1 of Lubotzky \cite{Lubotzky}. As remarked before, in characteristic zero, all lattices are in fact cocompact, and thus in particular the corresponding quotients have no cusps. Cusps are genuinely a positive characteristic phenomenon, as we saw in the previous section.

A very illustrative and in many ways prototypical example of a non-cocompact lattice in $\textnormal{PGL}(2, \mathbb{F}_q((T)))$ is given by $\Gamma = \textnormal{PGL}(2, \mathbb{F}_q[T^{-1}])$. Nagao's theorem \cite{nagao} tells us the structure of $\Gamma \backslash \backslash \mathcal{T}$. The underlying graph is a path graph, with nodes labelled by $\mathbb{Z}_{\geq 0}$. We have $G_0 = \textnormal{PGL}(2,\mathbb{F}_q)$ and for $j \geq 1$,
\begin{gather*}
    G_j = \begin{bmatrix}
        \mathbb{F}_q^\times & a_0 + a_1 T^{-1} + \dots + a_j T^{-j} \\
        0 & 1
    \end{bmatrix}.
\end{gather*}
The edge groups are
\begin{gather*}
    G_{\{0, 1\}} = \begin{bmatrix}
        \mathbb{F}_q^\times & \mathbb{F}_q \\
        0 & 1
    \end{bmatrix},
\end{gather*}
with the obvious inclusions into $G_0$ and $G_1$, and for $j \geq 1$, $G_{\{j, j+1\}} = G_j$ with the obvious inclusions into $G_j$ and $G_{j+1}$. See Serre \cite{serre_trees} Chapter II Section 1.6. In particular, we remark that, if we cut along the edge $\{0, 1\}$, the graph of groups with boundary consisting of vertices $j \geq 1$ is a cusp. This is very similar to $\textnormal{SL}(2, \mathbb{Z}) \backslash \mathbb{H}$ which also consists of a single cusp. See Figure \ref{fig_nagao}.

\begin{figure}[ht]
    \centering
    \includegraphics[width=1.0\textwidth]{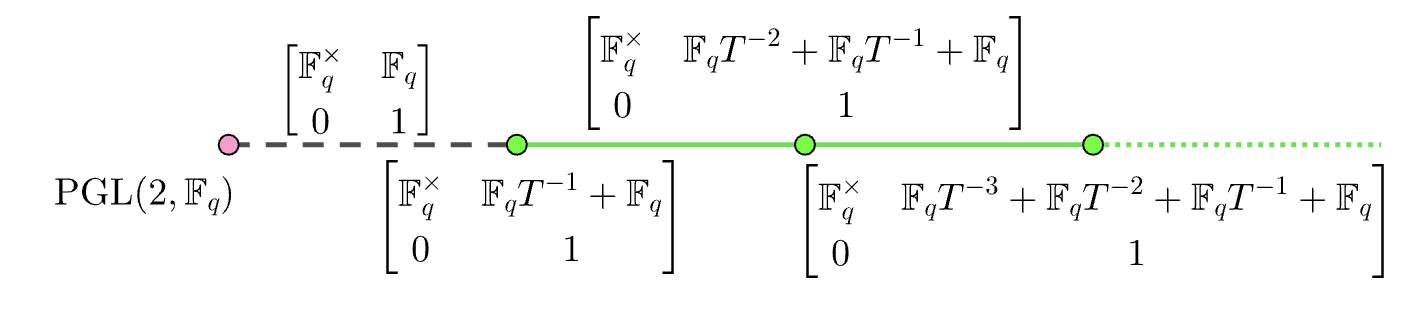}
    

    \caption{Illustration of Nagao's theorem. This is the graph of groups corresponding to the quotient of the Bruhat-Tits tree of $\textnormal{PGL}(2, \mathbb{F}_q((T)))$ by the lattice $\textnormal{PGL}(2, \mathbb{F}_q[T^{-1}])$.}
    \label{fig_nagao}
\end{figure}

We now briefly discuss the general construction of arithmetic lattices in $\textnormal{PGL}(2, \mathbb{F}_q((T)))$. This discussion is based on Serre \cite{serre_trees} Chapter II Section 2. Let $C$ be a smooth projective algebraic curve over a finite field $k_0$. Let $K$ be the function field of $C$, and let $P$ denote a closed point of $C$. Let $K_{P}$ be the completion of $K$ with respect to the valuation associated to $P$ and $\mathcal{O}_P$ the corresponding ring of integers; abstractly, $K_{P}$ is isomorphic to $k((T))$ where $k$ is the residue field at $P$ (a finite extension of $k_0$). Let $C^{\textnormal{aff}} = C \setminus \{P\}$, which is necessarily an affine curve having $P$ as its unique point at infinity. The ring of regular functions $R$ on $C^{\textnormal{aff}}$ is a Dedekind domain.

The subgroup $\Gamma = \textnormal{PGL}(2, R)$ is a lattice in $G = \textnormal{PGL}(2, K_P)$. If we take $C = \mathbb{P}^1(\mathbb{F}_q)$ and $P = \{0\}$, then $\Gamma = \textnormal{PGL}(2, \mathbb{F}_q[T^{-1}])$ which was discussed above. The vertices of the quotient of the Bruhat-Tits tree by $\Gamma$, i.e.\@ elements in
\begin{gather}
    \textnormal{PGL}(2, R) \backslash \textnormal{PGL}(2, K_P) / \textnormal{PGL}(2, \mathcal{O}_P), \label{eqn_function_field_quotient}
\end{gather}
have a geometric interpretation as rank-2 vector bundles on $C$ which trivialize on $C^\textnormal{aff}$ (modulo tensoring with a line bundle $L$ corresponding to a divisor of the form $\ell [P]$ with $\ell \in \mathbb{Z}$). We wish to explain this, at least on a heuristic level. Roughly speaking, rank-2 vector bundles on $C$ which trivialize on $C^\textnormal{aff}$ are constructed as follows. We start with the trivial bundle on $C^\textnormal{aff}$ and the trivial bundle on some arbitrarily small neighborhood $U$ of $P$. To construct a global vector bundle, we must specify the transition data on the intersection of $C^\textnormal{aff}$ and $U$, which is an arbitrarily small punctured disk centered at $P$. Such data is given by an element in $\textnormal{GL}(2, K_P)$ (note that $K_P$ is exactly those functions defined on an arbitrarily small punctured disk). However, if we pre-compose with an automorphism of the trivial bundle on $C^{\textnormal{aff}}$ (i.e.\@ an element in $\textnormal{GL}(2, R)$; $R$ consists of regular functions on $C^\textnormal{aff}$), or post-compose with an automorphism of the trivial bundle on $U$ (i.e.\@ an element of $\textnormal{GL}(2, \mathcal{O}_P)$; $\mathcal{O}_P$ is regular functions on an arbitrarily small neighborhood of $P$), then we get the same bundle. Therefore, such bundles are parametrized by elements in the double coset $\textnormal{GL}(2, R) \backslash \textnormal{GL}(2, K_P) / \textnormal{GL}(2, \mathcal{O}_P)$. The passage from $\textnormal{GL}(2)$ to $\textnormal{PGL}(2)$ amounts to quotienting by the aforementioned equivalence relation, i.e.\@ tensoring with a line bundle $L$ corresponding to a divisor of the form $\ell[P]$. At the level of transition maps, tensoring with such a line bundle corresponds to multiplying the transition map by a meromorphic section of $L$ which is trivial on $C^{\textnormal{aff}}$, i.e.\@ a diagonal matrix in $\textnormal{GL}(2, K_P)$.

The structure of the cusps carries geometric information in the language of vector bundles. Let $\textnormal{Pic}(R)$ denote the Picard group of $R$, i.e.\@ the group of isomorphism classes of line bundles on $C^{\textnormal{aff}}$, with group operation given by tensor product (or equivalently the class group of the Dedekind domain $R$). In the case at hand, it turns out that $\textnormal{Pic}(R)$ is finite. Let $c \in \textnormal{Pic}(R)$. We can then form an associated line bundle $L_c$ on $C$. Let $L_P$ be the line bundle corresponding to the divisor $[P]$. Let $L_P^n$ be the $n$th tensor power of $L_P$. Any rank-2 vector bundle of the form $L_c \oplus (L_P^n \otimes L_c^{-1})$ is trivial on $C^{\textnormal{aff}}$. Furthermore, such vector bundles all define distinct elements in \eqref{eqn_function_field_quotient} for $n \geq 1$. Additionally, the corresponding subgraph of groups for $n \geq m_c$ sufficiently large defines a cusp. As we vary over $c \in \textnormal{Pic}(R)$, we obtain distinct cusps, and all cusps arise in this way.

The spectral theory of the adjacency operator on such arithmetic quotients is closely related to the study of automorphic forms for $\textnormal{PGL}(2, K)$. In the setting of arithmetic hyperbolic surfaces (or more generally arithmetic locally symmetric spaces associated to $\textnormal{SL}(2, F)$, where $F$ is a number field, such as Hilbert modular varieties or Bianchi manifolds), there is a close relationship between resonances and the zeta function of $F$. This link is expressed in the language of Eisenstein series where it essentially corresponds to the Langlands-Shahidi method. In Section \ref{sec_arithmetic2}, we compute explicitly the resonances for several arithmetic examples as described in this section, and we observe the same phenomenon. We discuss this connection further in that section.

\section{Resolvent on model spaces} \label{sec_model_resolvent}
In order to prove Theorem~\ref{thm:merom_resolvent} we follow the approach of \cite{guillope_zworski_95} (see also \cite{Bor16}) which consists of first proving exact meromorphic resolvents on the model geometries, i.e.\@ on cusps and funnels. We use this to then construct a global parametrix in Section \ref{sec_geometrically_finite_resolvent}.

We begin by recalling what it means for a family of operators to be meromorphic.
\begin{definition}
      Let $X , Y$ be Banach spaces. A family of operators
      \begin{equation*}
        \mathbb{C} \supseteq \Omega \ni z \mapsto A(z) \in \mathcal{L}(X,Y) 
      \end{equation*}      
      is called \emph{meromorphic} if for every $z_0 \in \Omega$ there exist $J \in \mathbb{N}_0,\ \varepsilon > 0$ and operators $A_j \in \mathcal{L}(X,Y)$ such that
      \begin{equation*}
        A(z) = \sum_{
          j = - J
        }^{\infty}A_j(z - z_0)^{j} \quad \text{for } z \in B(z_0, \varepsilon)\setminus\{z_0\}, 
      \end{equation*}
      where the series converges in the operator norm. Moreover, we call $A$ \emph{finitely meromorphic} if $A_{- J}, \ldots , A_{- 1}$ are finite-rank operators and \emph{holomorphic} if $A_{- J}= \ldots = A_{- 1} = 0$.
    \end{definition}

\subsection{Resolvent on a funnel}
Funnels are not actually graphs of groups, but rather graphs of groups with boundary. Because of this, it will be most convenient for us to study the resolvent first on the $(q+1)$-regular tree where the analysis is straightforward, and then deduce from this properties for the funnel (which naturally arises as a subgraph of the tree). Handling orbifold funnels is also straightforward because these arise as subgraphs of groups of a quotient of the tree by a finite group.

\subsubsection{Resolvent on the $(q+1)$-regular tree} \label{sec_resolvent_tree}

Let $\mathcal{T} =(\mathcal{V}, \mathcal{E})$ denote a $(q + 1)$-regular tree. Then the $\ell^2$-spectrum of $A$ is given by $[- 2 \sqrt{q}, 2 \sqrt{q}]$. Thus, the resolvent
\begin{equation*}
  R_{\mathcal{T}}(\mu) \coloneqq \left(\frac{A}{2 \sqrt{q}} - z(\mu)\right)^{-1} \colon \ell^2(\mathcal{V}) \rightarrow \ell^2(\mathcal{V}), \quad z(\mu) \coloneqq \frac{\mu + \mu^{-1}}{2},
\end{equation*}
exists for $\lvert z(\mu) \rvert > 1$. In that region we denote by
\begin{equation*}
  K_\mu \colon \mathcal{V} \times \mathcal{V} \rightarrow \mathbb{C}, \quad K_{\mu }(x,y) \coloneqq  (R_{\mathcal{T}}(\mu)\delta_y)(x), \qquad \delta_y(x) =
  \begin{cases}
    1    & \colon x = y,\\
    0    & \colon x \neq y,
  \end{cases},
\end{equation*}
the kernel of $R_{\mathcal{T}}(\mu)$ such that $R_{\mathcal{T}}(\mu)f(x) = \sum_{
  y \in \mathcal{V}
}K_{\mu}(x,y)f(y)$ for all $f \in \ell^2(\mathcal{V})$. Since $A$ is invariant under the action of the automorphism group $\mathrm{Aut}(\mathcal{T})$ of the tree we have $R_{\mathcal{T}}(\mu) \circ g = g \circ R_{\mathcal{T}}(\mu)$ for all $g \in \mathrm{Aut}(\mathcal{T})$. In terms of the kernel this means that $K _{\mu}(g x, g y) = K_{\mu}(x,y)$ and since $\mathrm{Aut}(\mathcal{T})$ acts transitively on paths of the same length, $K_{\mu}(x,y) \eqqcolon k_{\mu}(d(x,y))$ only depends on the distance $d(x,y)$ between $x$ and $y$. We can now compute the kernel explicitly. Note that
\begin{align*}
    [R_{\mathcal{T}}(\mu) \delta_y](x) &= \sum_{z \in \mathcal{V}} k_\mu(d(x, z)) \delta_y(z) = k_\mu(d(x, y)), \\
    \Big[\frac{A}{2 \sqrt{q}} R_{\mathcal{T}}(\mu) \delta_y \Big](x) &= \frac{1}{2 \sqrt{q}} \begin{cases}
        k_\mu(d(x, y) - 1) + q k_\mu(d(x, y) + 1) & x \neq y \\
        (q+1) k_\mu(1) & x = y.
    \end{cases}
\end{align*}
Therefore the identity
\begin{gather*}
    \Big( z(\mu) R_{\mathcal{T}}(\mu) + I \Big) \delta_y = \Big( \frac{A}{2 \sqrt{q}} R_{\mathcal{T}}(\mu) \Big) \delta_y,
\end{gather*}
results in the following equations:
\begin{enumerate}[(i)]
\item \label{enumerate:cqzb9zuq8f} $z(\mu)k_\mu(0) + 1 = \frac{q + 1}{2 \sqrt{q}}k_\mu(1)$,
\item \label{enumerate:cqzbugtuoq} $z(\mu)k_\mu(d) = \frac{1}{2 \sqrt{q}}(k_\mu(d - 1) + q k_\mu(d + 1))$ for $d>0$.
\end{enumerate}
This is an order 2 linear recurrence relation with characteristic polynomial
\begin{gather*}
    q \lambda^2 - (\mu + \mu^{-1}) \sqrt{q} \lambda + 1 = (\sqrt{q} \lambda - \mu)(\sqrt{q} \lambda - \mu^{-1}),
\end{gather*}
so the general solution (for $d > 0$) is given by $C_1 (\frac{\mu}{\sqrt{q}})^d + C_2 (\frac{\mu^{-1}}{\sqrt{q}})^d$. However, our choice of parametrization implies that the resolvent defines a bounded operator on $\ell^2(\mathcal{V})$ for $|z(\mu)| > 1$, which in turn forces that $C_1 = 0$. From \eqref{enumerate:cqzb9zuq8f} we then infer that
\begin{gather*}
    C_2 = -\frac{2}{\mu - q^{-1} \mu^{-1}}.
\end{gather*}
Therefore,
\begin{equation}\label{eq:cq4enzzhjz}
  [R_{\mathcal{T}}(\mu)f](x) := -\frac{2}{\mu - q^{-1} \mu^{-1}} \sum_{
    y \in \mathcal{V}
  }\left(\frac{1}{\mu \sqrt{q}}\right)^{d(x,y)}f(y)
\end{equation}
provides a meromorphic extension of the resolvent to $\mu \in \mathbb{C}^\times \setminus \{\pm \frac{1}{\sqrt{q}}\}$ as a family of operators from $C_c(\mathcal{V})$ to $C(\mathcal{V})$.

\subsubsection{Convergence of the resolvent on weighted $\ell^2$-spaces}
In this section we investigate the convergence of the resolvent on the weighted $\ell^2$-spaces. For this we fix a base point $o \in \mathcal{V}$ and define, for $N \in \mathbb{N}_0$, the functions
\begin{equation*}
  q^{N d(o,\cdot)} \colon \mathcal{V} \rightarrow \mathbb{R} , \quad q^{N d(o,\cdot)}(v) \coloneqq q^{N d(o,v)}.
\end{equation*}
We will prove the following

\begin{theorem}\label{theorem:cqzdnhg2al}
  For every $N \geq 0$ the resolvent $R_{\mathcal{T}}(\mu)$ is finitely meromorphic as a family of operators
  \begin{equation*}
    R_{\mathcal{T}}(\mu) \colon q^{- N d(o, \cdot)}\ell^2(\mathcal{V}) \rightarrow q^{N d(o,\cdot)}\ell^2(\mathcal{V})
  \end{equation*}
  for $\lvert \mu \rvert > q^{- N}$.
\end{theorem}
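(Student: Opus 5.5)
The plan is to write the explicit kernel \eqref{eq:cq4enzzhjz} as a power series in $a(\mu) := \frac{1}{\mu\sqrt q}$ whose coefficients are operators independent of $\mu$. For $d \in \mathbb{N}_0$ let $S_d$ denote the sphere operator with kernel $\mathbbm{1}_{d(x,y)=d}$. Then, on $C_c(\mathcal V)$,
\begin{gather*}
  R_{\mathcal T}(\mu) = c(\mu) \sum_{d \geq 0} a(\mu)^d S_d, \qquad c(\mu) = -\frac{2}{\mu - q^{-1}\mu^{-1}}.
\end{gather*}
The goal is to show that this series converges in operator norm $q^{-Nd(o,\cdot)}\ell^2(\mathcal V) \to q^{Nd(o,\cdot)}\ell^2(\mathcal V)$, locally uniformly on $|\mu| > q^{-N}$. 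That makes $\mu \mapsto \sum_d a(\mu)^d S_d$ holomorphic there, and it is the limit of norm-holomorphic partial sums. The only remaining poles then come from the scalar prefactor $c(\mu)$, which has simple poles at $\mu = \pm q^{-1/2}$.

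\emph{Step 1: unweighted norm of $S_d$.} Since $S_1 = A$, the tree recursion gives $S_1 S_d = S_{d+1} + q S_{d-1}$ for $d \geq 2$ and $S_1^2 = S_2 + (q+1)I$. Hence $S_d$ is a polynomial in $A$; explicitly, for $d \geq 2$,
\begin{gather*}
  S_d = q^{d/2}\Big(U_d\big(\tfrac{A}{2\sqrt q}\big) - q^{-1} U_{d-2}\big(\tfrac{A}{2\sqrt q}\big)\Big),
\end{gather*}
where $U_d$ are the Chebyshev polynomials of the second kind. The $\ell^2$-spectrum of $\frac{A}{2\sqrt q}$ is $[-1,1]$, and $|U_d| \leq d+1$ on $[-1,1]$. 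By the spectral theorem, $\|S_d\|_{\ell^2 \to \ell^2} \leq 2(d+1)q^{d/2}$. I expect this step to be the crux; the alternative would be a direct Schur test with spherical weights, but the Chebyshev identity is cleaner.

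\emph{Step 2: weighted norm of $S_d$.} Conjugating by the weights, the operator $q^{-Nd(o,\cdot)} S_d\, q^{-Nd(o,\cdot)}$ on $\ell^2$ has kernel $q^{-N(d(o,x)+d(o,y))}\mathbbm{1}_{d(x,y)=d}$. By the triangle inequality $d(o,x)+d(o,y) \geq d$, so this kernel is nonnegative and entrywise dominated by $q^{-Nd}\mathbbm{1}_{d(x,y)=d}$. For nonnegative kernels, entrywise domination implies domination of operator norms (test against $|f|$). Therefore
\begin{gather*}
  \|S_d\|_{q^{-Nd(o,\cdot)}\ell^2 \to q^{Nd(o,\cdot)}\ell^2} \leq 2(d+1)\, q^{d(1/2 - N)}.
\end{gather*}

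\emph{Step 3: convergence.} Combining Steps 1 and 2,
\begin{gather*}
  \sum_{d \geq 0} |a(\mu)|^d \, \|S_d\| \leq \sum_{d \geq 0} 2(d+1)\big(|\mu| q^N\big)^{-d},
\end{gather*}
which converges locally uniformly exactly when $|\mu| > q^{-N}$. This gives the claimed holomorphy of $\sum_d a(\mu)^d S_d$ on that region. It also shows that $R_{\mathcal T}(\mu)$ is meromorphic there, with possible poles only at $\mu = \pm q^{-1/2}$, and these lie in the region only when $N > \tfrac12$.

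\emph{Step 4: finite rank of the residues.} Since $c(\mu)$ has simple poles, each residue at $\mu = \pm q^{-1/2}$ is a scalar multiple of $\sum_d a(\mu_0)^d S_d$ at $\mu_0 = \pm q^{-1/2}$. At $\mu_0 = q^{-1/2}$ we have $a(\mu_0) = 1$, so the residue is a multiple of $f \mapsto \big(\sum_y f(y)\big)\mathbbm 1$, which has rank one. At $\mu_0 = -q^{-1/2}$ we have $a(\mu_0) = -1$, and bipartiteness gives $(-1)^{d(x,y)} = (-1)^{d(o,x)}(-1)^{d(o,y)}$. The residue is then a multiple of $f \mapsto \langle f, (-1)^{d(o,\cdot)} \rangle\, (-1)^{d(o,\cdot)}$, again rank one. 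Hence the meromorphic family is finitely meromorphic, which completes the argument.
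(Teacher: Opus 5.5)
Your proof is correct. Its architecture matches the paper's: you expand the explicit kernel \eqref{eq:cq4enzzhjz} in sphere operators (your $S_d$ is $(q+1)q^{d-1}L_d$ in the paper's normalization), bound each sphere operator between the weighted spaces, sum in $d$, and read off the rank-one residues at $\pm q^{-1/2}$ from the explicit formula, as the paper does.

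The difference is the key estimate on the sphere operators. The paper decomposes $f$ into shells $\chi_k f$ around $o$ and imports the shell-to-shell bound $\|\chi_m L_d f_k\|^2 \le \frac{q^{1-d}}{q+1}\|f_k\|^2$ from Lemma 6.1 of \cite{FTN91}. It then applies Cauchy--Schwarz over the $\min(m,d)+1$ contributing shells and sums over $m$ with the weights, arriving at $\|S_d\|_{q^{-Nd(o,\cdot)}\ell^2\to q^{Nd(o,\cdot)}\ell^2}\lesssim_N \sqrt{d}\,q^{d(1/2-N)}$.

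You split the problem into two easier pieces. The first is an unweighted bound, which you get from the Chebyshev representation of $S_d$ as a polynomial in $A$ together with the spectral theorem. The second is a one-line weight reduction, using $d(o,x)+d(o,y)\ge d(x,y)$ and positivity of the kernel.

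Your route is self-contained, needing no external lemma, and it shows directly that the weights contribute exactly the factor $q^{-Nd}$. The paper's shell estimate carries finer information about how $L_d$ moves mass between spheres around $o$, and it gives the polynomial factor $\sqrt{d}$ rather than your $d+1$. That polynomial loss does not matter, since both bounds give convergence exactly for $|\mu|>q^{-N}$.
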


We first define, for $d > 0$ and $f \in C(\mathcal{V})$, the spherical averaging operators
\begin{equation*}
  L_d f(x) \coloneqq(q + 1)^{-1} q^{- d + 1} \sum_{
    \substack{
      y \in \mathcal{V}\\
      d(x,y) = d
    }}f(y).
\end{equation*}
Thus, \eqref{eq:cq4enzzhjz} can be rewritten as
\begin{equation}\label{eq:cqzdm5lqhj}
  R_{\mathcal{T}}(\mu)f(x) = -\frac{2}{\mu - q^{-1} \mu^{-1}} \left(f(x) + \sum_{
      d = 1
    }^\infty \left(\frac{1}{\mu \sqrt{q}}\right)^{d}(q + 1) q^{d - 1} L_d f(x)\right)
\end{equation}
which converges for $\mu \in \mathbb{C}^\times \backslash \{\pm \frac{1}{\sqrt{q}}\}$ and $f \in C_c(\mathcal{V})$ by the previous section. We now investigate the convergence of this series for functions in weighted $\ell^2$-spaces.
\begin{lemma}\label{lemma:cqzdm5wzbx}
  For each $f \in q^{- N d(o, \cdot)} \ell^2(\mathcal{V})$ we have
  \begin{equation*}
    \lVert  \chi_m(L_d f)\rVert_{\ell^2}^2 \leq \frac{q^{1 - d}}{q + 1}q^{- 2N(m + d)} \frac{q^{4N(\min(m,d) + 1)} - 1}{q^{4N} - 1} \lVert f \rVert_{q^{- N d(o, \cdot)}\ell^2}^2,
  \end{equation*}
  where $\chi_m$ denotes the characteristic function of $S(o,m) \coloneqq \left\{ x \in \mathcal{V} \mid d(o,x) = m \right\}$.
\end{lemma}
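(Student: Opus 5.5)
The plan is to bound $L_d f(x)$ pointwise for $x\in S(o,m)$ by splitting the sphere $S(x,d)$ according to the distance of its points from $o$. Then apply Cauchy--Schwarz twice, and finally sum over $x$ using a counting argument in the tree. Here $\mathcal{S}\equiv 1$, so all norms are unweighted sums over vertices. Write $c_d=(q+1)^{-1}q^{1-d}$, so that $L_df(x)=c_d\sum_{y\in S(x,d)}f(y)$.

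\emph{Step 1: decomposition.} For $x\in S(o,m)$ and $y\in S(x,d)$, the geodesic from $x$ to $y$ first moves $j$ steps towards $o$ and then $d-j$ steps away from it, with $0\le j\le \min(m,d)$. Hence $d(o,y)=m+d-2j$. Let $C_j(x)$ denote the set of such $y$ and $n_j(x)=|C_j(x)|$. Note that $q^{-2N(m+d-2j)}=q^{-2N(m+d)}q^{4Nj}$, so that
\[
\sum_{j=0}^{\min(m,d)} q^{-2N(m+d-2j)}=q^{-2N(m+d)}\frac{q^{4N(\min(m,d)+1)}-1}{q^{4N}-1},
\]
which is exactly the factor appearing in the claimed bound.

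\emph{Step 2: pointwise bound.} First apply Cauchy--Schwarz in $j$ with weights $q^{\mp N(m+d-2j)}$. Then apply Cauchy--Schwarz inside each class $C_j(x)$. This gives
\[
|L_df(x)|^2\le c_d^2\Big(\sum_j q^{-2N(m+d-2j)}\Big)\sum_j n_j(x)\sum_{y\in C_j(x)} q^{2N d(o,y)}|f(y)|^2 .
\]

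\emph{Step 3: summing over $x$.} Sum over $x\in S(o,m)$ and exchange the order of summation. A fixed $y$ with $d(o,y)=m+d-2j$ lies in $C_j(x)$ for exactly $M_j(y)$ vertices $x\in S(o,m)$. These are the $x$ obtained from $y$ by going $d-j$ steps towards $o$ and then $j$ steps away without backtracking. Moreover, for a given $y$ the index $j$ is determined by $d(o,y)$. So it suffices to prove the counting estimate
\[
n_j\,M_j\le (q+1)q^{d-1}=c_d^{-1}.
\]
Granting this, we obtain $\|\chi_m L_df\|^2\le c_d\bigl(\sum_j q^{-2N(m+d-2j)}\bigr)\|f\|^2_{q^{-Nd(o,\cdot)}\ell^2}$, which is the claim.

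\emph{Step 4: the counting estimate.} This is the main, though elementary, obstacle; it is a case analysis.
\begin{itemize}
\item Generic case $0<j<d$ with $j<m$ and $d-j<m+d-2j$: $n_j=(q-1)q^{d-j-1}$ and $M_j=(q-1)q^{j-1}$, so the product is below $q^d$.
\item $j=0$: $n_0=q^d$ and $M_0=1$.
\item $j=d$: $n_d=1$ and $M_d\le q^d$.
\item $j=m$, where the turning point is $o$ itself: the descent from $o$ has $q$ admissible first steps (avoiding the branch towards $x$) rather than $q-1$, so $n_m\le q^{d-m}$. Symmetrically, $M_m$ gains the same kind of boundary effect when the upward path from $y$ reaches $o$; it is still at most $q^{m}$.
\end{itemize}
In every case $n_jM_j\le q^d\le (q+1)q^{d-1}$. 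I would verify these cases carefully, in particular the degenerate ones where $m=0$ or the turning point is $o$, since those are where the branching factor $q+1$ at $o$ enters instead of $q$ or $q-1$.
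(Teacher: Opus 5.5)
Your argument is correct. It has the same skeleton as the paper's proof: split according to the shells $S(o,m+d-2j)$, $0\le j\le \min(m,d)$, then apply Cauchy--Schwarz in $j$ against the weights $q^{\mp N(m+d-2j)}$. The difference is that you prove the key ingredient yourself rather than citing it.

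The paper writes $f=\sum_k \chi_k f$ and quotes Lemma 6.1 of \cite{FTN91}. That lemma supplies both the support restriction and the shell-to-shell bound $\|\chi_m L_d(\chi_k f)\|_{\ell^2}^2\le \frac{q^{1-d}}{q+1}\|\chi_k f\|_{\ell^2}^2$. The paper then combines the shells by the triangle inequality in $\ell^2$, followed by Cauchy--Schwarz.

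You instead apply Cauchy--Schwarz pointwise and reduce everything to $n_jM_j\le (q+1)q^{d-1}$. This is exactly the Schur test for the kernel $c_d\mathbbm{1}_{\{d(x,y)=d\}}$ from $S(o,m+d-2j)$ to $S(o,m)$, with row sums $c_dn_j$ and column sums $c_dM_j$. So you are in effect reproving the cited bound by double counting. Both routes give the same constant; the paper's is shorter, yours is elementary and self-contained.

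One correction in Step 4: the assertion that $n_jM_j\le q^d$ in every case is false.
\begin{itemize}
\item If $m=0$, then $x=o$ and $n_0=|S(o,d)|=(q+1)q^{d-1}$, not $q^d$.
\item If $j=d=m$, then $y=o$ and $M_d=|S(o,d)|=(q+1)q^{d-1}$. This violates both your bound $M_d\le q^d$ and your bound $M_m\le q^m$.
\end{itemize}
In these two cases $n_jM_j=(q+1)q^{d-1}$ exactly. The inequality you actually need therefore still holds, with equality. These are also precisely the cases showing that the factor $\frac{q^{1-d}}{q+1}$ cannot be replaced by $q^{-d}$.

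The remaining cases are:
\begin{itemize}
\item $0<j<\min(m,d)$: $n_jM_j=(q-1)^2q^{d-2}$.
\item $j=0<m$, $j=d<m$, and $0<j=m<d$: $n_jM_j=q^d$.
\end{itemize}
So the proof goes through once these values are recorded correctly.
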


\begin{proof}
    We can write $f = \sum_k f_k$ with $f_k := \chi_k f$. Lemma 6.1 of \cite{FTN91} tells us that we can only have $\chi_m(L_d f_k) \neq 0$ if $d + k - m$ is even and $|d-k| \leq m \leq d + k$, i.e., if $k = m + d - 2j$ for some $0 \leq j \leq \min(m,d)$, and that, moreover,
  \begin{equation*}
    \lVert \chi_m(L_d f_k ) \rVert _{\ell^2}^2 \leq \frac{q^{1 - d}}{q + 1}\lVert f_k \rVert _{\ell^2}^2.
  \end{equation*}
  Thus, we calculate
  \begin{align*}
    \lVert & \chi_m(L_d f) \rVert_{\ell^2} \\
    &\leq \sum_{
                                           j = 0
                                     }^{\min(m,d)} \lVert \chi_m(L_d f_{m + d - 2j}) \rVert_{\ell^2} \leq q^{\frac{1 - d}{2}}(q + 1)^{- \frac{1}{2}}\sum_{
                                     j = 0
                                     }^{\min(m,d)} \lVert f_{m + d - 2j} \rVert_{\ell^2} \\
                                         &\leq q^{\frac{1 - d}{2}}(q + 1)^{- \frac{1}{2}}\sum_{
                                           j = 0
                                           }^{\min(m,d)} q^{-(m + d - 2j)N}\lVert (q^{N d(o, \cdot)}f)_{m + d - 2j} \rVert_{\ell^2}\\
                                         &\leq q^{\frac{1 - d}{2}}(q + 1)^{- \frac{1}{2}}\sqrt{\sum_{
                                           j = 0
                                           }^{\min(m,d)}q^{- 2(m + d - 2j)N}} \sqrt{\sum_{
                                           j = 0
                                           }^{\min(m,d)} \lVert(q^{N d(o, \cdot)}f)_{m + d - 2 j} \rVert_{\ell^2}^2} \\
                                         & \leq q^{\frac{1 - d}{2}}(q + 1)^{- \frac{1}{2}}q^{- N(m + d)} \sqrt{\frac{q^{4N(\min(m,d) + 1)} - 1}{q^{4N} - 1}}\sqrt{\sum_{
                                           j = 0
                                           }^{\min(m,d)} \lVert(q^{N d(o, \cdot)}f)_{m + d - 2 j} \rVert_{\ell^2}^2}.
  \end{align*}
  Since $\sum_{
    j = 0
  }^{\min(m,d)} \lVert(q^{N d(o, \cdot)}f)_{m + d - 2 j} \rVert_{\ell^2}^{2} \leq \lVert f \rVert_{q^{- N d(o,\cdot)}\ell^2 }^2$ we obtain the claim.
\end{proof}

\begin{proof}[Proof of Theorem~\ref{theorem:cqzdnhg2al}]
    Suppose $f \in q^{-Nd(o, \cdot)} \ell^2$ and $\mu \neq \pm \frac{1}{\sqrt{q}}$. We wish to show that $R_{\mathcal{T}}(\mu) f \in q^{N d(o, \cdot)} \ell^2$ if $|\mu| > q^{-N}$. Using Lemma \ref{lemma:cqzdm5wzbx}, we begin by observing that
    \begin{align*}
        \|L_d f\|^2_{q^{N d(o, \cdot)} \ell^2} &= \| q^{-N d(o, \cdot)} L_d f \|_{\ell^2}^2 = \sum_{m = 0}^\infty q^{-2 N m} \| \chi_m(L_d f) \|_{\ell^2}^2 \\
        & \lesssim_{N} q^{-d} q^{-2 N d} \| f \|^2_{q^{-Nd(o, \cdot)} \ell^2} \Big(\sum_{m = 0}^\infty q^{-4 N m} q^{4 N \textnormal{min}(m, d)} \Big)\\
        & \lesssim_N q^{-d} q^{-2 N d} \| f \|^2_{q^{-Nd(o, \cdot)} \ell^2} \Big( \sum_{m = 0}^d q^{-4N m} q^{4 N m} + \sum_{m = d+1}^\infty q^{-4Nm}q^{4 N d} \Big) \\
        & \lesssim_N q^{-d} q^{-2 N d} d \| f \|^2_{q^{-Nd(o, \cdot)} \ell^2}
    \end{align*}
    Therefore, using \eqref{eq:cqzdm5lqhj}, we have
    \begin{align*}
        \|R_{\mathcal{T}}(\mu) f \|_{q^{N d(o, \cdot)} \ell^2} & \lesssim_\mu \sum_{d = 0}^\infty \Big( \frac{1}{\mu \sqrt{q}} \Big)^d q^d \|L_d f \|_{q^{N d(o, \cdot)} \ell^2} \\
        & \lesssim_{\mu, N} \| f \|_{q^{-N d(o, \cdot)} \ell^2} \Big( \sum_{d = 0}^\infty \Big( \frac{1}{\mu q^N} \Big)^d \sqrt{d} \Big).
    \end{align*}
    This converges as long as $|\mu| > q^{-N}$. 

    Finally, we prove that it is finitely meromorphic. This follows immediately from the explicit formula \eqref{eq:cq4enzzhjz} because at each pole, $\mu = \pm \frac{1}{\sqrt{q}}$, the residue operator simply involves projecting onto the space of constants (in case $\mu = \frac{1}{\sqrt{q}}$), or the space of functions which take value $(-1)^{d(o, v)} c$ at vertex $v$ for some constant $c$ and choice of $o$ (in case $\mu = -\frac{1}{\sqrt{q}}$). Thus in both cases the residue operator is rank one.
  \end{proof}

  \begin{remark}
  We can also obtain the same result by using the Schur test for the radial kernel $k_\mu$ of the resolvent $R_{\mathcal{T}}(\mu)$. This approach will be used in the discussion of the resolvent on a cusp in Section~\ref{section:resolvent_on_q_cusp}. 
\end{remark}

\begin{remark} \label{rmk_orbifold_funnel}
    Suppose $H$ is a finite group which fixes some point $o$ of $\mathcal{T}$. Let $\mathcal{P} := H \backslash \backslash \mathcal{T}$, and let $R_\mathcal{P}(\mu)$ denote the resolvent on $\mathcal{P}$. The formula for the kernel function of $R_\mathcal{P}(\mu)$ is simply given by
    \begin{gather*}
        R_\mathcal{P}(\mu)(x, y) = \sum_{h \in H} R_{\mathcal{T}}(\mu)(x, h.y).
    \end{gather*}
    From this it is clear that Theorem \ref{theorem:cqzdnhg2al} also applies to $R_\mathcal{P}(\mu)$. In particular, given any orbifold funnel, we can find some such $\mathcal{P}$ containing this funnel as a subgraph of groups.
\end{remark}

\subsection{Resolvent on a cusp}\label{section:resolvent_on_q_cusp}
We now turn our attention to the resolvent on a cusp. We shall start by considering the resolvent of a certain operator which we can in some sense interpret as the adjacency operator on a graph, but we must yet again slightly extend the definition of graph. We start by considering the infinite path graph with vertices labeled by the integers. We then define a not-necessarily-integer-valued stabilizer function where vertex $k$ has $\mathcal{S}(k) = q^k$, and the edge $(i, i+1)$ has $\mathcal{S}(i, i+1) = q^{i}$. See Figure \ref{fig_idealized}. If we define the operator $A_{\mathcal{Z}}$ by the same formula as in \eqref{eq:average}, we see that $A_{\mathcal{Z}}$ simply acts by 
\begin{gather*}
    [A f](k) = q f(k-1) + f(k+1).
\end{gather*}
We also note that, using the same definition for degree as given in \eqref{eqn_degree} (replacing $|G_v|$ and $|G_e|$ by $\mathcal{S}(v)$ and $\mathcal{S}(e)$, respectively), the object $\mathcal{Z}$ can be thought of as a $(q+1)$-regular graph, and the proof of Proposition \ref{prop_self_adjoint} implies that $A_{\mathcal{Z}}$ is in fact bounded in $\ell^2(\mathcal{V})$ (where vertex $k \in \mathbb{Z}$ has mass $q^k$). We thus know that the resolvent must define a bounded operator on $\ell^2(\mathcal{V})$ for large $\mu$. We call $\mathcal{Z}$ the \textit{idealized parabolic cylinder}.

The object $\mathcal{Z}$ is a useful tool to study the resolvent on cusps. The notion of graph of groups with boundary naturally also leads to the notion of graph with stabilizer function with boundary by just remembering the data of the size of each vertex and edge group. If we cut $\mathcal{Z}$ along the edge connecting vertices 0 and 1, then the graph of groups with stabilizer function with boundary corresponding to the positive integers, which we denote $\mathcal{C}$, is a cusp in the sense that its stabilizer function matches that of a cusp. Any other $(q+1)$-regular cusp has the same stabilizer function up to scaling the stabilizer function by some positive integer.

\begin{figure}[ht]
    \centering
    \includegraphics[width=1.0\textwidth]{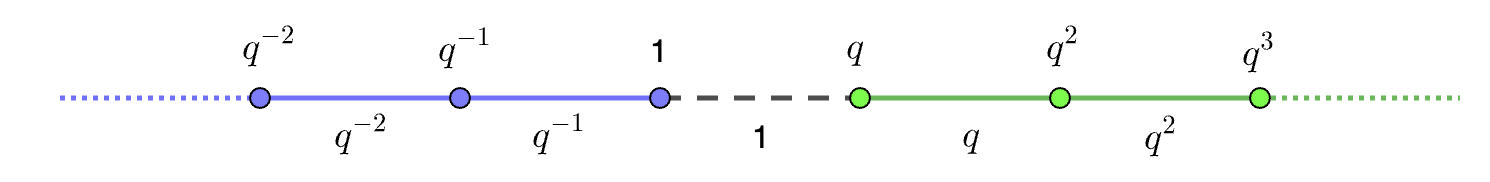}
    

    \caption{The idealized parabolic cylinder $\mathcal{Z}$. As a graph with stabilizer function, the green part is a cusp.}
    \label{fig_idealized}
\end{figure}

We wish to explicitly find the kernel function $K_{\mu}(\cdot, \cdot)$ for $R_\mathcal{Z}(\mu)$. It must satisfy the identity
\begin{gather*}
    [R_\mathcal{Z}(\mu) f](k_1) = \sum_{k_2 \in \mathbb{Z}} K_\mathcal{Z}(k_1, k_2) f(k_2) q^{-k_2}.
\end{gather*}
We have that
\begin{align*}
    [R_{\mathcal{Z}}(\mu) \delta_{k_2}](k_1) &= K_\mu(k_1, k_2) q^{-k_2} \\
    [A \delta_{k_2}](k_1) &= q \delta_{k_2+1}(k_1) + \delta_{k_2 - 1}(k_1) \\
    [A R_{\mathcal{Z}}(\mu) \delta_{k_2}](k_1) &= q^{-k_2} \Big( q K_\mu(k_1-1, k_2) + K_\mu(k_1 + 1, k_2) \Big) \\
    [R_{\mathcal{Z}}(\mu) A \delta_{k_2}](k_1) &= q^{-k_2} \Big(q K_\mu(k_1, k_2 - 1) + K_\mu(k_1, k_2 + 1) \Big).
\end{align*}
Therefore, the defining identities for the resolvent (as a left and right inverse to $\frac{A}{2 \sqrt{q}} - z(\mu)$) give the relations:
\begin{align*}
    q^{k_2} \delta_{k_2}(k_1) &= \frac{\sqrt{q}}{2} K_\mu(k_1 - 1, k_2) + \frac{1}{2 \sqrt{q}} K_\mu(k_1 + 1, k_2) - z(\mu) K_\mu(k_1, k_2) \\
    q^{k_2} \delta_{k_2}(k_1) &= \frac{\sqrt{q}}{2} K_\mu(k_1, k_2 - 1) + \frac{1}{2 \sqrt{q}} K_\mu(k_1, k_2 + 1) - z(\mu) K_\mu(k_1, k_2).
\end{align*}
If we were to fix $k_1$ and consider those $k_2 \geq k_1$, we see that we would get a linear recurrence relation with characteristic polynomial
\begin{gather*}
  \lambda^2 - 2 z(\mu) \sqrt{q} \lambda + q = (\lambda - \sqrt{q} \mu) \left(\lambda - \frac{\sqrt{q}}{\mu}\right).
\end{gather*}
The general solution would then be $C_1(k_1)(\sqrt{q} \mu)^{k_2} + C_2(k_1)\Big(\frac{\sqrt{q}}{\mu}\Big)^{k_2}$. However, we know that for $\mu$ large, $R_\mathcal{Z}(\mu)$ must define a bounded operator on $\ell^2(\mathcal{V})$, which in turn implies that we must have $C_1(k_1) = 0$. For example, if we were to take the function $f = \sum_{k = 0}^\infty \delta_k$, which is clearly in $\ell^2(\mathcal{V})$, then 
\begin{align*}
    [R_{\mathcal{Z}}(\mu) f](\ell) &= \sum_{m \in \mathbb{N}} K_\mu(\ell, m) q^{-m} \\
    &= \sum_{m = 0}^\ell K_\mu(\ell, m) q^{-m} + \sum_{m = \ell+1}^\infty \Big(C_1(\ell) (\sqrt{q} \mu)^m + C_2(\ell) \Big(\frac{\sqrt{q}}{\mu} \Big)^m \Big) q^{-m}
\end{align*}
which must be finite for $|\mu|$ large. 

We can therefore write $K_\mu(k_1, k_2) = C(k_1) \Big( \frac{\sqrt{q}}{\mu} \Big)^{k_2}$. On the other hand, if we fix $k_2$, and consider all $k_1 \leq k_2$, the corresponding sequence must satisfy the same recurrence relation, which is to say that $C(k_1) = C_1 (\sqrt{q} \mu)^{k_1} + C_2 (\frac{\sqrt{q}}{\mu})^{k_1}$. We claim that the boundedness condition on $R_{\mathcal{Z}}(\mu)$ forces $C_2 = 0$. For this we consider the sequence of elements $q^{n/2} \delta_n$ with $n \to -\infty$. These elements all have norm $1$. We have
\begin{gather*}
    [R_\mathcal{Z}(\mu) q^{n/2} \delta_n](n-1) = K_\mu(n-1, n) q^{n/2} q^{-n}.
\end{gather*}
Therefore $\|R_\mathcal{Z}(\mu) q^{n/2} \delta_n\| \geq |K_\mu(n-1, n)| q^{-n + \frac{1}{2}}$. When $|\mu|$ is large, we know that $\|R_\mathcal{Z}(\mu) q^{n/2} \delta_n\|$ must remain bounded as $n \to -\infty$, i.e.
\begin{gather*}
  \left(C_1 (\sqrt{q} \mu)^{n-1} + C_2 \left(\frac{\sqrt{q}}{\mu} \right)^{n-1}\right) \left(\frac{\sqrt{q}}{\mu} \right)^{n} q^{-n + \frac{1}{2}}
\end{gather*}
must be uniformly bounded in $n$. This is only possible if $C_2 = 0$. 

We can apply a similar analysis to the case that $k_1 \geq k_2$. We conclude that
\begin{gather*}
    K_\mu(k_1, k_2) = \begin{cases}
        C (\frac{\sqrt{q}}{\mu})^{k_1} (\sqrt{q} \mu)^{k_2} & k_1 \geq k_2 \\
        C (\frac{\sqrt{q}}{\mu})^{k_2} (\sqrt{q} \mu)^{k_1} & k_2 \geq k_1
    \end{cases} = C \Big(\frac{\sqrt{q}}{\mu} \Big)^{|k_1 - k_2|} q^{\textnormal{min}(k_1, k_2)}.
\end{gather*}
Finally, we can compute what $C$ is by setting $k_1 = k_2 = k$, which results in
\begin{gather*}
    q^k = \frac{\sqrt{q}}{2} C \frac{\sqrt{q}}{\mu} q^{k-1} + \frac{1}{2 \sqrt{q}} C \frac{\sqrt{q}}{\mu} q^k - z(\mu) C q^k,
\end{gather*}
and thus $C = -\frac{2}{\mu - \mu^{-1}}$. In summary we have proven
\begin{proposition}
    The formula
    \begin{gather}
        [R_{\mathcal{Z}}(\mu) f](k) = -\frac{2}{\mu - \mu^{-1}} \Bigg(\sum_{\ell < k} \Big( \frac{\sqrt{q}}{\mu} \Big)^{k-\ell} f(\ell) + \sum_{\ell \geq k} ( \sqrt{q} \mu )^{k-\ell} f(\ell) \Bigg) \label{eqn_resolvent_cusp}
    \end{gather}
    provides a finitely meromorphic continuation of the resolvent of $\frac{A}{2 \sqrt{q}}$ to $\mathbb{C}^\times \setminus \{\pm 1\}$ as a family of operators from $C_c(\mathcal{Z})$ to $C(\mathcal{Z})$. 
  \end{proposition}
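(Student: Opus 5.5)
To prove the proposition, I would define $R_{\mathcal Z}(\mu)$ by formula \eqref{eqn_resolvent_cusp} and check three things. First, it really inverts $\frac{A_{\mathcal Z}}{2\sqrt q}-z(\mu)$ on $C_c(\mathcal Z)$. Second, it agrees with the $\ell^2$-resolvent for $|\mu|$ large. Third, its only poles are at $\mu=\pm1$, and there it is finitely meromorphic.

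For $f\in C_c(\mathcal Z)$ both sums in \eqref{eqn_resolvent_cusp} are finite, since $f(\ell)=0$ outside a finite set. So for each fixed $k$ the expression $[R_{\mathcal Z}(\mu)f](k)$ is a Laurent polynomial in $\mu$ times the prefactor $-\frac{2}{\mu-\mu^{-1}}$. It is therefore holomorphic on $\mathbb C^\times\setminus\{\pm1\}$ as a $C(\mathcal Z)$-valued map, where $C(\mathcal Z)$ carries the topology of pointwise convergence.

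For the inverse identity, it suffices by linearity to take $f=\delta_m$. Then $R_{\mathcal Z}(\mu)\delta_m(k)=-\frac{2}{\mu-\mu^{-1}}u(k)$, where
\[
u(k)=\Big(\tfrac{\sqrt q}{\mu}\Big)^{k-m} \text{ for } k>m, \qquad u(k)=(\sqrt q\mu)^{k-m} \text{ for } k\le m .
\]
Away from $k=m$, each branch is a solution of the recurrence $\frac{\sqrt q}{2}u(k-1)+\frac{1}{2\sqrt q}u(k+1)=z(\mu)u(k)$, because $\frac{\sqrt q}{\mu}$ and $\sqrt q\mu$ are the roots of the characteristic polynomial computed above. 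At $k=m$ the two branches join, and a direct computation shows that the defect equals exactly $\frac{\sqrt q}{2}\mu^{-1}\cdot\frac{1}{\sqrt q}\ldots$, which collapses to $-\frac{\mu-\mu^{-1}}{2}$. This cancels the prefactor and gives $\bigl(\tfrac{A}{2\sqrt q}-z\bigr)R_{\mathcal Z}\delta_m=\delta_m$. The check is the same diagonal computation that determined $C$ in the kernel derivation, where the kernel normalisation $q^{-k_2}$ has been absorbed.

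The same local computation in the second variable gives $R_{\mathcal Z}(\tfrac{A}{2\sqrt q}-z)f=f$ for $f\in C_c$, since $A_{\mathcal Z}$ preserves $C_c$. Because the kernel was derived from the $\ell^2$-boundedness constraints, it coincides with the true $\ell^2$-resolvent for $|\mu|$ large. To make this airtight I would bound the operator directly with the Schur test, as the remark after Theorem~\ref{theorem:cqzdnhg2al} suggests. In the weighted measure $q^{-\ell}$ the kernel is $C(\sqrt q/\mu)^{|k-\ell|}q^{\min(k,\ell)}$. For $|\mu|>\sqrt q+1$ the Schur sums are geometric series and converge uniformly, so the operator is bounded on $\ell^2$ and inverts $\frac{A}{2\sqrt q}-z$ there. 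By uniqueness of the inverse it equals the resolvent.

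For finite meromorphy, the only singularities are the zeros of $\mu-\mu^{-1}$, namely $\mu=\pm1$, and these poles are simple. The residue at $\mu_0=\pm1$ is $\mp\,$ the map
\[
f\mapsto \sum_{\ell<k}(\pm\sqrt q)^{k-\ell}f(\ell)+\sum_{\ell\ge k}(\pm\sqrt q)^{k-\ell}f(\ell).
\]
Since $(\pm\sqrt q)^{k-\ell}=(\pm\sqrt q)^k(\pm\sqrt q)^{-\ell}$, both sums combine into a single one, and the residue is $f\mapsto (\pm\sqrt q)^k\sum_\ell(\pm\sqrt q)^{-\ell}f(\ell)$. This has rank one, with image spanned by $k\mapsto(\pm\sqrt q)^k$. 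The remaining Laurent coefficients are finite sums for each fixed $k$, so the expansion converges as a map $C_c\to C$ and the family is finitely meromorphic.

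The main obstacle is the book-keeping at the diagonal, where the two branches and the weights must be matched consistently. A second point of care is justifying that the formula is the genuine $\ell^2$-resolvent for large $|\mu|$ rather than some other right inverse; the Schur test settles this.
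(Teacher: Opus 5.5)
Your argument is correct, but it runs in the opposite direction from the paper's. The paper \emph{derives} the kernel. It solves the two-term recurrence in each variable and uses $\ell^2$-boundedness of the (a priori existing) resolvent for large $|\mu|$ to discard the growing solutions on each side, via the test functions $\sum_{k\ge 0}\delta_k$ and $q^{n/2}\delta_n$. It then fixes $C=-\frac{2}{\mu-\mu^{-1}}$ from the diagonal equation and leaves meromorphy to be read off from the explicit formula.

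You instead \emph{verify}. You check directly that the formula is a two-sided inverse of $\frac{A}{2\sqrt q}-z(\mu)$ on $C_c(\mathcal Z)$, bound it on $\ell^2$ by the Schur test for large $|\mu|$, and identify it with the resolvent by uniqueness of inverses. The paper's route explains where the formula comes from and shows it is the only possible kernel of a bounded inverse. Yours avoids the somewhat informal exclusion arguments and does not presuppose the shape of the kernel. It also makes explicit the ``finitely'' part, namely the rank-one residue with image spanned by $k\mapsto(\pm\sqrt q)^k$, which the paper leaves implicit.

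Two small slips, neither of which affects the conclusion:
\begin{itemize}
\item The residue of $-\frac{2}{\mu-\mu^{-1}}$ is $-1$ at both $\mu_0=1$ and $\mu_0=-1$, not $\mp 1$, since $\frac{d}{d\mu}(\mu-\mu^{-1})=2$ at both points.
\item The diagonal defect should be written out as $\frac{\sqrt q}{2}(\sqrt q\mu)^{-1}+\frac{1}{2\sqrt q}\cdot\frac{\sqrt q}{\mu}-z(\mu)=-\frac{\mu-\mu^{-1}}{2}$.
\end{itemize}

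As an aside, the Schur sums already converge for $|\mu|>\sqrt q$. Conjugating by $f\mapsto q^{-k/2}f$ turns the formula into convolution with $-\frac{2}{\mu-\mu^{-1}}\mu^{-|n|}$ on unweighted $\ell^2(\mathbb{Z})$, which gives boundedness for all $|\mu|>1$ at once.
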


  \subsubsection{Convergence of the cusp resolvent on weighted \texorpdfstring{$\ell^2$}{_2}-spaces}
  We shall ultimately be interested in the resolvent on the cusp $\mathcal{C}$, rather than on $\mathcal{Z}$. In Section \ref{sec_geometrically_finite_resolvent}, we shall construct an approximation to the resolvent on geometrically finite graphs by ``gluing together'' our model resolvents on funnels and cusps. The model resolvent on the cusps shall take the form of $R_{\mathcal{C}}(\mu)$, defined below and which we now focus on.

  Let $\mathcal{C}'$ denote the subgraph with stabilizer function corresponding to $n \geq 2$. Let $\mathbbm{1}_{\mathcal{C}'}$ denote the indicator function of $\mathcal{C}'$.

  \begin{theorem} \label{thm_cusp}
    We define
    \begin{gather*}
        R_{\mathcal{C}}(\mu) := \mathbbm{1}_{\mathcal{C}'} R_{\mathcal{Z}}(\mu) \mathbbm{1}_{\mathcal{C}'}.
    \end{gather*}
    For every $N \geq 0$, $R_{\mathcal{C}}(\mu)$ defines a finitely meromorphic family of operators
    \begin{gather*}
        R_{\mathcal{C}}(\mu): q^{-N d(o, \cdot)} \ell^2(\mathcal{C}) \to q^{N d(o, \cdot)} \ell^2(\mathcal{C}),
    \end{gather*}
    on $|\mu| > q^{\frac{1}{2}-N}$ with poles at $\pm 1$.
\end{theorem}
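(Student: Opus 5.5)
The plan is to reduce everything to a Schur test for an explicit kernel on $\mathbb{Z}_{\geq 2}$, in the spirit of the remark following Theorem~\ref{theorem:cqzdnhg2al}. On the cusp the base point $o$ is the root, so $d(o,k)=k$, and vertex $k$ has mass $q^{-k}$.

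\emph{Step 1: unitary reduction.} I would use the unitary maps $q^{-Nd(o,\cdot)}\ell^2(\mathcal{C}) \to \ell^2(\mathbb{Z}_{\geq 2})$, $f \mapsto g$ with $g(\ell)=q^{N\ell}q^{-\ell/2}f(\ell)$, and $q^{Nd(o,\cdot)}\ell^2(\mathcal{C})\to \ell^2(\mathbb{Z}_{\geq 2})$, $h\mapsto q^{-Nk}q^{-k/2}h(k)$. They conjugate $R_{\mathcal{C}}(\mu)$ into an operator on the unweighted space $\ell^2(\mathbb{Z}_{\geq 2})$ with counting measure. By \eqref{eqn_resolvent_cusp} its kernel is
\begin{gather*}
M_\mu(k,\ell) = -\frac{2}{\mu-\mu^{-1}}\, q^{-Nk-\frac{k}{2}}\, q^{-N\ell+\frac{\ell}{2}}\, \kappa_\mu(k,\ell),
\end{gather*}
where $\kappa_\mu(k,\ell)=(\sqrt q/\mu)^{k-\ell}$ for $\ell<k$ and $\kappa_\mu(k,\ell)=(\sqrt q\mu)^{k-\ell}$ for $\ell\ge k$.

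\emph{Step 2: simplify the kernel and apply Schur.} In both cases the powers of $\sqrt q$ cancel. One is left with
\begin{gather*}
\lvert M_\mu(k,\ell)\rvert = \frac{2}{\lvert\mu-\mu^{-1}\rvert}\,\lvert\mu\rvert^{-\lvert k-\ell\rvert}\,q^{-N(k+\ell)}.
\end{gather*}
Since $q^{-N(k+\ell)} = q^{-N\lvert k-\ell\rvert}\,q^{-2N\min(k,\ell)} \le q^{-N\lvert k-\ell\rvert}$, the row and column sums are bounded by
\begin{gather*}
\sum_{j\in\mathbb{Z}} \bigl(\lvert\mu\rvert q^{N}\bigr)^{-\lvert j\rvert},
\end{gather*}
which is finite for $\lvert\mu\rvert>q^{-N}$. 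The Schur test therefore gives boundedness on the claimed region $\lvert\mu\rvert>q^{\frac12-N}$, and in fact on the larger region $\lvert\mu\rvert>q^{-N}$, away from $\mu=\pm1$. The bound is locally uniform in $\mu$ on compact subsets.

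\emph{Step 3: holomorphy away from $\pm1$.} Let $\tilde M_\mu = \frac{\mu-\mu^{-1}}{-2}M_\mu$. Each matrix entry of $\tilde M_\mu$ is a monomial in $\mu^{\pm1}$. I would get norm-holomorphy either by expanding in a power series locally, with the entrywise-differentiated kernels controlled by the same Schur bound (which only costs an extra factor $\lvert j\rvert$), or by combining weak holomorphy of the matrix elements with local uniform boundedness. Hence $\mu\mapsto\tilde M_\mu$ is holomorphic on $\lvert\mu\rvert>q^{-N}$.

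\emph{Step 4: the poles at $\pm1$.} The scalar prefactor $-2/(\mu-\mu^{-1})$ has simple poles at $\mu=\pm1$, so the only singular term is the residue, a constant multiple of $\tilde M_{\pm1}$. In the original (unconjugated) variables, at $\mu=\pm1$ both cases of $\kappa$ collapse to the same kernel $(\pm1)^{k-\ell}q^{(k-\ell)/2}$. This kernel factors as $\bigl((\pm1)^kq^{k/2}\bigr)\bigl((\pm1)^{\ell}q^{-\ell/2}\bigr)$, so the residue operator is rank one, and the family is finitely meromorphic.

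I expect the main obstacle to be conceptual rather than computational: keeping the measure conventions straight, namely that the sum in \eqref{eqn_resolvent_cusp} is taken without the weight $q^{-\ell}$. One must check that the rank-one residue really maps $q^{-Nd(o,\cdot)}\ell^2$ into $q^{Nd(o,\cdot)}\ell^2$. This holds because $\sum_\ell q^{-2N\ell}<\infty$ and $\sum_k q^{-2Nk}<\infty$ for $N>0$, which is where the restriction to $\mathcal{C}'$ and the region of $\mu$ enter. For $N=0$ one instead needs $\lvert\mu\rvert>q^{1/2}>1$, which excludes the poles entirely.
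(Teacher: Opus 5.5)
Your proof is correct, and it differs from the paper's proof in a way that improves the result.

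The paper applies the Schur test directly on $\ell^2(\mathcal{C})$, with its measure $q^{-k}$ and constant test functions, to the kernel $q^{-N(k_1+k_2)}(\sqrt{q}/\mu)^{\lvert k_1-k_2\rvert}q^{\min(k_1,k_2)}$ of $q^{-Nd(o,\cdot)}R_{\mathcal{C}}(\mu)q^{-Nd(o,\cdot)}$. The row sums then carry asymmetric ratios: $q^{\frac12+N}/\lvert\mu\rvert$ for $k_2<k_1$, and $1/(\lvert\mu\rvert q^{\frac12+N})$ for $k_2\ge k_1$. The paper evaluates these as explicit geometric sums, and the first one is what forces $\lvert\mu\rvert>q^{\frac12-N}$.

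Your conjugation by $q^{\mp k/2}$ to counting measure is equivalent to a Schur test with test function $q^{k/2}$. It makes the kernel symmetric in modulus, $\lvert\mu\rvert^{-\lvert k-\ell\rvert}q^{-N(k+\ell)}$, and gives the larger region $\lvert\mu\rvert>q^{-N}$ with a one-line estimate. This is the natural region for three reasons:
\begin{itemize}
\item it matches Theorem~\ref{theorem:cqzdnhg2al} for the tree;
\item it corresponds to $\textnormal{Re}(s)>\frac12-N$ under the dictionary of Table~\ref{table_hyperbolic_regular};
\item for $N=0$ it is exactly the $\ell^2$ resolvent region $\lvert\mu\rvert>1$, whereas the paper's estimate only gives $\lvert\mu\rvert>q^{1/2}$ there.
\end{itemize}
Fed into the parametrix of Section~\ref{sec_geometrically_finite_resolvent}, it would also let the region in Theorem~\ref{thm:merom_resolvent} be enlarged to $\lvert\mu\rvert>q^{-N}$.

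Your Steps 3 and 4 also spell out the holomorphy away from $\pm1$ and the rank-one residues at $\pm1$. The paper's proof leaves these implicit after the boundedness estimate. In your version they follow cleanly because the entries $q^{-N(k+\ell)}\mu^{-\lvert k-\ell\rvert}$ are monomials in $\mu^{-1}$, and at $\mu=\pm1$ the kernel factors as a product of a function of $k$ and a function of $\ell$.
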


\begin{proof}
    We wish to prove that
    \begin{gather*}
        \tilde{R}_{\mathcal{C}}(\mu) := q^{-N d(o, \cdot)} R_{\mathcal{C}}(\mu) q^{-N d(o, \cdot)}
    \end{gather*}
    defines an operator on $\ell^2(\mathcal{C})$ for $|\mu| > q^{\frac{1}{2}-N}$ for $\mu \neq \pm 1$. The kernel function for $\tilde{R}_{\mathcal{C}}(\mu)$ is given by
    \begin{gather*}
        \tilde{K}_{\mathcal{C}}(\mu)(k_1, k_2) = - \frac{2}{\mu - \mu^{-1}} q^{-N(k_1 + k_2)} \Big(\frac{\sqrt{q}}{\mu} \Big)^{|k_1 - k_2|} q^{\textnormal{min}(k_1, k_2)}
    \end{gather*}
    if $k_1 \neq 0$ and $k_2 \neq 0$, and is zero when $k_1 = 0$ or $k_2 = 0$. 
    
    We can bound the operator norm by the Schur test:
    \begin{gather*}
        \| \tilde{K}_{\mathcal{C}}(\mu) \|^2 \leq \Big( \sup_{k_1 \in \mathbb{N}} \sum_{k_2 \in \mathbb{N}} |\tilde{K}_{\mathcal{C}}(\mu)(k_1, k_2)| q^{-k_2} \Big) \Big(\sup_{k_2 \in \mathbb{N}} \sum_{k_1 \in \mathbb{N}} |\tilde{K}_{\mathcal{C}}(\mu)(k_1, k_2)| q^{-k_1} \Big).
    \end{gather*}
    We have,
    \begin{align*}
        \sum_{k_1 \in \mathbb{N}} \sum_{k_2 \in \mathbb{N}} |\tilde{K}_{\mathcal{C}}(\mu)(k_1, k_2)| q^{-k_2} &= \frac{2}{|\mu - \mu^{-1}|} \sup_{k_1 \in \mathbb{N}} \Big( \sum_{k_2 = 1}^{k_1 - 1} \Big( \frac{\sqrt{q}}{|\mu|} \Big)^{k_1 - k_2} q^{-N(k_1 + k_2)} \\
        & + \sum_{k_2 = k_1}^{\infty} \Big(\frac{1}{|\mu|\sqrt{q}} \Big)^{k_2 - k_1} q^{-N(k_1 + k_2)} \Big)
    \end{align*}
    The first sum evaluates to
      \begin{equation*}
         \left(\frac{1}{\lvert \mu \rvert q^{-\frac{1}{2} + N}}\right)^{k_1}\left(\frac{\left(\frac{\lvert \mu \rvert}{q^{\frac{1}{2} + N}}\right)^{k_1} - 1}{\frac{\lvert \mu \rvert}{q^{\frac{1}{2} + N}} - 1} - 1\right) \leq  \frac{q^{- 2N k_1} -\left(\frac{1}{\lvert \mu \rvert q^{-\frac{1}{2} + N}}\right)^{k_1}}{\frac{\lvert \mu \rvert}{q^{\frac{1}{2} + N}} - 1}
      \end{equation*}
      if $\lvert \mu \rvert \neq q^{\frac{1}{2} + N}$ and to $q^{- 2N k_1} k_1$ if $\lvert \mu \rvert = q^{\frac{1}{2} + N}$, which are both uniformly bounded in $k_1$ for $\lvert \mu \rvert>q^{\frac{1}{2}-N}$. Moreover, for $\lvert \mu  \rvert > q^{\frac{1}{2} + N}$, it is bounded by $\frac{1}{\frac{\lvert \mu \rvert}{q^{\frac{1}{2} + N}} - 1}$. The second sum evaluates to
      \begin{equation*}
        q^{- 2N k_1} \sum_{
          k_2 = k_1
        }^\infty\left(\frac{1}{\lvert \mu \rvert q^{\frac{1}{2}+N}}\right)^{k_2 - k_1 } = \frac{q^{- 2 N k_1}}{1 - \frac{1}{\lvert \mu \rvert q^{\frac{1}{2} + N}}} \leq \frac{1}{1 - \frac{1}{\lvert \mu \rvert q^{\frac{1}{2} + N}}}
      \end{equation*}
      if $\lvert \mu \rvert > \frac{1}{ q^{\frac{1}{2} + N}}$. Thus, the supremum is finite for $\lvert \mu \rvert>q^{\frac{1}{2} - N}$ and the same argument applies to the second supremum.

\end{proof}

\section{Resolvent on geometrically finite graphs of groups} \label{sec_geometrically_finite_resolvent}

The point of this section is to prove Theorem \ref{thm:merom_resolvent}. We begin by recalling a version of the analytic Fredholm theorem which we shall use.

\begin{theorem}[Analytic Fredholm theorem; see Theorem 6.7 of \cite{Bor16}]
      Suppose $E(s)$ is a finitely meromorphic family of compact operators on a Hilbert space $\mathcal{H}$. If $I + E(s)$ is invertible for at least one point in the domain $\Omega \subset \mathbb{C}$, then $(I + E(s))^{-1}$ exists as a finitely meromorphic family of operators on all of $\Omega$.
    \end{theorem}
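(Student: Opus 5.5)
The plan is to reduce the problem locally to a finite-dimensional determinant, and then use connectedness of $\Omega$ to globalize. We assume $\Omega$ is a connected open set, as is implicit in the statement.

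\textbf{Step 1: local splitting.} Fix $s_0 \in \Omega$ and expand near $s_0$:
\begin{gather*}
E(s) = \sum_{j=1}^J \frac{A_{-j}}{(s-s_0)^j} + E_{\mathrm{hol}}(s),
\end{gather*}
where the $A_{-j}$ have finite rank and $E_{\mathrm{hol}}$ is holomorphic near $s_0$. We first check that $E_{\mathrm{hol}}(s)$ is compact. It equals $E(s)$ minus a finite-rank operator for $s \ne s_0$, and $E_{\mathrm{hol}}(s_0)$ is a norm limit of such operators. Since $E_{\mathrm{hol}}(s_0)$ is compact, we may choose a finite-rank $F_0$ with $\|E_{\mathrm{hol}}(s_0) - F_0\| < \tfrac12$. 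By continuity there is a disc $D \ni s_0$ on which $\|E_{\mathrm{hol}}(s) - F_0\| < 1$. On $D$ the operator $B(s) := I + E_{\mathrm{hol}}(s) - F_0$ is then invertible by a Neumann series, and $B(s)^{-1}$ is holomorphic. We now write
\begin{gather*}
I + E(s) = B(s)\bigl(I + B(s)^{-1} G(s)\bigr), \qquad G(s) := F_0 + \sum_{j=1}^J \frac{A_{-j}}{(s-s_0)^j}.
\end{gather*}
Here $G(s)$ is meromorphic on $D$ with values in the fixed finite-dimensional range space $V := \operatorname{span}(\operatorname{im}F_0 \cup \bigcup_j \operatorname{im}A_{-j})$.

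\textbf{Step 2: reduction to a matrix.} Set $K(s) := B(s)^{-1}G(s)$; this operator maps into $W(s) := B(s)^{-1}V$. To avoid a moving subspace, we instead factor on the other side: $I + K(s)$ is invertible if and only if $I + G(s)B(s)^{-1}$ is invertible. The operator $G(s)B(s)^{-1}$ has range in the fixed space $V$. Choose a basis $v_1,\dots,v_n$ of $V$ and a complementary projection $P$ onto $V$. Then invertibility of $I + G B^{-1}$ is equivalent to invertibility of the $n\times n$ matrix $M(s) := \bigl(\langle (I + G B^{-1}) v_i, v_k^*\rangle\bigr)$ restricted to $V$. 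The entries of $M(s)$ are meromorphic on $D$. Let $d(s) := \det M(s)$, which is meromorphic on $D$. Wherever $d(s)\neq 0$, the inverse is obtained from Cramer's rule:
\begin{gather*}
(I + G B^{-1})^{-1} = I - G B^{-1}\bigl(\text{explicit expression in } M(s)^{-1}\bigr).
\end{gather*}
Since $M(s)^{-1} = \operatorname{adj} M(s)/d(s)$, this inverse is meromorphic in $s$. Moreover, its singular Laurent coefficients are finite rank, because they factor through $V$. Composing with the holomorphic $B(s)^{-1}$ preserves both meromorphy and finite-rankness of the singular parts. Hence $(I+E(s))^{-1}$ is finitely meromorphic on $D$ provided $d \not\equiv 0$ on $D$.

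\textbf{Step 3: globalization.} This is the main point. Consider the set $U$ of $s_0 \in \Omega$ having a neighbourhood on which $(I+E)^{-1}$ is finitely meromorphic; equivalently, the local determinant $d$ from Step 2 is not identically zero. The set $U$ is open by construction. It is nonempty, because it contains the point where $I+E$ is invertible: there $d \neq 0$ at that point. We show $U$ is also closed in $\Omega$. Take $s_1 \in \overline U \cap \Omega$ and build the local determinant $d_1$ on a disc $D_1$ around $s_1$. The disc $D_1$ meets $U$, and on $D_1\cap U$ the operator $I+E(s)$ is invertible off a discrete set. At any such point $d_1 \neq 0$, since $d_1(s)\neq 0$ exactly when $I+E(s)$ is invertible, away from the poles of the entries. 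By the identity theorem $d_1 \not\equiv 0$, so $s_1 \in U$. Connectedness of $\Omega$ then gives $U=\Omega$.

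We expect the main obstacle to be the bookkeeping in Step 2. One must make sure the inverse's principal parts are genuinely finite rank. This requires arranging the factorization so that all singular behaviour passes through the fixed finite-dimensional space $V$, which is why $G B^{-1}$ is used rather than $B^{-1}G$. One must also correctly match the zeros of $d$ with non-invertibility in the presence of poles of $G$. The natural way to handle the latter is to multiply $M(s)$ by $(s-s_0)^{J}$ to clear the poles before taking the determinant.
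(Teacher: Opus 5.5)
Your proof is correct. The paper gives no proof of its own and only cites Borthwick, and your argument is the standard proof of this theorem: approximate the holomorphic compact part by a finite-rank operator, factor $I+E(s)$ so that all singular and finite-rank behaviour passes through a fixed finite-dimensional space, reduce invertibility to a meromorphic determinant, and conclude by an open--closed argument on the connected domain $\Omega$.
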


\begin{proof}[Proof of Theorem \ref{thm:merom_resolvent}]
    We first discuss the proof strategy. We begin by constructing a finitely meromorphic family of operators $P(\mu): q^{-Nd(o, \cdot)} \ell^2(\mathcal{V}) \to q^{N d(o, \cdot)} \ell^2(\mathcal{V})$ for $|\mu| > q^{\frac{1}{2}-N}$, and which acts as a parametrix to $\frac{A}{2 \sqrt{q}} - z(\mu)$ in the sense that
    \begin{gather*}
      \Big(\frac{A}{2 \sqrt{q}} - z(\mu)\Big) P(\mu) = I + K(\mu),
    \end{gather*}
    where $K(\mu) : q^{-N d(o, \cdot)} \ell^2 \to q^{-N d(o, \cdot)} \ell^2$ is a meromorphic family of compact (in fact, finite rank) operators on this same domain. After showing that $I + K(\mu): q^{-N d(o, \cdot)} \ell^2 \to q^{-N d(o, \cdot)} \ell^2$ is invertible for some $\mu$ sufficiently large, we conclude by the analytic Fredholm theorem that
    \begin{gather*}
      R_\mathcal{G}(\mu) = P(\mu) (I + K(\mu))^{-1}
    \end{gather*}
    has the claimed properties.

    Let $\mathcal{F} = \mathcal{L} \sqcup \mathcal{F}_1 \sqcup \dots \sqcup \mathcal{F}_{n_f} \sqcup \mathcal{C}_1 \sqcup \dots \sqcup \mathcal{C}_{n_c}$ be the decomposition of $\mathcal{V}$ into a (not necessarily $(q+1)$-regular) compact core $\mathcal{L}$, a finite number $n_f$ of $(q+1)$-regular orbifold funnels $\mathcal{F}_i$, and a finite number $n_c$ of $(q+1)$-regular cusps $\mathcal{C}_i$. Let $H_j$ be the finite group which we can quotient the standard funnel by to obtain $\mathcal{F}_j$, and let $\mathcal{P}_j := H_j \backslash \backslash \mathcal{T}$. Let $\mathcal{L}'$ denote all vertices at distance at most 1 from $\mathcal{L}$. Let $\mathcal{L}''$ denote all vertices at distance at most 2 from $\mathcal{L}$. Let $\mathcal{F}_k'$ and $\mathcal{C}_\ell'$, respectively, denote all vertices of distance at least one from the root of $\mathcal{F}_k$ and $\mathcal{C}_\ell$, respectively. We thus have
    \begin{gather*}
        \mathcal{V} = \mathcal{L}' \sqcup \mathcal{F}_1' \sqcup \dots \sqcup \mathcal{F}_{n_f}' \sqcup \mathcal{C}_1' \sqcup \dots \sqcup \mathcal{C}_{n_c}'.
    \end{gather*}
    
    For each orbifold funnel $\mathcal{F}_j$, we have a natural inclusion $\iota_{\mathcal{F}_j}: \mathcal{F}_j \hookrightarrow \mathcal{P}_j$; we use the same notation for the natural map $C(\mathcal{F}_j) \hookrightarrow C(\mathcal{P}_j)$. We also have the adjoint map $\iota_{\mathcal{F}_j}^*: C(\mathcal{P}_j) \to C(\mathcal{F}_j)$. Let $\mathcal{P}_j' := \iota_{\mathcal{F}_j}(\mathcal{F}_j')$. We likewise have for each $\mathcal{C}_\ell$ inclusions $\iota_{\mathcal{C}_\ell}: \mathcal{C}_\ell \hookrightarrow \mathcal{Z}$; again we use the same notation for the map $C(\mathcal{C}_\ell) \hookrightarrow C(\mathcal{Z})$. Let $\mathcal{C}' := \iota_{\mathcal{C}_\ell}(\mathcal{C}_\ell')$ (the definition does not depend on the choice of $\ell$). We have that
    \begin{align}
        A_{\mathcal{G}} \iota^*_{\mathcal{F}_j} f &= \iota^*_{\mathcal{F}_j} A_{\mathcal{P}_j} f \label{T_1} \\
        A_{\mathcal{G}} \iota^*_{\mathcal{C}_\ell} g &= \iota^*_{\mathcal{C}_\ell} A_{\mathcal{Z}} g \label{C_1}
    \end{align}
    whenever $f \in C(\mathcal{P}_j)$ is supported on $\mathcal{P}_j'$, and $g \in C(\mathcal{Z})$ is supported on $\mathcal{C}'$.

    Let $R_{\mathcal{L}''}(\mu)$ denote the resolvent of the induced subgraph associated to the vertices in $\mathcal{L}''$ (i.e.\@ we keep all edges between any two vertices in $\mathcal{L}''$). We remark for future use that if we write $A_{\mathcal{G}} = A_{\mathcal{L}''} + B$, then the image of $B$ is supported on $\mathcal{G} \setminus \mathcal{L}'$ because every edge from a vertex in $\mathcal{L}'$ is also an edge in the induced graph of $\mathcal{L}''$.
    
    We define
    \begin{gather}
      P(\mu) \coloneqq  \mathbbm{1}_{\mathcal{L}'} R_{\mathcal{L}''}(\mu_0) \mathbbm{1}_{\mathcal{L}'} + \sum_{k = 1}^{n_f} \iota^*_{\mathcal{F}_j} \mathbbm{1}_{\mathcal{P}_j'} R_{\mathcal{P}_j}(\mu) \iota_{\mathcal{F}_j} \mathbbm{1}_{\mathcal{F}_j'} + \sum_{\ell = 1}^{n_c} \iota_{\mathcal{C}_\ell}^* \mathbbm{1}_{\mathcal{C}'} R_{\mathcal{Z}}(\mu) \iota_{\mathcal{C}_\ell} \mathbbm{1}_{\mathcal{C}_\ell'}, \label{eqn_P_mu}
    \end{gather}
    where $\mu_0$ is some fixed value which we later set to be sufficiently large. It is immediate that if $f \in q^{-N d(o, \cdot)} \ell^2(\mathcal{V})$, then $\iota_{\mathcal{F}_j} \mathbbm{1}_{\mathcal{F}_j'} f \in q^{-N d(o_{\mathcal{P}_j}, \cdot)} \ell^2(\mathcal{P}_j)$, and $\iota_{\mathcal{C}_\ell} \mathbbm{1}_{\mathcal{C}_\ell'} f \in q^{-N d(o_{\mathcal{Z}}, \cdot)} \ell^2(\mathcal{C}')$. Similarly if $g_j \in q^{N d(o_{\mathcal{P}_j}, \cdot)} \ell^2(\mathcal{P}_j)$, then $\iota_{\mathcal{F}_j}^* \mathbbm{1}_{\mathcal{P}_j'} g_j \in q^{N d(o, \cdot)} \ell^2(\mathcal{V})$, and if $g_\ell \in q^{N d(o_{\mathcal{Z}}, \cdot)} \ell^2(\mathcal{C}')$, then $\iota_{\mathcal{C}_\ell}^* g_\ell \in q^{N d(o, \cdot)} \ell^2(\mathcal{V})$. By Theorems \ref{theorem:cqzdnhg2al} (with Remark \ref{rmk_orbifold_funnel}) and \ref{thm_cusp}, we conclude that $P(\mu)$ is a finitely meromorphic family of operators from $q^{-Nd(o, \cdot)} \ell^2(\mathcal{V}) \to q^{N d(o, \cdot)} \ell^2(\mathcal{V})$ for $|\mu| > q^{\frac{1}{2}-N}$, and it is holomorphic for $\mu \notin \{\pm 1, \pm \frac{1}{\sqrt{q}} \}$. 

    We now consider $\big(\frac{A_{\mathcal{G}}}{2 \sqrt{q}} - z(\mu)\big) P(\mu)$. We obtain
    \begin{align}
        &\Big(\frac{A_\mathcal{G}}{2 \sqrt{q}} - z(\mu) \Big) \mathbbm{1}_{\mathcal{L}'} R_{\mathcal{L}''}(\mu_0) \mathbbm{1}_{\mathcal{L}'} + \sum_{j = 1}^{n_f} \iota_{\mathcal{F}_j}^* \Big(\frac{A_{\mathcal{P}_j}}{2 \sqrt{q}} - z (\mu) \Big) \mathbbm{1}_{\mathcal{P}_j'} R_{\mathcal{P}_j}(\mu) \iota_{\mathcal{F}_j} \mathbbm{1}_{\mathcal{F}_j'} \label{p_1} \\
        &+ \sum_{k = 1}^{n_c} \iota^*_{\mathcal{C}_k} \Big(\frac{A_{\mathcal{Z}}}{2 \sqrt{q}} - z(\mu)\Big) \mathbbm{1}_{\mathcal{C}'} R_{\mathcal{Z}}(\mu) \iota_{\mathcal{C}_k} \mathbbm{1}_{\mathcal{C}_k'} \label{p_2} \\
        =& \  \mathbbm{1}_{\mathcal{L}'} \Big(\frac{A_\mathcal{G}}{2 \sqrt{q}} - z(\mu) \Big) R_{\mathcal{L}''}(\mu_0) \mathbbm{1}_{\mathcal{L}'} + \sum_{j = 1}^{n_f} \iota_{\mathcal{F}_j}^*  \mathbbm{1}_{\mathcal{P}_j'} \Big(\frac{A_{\mathcal{P}_j}}{2 \sqrt{q}} - z (\mu)\Big) R_{\mathcal{P}_j}(\mu) \iota_{\mathcal{F}_j} \mathbbm{1}_{\mathcal{F}_j'} \label{p_3} \\
        &+ \sum_{k = 1}^{n_c} \iota^*_{\mathcal{C}_k}  \mathbbm{1}_{\mathcal{C}'} \Big(\frac{A_{\mathcal{Z}}}{2 \sqrt{q}} - z(\mu)\Big) R_{\mathcal{Z}}(\mu) \iota_{\mathcal{C}_k} \mathbbm{1}_{\mathcal{C}_k'} + \frac{[A_{\mathcal{G}}, \mathbbm{1}_{\mathcal{L}'}]}{2 \sqrt{q}} R_{\mathcal{L}''}(\mu_0) \mathbbm{1}_{\mathcal{L}'} \label{p_4} \\
        &+ \sum_{j = 1}^{n_f} \iota_{\mathcal{F}_j}^* \frac{[A_{\mathcal{P}_j}, \mathbbm{1}_{\mathcal{P}_j'}]}{2 \sqrt{q}} R_{\mathcal{P}_j}(\mu) \iota_{\mathcal{F}_j} \mathbbm{1}_{\mathcal{F}_j'} + \sum_{k = 1}^{n_c} \iota_{\mathcal{C}_k}^* \frac{[A_\mathcal{Z}, \mathbbm{1}_{\mathcal{C}'}]}{2 \sqrt{q}} R_{\mathcal{Z}}(\mu) \iota_{\mathcal{C}_k} \mathbbm{1}_{\mathcal{C}_k'}. \label{p_5}
    \end{align}
Note that in the second term in \eqref{p_1} and the term in \eqref{p_2} we have used \eqref{T_1} and \eqref{C_1}. Going from the first two lines to the last three lines simply involves repeatedly using the identity $AB = BA + [A, B]$. We immediately recognize that the second term in \eqref{p_3} is equal to $\sum \mathbbm{1}_{\mathcal{F}_j'}$, and the first term in \eqref{p_4} is equal to $\sum \mathbbm{1}_{\mathcal{C}_j'}$. To analyze the first term in \eqref{p_3}, we write $A_{\mathcal{G}} = A_{\mathcal{L}''} + B$, and use the resolvent identity:
\begin{gather*}
    R_{\mathcal{L}''}(\mu_0) = R_{\mathcal{L}''}(\mu) + (z(\mu_0) - z(\mu)) R_{\mathcal{L}''}(\mu) R_{\mathcal{L}''}(\mu_0).
\end{gather*}
We then get
\begin{align*}
    \mathbbm{1}_{\mathcal{L}'} &\Big(\frac{A_\mathcal{G}}{2 \sqrt{q}} - z(\mu)\Big) R_{\mathcal{L}''}(\mu_0) \mathbbm{1}_{\mathcal{L}'} = \mathbbm{1}_{\mathcal{L}'} \Big(\frac{B}{2 \sqrt{q}} + \frac{A_{\mathcal{L}''}}{2 \sqrt{q}} - z(\mu)\Big) R_{\mathcal{L}''}(\mu_0) \mathbbm{1}_{\mathcal{L}'} \\
    &= \mathbbm{1}_{\mathcal{L}'}\frac{B}{2 \sqrt{q}} R_{\mathcal{L}''}(\mu_0) \mathbbm{1}_{\mathcal{L}'} + \mathbbm{1}_{\mathcal{L}'} + (z(\mu_0) - z(\mu)) \mathbbm{1}_{\mathcal{L}'}R_{\mathcal{L}''}(\mu_0) \mathbbm{1}_{\mathcal{L}'}
\end{align*}
As previously mentioned, the image of $B$ is supported on $\mathcal{G} \setminus \mathcal{L}'$. Therefore the first term above is simply zero. Because $\mathbbm{1}_{\mathcal{L}'} + \sum \mathbbm{1}_{\mathcal{F}_j'} + \sum \mathbbm{1}_{\mathcal{C}_k'} = I$, we obtain that
\begin{gather*}
  \Big(\frac{A_{\mathcal{G}}}{2 \sqrt{q}} - z(\mu) \Big) P(\mu) = I + K(\mu),
\end{gather*}
where
\begin{align}
  K(\mu) &\coloneqq  (z(\mu_0) - z(\mu)) \mathbbm{1}_{\mathcal{L}'}R_{\mathcal{L}''}(\mu_0) \mathbbm{1}_{\mathcal{L}'} + \frac{[A_{\mathcal{G}}, \mathbbm{1}_{\mathcal{L}'}]}{2 \sqrt{q}} R_{\mathcal{L}''}(\mu_0) \mathbbm{1}_{\mathcal{L}'} \label{K_1} \\
       &+ \sum_{j = 1}^{n_f} \iota_{\mathcal{F}_j}^* \frac{[A_{\mathcal{P}_j}, \mathbbm{1}_{\mathcal{P}_j'}]}{2 \sqrt{q}} R_{\mathcal{P}_j}(\mu) \iota_{\mathcal{F}_j} \mathbbm{1}_{\mathcal{F}_j'} + \sum_{k = 1}^{n_c} \iota_{\mathcal{C}_k}^* \frac{[A_\mathcal{Z}, \mathbbm{1}_{\mathcal{C}'}]}{2 \sqrt{q}} R_{\mathcal{Z}}(\mu) \iota_{\mathcal{C}_k} \mathbbm{1}_{\mathcal{C}_k'}. \notag
\end{align}

We now claim that the image of $K(\mu)$ always lies in $C(\mathcal{L}'')$, which is a finite-dimensional space. The first term in \eqref{K_1} clearly has this property. In general if $A$ is the adjacency operator of some graph, and $X$ is some subgraph, then $[A, \mathbbm{1}_X]$ has image supported on $\partial X \cup (X' \setminus X)$ where 
$X'$ is the set of vertices at distance at most one from $X$. To see this, we simply record the image of $\delta_x$ for every vertex $x$. If $x \in (X \setminus \partial X)$, then $[A, \mathbbm{1}_X] \delta_x = 0$. Similarly if $x \notin X'$, then $[A, \mathbbm{1}_X] \delta_x = 0$. Suppose $x \in (X' \setminus X)$. Then $A \mathbbm{1}_X \delta_x = 0$, whereas $\mathbbm{1}_X A \delta_x$ is supported on $\partial X$. Similarly, if $x \in \partial X$, then $A \mathbbm{1}_X \delta_x = A \delta_x$ which is supported on $X'$. However for $y \in X$, we have $[A \delta_x](y) = [\mathbbm{1}_X A \delta_x](y)$, so $[A, \mathbbm{1}_X] \delta_x$ is supported on $X' \setminus X$. This observation immediately implies that the image of $K(\mu)$ always lies in $C(\mathcal{L}'')$. Furthermore it is clear from the formula for $K(\mu)$ that it defines a finitely meromorphic family of compact operators:
$K(\mu) : q^{-N d(o, \cdot)} \ell^2(\mathcal{V}) \to C(\mathcal{L}'') \subset q^{-N d(o, \cdot)} \ell^2(\mathcal{V})$
for $|\mu| > q^{\frac{1}{2}-N}$. 

Now, we can clearly choose $\mu_0$ and $\mu$ sufficiently large so that $\|R_{\mathcal{L}''}(\mu_0)\|$, $\|R_{\mathcal{P}_j}(\mu)\|$ and $\|R_{\mathcal{Z}}(\mu)\|$ are all as small as desired, and thus we can find a $\mu_0$ and $\mu_1$ such that $\|K(\mu_1)\| < \varepsilon$ for any $\varepsilon > 0$. In particular, taking any $\varepsilon < 1$, we obtain that $(I + K(\mu_1))^{-1}$ exists as a convergent Neumann series. The meromorphic Fredholm theorem then implies that 
\begin{gather*}
  (I + K(\mu))^{-1} : q^{-Nd(o, \cdot)} \ell^2(\mathcal{V}) \to q^{-Nd(o, \cdot)} \ell^2(\mathcal{V})
\end{gather*}
is finitely meromorphic on the specified domain. We therefore obtain that $R(\mu) = P(\mu) (I + K(\mu))^{-1}$ is finitely meromorphic on this domain as a family of operators $q^{-Nd(o, \cdot)} \ell^2(\mathcal{V}) \to q^{Nd(o, \cdot)} \ell^2(\mathcal{V})$. 
    
\end{proof}

One consequence of the proof is the following bound on the multiplicity of a resonance. It is likely that this bound can be greatly improved, but for now we content ourselves with simply deriving a finite explicit bound.

\begin{proposition} \label{prop_bound_multiplicity}
  The multiplicity of a resonance is at most $(2 (|\mathcal{L}| + 2 n_c + (1 + q) n_f)+1) (|\mathcal{L}| + 2 n_c + (1 + q) n_f)$.
\end{proposition}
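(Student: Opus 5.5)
Set $D := |\mathcal{L}''|$. The plan is to show $D \le |\mathcal{L}| + 2n_c + (1+q)n_f$. Then I bound the polar part of $R_{\mathcal{G}}(\mu) = P(\mu)(I+K(\mu))^{-1}$ at a resonance $\mu_0$ in terms of $D$.

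\emph{Counting $D$.} The set $\mathcal{L}''$ consists of $\mathcal{L}$ together with the vertices at distance one and two from the core inside the funnels and cusps. A cusp contributes its root and the next vertex, so at most $2$ vertices. An orbifold funnel contributes its root and at most $q$ children of the root, so at most $1+q$ vertices. This gives $D \le |\mathcal{L}| + 2n_c + (1+q)n_f$, and it suffices to prove a multiplicity bound of $(2D+1)D$.

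\emph{Structure of $(I+K)^{-1}$.} Write $\Pi$ for the projection onto $C(\mathcal{L}'')$. Since $K(\mu)$ has image in $C(\mathcal{L}'')$, we have $K = \Pi K$. Put $k(\mu) := \Pi K(\mu)|_{C(\mathcal{L}'')}$, a $D\times D$ matrix depending meromorphically on $\mu$. A direct computation gives
\begin{gather*}
(I+K)^{-1} = I - (I_D + k)^{-1} K .
\end{gather*}
This holds because $K(I-(I+k)^{-1}K) = K - k(I+k)^{-1}K = (I+k)^{-1}K$. Consequently every singular term of $(I+K)^{-1}$ has image contained in $C(\mathcal{L}'')$. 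The singular terms of $R_{\mathcal{G}}$ then come from two sources.

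\emph{First source: $P(\mu)$ applied to the polar part of $(I+K)^{-1}$.} The polar part of $(I+K)^{-1}$ has coefficients with image in the $D$-dimensional space $C(\mathcal{L}'')$. Its pole order $J$ is controlled by the $D\times D$ determinant: $(I+k)^{-1} = \operatorname{adj}(I+k)/\det(I+k)$. If $k$ is holomorphic near $\mu_0$, the pole order is at most the order of vanishing of $\det$, which is at most $D$, provided the determinant is a polynomial of degree $\le 2D$ in $\mu^{\pm1}$.

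This degree count is the step I expect to be the main obstacle, so I would check it directly. The model kernels in \eqref{eq:cq4enzzhjz} and \eqref{eqn_resolvent_cusp}, restricted to the finitely many boundary vertices where the commutators act, are rational in $\mu$ of degree one per entry. Applying $P(\mu)$ to $\sum_{j\le J}(\mu-\mu_0)^{-j}B_j$ with $\operatorname{im}B_j\subset C(\mathcal{L}'')$, and Taylor expanding $P$, shows that the span of the images of the singular coefficients lies in the span of $\{P^{(i)}(\mu_0)C(\mathcal{L}'') : 0\le i< J\}$. Its dimension is at most $JD \le 2D\cdot D$, where I allow for a pole order of up to $2D$ to be safe.

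\emph{Second source: poles of $P$ itself, at $\pm1$ and $\pm q^{-1/2}$.} These have rank one per funnel or cusp and order one, so they add at most $D$ further dimensions. One must also check the interaction of these poles with the first source. There I would use that at $\mu_0$ the total pole order of $(I+k)^{-1}$ plus one is bounded by $2D+1$. This yields the bound $(2D+1)D$ on $\dim \operatorname{grst}(\mathcal{G},\mu_0)$, which is the claim.
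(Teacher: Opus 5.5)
Your proposal follows the paper's proof: the count $D=|\mathcal{L}''|\le|\mathcal{L}|+2n_c+(1+q)n_f$, the identity $(I+K)^{-1}=I-(I_D+k)^{-1}K$ (the paper's block upper-triangular inverse in other notation), the fact that all polar coefficients of $(I+K)^{-1}$ map into $C(\mathcal{L}'')$, and the final tally of at most $2D+1$ subspaces of dimension at most $D$. Your handling of the poles of $P$ is slightly cleaner than the paper's. Including all of $\operatorname{im}P_{-1}$ (rank at most $\max(n_f,n_c)\le D$) also absorbs the term pairing the residue of $P$ with the holomorphic part of $(I+K)^{-1}$, whose input is not confined to $C(\mathcal{L}'')$.

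The gap is the one step that produces the factor $2D$: that $(I+K(\mu))^{-1}$ has a pole of order $J\le 2D$ at every $\mu_0$. You state this only as a proviso, and your supporting remarks do not establish it.
\begin{itemize}
\item A Laurent polynomial $\mu^m f(\mu)$ with $\deg f\le 2D$ can vanish to order $2D$ at a single point, not ``at most $D$''.
\item Entries that are ``rational of degree one'' give neither a Laurent-polynomial determinant nor a holomorphic adjugate.
\item You only treat $k$ holomorphic at $\mu_0$. But the columns of $k$ indexed by funnel vertices of $\mathcal{L}''\setminus\mathcal{L}'$ carry the prefactor $(\mu-q^{-1}\mu^{-1})^{-1}$ from \eqref{eq:cq4enzzhjz}, and the cusp columns carry $(\mu-\mu^{-1})^{-1}$ from \eqref{eqn_resolvent_cusp}. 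For a funnel with trivial groups, root $o_j$ and child $v$, the $(o_j,v)$ entry of $k$ is $-\frac{1}{\sqrt{q}}\cdot\frac{1+(q-1)q^{-1}\mu^{-2}}{\mu-q^{-1}\mu^{-1}}$. The off-diagonal block $B:=\Pi K|_{C(\mathcal{V}\setminus\mathcal{L}'')}$ has the same poles.
\end{itemize}
So exactly at $\pm q^{-1/2}$ and $\pm1$, where your second source lives, the pole order is not controlled by your setup. Neither the adjugate bound for $(I_D+k)^{-1}$ nor the passage to $I-(I_D+k)^{-1}K$ applies there.

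The paper closes this step by asserting that each entry of $I_{\mathcal{L}''}+A(\mu)$ is a Laurent polynomial of degree at most $2$. By the computation above that assertion is not literally right either, but it is easily repaired, and the repair is what your argument needs.
\begin{itemize}
\item Let $\Delta(\mu)$ be diagonal with entries $\mu-q^{-1}\mu^{-1}$ on funnel columns, $\mu-\mu^{-1}$ on cusp columns and $1$ on columns in $\mathcal{L}'$. Then $M:=(I_D+k)\Delta$ has Laurent-polynomial entries and $(I_D+k)^{-1}=\Delta\operatorname{adj}(M)/\det M$.
\item Give the rows indexed by funnel vertices of $\mathcal{L}''\setminus\mathcal{L}'$ weight $1$, the corresponding columns weight $-1$, and all other rows and columns weight $0$. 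The explicit kernels show that the exponents of every nonzero entry of $M$ lie within $1$ of its row weight plus its column weight.
\item Hence $\det M=\mu^{-D}f(\mu)$ with $f\not\equiv0$ of degree at most $2D$. So $(I_D+k)^{-1}$ has poles of order at most $2D$ at every $\mu_0\in\mathbb{C}^\times$, including $\pm1$ and $\pm q^{-1/2}$.
\item For the block $B$: if $y\notin\mathcal{L}''$ lies at depth $t$ below the funnel or cusp vertex $v\in\mathcal{L}''\setminus\mathcal{L}'$, then $d(x,y)=d(x,v)+t$ at every $x$ where the commutator terms are evaluated. So $Be_y$ is a constant times $(\sqrt{q}\mu)^{-t}ke_v$.
\item Consequently $(I_D+k)^{-1}Be_y$ is a holomorphic multiple of $e_v-(I_D+k)^{-1}e_v$ and brings no extra pole.
\end{itemize}
With $J\le 2D$ established this way, your count $\dim\operatorname{grst}\le\operatorname{rank}P_{-1}+JD\le(2D+1)D$ goes through.
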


\begin{proof}
    This follows from analyzing the explicit nature of $(1 + K(\mu))^{-1}$ in the proof. We know that the image of $K(\mu)$ is contained in $C(\mathcal{L}'')$. We could thus write $K(\mu)$ with respect to $C(\mathcal{L}'') \oplus C(\mathcal{G} \setminus \mathcal{L}'')$:
    \begin{gather*}
        K(\mu) = \begin{bmatrix}
            A(\mu) & B(\mu) \\
            0 & 0
        \end{bmatrix}.
    \end{gather*}
    We can then explicitly write
    \begin{gather*}
        (I + K(\mu))^{-1} = \begin{bmatrix}
            (I_{\mathcal{L}''} + A(\mu))^{-1} & -(I_{\mathcal{L}''} + A(\mu))^{-1} B(\mu) \\
            0 & I_{\mathcal{G} \setminus \mathcal{L}''}
        \end{bmatrix}.
    \end{gather*}
    We thus see that if we were to Laurent expand at a pole, any coefficient of a negative power in the expansion would be of the form
    \begin{gather*}
        \begin{bmatrix}
            C & D \\
            0 & 0
        \end{bmatrix},
    \end{gather*}
    and thus in particular the image of all such coefficients lies in $C(\mathcal{L}'')$, which has dimension $|\mathcal{L}| + 2 n_c + (1 + q) n_f$. However, we also see that $(I_{\mathcal{L}''} + A(\mu))^{-1}$ has entries which have poles of degree at most $2 (|\mathcal{L}| + 2 n_c + (1 + q) n_f)$ (since each entry of $I_{\mathcal{L}''} + A(\mu)$ is a Laurent polynomial of degree at most 2), so the pole is of order at most this. To obtain the resolvent, we must multiply $(I + K(\mu))^{-1}$ by $P(\mu)$, which itself has poles of order at most 1. Negative Laurent coefficients of $R_{\mathcal{G}}(\mu)$ are thus of the form $\sum P_i B_j$ where $B_j$ is a negative Laurent coefficient of $(I + K(\mu))^{-1}$, and there are at most $2 (|\mathcal{L}| + 2 n_c + (1 + q) n_f) + 1$ terms in the sum, each of which lands in a space of dimension at most $|\mathcal{L}| + 2 n_c + (1 + q) n_f$. We thus obtain the claimed bound on the multiplicity.
\end{proof}

\section{Characterization of resonant states} \label{sec_resonant_states}

\subsection{Purely outgoing functions}

Let $\mathcal{G}$ be geometrically finite. Let $\mathcal{H}$ be any finite connected subgraph of $\mathcal{G}$ containing the compact core $\mathcal{L}$. Then $\mathcal{G} \setminus \mathcal{H} = \mathcal{C}_1 \sqcup \dots \sqcup \mathcal{C}_{n} \sqcup \mathcal{F}_1 \sqcup \dots \sqcup \mathcal{F}_{m}$ consists of finitely many funnels $\mathcal{F}_j$ and cusps $\mathcal{C}_\ell$. Note, however, that it is possible that several of these funnels are subfunnels of the same single funnel appearing as a connected component of $\mathcal{G} \setminus \mathcal{L}$. See Figure \ref{fig_exceptional}.

\begin{figure}[ht]
  \centering
  

  \resizebox{0.6\textwidth}{!}{%
    \begin{tikzpicture}[x=1.1cm,y=1.05cm,line cap=round,line join=round]
      \definecolor{b}{RGB}{83,88,220}
      \definecolor{p}{RGB}{226,149,188}
      \definecolor{g}{RGB}{122,192,92}
      \def\TreeLevelDist{0.42}
      \def\TreeBaseSep{0.6}
      \tikzset{tree node/.style={circle,draw=black,thin,fill=b,inner sep=0pt,minimum size=4.6pt}}
      \newcommand{\ThreeRegularTree}[2]{
        \coordinate (#1-0) at (0,0);
        \foreach \i in {0,...,7} {
          \pgfmathsetmacro{\x}{(\i - 3.5) * \TreeBaseSep}
          \coordinate (#1-3-\i) at (\x,{#2*3*\TreeLevelDist});
        }

        \pgfmathsetmacro{\x}{(0*2 + 0.5 - 3.5) * \TreeBaseSep}
        \coordinate (#1-2-0) at (\x,{#2*2*\TreeLevelDist});
        \pgfmathtruncatemacro{\childA}{2*0}
        \pgfmathtruncatemacro{\childB}{2*0+1}
        \draw[thick, b] (#1-2-0)--(#1-3-\childA);
        \draw[thick, b] (#1-2-0)--(#1-3-\childB);
        \foreach \i in {1,...,3} {
          \pgfmathsetmacro{\x}{(\i*2 + 0.5 - 3.5) * \TreeBaseSep}
          \coordinate (#1-2-\i) at (\x,{#2*2*\TreeLevelDist});
          \pgfmathtruncatemacro{\childA}{2*\i}
          \pgfmathtruncatemacro{\childB}{2*\i+1}
          \draw[dash pattern=on 4pt off 4pt,thick,darkgray] (#1-2-\i)--(#1-3-\childA);
          \draw[dash pattern=on 4pt off 4pt,thick,darkgray] (#1-2-\i)--(#1-3-\childB);
        }
        \pgfmathsetmacro{\x}{(0*4 + 1.5 - 3.5) * \TreeBaseSep}
        \coordinate (#1-1-0) at (\x,{#2*\TreeLevelDist});
        \pgfmathtruncatemacro{\childA}{2*0}
        \pgfmathtruncatemacro{\childB}{2*0+1}
        \draw[dash pattern=on 4pt off 4pt,thick,darkgray] (#1-1-0)--(#1-2-\childA);
        \draw[thick,p!50] (#1-1-0)--(#1-2-\childB);
        \draw[thick,p!50] (#1-0)--(#1-1-0);

        \pgfmathsetmacro{\x}{(1*4 + 1.5 - 3.5) * \TreeBaseSep}
        \coordinate (#1-1-1) at (\x,{#2*\TreeLevelDist});
        \pgfmathtruncatemacro{\childA}{2*1}
        \pgfmathtruncatemacro{\childB}{2*1+1}
        \draw[thick,p!50] (#1-1-1)--(#1-2-\childA);
        \draw[thick,p!50] (#1-1-1)--(#1-2-\childB);
        \draw[thick,p!50] (#1-0)--(#1-1-1);

        \foreach \i in {0,...,7}{
          \draw[dash pattern=on 1.3pt off 1.5pt,thick,b] (#1-3-\i)--++(-0.11,{#2*0.42});
          \draw[dash pattern=on 1.3pt off 1.5pt,thick,b] (#1-3-\i)--++(0.11,{#2*0.42});
        }

        \filldraw[fill=p,draw=black,thin] (#1-0) circle (0.08);
        \foreach \i in {0,...,7} {
          \node[tree node] at (#1-3-\i) {};
        }
        \node[tree node] at (#1-2-0) {};
        \foreach \i in {1,...,3} {
          \filldraw[fill=p,draw=black,thin] (#1-2-\i) circle (0.08);
        }
        \foreach \i in {0,...,1} {
          \filldraw[fill=p,draw=black,thin] (#1-1-\i) circle (0.08);
        }
      }

      \newcommand{\ThreeRegularTreeBelow}[2]{
        \coordinate (#1-0) at (0,0);
        \foreach \i in {0,...,7} {
          \pgfmathsetmacro{\x}{(\i - 3.5) * \TreeBaseSep}
          \coordinate (#1-3-\i) at (\x,{#2*3*\TreeLevelDist});
        }

        \pgfmathsetmacro{\x}{(0*2 + 0.5 - 3.5) * \TreeBaseSep}
        \coordinate (#1-2-0) at (\x,{#2*2*\TreeLevelDist});
        \pgfmathtruncatemacro{\childA}{2*0}
        \pgfmathtruncatemacro{\childB}{2*0+1}
        \draw[dash pattern=on 4pt off 4pt,thick,darkgray]  (#1-2-0)--(#1-3-\childA);
        \draw[dash pattern=on 4pt off 4pt,thick,darkgray]  (#1-2-0)--(#1-3-\childB);
        \foreach \i in {1,...,3} {
          \pgfmathsetmacro{\x}{(\i*2 + 0.5 - 3.5) * \TreeBaseSep}
          \coordinate (#1-2-\i) at (\x,{#2*2*\TreeLevelDist});
          \pgfmathtruncatemacro{\childA}{2*\i}
          \pgfmathtruncatemacro{\childB}{2*\i+1}
          \draw[thick,b] (#1-2-\i)--(#1-3-\childA);
          \draw[thick,b] (#1-2-\i)--(#1-3-\childB);
        }
        \pgfmathsetmacro{\x}{(0*4 + 1.5 - 3.5) * \TreeBaseSep}
        \coordinate (#1-1-0) at (\x,{#2*\TreeLevelDist});
        \pgfmathtruncatemacro{\childA}{2*0}
        \pgfmathtruncatemacro{\childB}{2*0+1}
        \draw[thick, p!50] (#1-1-0)--(#1-2-\childA);
        \draw[dash pattern=on 4pt off 4pt,thick,darkgray] (#1-1-0)--(#1-2-\childB);
        \draw[thick,p!50] (#1-0)--(#1-1-0);

        \pgfmathsetmacro{\x}{(1*4 + 1.5 - 3.5) * \TreeBaseSep}
        \coordinate (#1-1-1) at (\x,{#2*\TreeLevelDist});
        \pgfmathtruncatemacro{\childA}{2*1}
        \pgfmathtruncatemacro{\childB}{2*1+1}
        \draw[thick, b] (#1-1-1)--(#1-2-\childA);
        \draw[thick, b] (#1-1-1)--(#1-2-\childB);
        \draw[dash pattern=on 4pt off 4pt,thick,darkgray] (#1-0)--(#1-1-1);

        \foreach \i in {0,...,7}{
          \draw[dash pattern=on 1.3pt off 1.5pt,thick,b] (#1-3-\i)--++(-0.11,{#2*0.42});
          \draw[dash pattern=on 1.3pt off 1.5pt,thick,b] (#1-3-\i)--++(0.11,{#2*0.42});
        }

        \filldraw[fill=p,draw=black,thin] (#1-0) circle (0.08);
        \foreach \i in {0,...,7} {
          \node[tree node] at (#1-3-\i) {};
        }
        \filldraw[fill=p,draw=black,thin] (#1-2-0) circle (0.08);
        \foreach \i in {1,...,3} {
          \node[tree node] at (#1-2-\i) {};
        }
        \filldraw[fill=p,draw=black,thin] (#1-1-0) circle (0.08);
        \node[tree node] at (#1-1-1) {};
      }

      \coordinate (p1) at (-2,1.2);
      \coordinate (p2) at (-0.1,0.8);
      \coordinate (p3) at (1.2,0.4);
      \coordinate (p4) at (1.5,-1.2);
      \coordinate (p5) at (-1,- 0.3);
      \coordinate (p6) at (-1.3,-1.4);
      \coordinate (p7) at (-3,-1);
      \coordinate (p9) at (0.5,-0.2);

      \draw[thick,p!50] (p1)--(p5);
      \draw[thick,p!50] (p1)--(p4);
      \draw[thick,p!50] (p2)--(p7);
      \draw[thick,p!50] (p2)--(p9);
      \draw[thick,p!50] (p3)--(p4);
      \draw[thick,p!50] (p3)--(p5);
      \draw[thick,p!50] (p3)--(p9);
      \draw[thick,p!50] (p5)--(p6);
      \draw[thick,p!50] (p6)--(p7);
      \draw[thick,p!50] (p7)--(p9);

      \coordinate (u) at (0.5,1.5);
      \draw[thick,p!50] (u)--(p2);
      \begin{scope}[shift={(u)}]
        \ThreeRegularTree{tu}{1}
      \end{scope}

      \coordinate (d) at (- 1,-2);
      \draw[thick, p!50] (d)--(p6);
      \begin{scope}[shift={(d)}]
        \ThreeRegularTreeBelow{td}{-1}
      \end{scope}

      \draw[dash pattern=on 1.3pt off 1.5pt,thick,g] (-4.4,2.8)--(-3.8,2.4);
      \draw[dash pattern=on 4pt off 4pt,thick,darkgray] (-3.8,2.4)--(-3.2,2.0);
      \draw[thick,p] (-3.2,2.0)--(-2.6,1.6);
      \draw[thick,p] (-2.6,1.6)--(p1);

      \draw[thick,p!50] (p4)--(2.3,-0.8);
      \draw[dash pattern=on 4pt off 4pt,thick,darkgray] (2.3,-0.8)--(3.1,-0.4);
      \draw[thick,g] (3.1,-0.4)--(3.9,0);
      \draw[dash pattern=on 1.3pt off 1.5pt,thick,g] (3.9,0)--(4.7,0.4);

      \foreach \pt in {p1,p2,p3,p4,p5,p6,p7,p9}{
        \filldraw[fill=p,draw=black,thin] (\pt) circle (0.08);
      }
      \foreach \x/\y in {2.3/-0.8,-3.2/2.0,-2.6/1.6}{
        \filldraw[fill=p,draw=black,thin] (\x,\y) circle (0.08);
      }
      \foreach \x/\y in {-3.8/2.4,3.1/-0.4,3.9/0}{
        \filldraw[fill=g!70!white,draw=black,thin] (\x,\y) circle (0.08);
      }
    \end{tikzpicture}
  }

  \caption{Here $\mathcal{G}$ has two cusps and two funnels. The pink region represents a possible exceptional locus $\mathcal{H}$ of an outgoing function. When we remove $\mathcal{H}$, we are left with 11 funnels and 2 cusps.}
  \label{fig_exceptional}
  \end{figure}

    \begin{definition} \label{defn_outgoing}
        Let $\mu \in \mathbb{C}^\times$, and let $\mathcal{H}$ be a finite connected subgraph containing $\mathcal{L}$ with $\mathcal{G} \setminus \mathcal{H} = \mathcal{C}_1 \sqcup \dots \sqcup \mathcal{C}_m \sqcup \mathcal{F}_1 \sqcup \dots \sqcup \mathcal{F}_n$. A function $f \in C(\mathcal{V})$ is called \textit{purely outgoing of parameter $\mu$ and exceptional locus $\mathcal{H}$} if
        \begin{enumerate}[(i)]
            \item on each $\mathcal{C}_\ell$ there exists a polynomial $p_{C_\ell}$ in one variable such that $f(v) = p_{C_\ell}(d(o, v)) (\frac{\sqrt{q}}{\mu})^{d(o, v)}$ for all $v \in C_\ell$ and any choice of $o \in \mathcal{L}$,
            \item on each $\mathcal{F}_\ell$, there exists a polynomial $p_{\mathcal{F}_\ell}$ in one variable such that $f(v) = p_{\mathcal{F}_\ell}(d(o, v)) (\mu \sqrt{q})^{-d(o, v)}$ for all $v \in \mathcal{F}_\ell$ and any choice of $o \in \mathcal{L}$.
        \end{enumerate}
    We say that a purely outgoing solution is \textit{of constant type} if all of the above polynomials are in fact constants.
    \end{definition}

This definition will be crucial in giving a geometric interpretation to the resonances. We can already immediately observe that if $u$ is a purely outgoing solution of constant type with parameter $\mu_0$ and exceptional locus $\mathcal{H}$, then $(\frac{A}{2 \sqrt{q}} - z(\mu_0)) u$ is compactly supported (it is supported on $\mathcal{H}'$).

\begin{lemma}\label{lemma:cq4fkr0wif}
      Let $f \in C_c(\mathcal{V})$ and define the meromorphic family of vectors
      \begin{equation*}
        \tilde{f}_\mu \coloneqq(I + K(\mu))^{-1} f
      \end{equation*}
      in $q^{N d(o, \cdot)} \ell^2(\mathcal{V})$ for $\lvert \mu \rvert > q^{\frac{1}{2}- N}$ with $K$ as in \eqref{K_1}. Then $\tilde{f}_\mu |_{\mathcal{V} \setminus(\mathrm{supp}f \cup \mathcal{L}'')} \equiv 0$.
    \end{lemma}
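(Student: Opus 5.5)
The plan is to read off the structure of $(I+K(\mu))^{-1}$ from the block decomposition already used in the proof of Proposition~\ref{prop_bound_multiplicity}. The key input, established in the proof of Theorem~\ref{thm:merom_resolvent}, is that the image of $K(\mu)$ lies in the finite-dimensional space $C(\mathcal{L}'')$ for every $\mu$ in the domain. With respect to the splitting of functions on $\mathcal{V}$ into their restrictions to $\mathcal{L}''$ and to $\mathcal{V}\setminus\mathcal{L}''$, we therefore have
\begin{gather*}
K(\mu) = \begin{bmatrix} A(\mu) & B(\mu) \\ 0 & 0 \end{bmatrix}.
\end{gather*}

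At any $\mu$ where $I+K(\mu)$ is invertible, we set $g = \tilde{f}_\mu$, so that $g + K(\mu) g = f$. Restricting this identity to $\mathcal{V}\setminus\mathcal{L}''$ kills the term $K(\mu)g$, because $K(\mu)g$ is supported in $\mathcal{L}''$. This gives $g|_{\mathcal{V}\setminus\mathcal{L}''} = f|_{\mathcal{V}\setminus\mathcal{L}''}$. Equivalently, the explicit inverse from Proposition~\ref{prop_bound_multiplicity} has lower-right block equal to the identity and lower-left block zero. Hence $g$ agrees with $f$ off $\mathcal{L}''$, and in particular vanishes on $\mathcal{V}\setminus(\operatorname{supp} f\cup\mathcal{L}'')$. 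The same argument shows that $\tilde{f}_\mu - f$ takes values in $C(\mathcal{L}'')$.

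It remains to pass to the poles of $(I+K(\mu))^{-1}$. Here $\tilde{f}_\mu$ is a meromorphic family, and its restriction to $\mathcal{V}\setminus\mathcal{L}''$ coincides with $f|_{\mathcal{V}\setminus\mathcal{L}''}$ on the dense open set where $I+K(\mu)$ is invertible. By the identity theorem for meromorphic functions, applied pointwise at each vertex, this restriction is identically the constant $f|_{\mathcal{V}\setminus\mathcal{L}''}$. So the conclusion holds for the meromorphic family as a whole, in the sense that every Laurent coefficient of a negative power, and the holomorphic part, vanish on $\mathcal{V}\setminus(\operatorname{supp} f\cup \mathcal{L}'')$.

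There is no real obstacle here. The only point to check is that the support property of the image of $K(\mu)$ holds for every $\mu$, including near poles of $R_{\mathcal{P}_j}(\mu)$ and $R_{\mathcal{Z}}(\mu)$. This follows because the commutators $[A,\mathbbm{1}_X]$ have image in $C(\mathcal{L}'')$ regardless of what they are composed with on the right, so the support statement holds coefficientwise in the Laurent expansion.
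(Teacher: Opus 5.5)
Your proposal is correct and follows essentially the same argument as the paper: since the image of $K(\mu)$ lies in $C(\mathcal{L}'')$, restricting $(I+K(\mu))\tilde f_\mu = f$ to $\mathcal{V}\setminus\mathcal{L}''$ gives $\tilde f_\mu = f$ there, and meromorphicity extends this to the whole domain. The only cosmetic difference is that the paper establishes the identity for $|\mu|$ large and then continues, whereas you apply it at every point where $I+K(\mu)$ is invertible; both routes are valid.
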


    \begin{proof}
      By the proof of Theorem \ref{thm:merom_resolvent} we have $(I + K(\mu)) \tilde{f}_\mu  = \tilde{f}_\mu $ on $\mathcal{V} \setminus \mathcal{L}''$. But for $\lvert \mu \rvert$ large enough we have $f =(I + K(\mu)) \tilde{f}_\mu$ so that $\tilde{f}_\mu$ has to vanish identically on $\mathcal{V} \setminus(\mathrm{supp}f \cup \mathcal{L}'')$ for these $\mu$. By meromorphicity, this holds for all $\lvert \mu \rvert > q^{\frac{1}{2}- N}$. 
    \end{proof}

\begin{proposition}\label{proposition:cq0kuqgmlp}
    Suppose $f \in C_c(\mathcal{V})$ and $\mu_0 \in \mathbb{C}^\times$. Suppose near $\mu_0$ we have
    \begin{gather*}
        P(\mu) = \frac{P_{-1}}{\mu - \mu_0} + \sum_{j = 0}^\infty P_j(\mu - \mu_0)^j.
    \end{gather*}
    \begin{enumerate}
        \item Suppose $\mu_0 \notin \{\pm 1, \pm \frac{1}{\sqrt{q}} \}$ (and thus $P_{-1} = 0$). Then $P_j f$ is purely outgoing of degree at most $j$.
        \item Suppose $\mu_0 \in \{\pm 1, \pm \frac{1}{\sqrt{q}}\}$. Then $P_j f$ is purely outgoing of degree at most $j + 1$. 
    \end{enumerate}
\end{proposition}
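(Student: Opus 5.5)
The plan is to use the explicit structure of $P(\mu)$ from \eqref{eqn_P_mu}. We treat the three kinds of summands separately, take $\mathcal{H}$ large enough to contain $\mathrm{supp} f$ and $\mathcal{L}''$, and check the outgoing form on each cusp and funnel piece of $\mathcal{G}\setminus\mathcal{H}$. The core term $\mathbbm{1}_{\mathcal{L}'} R_{\mathcal{L}''}(\mu_0)\mathbbm{1}_{\mathcal{L}'}$ does not depend on $\mu$ and is supported in $\mathcal{L}'\subset\mathcal{H}$. It therefore contributes only to $P_0$, and its contribution vanishes outside $\mathcal{H}$, which is compatible with any outgoing form. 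So it remains to handle the funnel and cusp terms. For these, $g:=\mathbbm{1}_{\mathcal{F}_j'}f$ (resp.\ $\mathbbm{1}_{\mathcal{C}_\ell'}f$) is a fixed compactly supported function.

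For a cusp, take $v$ at height $k$ beyond $\mathrm{supp} f$. By \eqref{eqn_resolvent_cusp}, only the first sum contributes, so
\[
R_{\mathcal{Z}}(\mu)g(k) = -\frac{2}{\mu-\mu^{-1}}\Big(\frac{\sqrt q}{\mu}\Big)^{k} c(\mu),
\qquad c(\mu)=\sum_{\ell}\Big(\frac{\sqrt q}{\mu}\Big)^{-\ell} g(\ell),
\]
and $c$ is a Laurent polynomial in $\mu$, holomorphic on $\mathbb{C}^\times$. Similarly, for a funnel, lift to $\mathcal{P}_j$ and use \eqref{eq:cq4enzzhjz} summed over $H_j$ as in Remark \ref{rmk_orbifold_funnel}. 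For $x$ far from the finite support, $d(x,y)=d(o,x)-d(o,y')$ on the relevant branch, where $y'$ denotes the projection of $y$ onto the geodesic from $o$ to $x$ (more precisely, the distance $d(o,\cdot)$ minus a quantity depending on $y$ and on which subfunnel $x$ lies in). On each subfunnel of $\mathcal{G}\setminus\mathcal{H}$ this gives
\[
-\frac{2}{\mu-q^{-1}\mu^{-1}}\,(\mu\sqrt q)^{-d(o,x)}\,c_{\mathcal{F}}(\mu),
\]
with $c_{\mathcal{F}}$ again a Laurent polynomial. Here it is essential that $\mathcal{H}$ be chosen so that each subfunnel lies entirely beyond the branching points of the support. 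That is exactly why the definition allows several subfunnels of one funnel, and choosing $\mathcal{H}$ this way is the main bookkeeping obstacle.

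So on each end $P(\mu)f(v)=h(\mu)\,\lambda(\mu)^{d}$ with $d=d(o,v)$ (up to a fixed shift of $d$, which is absorbed into $h$), where $\lambda(\mu)=\sqrt q/\mu$ or $(\mu\sqrt q)^{-1}$ and $h$ is meromorphic with at most a simple pole at $\pm1$ or $\pm q^{-1/2}$. We Taylor expand in $\mu$ around $\mu_0$. Since $\mu_0\neq0$, we can write $\lambda(\mu)^d=\lambda(\mu_0)^d(\mu_0/\mu)^d$ and expand $(\mu_0/\mu)^d=(1+(\mu-\mu_0)/\mu_0)^{-d}$. The coefficient of $(\mu-\mu_0)^i$ is $\binom{-d}{i}\mu_0^{-i}$, which is a polynomial in $d$ of degree exactly $i$. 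The Cauchy product with the Laurent expansion $h(\mu)=\sum_{i\ge -1}h_i(\mu-\mu_0)^i$ then gives the coefficient of $(\mu-\mu_0)^j$ as $\lambda(\mu_0)^d\sum_{i+m=j}h_m\binom{-d}{i}\mu_0^{-i}$. Its degree in $d$ is at most $\max i = j-\min m$. This maximum is $j$ when $h$ is holomorphic (case (1)) and $j+1$ when $m$ may equal $-1$ (case (2)). Since the residue $P_{-1}$ can only come from the simple pole of $h$, the same computation also covers $j=-1$.
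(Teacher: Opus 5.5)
Your proposal is correct and is essentially the paper's proof. On each end the paper writes $P(\mu)f$ as a $\mu$-dependent constant times $(\sqrt q/\mu)^{k}$ or $(\mu\sqrt q)^{-d}$, passes to the holomorphic family $S(\mu)=\frac12(\mu-\mu^{-1})(\mu-q^{-1}\mu^{-1})P(\mu)$, differentiates by the product rule and divides back; this is the same computation as your Cauchy product of the Laurent series of $h$ with the binomial expansion of $(\mu_0/\mu)^d$. Your choice of a connected $\mathcal{H}\supseteq\mathcal{L}''\cup\mathrm{supp}\,f$ is slightly more careful than the paper's smallest connected $\mathcal{H}\supseteq\mathcal{L}\cup\mathrm{supp}\,f$, which ignores the $\mu$-independent core term that can be nonzero at the roots of the ends. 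The subfunnel bookkeeping you flag follows automatically from connectedness of $\mathcal{H}$, since each funnel component outside $\mathcal{H}$ is then a full subtree disjoint from the support.
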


\begin{proof}
    Because $P(\mu)$ has a simple pole at $\mu_0 \in \{\pm 1, \pm \frac{1}{\sqrt{q}}\}$, we know that $S(\mu) := \frac{1}{2}(\mu - \mu^{-1})(\mu - q^{-1} \mu^{-1}) P(\mu)$ is holomorphic on all of $\mathbb{C}^\times$. Suppose near $\mu_0$ we have
    \begin{gather*}
        S(\mu) = \sum_{j = 0}^\infty S_j(\mu-\mu_0)^j.
    \end{gather*}
    Because $S(\mu)$ is holomorphic, we can compute the coefficients of its power series expansions by simply taking higher and higher derivatives.
    
    Let $\mathcal{H}$ be the smallest connected subgraph of $\mathcal{G}$ containing $\mathcal{L}$ and the support of $f$. Let $\mathcal{C}_i$ be one of the cusps of $\mathcal{G} \setminus \mathcal{H}$. Then by \eqref{eqn_resolvent_cusp} (labelling vertices of $\mathcal{C}_i$ by $\mathbb{N}$) we have
    \begin{align*}
        [S(\mu) f](k) &= -(\mu - q^{-1} \mu^{-1}) \sum_{
                      \ell \leq  k
        } \left(\frac{\sqrt{q}}{\mu}\right)^{k - \ell} f(\ell ) \\
        &= -(\mu - q^{-1} \mu^{-1}) \left(\frac{\sqrt{q}}{\mu}\right)^{k}\sum_{
          \ell \leq  k
        } \left(\frac{\mu}{\sqrt{q}}\right)^{ \ell} f(\ell ) = A_i(f, \mu) \left(\frac{\sqrt{q}}{\mu}\right)^{k},
      \end{align*}
      where $A_i(f, \mu)$ is a constant that depends on $f$ and $\mu$ (but not on $k$), proving that $S(\mu) f$ is purely outgoing of constant type on the cusps, and thus in particular $S(\mu_0) f = S_0 f$ is purely outgoing of constant type. 

      Now we handle the funnels. Let $\mathcal{F}_j$ be one of the funnels of $\mathcal{G} \setminus \mathcal{H}$ with root $o_{\mathcal{F}_j}$. Then $\mathcal{F}_j$ is a subfunnel of a bigger funnel $\mathcal{Y}_j$ of $\mathcal{G} \setminus \mathcal{L}$. Using \eqref{eq:cq4enzzhjz}, we have
      \begin{align*}
        [S(\mu) f](v) &= -(\mu - \mu^{-1}) \sum_{y \in \mathcal{Y}_j} \Big(\frac{1}{\mu \sqrt{q}}\Big)^{d(v, y)} f(y) \\
                    &= -(\mu - \mu^{-1}) \Big(\frac{1}{\mu \sqrt{q}}\Big)^{d(v, o_{\mathcal{F}_j})} \left(\sum_{y \in \mathcal{Y}_j \cap \mathcal{H}} \Big(\frac{1}{\mu \sqrt{q}}\Big)^{d(o_{\mathcal{F}_j}, y)} f(y)\right) \\
                    &= B_j(f, \mu) (\mu \sqrt{q})^{-d(v, o_{\mathcal{F}_j})}.
      \end{align*}
      Here we have used the fact that $d(v, y) = d(v, o_{\mathcal{F}_j}) + d(o_{\mathcal{F}_j}, y)$. We conclude that $S(\mu_0)f = S_0 f$ is purely outgoing of constant type.

      We now consider derivatives in $\mu$ of $S(\mu) f$. This follows directly from the product rule. On each cusp $\mathcal{C}_i$ we get
      \begin{gather*}
          \Big(\frac{\partial}{\partial \mu}\Big)^m \Big(A_i(f, \mu) \Big(\frac{\sqrt{q}}{\mu}\Big)^k\Big) = \sum_{\ell = 0}^m \binom{m}{\ell} \Big(\big(\frac{\partial}{\partial \mu}\big)^{m-\ell} A_i(f, \mu)\Big) \prod_{j = 0}^{\ell-1} \Big(\frac{-k-j}{\mu}\Big) \Big(\frac{\sqrt{q}}{\mu}\Big)^k.
      \end{gather*}
      The argument on each funnel is virtually identical. Seeing as $S_j f = \frac{1}{j!} \frac{\partial^j}{\partial \mu^j} \big|_{\mu = \mu_0} S(\mu) f$, we conclude that indeed $S_j f$ is purely outgoing of degree at most $j$.

      We now wish to relate the power series expansion of $S(\mu)$ at $\mu_0$ with the Laurent series expansion of $P(\mu)$ at $\mu_0$. In case $\mu_0 \notin \{\pm 1, \pm \frac{1}{\sqrt{q}}\}$, then $\frac{1}{(\mu - \mu^{-1})(\mu - q^{-1} \mu^{-1})}$ is holomorphic at $\mu = \mu_0$, and we conclude that $P_j$ can be expressed as a linear combination of $S_\ell$ with $\ell \leq j$, from which the result follows. In case $\mu_0 \in \{\pm 1, \pm \frac{1}{\sqrt{q}}\}$, then $\frac{1}{(\mu - \mu^{-1})(\mu - q^{-1} \mu^{-1})} = \frac{1}{\mu - \mu_0} Z(\mu)$ where $Z(\mu)$ is holomorphic at $\mu = \mu_0$, in which case we see that $P_j$ is a linear combination of $S_\ell$ with $\ell \leq j + 1$. 
\end{proof}

    \begin{proposition}\label{proposition:cq0ku2e4d0}
      If $R_{\mathcal{G}}(\mu_0)$ is defined, then $u = R_{\mathcal{G}}(\mu_0)f$ is the unique purely outgoing solution of constant type of
      \begin{equation}\label{eq:cq4hwfnu39}
        \left(\frac{A_{\mathcal{G}}}{2 \sqrt{q}} - z(\mu_0)\right)u = f
      \end{equation}
      for $f \in C_c(\mathcal{V})$.
    \end{proposition}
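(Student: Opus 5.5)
We prove existence and uniqueness separately.

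\emph{Existence.} By Theorem~\ref{thm:merom_resolvent}, $R_{\mathcal{G}}(\mu) = P(\mu)(I+K(\mu))^{-1}$ with $P(\mu)$ as in \eqref{eqn_P_mu}. Set $\tilde f_\mu = (I+K(\mu))^{-1} f$. By Lemma~\ref{lemma:cq4fkr0wif}, $\tilde f_\mu$ is supported in $\operatorname{supp} f \cup \mathcal{L}''$, so $\tilde f_{\mu_0} \in C_c(\mathcal{V})$. Then $u = P(\mu_0)\tilde f_{\mu_0}$, and we want to show this is purely outgoing of constant type. If $\mu_0 \notin \{\pm 1, \pm \frac{1}{\sqrt q}\}$, this is case (1) of Proposition~\ref{proposition:cq0kuqgmlp} with $j=0$.

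If $\mu_0$ is one of the four special points and $R_{\mathcal{G}}$ is nonetheless regular there, Proposition~\ref{proposition:cq0kuqgmlp} only gives degree at most $1$ for $P_0$. Two approaches are available:
\begin{itemize}
\item Show directly that the residue of $P(\mu)g$ against $\tilde f$ cancels.
\item More simply, use the explicit formulas for $S(\mu)$ from that proof. On each cusp or funnel away from $\mathcal{H}$, $R_{\mathcal{G}}(\mu) f$ has the form $c(\mu)\,(\sqrt q/\mu)^{k}$, respectively $c(\mu)\,(\mu\sqrt q)^{-d}$, for a scalar $c(\mu)$ meromorphic in $\mu$. Since $R_{\mathcal{G}}(\mu)f$ is holomorphic at $\mu_0$ pointwise, evaluating at a single vertex shows $c(\mu)$ is holomorphic at $\mu_0$. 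Hence $u$ is of constant type.
\end{itemize}
The second route handles all $\mu_0$ uniformly, so I would use it throughout. Finally, $(\frac{A}{2\sqrt q} - z(\mu))R_{\mathcal{G}}(\mu) f = f$ holds for large $|\mu|$, and both sides are meromorphic and pointwise evaluable. So it continues to $\mu_0$, giving \eqref{eq:cq4hwfnu39}.

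\emph{Uniqueness.} Let $w$ be the difference of two solutions. Then $w$ is purely outgoing of constant type (take the union of the two exceptional loci) and $(\frac{A}{2\sqrt q} - z(\mu_0))w = 0$. The aim is to show $w=0$. Define the compactly supported function $g_\mu := (\frac{A}{2\sqrt q} - z(\mu))w_\mu$, where $w_\mu$ is the deformation of $w$ obtained by keeping $w$ on $\mathcal{H}'$ and replacing the tails by $c_\ell(\sqrt q/\mu)^{k}$ on cusps and $c_\ell(\mu\sqrt q)^{-d}$ on funnels. Each tail exponent solves the recurrence of the model operator for every $\mu$, so $g_\mu$ is supported in a fixed finite set and depends holomorphically on $\mu$.

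For $|\mu|$ large, $w_\mu \in \ell^2(\mathcal{V})$: on cusps $|\sqrt q/\mu|^{2k} q^{-k}$ is summable, and on funnels $q^{d}|\mu\sqrt q|^{-2d}$ is summable. Hence $w_\mu = R_{\mathcal{G}}(\mu) g_\mu$ there. Both sides are meromorphic, with pointwise values, so the identity continues to $\mu_0$, where $R_{\mathcal{G}}$ is regular. This gives $w = w_{\mu_0} = R_{\mathcal{G}}(\mu_0) g_{\mu_0} = R_{\mathcal{G}}(\mu_0) 0 = 0$.

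\emph{Main obstacle.} The main obstacle is justifying the identity $w_\mu = R_{\mathcal{G}}(\mu)g_\mu$ for large $|\mu|$. This needs $w_\mu$ in $\ell^2$ together with injectivity of $\frac{A}{2\sqrt q} - z(\mu)$ on $\ell^2$ (valid since $|z(\mu)| > \|A\|/2\sqrt q$). The continuation step then needs the weighted-space framework: $R_{\mathcal{G}}(\mu)$ applied to a fixed finitely supported holomorphic family yields values that are meromorphic pointwise. I expect the ramification points $\pm1$ to need the most care, since there the cusp exponents $\sqrt q\mu$ and $\sqrt q/\mu$ coincide. However, constant type fixes the tail uniquely, so the argument above should go through unchanged.
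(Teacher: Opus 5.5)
Your proof is correct, and on both halves it takes a different and more careful route than the paper.

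\textbf{Existence.} The paper simply writes $R_{\mathcal{G}}(\mu_0)=P(\mu_0)(I+K(\mu_0))^{-1}$ and cites Lemma~\ref{lemma:cq4fkr0wif} and Proposition~\ref{proposition:cq0kuqgmlp}. This tacitly requires both factors to be regular at $\mu_0$, which can fail at the special points. For example, at $\mu_0=\pm1$ on the modular curve of Section~\ref{sec_modular_curve}, $R_{\mathcal{G}}$ is regular but the cusp part of $P$ has a pole. At such points Proposition~\ref{proposition:cq0kuqgmlp}(2) would in any case only give degree at most one. Your tail-coefficient argument avoids this: you write $R_{\mathcal{G}}(\mu)f$ beyond $\mathcal{H}\supseteq \mathrm{supp} f\cup\mathcal{L}''$ as $c(\mu)$ times the outgoing exponential for $\mu$ in a punctured neighbourhood of $\mu_0$, then read off holomorphy of $c$ at $\mu_0$ from the value at one vertex. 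This works uniformly in $\mu_0$, so you can drop the sentence $u=P(\mu_0)\tilde f_{\mu_0}$ and the first bullet altogether. You also verify by continuation that $u$ actually solves the equation, which the paper takes for granted.

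\textbf{Uniqueness.} The paper uses the one-liner $u=R_{\mathcal{G}}(\mu_0)\bigl(\frac{A}{2\sqrt q}-z(\mu_0)\bigr)u$. That is, it treats $R_{\mathcal{G}}(\mu_0)$ as a left inverse on outgoing functions that are not compactly supported, without justification. Your deformation $w_\mu$ reduces this to $\ell^2$-injectivity of $\frac{A}{2\sqrt q}-z(\mu)$ for large $|\mu|$ plus meromorphic continuation, which is precisely the missing step. The paper itself only introduces this device later, in the converse half of the proof of Theorem~\ref{thm_resonant_state_good}.

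The paper's version is shorter; yours is self-contained and needs no separate treatment of the points $\pm1,\pm q^{-1/2}$.
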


    \begin{proof}
      First off note that since $R_{\mathcal{G}}(\mu_0) = P(\mu_0) (I + K(\mu_0))^{-1}$, by Lemma \ref{lemma:cq4fkr0wif} and Proposition \ref{proposition:cq0kuqgmlp} we immediately get that $R_{\mathcal{G}}(\mu_0) f$ is purely outgoing of constant type. 

      Now suppose $u$ is purely outgoing of constant type. We then clearly have that $f = (\frac{A}{2 \sqrt{q}} - z(\mu_0)) u$ is compactly supported. We may therefore apply $R_{\mathcal{G}}(\mu_0)$ to it, i.e.\@ we get that
      \begin{equation*}
        u = R_{\mathcal{G}}(\mu_0) \Big(\frac{A}{2 \sqrt{q}} - z(\mu_0)\Big) u = R_{\mathcal{G}}(\mu_0) f.\qedhere
      \end{equation*}
    \end{proof}

    \begin{lemma}\label{lemma:cq4flfr8j6}
        Let $\mu \in \mathbb{C}^\times \setminus \{\pm 1\}$. Suppose $u$ is a purely outgoing solution with exceptional locus $\mathcal{H}$, parameter $\mu$, and degree $d$. Then
        \begin{gather*}
            \Big(\frac{A}{2 \sqrt{q}} - z(\mu)\Big)^J u = 0 \textnormal{ on $\mathcal{H}'$} \iff d < J.
        \end{gather*}
        In particular, if $u$ is a purely outgoing solution of 
        \begin{gather*}
            \Big(\frac{A}{2 \sqrt{q}} - z(\mu)\Big) u = 0,
        \end{gather*}
        then $u$ is purely outgoing of constant type. In case $\mu = \pm 1$, then $u$ has degree at most 1.
    \end{lemma}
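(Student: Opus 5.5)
The statement is local to the ends: it only concerns how $\frac{A}{2\sqrt{q}} - z(\mu)$ acts on the explicit profiles $p(d(o,v))\lambda^{d(o,v)}$ on each cusp and funnel of $\mathcal{G}\setminus\mathcal{H}$. So I will reduce everything to a one-variable computation. Write $n = d(o,v)$. On a cusp, $A$ acts on functions of $n$ (away from the root) by $[Af](n) = q f(n-1) + f(n+1)$ (up to the orientation convention of the labelling). On a funnel, $A$ acts on radial functions by $f(n-1) + q f(n+1)$, since each vertex has one parent and, as a graph of groups, $q$ children counted with stabilizer weights. In both cases, at vertices of $\mathcal{G}\setminus\mathcal{H}'$ the operator $L := \frac{A}{2\sqrt{q}} - z(\mu)$ acts as a constant-coefficient second-order difference operator in $n$. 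Its characteristic polynomial has roots $\sqrt{q}\mu^{\pm1}$ (cusp) and $(\sqrt{q}\mu)^{-1}, \mu/\sqrt{q}$ (funnel), as computed in Section~\ref{sec_model_resolvent}. The outgoing profile $\lambda^n$ is one of the two roots: $\lambda = \sqrt{q}/\mu$ on a cusp and $\lambda = (\mu\sqrt{q})^{-1}$ on a funnel.

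\textbf{Key step.} For a simple root $\lambda$ of the characteristic polynomial $\chi$, I will show that $L\big(p(n)\lambda^n\big) = \tilde p(n)\lambda^n$, where $\deg\tilde p = \deg p - 1$ exactly (and $\tilde p = 0$ if $p$ is constant). To see this, write $L(p\lambda^n) = \lambda^n\sum_k c_k\, p(n+k)\lambda^k$ and Taylor expand $p(n+k)$. The degree-$\deg p$ coefficient is $\chi(\lambda)=0$. The next coefficient is $p'$ times $\lambda\chi'(\lambda)$ (up to a nonzero normalisation), which is nonzero precisely because $\lambda$ is a simple root. The roots are simple unless $\mu^2=1$, which is exactly where the hypothesis $\mu\neq\pm1$ enters. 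Iterating, $L^J(p\lambda^n) = \hat p(n)\lambda^n$ with $\deg \hat p = \deg p - J$, so this vanishes iff $\deg p < J$.

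\textbf{Assembling.} Let $d$ be the maximum degree over the ends. If $d<J$, each end gives $L^J u = 0$ at vertices sufficiently far from $\mathcal{H}$; I take the statement's ``on $\mathcal{H}'$'' to mean off $\mathcal{H}'$, i.e. on the ends minus their first layer, where the recursion is valid. Conversely, if $d \geq J$, choose an end where $\deg p = d$. There $L^J u$ is a nonzero polynomial of degree $d-J$ times $\lambda^n$, which is nonzero at infinitely many vertices.

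\textbf{Main obstacle.} The delicate point is bookkeeping near the roots. Applying $L$ once only makes the formula valid one step further from $\mathcal{H}$, so $L^J$ is controlled only at distance $\geq J$. I will handle this by enlarging the exceptional locus by $J$ layers, which is harmless since the ends of the enlarged locus are still cusps and funnels carrying the same profiles. For a funnel I also need that $u$ is radial relative to the subfunnel root, which holds because the profile depends only on $d(o,v)$, and $d(o,v)$ is the distance to the subfunnel root plus a constant.

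\textbf{Consequences.} The ``in particular'' is the case $J=1$: it forces $d<1$, i.e. constant type. When $\mu=\pm1$, the cusp roots $\sqrt{q}\mu^{\pm1}$ coincide and form a double root. Then $p\lambda^n$ solves $Lu=0$ iff $\deg p\le1$, since $n\lambda^n$ is the second solution and $n^2\lambda^n$ is not, by the second-order Taylor coefficient $\chi''(\lambda)\neq0$. On funnels the roots stay distinct, so degree $0$ is forced there. Together these give degree at most $1$.
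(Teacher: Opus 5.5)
Your argument is the paper's. The drop-one coefficient $\lambda\chi'(\lambda)$ at the outgoing root equals $\frac12(\mu^{-1}-\mu)$ on both cusps and funnels. Up to sign and normalisation, this is the coefficient $c_d\,d(\mu-\mu^{-1})$ that the paper extracts from $f(x-1)\mu+f(x+1)\mu^{-1}-f(x)(\mu+\mu^{-1})$, and the iteration is the same.

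Your enlargement of the exceptional locus is more than bookkeeping; it is needed. For fixed $\mathcal{H}$, the reading ``$L^J u=0$ off $\mathcal{H}'$'' is false for $J\ge 2$. Take $d=0$ and $J=2$. At a child $c$ of a funnel-end root $r$ one gets $L^2u(c)=\frac{1}{2\sqrt q}\,Lu(r)$. This is generically nonzero, because $u$ is unconstrained on $\mathcal{H}$. What actually holds is vanishing outside the $J$-neighbourhood of $\mathcal{H}$. That is all Theorem~\ref{thm_resonant_state_good} uses, and the paper's proof passes over the point.

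There is one false step, in your treatment of $\mu=\pm1$: the funnel roots do \emph{not} stay distinct. The roots $\mu/\sqrt q$ and $(\mu\sqrt q)^{-1}$ coincide at $\mu=\pm1$, as your own earlier remark (the roots are simple unless $\mu^2=1$) already says. For instance, $n\,q^{-n/2}$ solves the funnel recursion at $\mu=1$, so degree $0$ is not forced on funnels.

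The final claim still holds if you run the same double-root argument on the funnels. There the second-order coefficient $c_{-1}\lambda^{-1}+c_1\lambda$ equals $\pm1\neq0$ at $\lambda=\pm q^{-1/2}$; for example $L(n^2q^{-n/2})=q^{-n/2}$ at $\mu=1$. So every end, cusp or funnel, carries a polynomial of degree at most $1$.
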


    \begin{proof}
        On each cusp $\mathcal{C}_i$ and each funnel $\mathcal{F}_j$ of $\mathcal{G} \setminus \mathcal{H}$, with polynomials $p_{\mathcal{C}_i}$ and $p_{\mathcal{F}_j}$, one immediately calculates that for vertex $k \in \mathcal{C}_i$ and $v \in \mathcal{F}_j$ with $d(o_{\mathcal{F}_j}, v) = d$, we have
        \begin{align*}
            \Big[\Big(\frac{A}{2 \sqrt{q}} - z(\mu)\Big) u\Big](k) &= \frac{1}{2}(p_{\mathcal{C}_i}(k-1) \mu + p_{\mathcal{C}_i}(k+1) \mu^{-1} - p_{\mathcal{C}_i}(k)(\mu + \mu^{-1})) \Big(\frac{\sqrt{q}}{\mu}\Big)^k \\
            \Big[\Big(\frac{A}{2 \sqrt{q}} - z(\mu)\Big) u\Big](v) &= \frac{1}{2}(p_{\mathcal{F}_i}(d-1) \mu + p_{\mathcal{F}_i}(d+1) \mu^{-1} - p_{\mathcal{F}_i}(d)(\mu + \mu^{-1})) \Big(\mu \sqrt{q}\Big)^{-d}.
        \end{align*}
        In general, if $f(x)$ is a polynomial whose highest order term is of the form $c_d x^d$, then $g(x) = f(x-1) \mu + f(x+1) \mu^{-1} - f(x)(\mu + \mu^{-1})$ is degree at most $d-1$, and the coefficient of $x^{d-1}$ in $g(x)$ is $c_d d (\mu - \mu^{-1})$ which is non-zero if $\mu \neq \pm 1$. Therefore, in this case, $g(x)$ is a polynomial of degree exactly $d-1$. This shows inductively that if $\mu \neq \pm 1$ and if $u$ is purely outgoing of degree $d$, then $(\frac{A}{2 \sqrt{q}} - z(\mu))^J u$ is purely outgoing of degree $d - J$ (with the identically zero function viewed as the unique polynomial of degree $-k$ for every $k \geq 1$). As the only purely outgoing solution which is identically zero on $\mathcal{H}'$ is the function for which all of the associated polynomials are identically zero, the result follows. 
        
        The case of $\mu = \pm 1$ easily follows from direct analysis.
    \end{proof}

    \begin{corollary}
      If $0 \neq u$ is purely outgoing with parameter $\mu \neq \pm 1$ and 
      \begin{equation*}
        \left(\frac{A_{\mathcal{G}}}{2 \sqrt{q}} - z(\mu)\right)u = 0,
      \end{equation*}
      then $\mu$ is a resonance.
    \end{corollary}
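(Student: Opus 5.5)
We argue by contradiction, using the uniqueness statement of Proposition~\ref{proposition:cq0ku2e4d0}. Let $0 \neq u$ be purely outgoing with parameter $\mu \neq \pm 1$ and exceptional locus $\mathcal{H}$, and suppose $\big(\frac{A_{\mathcal{G}}}{2\sqrt{q}} - z(\mu)\big)u = 0$. By Lemma~\ref{lemma:cq4flfr8j6} (the case $J=1$), $u$ is automatically purely outgoing of constant type. Assume, for contradiction, that $\mu$ is not a resonance, so that $R_{\mathcal{G}}(\mu)$ is defined (holomorphic at $\mu$) as an operator $q^{-Nd(o,\cdot)}\ell^2 \to q^{Nd(o,\cdot)}\ell^2$ for $N$ large enough that $|\mu| > q^{\frac12 - N}$.

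We apply Proposition~\ref{proposition:cq0ku2e4d0} with $f = 0 \in C_c(\mathcal{V})$. It asserts that $R_{\mathcal{G}}(\mu)0 = 0$ is the \emph{unique} purely outgoing solution of constant type of $\big(\frac{A_{\mathcal{G}}}{2\sqrt{q}} - z(\mu)\big)v = 0$. Since $u$ is such a solution, $u = 0$, a contradiction. Concretely, the uniqueness argument there is the identity
\[
u = R_{\mathcal{G}}(\mu)\Big(\frac{A}{2\sqrt{q}} - z(\mu)\Big)u = R_{\mathcal{G}}(\mu)\,0 = 0,
\]
so its validity is the real content of the proof.

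The main obstacle is justifying the first equality, $R_{\mathcal{G}}(\mu)\big(\frac{A}{2\sqrt q}-z(\mu)\big)u = u$, because $u$ is not in the domain $q^{-Nd(o,\cdot)}\ell^2$; on funnels it decays only like $(\mu\sqrt q)^{-d}$ against exponential volume growth $q^d$. We verify it as follows.
\begin{enumerate}[(i)]
\item Set $g := \big(\frac{A}{2\sqrt q}-z(\mu)\big)u$, which is compactly supported. Then $w := u - R_{\mathcal{G}}(\mu)g$ is purely outgoing of constant type, since Proposition~\ref{proposition:cq0kuqgmlp} together with Lemma~\ref{lemma:cq4fkr0wif} shows $R_{\mathcal{G}}(\mu)g$ is. Moreover, $w$ solves the homogeneous equation, using $\big(\frac{A}{2\sqrt q}-z\big)R_{\mathcal{G}}(\mu)g = g$ for $g \in C_c$, which holds for $|\mu|$ large and persists by meromorphic continuation.
\item To show $w = 0$, pick $\mu_1$ with $|\mu_1|$ large and compare $w$ with the decaying solution $w_{\mu_1} := \chi w - R_{\mathcal{G}}(\mu_1)\big(\frac{A}{2\sqrt q}-z(\mu_1)\big)(\chi w)$, where $\chi$ is a cutoff. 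Alternatively, argue directly: $w$ is determined by finitely many constants, namely its values on $\mathcal{H}'$ and its cusp and funnel coefficients, so the homogeneous outgoing problem reduces to a finite linear system in $\mu$.
\item The determinant of this finite system is a meromorphic function in $\mu$ whose zeros are exactly the poles of $(I+K(\mu))^{-1}$ arising through $P(\mu)$. Hence a nontrivial solution at a non-resonant $\mu$ contradicts invertibility of $I+K(\mu)$ restricted to $C(\mathcal{L}'')$.
\end{enumerate}
Since the proof of Proposition~\ref{proposition:cq0ku2e4d0} in the paper already uses the identity directly, the corollary follows from it by taking $f=0$.
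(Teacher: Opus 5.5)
Your argument is the paper's: Lemma~\ref{lemma:cq4flfr8j6} makes $u$ of constant type, and since $0$ is a second purely outgoing constant-type solution of the homogeneous equation, the uniqueness in Proposition~\ref{proposition:cq0ku2e4d0} (with $f=0$) forces $\mu$ to be a resonance. You are right that the identity $u=R_{\mathcal{G}}(\mu)\bigl(\frac{A}{2\sqrt q}-z(\mu)\bigr)u$ behind that uniqueness needs justification, since the paper's proof takes it for granted, but your steps (i)--(iii) do not supply it, and the following is what does: here $g=0$, so (i) just restates the problem, and (iii) asserts the key claim while also presuming that non-resonance makes $I+K(\mu)$ invertible, which is false (at $q\mu^2=1-q$ one has $P(\mu)g=0$ for $g$ the indicator of the children of a funnel root, so $I+K(\mu)=\bigl(\frac{A}{2\sqrt q}-z(\mu)\bigr)P(\mu)$ is singular there even for the tree, whose only resonances are $\pm\frac{1}{\sqrt q}$); the deformation from the proof of Theorem~\ref{thm_resonant_state_good} closes the gap: let $u_{\mu'}$ agree with $u$ on $\mathcal{H}$ and be outgoing with parameter $\mu'$ and the same constants, so that $f(\mu'):=\bigl(\frac{A}{2\sqrt q}-z(\mu')\bigr)u_{\mu'}$ is a holomorphic family in $C(\mathcal{H}')$ with $f(\mu)=0$, while $u_{\mu'}\in\ell^2(\mathcal{V})$ for $|\mu'|$ large gives $u_{\mu'}=R_{\mathcal{G}}(\mu')f(\mu')$ there, an identity of meromorphic families in $q^{Nd(o,\cdot)}\ell^2(\mathcal{V})$ that continues to $\mu'=\mu$ and yields $u=R_{\mathcal{G}}(\mu)0=0$ whenever $\mu$ is not a resonance.
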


    \begin{proof}
      Since $0$ is a purely outgoing solution of constant type by Lemma~\ref{lemma:cq4flfr8j6}, we have found two purely outgoing solutions so that Proposition~\ref{proposition:cq0ku2e4d0} implies that $\mu$ is a resonance.
    \end{proof}

    \begin{definition}\label{definition:cq4hv1yzhj}
      If $\mu_0$ is a resonance we write, for $\mu$ in a small neighborhood of $\mu_0$,
      \begin{equation*}
        R_{\mathcal{G}}(\mu) = \sum_{
          j = 1
        }^J \frac{A_{-j}}{(\mu - \mu_0)^j} + R_\textnormal{hol}(\mu),
      \end{equation*}
      where $A_j$ are finite-rank operators and $R_{\textnormal{hol}}(\mu)$ is holomorphic. Recall that we define
      \begin{equation*}
        \mathrm{grst}(A_{\mathcal{G}}, \mu_0) \coloneqq \textnormal{span}(\mathrm{im} A_{-1} \cup \dots \cup \mathrm{im} A_{-J})
      \end{equation*}
      the space of \emph{generalized resonant states at $\mu_0$}. We also define
      \begin{equation*}
        \mathrm{rst}(A_{\mathcal{G}}, \mu_0) \coloneqq \textnormal{grst}(A_{\mathcal{G}}, \mu_0) \cap \ker\left(\frac{A_{\mathcal{G}}}{2 \sqrt{q}} - z(\mu_0)\right)
      \end{equation*}
      the space of \emph{resonant states at $\mu_0$.} Note that, as each $A_{-j}$ has finite rank, $\mathrm{im}_{C_c(\mathcal{V})} A_{-j} = \mathrm{im}_{q^{- N d(o, \cdot)} \ell^2(\mathcal{V})} A_{-j}$.
    \end{definition}

    Before discussing the next proposition, we wish to motivate it by discussing the structure of the resolvent of an operator $B$ on a finite-dimensional space near an eigenvalue. Suppose $\lambda_0$ is an eigenvalue. Then near $\lambda_0$ we have an expansion of the form
    \begin{gather*}
        (\lambda - B)^{-1} = \sum_{j = 1}^J \frac{(B - \lambda_0)^{j-1} \Pi_{\lambda_0}}{(\lambda - \lambda_0)^j} + R(\lambda),
    \end{gather*}
    where $R(\lambda)$ is a holomorphic family of operators near $\lambda_0$, $\Pi_{\lambda_0}$ is the projection onto the $\lambda_0$ generalized eigenspace (which necessarily commutes with $B$ and thus with $B - \lambda_0$), and $J$ is such that $(B - \lambda_0)^J \Pi_{\lambda_0} = 0 = ((B - \lambda_0) \Pi_{\lambda_0})^J$. Note that in particular we have that $(B - \lambda_0) \Pi_{\lambda_0}$ is nilpotent. This result can be proved easily using the Jordan normal form. See e.g.\@ \cite{campbell_daners}. In light of the below proposition, the above expansion motivates generalized resonant states as the analogues of generalized eigenfunctions, and resonant states as the analogues of eigenfunctions.

    \subsection{Generalized resonant states: $\mu_0 \neq \pm 1$}

    \begin{proposition}\label{prop:cq4hwetoj6} 
      Let $\mu_0 \in \mathbb{C}^\times \setminus \{\pm 1\}$ be a resonance. Then there exists a neighborhood $U$ of $\mu_0$ such that
      \begin{gather*}
          R_{\mathcal{G}}(\mu) = \sum_{j = 1}^J \frac{A_{-j}}{(\mu - \mu_0)^j} + R_{\textnormal{hol}}(\mu) = \sum_{j = 1}^J \frac{B_{-j}}{(z(\mu) - z(\mu_0))^j} + R_\textnormal{hol}'(\mu)
      \end{gather*}
      where $R_{\textnormal{hol}}(\mu)$ and $R_{\textnormal{hol}}'(\mu)$ are holomorphic and
    \begin{enumerate}
        \item We have $B_{-j} = (\frac{A}{2 \sqrt{q}} - z(\mu_0))^{j-1} B_{-1}$.
        \item $-B_{-1}$ is a (finite-rank) projection operator.
        \item $\textnormal{span}(\textnormal{im}(B_{-1}) \cup \dots \cup \textnormal{im}(B_{-J})) = \textnormal{span}(\textnormal{im}(A_{- 1}) \cup \dots \cup \textnormal{im}(A_{- J}))$.
        \end{enumerate}
      \end{proposition}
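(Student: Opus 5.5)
Since $\mu_0 \neq \pm 1$, the map $\mu \mapsto z(\mu)$ has nonzero derivative $\frac12(1-\mu_0^{-2})$ at $\mu_0$. It is therefore a biholomorphism from a small neighborhood $U$ of $\mu_0$ onto a neighborhood of $z_0 := z(\mu_0)$. I would first use this to change variables. Writing $\mu = \mu(z)$ for the local inverse, $\tilde R(z) := R_{\mathcal G}(\mu(z))$ is a finitely meromorphic family in $z$ near $z_0$, viewed as operators $q^{-Nd(o,\cdot)}\ell^2 \to q^{Nd(o,\cdot)}\ell^2$. Its Laurent expansion $\sum_{j=1}^{J'} B_{-j}(z-z_0)^{-j} + \text{hol}$ has the same pole order $J'=J$, because $z-z_0 = (\mu-\mu_0)h(\mu)$ with $h$ holomorphic and nonvanishing.

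Item (3) then follows by re-expanding. Each $A_{-k}$ is a finite linear combination of $B_{-j}$ with $j \ge k$, obtained by expanding $(z(\mu)-z_0)^{-j} = (\mu-\mu_0)^{-j}h(\mu)^{-j}$ into Laurent series. Conversely, each $B_{-k}$ is a combination of $A_{-j}$ with $j \ge k$, using the inverse substitution. This gives both inclusions of spans.

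For items (1) and (2), I would work with the identities
\[
\Big(\tfrac{A}{2\sqrt q}-z\Big)\tilde R(z) = I, \qquad \tilde R(z)\Big(\tfrac{A}{2\sqrt q}-z\Big) = I \text{ on } C_c(\mathcal V).
\]
These hold for $|z|$ large, where $\tilde R(z)$ is the $\ell^2$ resolvent. They extend to $U$ by meromorphic continuation, since for $f \in C_c$ both sides are meromorphic with values in $C(\mathcal V)$ and $A$ acts locally. Write $\frac{A}{2\sqrt q}-z = (\frac{A}{2\sqrt q}-z_0) - (z-z_0)$ and compare coefficients of $(z-z_0)^{-j}$ for $j \ge 1$. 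This gives $(\frac{A}{2\sqrt q}-z_0)B_{-j} = B_{-j-1}$ and $(\frac{A}{2\sqrt q}-z_0)B_{-J}=0$. Induction yields (1), and also the intertwining relation $B_{-j}(\frac{A}{2\sqrt q}-z_0) = (\frac{A}{2\sqrt q}-z_0)B_{-j}$ on $C_c$. The $(z-z_0)^0$ coefficient gives $(\frac{A}{2\sqrt q}-z_0)R_0 - B_{-1} = I$, where $R_0$ is the constant term of the holomorphic part.

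The main obstacle is (2), namely $B_{-1}^2 = -B_{-1}$. There are two technical issues. First, the composition $B_{-1}B_{-1}$ must make sense: $B_{-1}$ maps into $q^{Nd(o,\cdot)}\ell^2$, not into the domain $q^{-Nd(o,\cdot)}\ell^2$. Second, we need a resolvent identity in the continued region. My plan is to use the structure already established. By Lemma \ref{lemma:cq4fkr0wif} and Proposition \ref{proposition:cq0kuqgmlp}, the image of $B_{-1}$ (equivalently of the $A_{-k}$) consists of finite-rank, purely outgoing functions whose exceptional locus is contained in $\mathcal L''$. Since $\mu(z)$ stays in $U$, with $|\mu_0| > q^{\frac12 - N}$, these functions decay like $q^{-(\frac12+\cdot)d}$ on funnels and grow at most like $(\sqrt q/|\mu|)^{d}$ on cusps.

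To keep domains clean, I would derive the idempotency via a contour integral rather than by composing directly. Take $-B_{-1} = \frac{1}{2\pi i}\oint_{|z-z_0|=\epsilon} (z-\tfrac{A}{2\sqrt q})^{-1}\,dz$, and evaluate the composition only on $\mathrm{im}\,B_{-1}$. The key input is that for $u \in \mathrm{im}\,B_{-1}$, the function $(\frac{A}{2\sqrt q}-z_0)^J u = 0$ is compactly supported. Then $\tilde R(z)u$ can be computed from the nilpotent relations via Proposition \ref{proposition:cq0ku2e4d0}, applied with uniqueness of purely outgoing solutions at generic $z$ near $z_0$:
\[
\tilde R(z)u = -\sum_{k\ge0}(z-z_0)^{-k-1}\Big(\tfrac{A}{2\sqrt q}-z_0\Big)^k u .
\]
To justify this, apply $(\frac{A}{2\sqrt q}-z)$ to the right-hand side and check that it returns $u$, and check that the right-hand side is purely outgoing of parameter $\mu(z)$. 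This second check is the delicate point, since $u$ is outgoing of parameter $\mu_0$, not $\mu(z)$. I would get around it by instead applying $B_{-1}$ to the coefficient identity $(\frac{A}{2\sqrt q}-z_0)R_0 - B_{-1} = I$ paired with the commuting relation, which yields $B_{-1}^2 = -B_{-1}$ on $C_c$ directly. I expect this bookkeeping to be the hardest part of the proof.
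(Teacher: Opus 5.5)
Your treatment of (1) and (3) is the same as the paper's. You invert $z(\mu)$ locally at $\mu_0\neq\pm1$, compare Laurent coefficients in $\bigl(\frac{A}{2\sqrt q}-z_0-(z-z_0)\bigr)\tilde R(z)=I$, and re-expand $(z-z_0)^{-j}$ in powers of $\mu-\mu_0$ and conversely.

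The gap is in (2). Put $D=\frac{A}{2\sqrt q}-z_0$ and let $R_0$ be the constant term of the holomorphic part. Left-multiplying $DR_0-B_{-1}=I$ by $B_{-1}$ and using $B_{-1}D=B_{-2}$ gives
\[
B_{-1}^2=-B_{-1}+B_{-2}R_0 ,
\]
not $B_{-1}^2=-B_{-1}$. The extra term disappears for a simple pole. In general it cannot be removed using identities at the single point $z_0$. Indeed $B_{-2}R_0=B_{-1}DR_0=B_{-1}+B_{-1}^2$, so $B_{-2}R_0=0$ is equivalent to what you want, and the argument is circular. Multiplying on the right instead produces the same tautology with $R_0B_{-2}$.

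There is also a domain problem in this step. It applies $B_{-1}$ to $\mathrm{im}\,R_0\subset q^{Nd(o,\cdot)}\ell^2(\mathcal V)$ and uses $B_{-1}D=DB_{-1}$ there. You only established that commutation on $C_c(\mathcal V)$, and $B_{-1}$ is not defined outside $q^{-Nd(o,\cdot)}\ell^2(\mathcal V)$.

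The missing ingredient is a relation between the resolvent at two different parameters. Your contour-integral idea was on the right track, but it needs the resolvent identity $R(z)-R(w)=(z-w)R(z)R(w)$, not a formula for $\tilde R(z)u$. Insert the resolvent identity into
\[
B_{-1}^2=(2\pi i)^{-2}\oint\oint R(z)R(w)\,dz\,dw
\]
over two nested small circles around $z_0$; this yields $B_{-1}^2=-B_{-1}$. This is the standard Riesz-projection argument the paper invokes for (2) by citing Dyatlov--Zworski.

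The domain problem you raised is genuine, and that citation does not settle it either.
\begin{itemize}
\item If $|\mu_0|>1$, the images of the $B_{-j}$ lie in $\ell^2(\mathcal V)$. The continued resolvent is then the $\ell^2$ resolvent near $z_0$, and the usual argument applies verbatim.
\item If $|\mu_0|\le 1$, consider a purely outgoing function of parameter $\mu_0$ whose polynomial $p$ is nonzero on some end. Its $\ell^2$-mass at distance $d$ on that end is of order $|p(d)|^2|\mu_0|^{-2d}$, on funnels and on cusps alike. So it lies in no $q^{-Nd(o,\cdot)}\ell^2(\mathcal V)$ with $N\ge 0$.
\end{itemize}
In the second case neither $B_{-1}^2$ nor $R(z)R(w)$ is a literal composition. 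To make (2) meaningful you must first define these products, for example by analytically continuing the explicit geometric sums over the funnels and cusps, and only then run the contour argument.
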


      \begin{proof}
        Since $z'(\mu_0) \neq 0$ as long as $\mu_0 \neq \pm 1$, the inverse function theorem for holomorphic functions implies that there exists a neighborhood $U$ of $\mu_0$ such that $z(\mu): U \to V$ is biholomorphic with inverse $\mu(z): V \to U$. 
        
        Property (1) now follows immediately from the identity $(\frac{A}{2 \sqrt{q}} - z(\mu_0) - (z(\mu) - z(\mu_0)) R(\mu) = I$ by comparing coefficients in the Laurent series on both sides. Property (2) follows from standard resolvent formalism arguments; see e.g.\@ the proof of Theorem C.9 in \cite{dyatlov_zworski}.

        Property (3) essentially just follows by analyzing how Laurent series expansions change under holomorphic change of coordinates. We can write $\mu(z) - \mu_0 = \sum_{k = 1}^\infty a_k (z - z(\mu_0))^k$ with $a_1 \neq 0$. We have that
        \begin{gather*}
            \frac{1}{(\mu - \mu_0)^j} = \frac{1}{(z - z(\mu_0))^j} \frac{1}{(a_1 + a_2 (z - z(\mu_0)) + \dots)^j}, 
        \end{gather*}
        and $\frac{1}{(a_1 + a_2 (z - z(\mu_0)) + \dots)^j}$ is holomorphic near $z(\mu_0)$. We thus see that $A_{-j}$ can be expressed as a linear combination of $B_{-k}$ with $k \geq j$ and, by reversing the roles of $z$ and $\mu$, we see that $B_{-j}$ can be expressed as a linear combination of $A_{-k}$ with $k \geq j$. This shows Property (3).
    \end{proof}

    We can characterize the generalized resonant states as follows:

    \begin{theorem} \label{thm_resonant_state_good}
      If $\mu_0 \in \mathbb{C}^\times \setminus \{\pm 1\}$ is a resonance, then
      \begin{equation*}
        \mathrm{grst}(A_{\mathcal{G}}, \mu_0 ) = \left\{ u \in C(\mathcal{V}) \mid u \text{ purely outgoing},\, \exists \ J \in \mathbb{N} \colon \left(\frac{A_{\mathcal{G}}}{2 \sqrt{q}} - z(\mu_0)\right)^J u = 0\right\}. \end{equation*}
    \end{theorem}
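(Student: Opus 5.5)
We prove the two inclusions separately, working throughout in the coordinate $z$ near $z_0 := z(\mu_0)$ via Proposition~\ref{prop:cq4hwetoj6}. Its part (3) gives $\mathrm{grst}(A_{\mathcal{G}}, \mu_0) = \mathrm{span}\bigcup_j \mathrm{im}\, B_{-j}$. Its part (1) gives $B_{-j} = (\frac{A}{2\sqrt{q}} - z_0)^{j-1} B_{-1}$, so this span is simply $\mathrm{im}(B_{-1})$. Since $-B_{-1}$ is a finite-rank projection, and finite rank lets us test against $C_c(\mathcal{V})$, we get $\mathrm{grst} = \{B_{-1} f : f \in C_c(\mathcal{V})\}$.

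\emph{Inclusion $\subseteq$.} Take $f \in C_c(\mathcal{V})$. Comparing coefficients in $(\frac{A}{2\sqrt{q}} - z_0 - (z - z_0)) R = I$ at order $(z-z_0)^{-J-1}$ gives $(\frac{A}{2\sqrt{q}} - z_0) B_{-J} = 0$. Hence $(\frac{A}{2\sqrt{q}} - z_0)^J B_{-1} f = 0$, which is the annihilation condition.

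For the outgoing property, write $R_{\mathcal{G}}(\mu) f = P(\mu)\tilde f_\mu$ with $\tilde f_\mu = (I+K(\mu))^{-1} f$. By Lemma~\ref{lemma:cq4fkr0wif}, every Laurent coefficient of $\tilde f_\mu$ at $\mu_0$ is supported in the fixed finite set $\mathrm{supp} f \cup \mathcal{L}''$. The Laurent coefficients of $R_{\mathcal{G}}(\mu) f$ are finite sums $\sum P_i g_k$, where the $g_k$ are compactly supported with support in that fixed set. We may take the exceptional locus $\mathcal{H}$ to be the smallest connected subgraph containing $\mathcal{L}$ and this set. Proposition~\ref{proposition:cq0kuqgmlp} then shows each $P_i g_k$ is purely outgoing of bounded degree with respect to this common $\mathcal{H}$. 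Note that $\mu_0 = \pm q^{-1/2}$ is allowed: there $P$ has a simple pole, but part (2) of that proposition still gives outgoing terms. A finite linear combination of outgoing functions with common $\mathcal{H}$ is outgoing, so each $A_{-j} f$ is outgoing, and hence so is $B_{-1} f$ by part (3).

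\emph{Inclusion $\supseteq$.} Let $u$ be purely outgoing with $(\frac{A}{2\sqrt{q}} - z_0)^J u = 0$. By Lemma~\ref{lemma:cq4flfr8j6}, $u$ has degree $d < J$. We argue by induction on $d$, guided by the finite-dimensional Jordan picture. The main step is to produce, for $\mu$ near $\mu_0$, a holomorphic family $u_\mu$ of purely outgoing functions with parameter $\mu$, such that $u_{\mu_0} = u$ and $f_\mu := (\frac{A}{2\sqrt{q}} - z(\mu)) u_\mu$ is compactly supported.

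For $d = 0$, define $u_\mu$ by keeping $u$ on $\mathcal{H}'$ and replacing the factors $(\sqrt{q}/\mu_0)^{k}$ by $(\sqrt{q}/\mu)^{k}$ on the cusps, and $(\mu_0\sqrt{q})^{-k}$ by $(\mu\sqrt{q})^{-k}$ on the funnels. Then $f_\mu$ is supported on $\mathcal{H}'$, and $f_{\mu_0} = 0$.

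By Proposition~\ref{proposition:cq0ku2e4d0}, for $\mu \neq \mu_0$ near $\mu_0$ we have $u_\mu = R_{\mathcal{G}}(\mu) f_\mu$. Expand $f_\mu = \sum_{k \ge 1} f_k (z - z_0)^k$ and take the constant term in $z - z_0$. This gives $u = \sum_{k} B_{-k} f_k + (\text{holomorphic part at } z_0 \text{ applied to } f_0 = 0)$, so $u \in \mathrm{grst}$.

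For higher degree $d$, use the same deformation, now also letting the polynomial coefficients depend on $\mu$. The only requirement is that $f_\mu$ be compactly supported, so nothing further needs tuning. Then $f_\mu$ vanishes to order $\ge 1$ at $\mu_0$ only if $(\frac{A}{2\sqrt{q}} - z_0) u = 0$. In general, $f_{\mu_0} = (\frac{A}{2\sqrt{q}} - z_0) u =: v$ is outgoing of degree $d-1$, so by induction $v \in \mathrm{grst}$.

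The constant term of $R_{\mathcal{G}}(\mu) f_\mu$ is then $\sum_k B_{-k} f_k + R'_{\mathrm{hol}}(\mu_0) v$. The main obstacle is to show $R'_{\mathrm{hol}}(\mu_0) v \in \mathrm{grst}$. For this we use that $\mathrm{grst}$ is invariant under $R'_{\mathrm{hol}}(\mu_0)$, since $R'_{\mathrm{hol}}$ commutes with $B_{-1}$ by the standard projection calculus in Proposition~\ref{prop:cq4hwetoj6}(2). The degree of the deformed polynomial parts might not be preserved, but compact support of $f_\mu$ is all that Proposition~\ref{proposition:cq0ku2e4d0} needs, and this holds whenever $u_\mu$ is outgoing with parameter $\mu$ and constant type. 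To stay within constant type, we instead deform using differences: replace $u$ by combinations of $\partial_\mu^m$ of constant-type families, which realize polynomial prefactors as in Proposition~\ref{proposition:cq0kuqgmlp}.
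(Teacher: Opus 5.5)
Your inclusion $\subseteq$ is fine. Using Lemma~\ref{lemma:cq4fkr0wif} to see that every Laurent coefficient of $(I+K(\mu))^{-1}f$ is supported in the fixed finite set $\mathrm{supp} f\cup\mathcal{L}''$ is, if anything, cleaner than treating $P_{-1}B_0$ separately at $\mu_0=\pm q^{-1/2}$. (Minor: $(\frac{A}{2\sqrt q}-z_0)B_{-J}=0$ comes from the coefficient of $(z-z_0)^{-J}$.)

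The gap is in $\supseteq$. In your inductive step, the deformed family $u_\mu$ is outgoing with parameter $\mu$ and has polynomial prefactors of degree $\ge d\ge 1$ for $\mu$ near $\mu_0$. This is forced, since the coefficients are holomorphic and $u_{\mu_0}=u$. By Lemma~\ref{lemma:cq4flfr8j6}, $f_\mu=(\frac{A}{2\sqrt q}-z(\mu))u_\mu$ is then outgoing of degree $\ge d-1\ge 0$ on the ends. So it is not compactly supported, and for $|\mu_0|\le 1$ it is not even in $q^{-Nd(o,\cdot)}\ell^2(\mathcal{V})$. Moreover $u_\mu$ is not of constant type. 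Hence Proposition~\ref{proposition:cq0ku2e4d0} does not apply, $R_{\mathcal{G}}(\mu)f_\mu$ is undefined, and the identity $u_\mu=R_{\mathcal{G}}(\mu)f_\mu$ behind your constant-term computation is unavailable.

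Your repair cannot be formulated either. $R'_{\textnormal{hol}}(\mu_0)$ acts only on $q^{-Nd(o,\cdot)}\ell^2(\mathcal{V})$, whereas $v=(\frac{A}{2\sqrt q}-z_0)u$ and the elements of $\mathrm{grst}$ are outgoing functions outside that space. So ``$\mathrm{grst}$ is invariant under $R'_{\textnormal{hol}}(\mu_0)$'' has no meaning here: the finite-dimensional Riesz-projection calculus does not transfer, because $R_{\mathcal{G}}$ maps between different weighted spaces.

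The last suggestion (derivatives of constant-type families $w_\mu=R_{\mathcal{G}}(\mu)g_\mu$) is not carried out. Its Taylor coefficients at $\mu_0$ contain the holomorphic Laurent coefficients of $R_{\mathcal{G}}$ applied to the low-order Taylor coefficients of $g_\mu$, and nothing forces these into $\mathrm{grst}$.

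Even the base case has a hole. For $d=0$ you assert $f_{\mu_0}=0$, but you only know $(\frac{A}{2\sqrt q}-z_0)^J u=0$, not $(\frac{A}{2\sqrt q}-z_0)u=0$. This is true (pair the compactly supported vector $(\frac{A}{2\sqrt q}-z_0)u$ with itself and use self-adjointness), but you give no argument.

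The missing idea is to apply the whole power at once, which removes the induction entirely. Keep the polynomials of $u$ fixed, let $u_\mu$ be the corresponding outgoing family with parameter $\mu$, and put $f(\mu):=(\frac{A}{2\sqrt q}-z(\mu))^M u_\mu$, where $M$ is the annihilation exponent.

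\begin{itemize}
\item Since $\deg u<M$ and each factor lowers the degree (Lemma~\ref{lemma:cq4flfr8j6}), $f(\mu)$ is supported in a fixed finite set, depends holomorphically on $\mu$, and satisfies $f(\mu_0)=0$.
\item For $|\mu|$ large, $u_\mu\in\ell^2(\mathcal{V})$, so $u_\mu=R_{\mathcal{G}}(\mu)^M f(\mu)$.
\item This identity continues meromorphically once $R_{\mathcal{G}}(\mu)^M$ is read as $\frac{1}{(M-1)!}\partial_z^{M-1}R_{\mathcal{G}}(\mu(z))$; here $z$ is a local coordinate because $\mu_0\neq\pm1$. Its singular part at $z_0$ is a combination of the $B_{-j}$; see \eqref{eqn_power_resolvent}.
\item Take the $(z-z_0)^0$ coefficient of both sides. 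The holomorphic part contributes its value at $z_0$ applied to $f(\mu_0)=0$. The singular part contributes elements of $\mathrm{span}\bigcup_j \mathrm{im}\,B_{-j}$.
\end{itemize}

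Hence $u=u_{\mu_0}\in\mathrm{grst}(A_{\mathcal{G}},\mu_0)$. This is the paper's argument.
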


    \begin{proof}
      Let us first prove that the left hand side is contained in the right hand side. Let $u \in \mathrm{grst}(A_{\mathcal{G}}, \mu_0)$ and recall that $R_{\mathcal{G}}(\mu) = P(\mu)(I + K(\mu))^{-1}$. We can write
      \begin{equation*}
        R_{\mathcal{G}}(\mu)= P(\mu)(I + K(\mu))^{-1} = \left(\sum_{
            k = -1
          }^{\infty}P_{k}(\mu - \mu_0)^k\right) \cdot\left(\sum_{
            j = 1
          }^K \frac{B_{-j}}{(\mu - \mu_0)^j}+ B_{\textnormal{hol}}(\mu)\right)
      \end{equation*}
      for some finite-rank operators $B_j \colon C_c(\mathcal{V}) \rightarrow C_c(\mathcal{V})$ and a holomorphic operator $B_{\textnormal{hol}}(\mu)$; let $B_0$ be the constant term in the power series expansion of $B_{\textnormal{hol}}(\mu)$ at $\mu_0$. Since $R_{\mathcal{G}}(\mu)$ is meromorphic we may write
      \begin{equation*}
        R_{\mathcal{G}}(\mu)= \sum_{
          j = 1
        }^J \frac{A_{-j}}{(\mu - \mu_0)^j}+ R_{\textnormal{hol}}(\mu).
      \end{equation*}
      We get that
        \begin{equation*}
            A_{-j} = \sum_{k = -1}^j P_k B_{-j-k}.
        \end{equation*}
        In case $\mu_0 \notin \{\pm 1, \pm \frac{1}{\sqrt{q}}\}$, we have that $P_{-1} = 0$, and we immediately get that every element in the image of $A_{-j}$ is purely outgoing by Proposition~\ref{proposition:cq0kuqgmlp}.

        Suppose $\mu_0 \in \{\pm 1, \pm \frac{1}{\sqrt{q}}\}$. It is clear that the image of every $A_{-k}$ with $k \geq 2$ is purely outgoing. However, for $k = -1$, we must consider the term $P_{-1} B_0$, and $B_0$ does not necessarily map compactly supported functions to compactly supported functions. In case $\mu_0 = \pm \frac{1}{\sqrt{q}}$, then $P_{-1}$ only arises from the part of $P(\mu)$ arising from the funnels. From the explicit formula \eqref{eq:cq4enzzhjz}, it is clear that the image of $P_{-1}$ consists of functions which are constant on the funnels in case $\mu_0 = \frac{1}{\sqrt{q}}$, or else are constant times $(-1)^{d(o, x)}$ on the funnels in case $\mu_0 = -\frac{1}{\sqrt{q}}$. These are exactly the outgoing conditions for such values of $\mu_0$. We can perform a similar analysis in case $\mu_0 = \pm 1$. We thus see that in all cases we have that $\textnormal{im}(A_{-j})$ is purely outgoing.

        We now consider the other direction. For this we will use in a crucial way that $\mu_0 \neq \pm 1$. Let $u$ be purely outgoing with parameter $\mu_0$ and annihilated by $(\frac{A_{\mathcal{G}}}{2 \sqrt{q}} - z(\mu_0))^M$ for some $M$. Suppose the exceptional locus of $u$ is $H$. Let $u_\mu$ denote the function which agrees with $u$ on $\mathcal{H}$, and is purely outgoing with parameter $\mu$ and with the same polynomials on each funnel and cusp as $u$.

      Then 
      \begin{equation*}
        f(\mu) \coloneqq\left(\frac{A_{\mathcal{G}}}{2 \sqrt{q}} - z(\mu)\right)^M u_\mu  
      \end{equation*}
      is finitely supported by Lemma~\ref{lemma:cq4flfr8j6} (which crucially uses the fact that $\mu_0 \neq \pm 1$) and $f(\mu_0) = 0$ so that we may write $f(\mu) =(z(\mu) - z(\mu_0))^k \tilde{f}(\mu )$ for some function $\tilde{f}$ with $\tilde{f}(\mu_0) \neq 0$ and some $k$.

      Let $R_{\mathcal{G}}(\mu)^M$ be defined via meromorphic continuation as the $M$th power of $R_{\mathcal{G}}(\mu)$ on the locus of $\mu$ on which it makes sense to take an $M$th power (i.e.\@ $|\mu| \gg 1$). Suppose near $\mu_0$ we have that
      \begin{gather*}
          R_{\mathcal{G}}(\mu) = \sum_{j = -J}^\infty B_j (z(\mu) - z(\mu_0))^j.
      \end{gather*}
      We claim then that near $\mu_0$,
      \begin{align}
        R_{\mathcal{G}}(\mu)^M &= \sum_{j = -J}^\infty \binom{j}{M-1} B_j (z(\mu) - z(\mu_0))^{j - (M-1)} \label{eqn_power_resolvent} \\
                               &= \sum_{\ell = -J-(M-1)}^{-1-(M-1)} \binom{\ell+(M-1)}{M-1} B_{\ell+(M-1)} (z(\mu) - z(\mu_0))^{\ell} + R'_{\textnormal{hol}}(\mu), \notag
      \end{align}
      where $R'_{\textnormal{hol}}(\mu)$ is holomorphic at $\mu_0$. This essentially follows from the identity
    \begin{gather*}
        \frac{1}{(1 - z)^M} = \frac{1}{(M-1)!} \frac{d^{M-1}}{d z^{M-1}} \frac{1}{1 - z}.
    \end{gather*}
    For $z(\mu) \gg 1$, we have the identity
    \begin{gather*}
      R_{\mathcal{G}}(\mu)^M = \frac{1}{(M-1)!} \frac{d^{M-1}}{d z^{M-1}} R_{\mathcal{G}}(\mu(z)).
    \end{gather*}
    By meromorphic continuation, we see that this identity is always true. The identity in \eqref{eqn_power_resolvent} then follows from simply termwise differentiating the Laurent series for $R(\mu)$.
      
    But then, for $\mu$ close to $\mu_0$, we have
    \begin{align*}
        u_\mu &= R_{\mathcal{G}}(\mu)^M(z(\mu) - z(\mu_0))^k \tilde{f}(\mu) \\
        &= \left(\sum_{\ell = -J-(M-1)}^{-1-(M-1)} \binom{\ell+(M-1)}{M-1} B_{\ell+(M-1)} (z(\mu) - z(\mu_0))^{\ell} + R'_{\textnormal{hol}}(\mu)\right) (z(\mu) - z(\mu_0))^k \tilde{f}(\mu).
      \end{align*}
      Let $L$ be the most negative number such that $B_{L + (M-1)} \tilde{f}(\mu_0) \neq 0$. We must have that $k \geq -L$ since the LHS is defined at $\mu = \mu_0$. However, if $k > -L$, then the RHS would vanish at $\mu = \mu_0$. Thus in fact $k = -L$, and we get $u_\mu = B_{L + (M-1)} \tilde{f}(\mu) + S_{\textnormal{hol}}(\mu) \tilde{f}(\mu)$, where $S_{\textnormal{hol}}(\mu)$ is a holomorphic family of operators which vanishes at $\mu = \mu_0$. Thus $u = u_{\mu_0} = B_{L + (M-1)} \tilde{f}(\mu_0)$, so $u$ is a generalized resonant state.
    \end{proof}

\subsection{Generalized resonant states: $\mu_0 = \pm 1$}

We now examine the case of $\mu_0 = \pm 1$. These points are tricky for a number of interrelated reasons which all center around the fact that $\pm 1$ are the ramification points of our parametrization of $\mu \mapsto z(\mu)$. 

\begin{proposition} \label{prop_mu_pm1}
    At $\mu_0 = \pm 1$, we have that the resolvent has a pole of order at most 2. Let $w(\mu) := \frac{\mu - \mu_0}{\sqrt{2} \mu^{\frac{1}{2}}}$. Suppose for $\mu$ close to $\mu_0$ we write:
    \begin{gather*}
        R_{\mathcal{G}}(\mu) = \frac{A_{-2}}{(\mu - \mu_0)^2} + \frac{A_{-1}}{\mu - \mu_0} + R_{\textnormal{hol}}(\mu) = \frac{B_{-2}}{w(\mu)^2} + \frac{B_{-1}}{w(\mu)} + R'_{\textnormal{hol}}(\mu).
    \end{gather*}
    Then
    \begin{enumerate}
        \item $w(\mu)^2 = z(\mu) - z(\mu_0)$.
        \item All elements in $\textnormal{im}(B_{-2})$ and $\textnormal{im}(B_{-1})$ are eigenfunctions of $\frac{A_{\mathcal{G}}}{2 \sqrt{q}}$ with eigenvalue $z(\mu_0)$.
        \item The image of $B_{-2}$ is exactly the $\ell^2$-eigenfunctions of $\frac{A_{\mathcal{G}}}{2 \sqrt{q}}$ with eigenvalue $z(\mu_0)$.
        \item $\textnormal{span}(\textnormal{im}(A_{-2}) \cup \textnormal{im}(A_{-1})) = \textnormal{span}(\textnormal{im}(B_{-2}) \cup \textnormal{im}(B_{-1}))$.
    \end{enumerate} 
\end{proposition}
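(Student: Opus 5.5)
The plan is to view $R_{\mathcal{G}}$ near $\mu_0=\pm1$ as a function of the uniformizing variable $w$. First, (1) is a direct computation: $z(\mu)-z(\mu_0)=\frac{\mu+\mu^{-1}}{2}-\mu_0=\frac{(\mu-\mu_0)^2}{2\mu}$, using $\mu_0^{-1}=\mu_0$. Hence $w(\mu)^2=z(\mu)-z(\mu_0)$, with a fixed holomorphic branch of $\mu^{1/2}$ near $\mu_0$. Since $w'(\mu_0)\neq 0$, $w$ is a local biholomorphic coordinate. Item (4) then follows exactly as in Property (3) of Proposition~\ref{prop:cq4hwetoj6}. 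Writing $\mu-\mu_0=\sum_{k\ge1}a_kw^k$ with $a_1\neq0$ and re-expanding, each $A_{-j}$ becomes a linear combination of the $B_{-k}$ with $k\ge j$, and symmetrically. This is valid once we know the poles in $\mu$ and in $w$ have the same order.

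Next, I would bound the pole order and prove (2). For $|\mu|\gg1$ we have the identity $\bigl(\frac{A}{2\sqrt q}-z(\mu_0)\bigr)R_{\mathcal{G}}(\mu)=I+w(\mu)^2R_{\mathcal{G}}(\mu)$, valid on $C_c(\mathcal{V})$. By meromorphic continuation it holds near $\mu_0$. Expand $R_{\mathcal{G}}=\sum_{j\ge -J}B_jw^j$ and compare coefficients. For $j\le -1$ this gives $\bigl(\frac{A}{2\sqrt q}-z(\mu_0)\bigr)B_j=B_{j-2}$. Taking $j=-J$ and $j=-J+1$ gives $\bigl(\frac{A}{2\sqrt q}-z(\mu_0)\bigr)B_{-J}=\bigl(\frac{A}{2\sqrt q}-z(\mu_0)\bigr)B_{-J+1}=0$.

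To show $J\le2$, I would use self-adjointness and the spectral theorem on the physical sheet. Take $\mu=\mu_0(1+t)$ with $t>0$ real and small, so that $|\mu|>1$, $z(\mu)$ is real, and $|z(\mu)-z(\mu_0)|\sim t^2$. For $f\in C_c$, $\|R_{\mathcal{G}}(\mu)f\|_{\ell^2}\le \mathrm{dist}(z(\mu),\mathrm{spec})^{-1}\|f\|\le |z(\mu)-z(\mu_0)|^{-1}\|f\|$. Here we use that $z(\mu)$ is real and lies outside $[-1,1]\supseteq$ the part of the spectrum near $z(\mu_0)$. Since $z(\mu_0)=\pm1$, a suitable choice of sign makes $z(\mu)$ lie just beyond $z(\mu_0)$, on the side where no spectrum can approach arbitrarily closely unless $z(\mu_0)$ itself is in the spectrum. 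The key point is that for any $\delta$ the distance is at least $|z(\mu)-z(\mu_0)|$. Pairing with $g\in C_c$ gives $|\langle R_{\mathcal{G}}(\mu)f,g\rangle|\lesssim w^{-2}$ along this ray, so $\langle B_jf,g\rangle=0$ for $j<-2$. Hence $J\le 2$, and the pole order in $\mu$ is also at most $2$. The recursion then gives $\bigl(\frac{A}{2\sqrt q}-z(\mu_0)\bigr)B_{-2}=0$ and $\bigl(\frac{A}{2\sqrt q}-z(\mu_0)\bigr)B_{-1}=B_{-3}=0$, which is (2).

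For (3), I would again use the spectral theorem along the same ray. For $f,g\in C_c$, $w^2\langle R_{\mathcal{G}}(\mu)f,g\rangle=\int\frac{z(\mu)-z(\mu_0)}{\lambda-z(\mu)}\,d\langle E_\lambda f,g\rangle$. By dominated convergence (the integrand is bounded by $1$ on this ray) this tends to $-\langle \Pi f,g\rangle$, where $\Pi$ is the $\ell^2$-eigenprojection for the eigenvalue $z(\mu_0)$. Therefore $B_{-2}=-\Pi$ on $C_c(\mathcal{V})$. Since $C_c$ is dense in $\ell^2$ and $\Pi$ has finite rank, $\mathrm{im}\,B_{-2}=\mathrm{im}\,\Pi$.

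The main obstacle is making the spectral bound rigorous: the resolvent estimate is only available on the physical sheet $|\mu|>1$, and one has to check that the chosen real ray stays there and approaches $z(\mu_0)$ from outside $[-1,1]$. I would first try $\mu=\mu_0 s$ with $s\downarrow1$. For this ray $z(\mu)=\mu_0\frac{s+s^{-1}}{2}$ has modulus greater than $1$, so it lies at distance at least $|z(\mu)-z(\mu_0)|$ from every $\lambda\in[-1,1]$. For spectrum outside $[-1,1]$ (isolated eigenvalues of finite multiplicity), the distance is also bounded below by $|z(\mu)-z(\mu_0)|$ once $s$ is close enough to $1$.
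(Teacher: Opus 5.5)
\textbf{Verdict.} The plan is sound, and (1), (2) and (4) match the paper, but the pole-order bound rests on one step you have not justified.

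\textbf{The gap.} It comes from your choice of approach direction. You approach along the real ray $\mu=\mu_0 s$, $s\downarrow 1$, and use $\|R_{\mathcal{G}}(\mu)\|\le \textnormal{dist}(z(\mu),\textnormal{spec})^{-1}$. The paper instead uses $\|R_{\mathcal{G}}(\mu)\|\le |\textnormal{Im}\, z(\mu)|^{-1}$, which needs only self-adjointness and is effective along a non-real direction such as $z(\mu)=z(\mu_0)+it$ with $|\mu|>1$.

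On your real ray you need $\textnormal{dist}(z(\mu),\textnormal{spec})\ge |z(\mu)-z(\mu_0)|$. That is, you need that $\ell^2$-eigenvalues of $\frac{A_{\mathcal{G}}}{2\sqrt q}$ outside $[-1,1]$ do not accumulate at $z(\mu_0)=\pm1$. Knowing that these eigenvalues are isolated and of finite multiplicity does not give this. In general, discrete spectrum can accumulate at an endpoint of the essential spectrum.

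\textbf{How to repair it.} The required fact is true here, and there are two ways to secure it.
\begin{enumerate}
\item Argue via meromorphy. Each eigenvalue $z(\mu_1)$ with $|\mu_1|>1$ gives a pole of $\langle R_{\mathcal{G}}(\mu)f,g\rangle$ at $\mu_1$ for suitable $f,g\in C_c(\mathcal{V})$. This holds because the residue is a nonzero multiple of an eigenprojection and $C_c(\mathcal{V})$ is dense. By Theorem~\ref{thm:merom_resolvent}, poles cannot accumulate at $\mu_0$.
\item Switch to the paper's non-real direction. There $|z(\mu)-z(\mu_0)|\le|\lambda-z(\mu)|$ for every real $\lambda$. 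Then your pole-order bound and your dominated-convergence argument for (3) both go through unchanged, with no information about the discrete spectrum.
\end{enumerate}

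\textbf{Where your route improves on the paper.} Your treatment of (3) is genuinely cleaner. The spectral-measure limit identifies $B_{-2}=-\Pi$ on $C_c(\mathcal{V})$ directly, which gives both inclusions at once. The paper argues the two inclusions separately. One uses boundedness of $w(\mu)^2R_{\mathcal{G}}(\mu)$. The other expands $f=w(\mu)^2R_{\mathcal{G}}(\mu)f$ for an $\ell^2$-eigenfunction $f$, applying the Laurent expansion to a function not a priori in $C_c(\mathcal{V})$.

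\textbf{A small ordering point.} You should not assume $\Pi$ has finite rank; deduce it. The space $\Pi(C_c(\mathcal{V}))=-B_{-2}(C_c(\mathcal{V}))$ is finite-dimensional, because $B_{-2}$ is a Laurent coefficient of a finitely meromorphic family. It is also dense in $\textnormal{im}\,\Pi$, so it equals $\textnormal{im}\,\Pi$.
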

\begin{proof}
  First off we remark that it is clear that $w(\mu)$ is a holomorphic change of variables for $\mu$ close to $\mu_0$. Furthermore, claim (1) is obvious. Granting for now that at $\mu_0$ we have a pole of order at most 2, then Claim (2) follows immediately from the identity $\Big( \frac{A_{\mathcal{G}}}{2 \sqrt{q}} - z(\mu_0) - w(\mu)^2 \Big) R_{\mathcal{G}}(\mu) = I$. Claim (4) follows in the same way that Claim (3) was proved in Proposition \ref{prop:cq4hwetoj6}. 

  We now follow the strategy of Lemma 8.8 of Borthwick \cite{Bor16}. The strategy is to use $\ell^2$ estimates for the resolvent in order to show that it can maximally diverge to second order at $\mu_0$. Note that for this argument to work it is essential that $\mu_0 = \pm 1$ lie in the $\ell^2$ spectrum of $A_{\mathcal G}$ and can be approached from the $\ell^2$ resolvent set, so the $\ell^2$ resolvent and our meromorphically continued resolvent coincide. 
  
  As a first step to get the $\ell^2$ bounds we note that, because $A_{\mathcal{G}}$ is self-adjoint, we get that, for $\phi \in C_c(\mathcal{V})$,
  \begin{align*}
        \Big|\sum_{v \in \mathcal{V}} \bar{\phi}(v) \Big[\Big(\frac{A_{\mathcal{G}}}{2 \sqrt{q}} - \frac{\mu + \mu^{-1}}{2} \Big) \phi \Big] (v) \Big| &\geq \Big|\textnormal{Im} \sum_{v \in \mathcal{V}} \bar{\phi}(v) \Big[\Big(\frac{A_{\mathcal{G}}}{2 \sqrt{q}} - \frac{\mu + \mu^{-1}}{2}\Big) \phi \Big] (v) \Big| \\
        &= \Big|\textnormal{Im}\big(\frac{\mu + \mu^{-1}}{2}\big)\Big| \|\phi\|^2. 
    \end{align*}
    By (1) we have $\textnormal{Im}(\frac{\mu + \mu^{-1}}{2}) = \textnormal{Im}(w(\mu)^2)$. Applying Cauchy-Schwarz, we get
    \begin{gather*}
      \Big|\left(\frac{A_{\mathcal{G}}}{2 \sqrt{q}} - z(\mu)\right) \phi \Big| \geq |\textnormal{Im}(w(\mu)^2) | \|\phi\|.
    \end{gather*}
    This in turn implies the bound
    \begin{gather*}
        \|R_{\mathcal{G}}(\mu)\| \leq \frac{1}{|\textnormal{Im}(w(\mu)^2)|},
    \end{gather*}
    for $\mu$ close to $\mu_0$. This in turn implies that $R(\mu)$ has a pole of order at most 2 at $\mu_0$.

    We thus see that $w(\mu)^2 R_{\mathcal G}(\mu)$ is a well-defined operator near $\mu = \mu_0$ acting from $C_c(\mathcal{V})$ to $C(\mathcal{V})$. Furthermore, from the $\ell^2$ bound, anything in its image is in $\ell^2(\mathcal{V})$. Therefore anything in the image of $B_{-2}$ is in $\ell^2(\mathcal{V})$. On the other hand, we already know by Claim (2) that everything in the image must be an eigenfunction with eigenvalue $z(\mu_0)$. 
    
    On the other hand, let $f\in \ell^2(\mathcal V)$ be an eigenfunction $\Big(\frac{A_{\mathcal G}}{2\sqrt q} -z(\mu_0)\Big) f=0$. Then we consider (again by approaching $\mu_0$ outside of the $\ell^2$ resolvent set)
    \begin{align*}
     f&= R_{\mathcal G}(\mu) \Big(\frac{A_{\mathcal G}}{2\sqrt{q}} -z(\mu)\Big)f =R_{\mathcal G}(\mu) \Big(\frac{A_{\mathcal G}}{2\sqrt{q}} -z(\mu_0) + w(\mu)^2\Big)f\\
    &=R_{\mathcal G}(\mu) w(\mu)^2f = B_{-2}f + O(w(\mu))
    \end{align*}
    We thus conclude, by taking the limit $\mu\to\mu_0$, that $f\in\textnormal{im}(B_{-2})$.

\end{proof}

\begin{proposition} \label{prop_always_outgoing}
    Suppose $\phi$ is an outgoing eigenfunction of constant type. Then it satisfies the outgoing condition on the entirety of all of the funnels and cusps.
\end{proposition}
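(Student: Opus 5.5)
The plan is to show that the outgoing form of $\phi$ can be pushed inward, one vertex at a time, from the exceptional locus $\mathcal{H}$ down to the compact core $\mathcal{L}$. The tool is the eigenfunction equation $\bigl(\frac{A_{\mathcal{G}}}{2\sqrt q}-z(\mu)\bigr)\phi=0$, which holds at \emph{every} vertex and is not only a constraint outside $\mathcal{H}$. Since $\mathcal{H}$ is finite, it meets each cusp and each funnel of $\mathcal{G}\setminus\mathcal{L}$ in a finite set. A finite induction therefore suffices, run separately on each cusp and on each funnel $\mathcal{Y}$ of $\mathcal{G}\setminus\mathcal{L}$. We write $d$ for $d(o,\cdot)$.

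\emph{Cusps.} Label the vertices of a cusp of $\mathcal{G}\setminus\mathcal{L}$ by $k$. The stabilizer function is $\mathcal{S}(k)=\mathcal{S}(\{k,k+1\})=c\,q^k$. Hence, at any vertex $k$ whose neighbour $k-1$ also lies in the cusp, formula \eqref{eq:average} gives $A\phi(k)=q\phi(k-1)+\phi(k+1)$. The eigen equation can then be solved backwards:
\begin{gather*}
\phi(k-1)=q^{-1}\bigl(\sqrt q(\mu+\mu^{-1})\phi(k)-\phi(k+1)\bigr).
\end{gather*}
So $\phi(k-1)$ is uniquely determined by $\phi(k)$ and $\phi(k+1)$. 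The sequence $C(\sqrt q/\mu)^k$ solves this recurrence; this is the computation in Lemma \ref{lemma:cq4flfr8j6} with a constant polynomial, and it holds for all $\mu$, including $\mu=\pm1$. Since $\phi$ agrees with $C(\sqrt q/\mu)^k$ for all large $k$, backward uniqueness forces agreement down to the root of the cusp.

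\emph{Funnels.} Let $\mathcal{Y}$ be an orbifold funnel of $\mathcal{G}\setminus\mathcal{L}$. The components of $\mathcal{G}\setminus\mathcal{H}$ inside $\mathcal{Y}$ are subfunnels. On each of them $\phi=c_w(\mu\sqrt q)^{-d}$, with a constant $c_w$ that depends on the subfunnel rooted at $w$. I would induct on vertices $v\in\mathcal{Y}\cap\mathcal{H}$, from the deepest ones upward. The inductive hypothesis is that every child $w$ of $v$ roots a subtree on which $\phi=c_w(\mu\sqrt q)^{-d}$.

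Now apply the eigen equation at such a child $w$, with $d(w)=n$. Since $\mathcal{S}(w)=\mathcal{S}(\{v,w\})$, the coefficient of $\phi(v)$ in $A\phi(w)$ is $1$. By the degree formula \eqref{eqn_degree} and $(q+1)$-regularity, the children of $w$ together contribute total weight $q$. Since $\phi$ takes the same value $c_w(\mu\sqrt q)^{-n-1}$ on all of them, their contribution is $q\,c_w(\mu\sqrt q)^{-n-1}$. The equation therefore reads
\begin{gather*}
\phi(v)+q\,c_w(\mu\sqrt q)^{-n-1}=\sqrt q(\mu+\mu^{-1})\,c_w(\mu\sqrt q)^{-n}.
\end{gather*}
Solving gives $\phi(v)=c_w(\mu\sqrt q)^{-(n-1)}$. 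Consequently every child constant equals $c_w=\phi(v)(\mu\sqrt q)^{d(v)}$; in particular, all children of $v$ share the same constant. Thus $\phi=c_v(\mu\sqrt q)^{-d}$ on the whole subtree rooted at $v$, including $v$ itself. Iterating up to the root of $\mathcal{Y}$ gives the outgoing form on all of $\mathcal{Y}$.

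The main obstacle is the bookkeeping for orbifold funnels. One must check that the weights in \eqref{eq:average} really are $1$ toward the parent and $q$ in total toward the children at every non-root vertex of $\mathcal{Y}$. This follows from $G_w=G_{\{v,w\}}$ in Definition \ref{defn_funnel} together with the degree formula. One must also check that the argument never needs the eigen equation at the root of $\mathcal{Y}$ or of a cusp, where the neighbour lies in $\mathcal{L}$ and the weights differ. I would handle both by applying the equation only at children $w$ of the vertex being treated, and at cusp vertices whose lower neighbour is still in the cusp; by the arguments above, that is enough to reach the roots.
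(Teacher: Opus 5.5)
Your proposal is correct and follows the same approach as the paper. On cusps you use backward uniqueness of the two-term recurrence. On funnels you apply the eigenvalue equation at each child of a vertex to force the parent's value into outgoing form, so all children share one constant; the paper phrases this as a contradiction at a vertex of maximal height where the condition fails, you as a direct induction. Your explicit check that the weights are $1$ toward the parent and total $q$ toward the children handles orbifold funnels more carefully than the paper's wording about ``the $q$ descendants''.
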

\begin{proof}
    On the cusps this is immediate since if we know the value on an infinite subray of the cusp, the fact that we have an eigenfunction and that each vertex of the cusp has only two neighbors uniquely determines the value of the eigenfunction on the rest of the cusp, and it is immediate that it is indeed outgoing with the same parameters.

    On the funnels, the proof is also straightforward. We argue by contradiction. Suppose $x$ is a vertex of maximal height where the outgoing condition fails, i.e.\@ for every vertex of strictly higher height, the outgoing condition already holds. Let $y$ be a descendant one step higher than $x$ with value $c$. Then the $q$ descendants of $y$ must have value $c \frac{1}{\sqrt{q} \mu}$. We get the equation
    \begin{gather*}
      \frac{1}{2 \sqrt{q}}\left(\frac{c q}{\sqrt{q} \mu} + \phi(x)\right) = c \cdot \frac{\mu + \mu^{-1}}{2}
    \end{gather*}
    and when we solve we get $\phi(x) = c \mu \sqrt{q}$. This implies that all other descendant neighbors of $x$ also have value $c$, and that $\phi(x)$ together with its descendants satisfies the outgoing condition, contradicting the fact that we assumed that $x$ did not satisfy it.
\end{proof}

\begin{proposition} \label{prop_eigenfunction_zero}
    Suppose $\phi$ is an eigenfunction which is outgoing on all the funnels and cusps and is in $\ell^2(\mathcal{V})$ and has eigenvalue $z(\mu)$ with $\mu$ on the unit circle. Then $\phi$ is identically zero on all of the funnels and cusps.
\end{proposition}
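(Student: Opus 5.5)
The idea is that on the unit circle the outgoing profiles have modulus exactly matching the $\ell^2$ borderline. Any nonzero outgoing profile therefore fails to be square-summable with respect to the measure $\nu(v)=1/\mathcal{S}(v)$. We treat cusps and funnels separately.

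\emph{Cusps.} On a cusp, label the vertices by $k\in\mathbb{N}$. Being outgoing (of constant type on the whole cusp, as granted by the hypothesis together with Proposition \ref{prop_always_outgoing}) means $\phi(k)=c\,(\sqrt{q}/\mu)^{k}$. By the discussion of the idealized parabolic cylinder, the stabilizer function on the cusp is, up to a fixed positive scalar, $\mathcal{S}(k)=q^{k}$. Hence
\begin{gather*}
\sum_{k}|\phi(k)|^2\,\mathcal{S}(k)^{-1}\asymp |c|^2\sum_k q^{k}|\mu|^{-2k}q^{-k}=|c|^2\sum_k 1 .
\end{gather*}
This diverges unless $c=0$, since $|\mu|=1$. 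So $\phi\equiv 0$ on every cusp.

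\emph{Funnels.} On an (orbifold) funnel with root $o_{\mathcal F}$, we have $\phi(v)=c\,(\mu\sqrt q)^{-d(o_{\mathcal F},v)}$, so $|\phi(v)|^2=|c|^2q^{-d}$ at height $d$. The volume of the sphere of height $d$ is computed by lifting to the tree. If $\mathcal{F}$ is the quotient of a funnel by the finite group $H$ fixing the root (Remark \ref{rmk_orbifold_funnel}), then $\sum_{d(o_{\mathcal F},v)=d}\mathcal{S}(v)^{-1}=\#\{\text{height-}d\text{ vertices of the funnel}\}/|H|$, which is $q^{d}/|H|$ (the prong uses one of the $q+1$ edges at the root). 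So the $\ell^2$ norm squared over the funnel is $|c|^2|H|^{-1}\sum_d q^{-d}q^{d}=\infty$ unless $c=0$. I would record this mass computation as the key (slightly technical) step: it follows from the degree formula \eqref{eqn_degree}, or equivalently from the fact that the quotient measure $\nu$ pushes forward counting measure on the tree divided by $|H|$.

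\emph{Subfunnels.} If the outgoing condition is a priori only known outside some exceptional locus $\mathcal{H}$, the same computation applies to each of the finitely many subfunnels and cusps of $\mathcal{G}\setminus\mathcal{H}$. The constant on each piece must then vanish, so $\phi$ vanishes outside $\mathcal{H}$. Then I would propagate the vanishing inward. On a cusp this uses the two-term recurrence: vanishing on a tail forces vanishing on the whole cusp, as in the proof of Proposition \ref{prop_always_outgoing}. On a funnel I would use the eigenvalue equation at a vertex $y$, all of whose descendants are zero. It gives $\phi(\text{parent of }y)=2\sqrt q\, z(\mu)\phi(y)$ up to the group weights, which does not immediately give zero. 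The cleaner route is to invoke Proposition \ref{prop_always_outgoing}: $\phi$ is outgoing of constant type on the full funnel, with a single constant $c$, and this constant is zero by the computation above.

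The main obstacle is thus only the bookkeeping of the orbifold measure on funnels. I expect it to be handled by lifting to $H\backslash\backslash\mathcal{T}$ as described.
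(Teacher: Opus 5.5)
Your proposal is correct and is essentially the paper's proof: with $|\mu|=1$ the squared modulus of the outgoing profile exactly cancels the mass of each level ($q^{k}\cdot q^{-k}$ on a cusp, $q^{-d}\cdot q^{d}$ on a funnel), so the $\ell^2$ sum diverges unless the constant on each piece vanishes. Your level-mass count $q^{d}/|H|$ for orbifold funnels is a correct refinement that the paper leaves implicit, and the exceptional-locus paragraph is unnecessary, since the hypothesis already gives the outgoing condition on the full funnels and cusps. Even if it were needed, inward propagation on a funnel is direct: the eigenvalue equation at a child already known to vanish, whose own children also vanish, forces the parent to vanish.
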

\begin{proof}
    On the cusps for example, the value at height $k$ is $c (\frac{\sqrt{q}}{\mu})^k$ for some $c$. If we compute the contribution to the $\ell^2$-norm from the cusp, we obtain
    \begin{gather*}
        c^2 \cdot \sum_{k = 0}^\infty q^k \cdot q^{-k}
    \end{gather*}
    which is $\infty$ unless $c = 0$. 
    
    Similarly the outgoing condition on the funnels implies that the value is $c (\frac{1}{\sqrt{q} \mu})^k$ for vertices at distance $k$ from the root of the funnel. The contribution to the $\ell^2$-norm from the funnels is thus
    \begin{gather*}
        c^2 \cdot \sum_{k = 0}^\infty q^{-k} \cdot q^k
    \end{gather*}
    which again is infinite unless $c = 0$.
\end{proof}

\begin{theorem} \label{thm_bad_points}
    The generalized resonant states for $\mu_0 = \pm 1$ are the outgoing eigenfunctions of constant type with eigenvalue $z(\mu_0)$.
\end{theorem}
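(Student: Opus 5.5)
The plan is to prove the two inclusions separately, working throughout in the local variable $w(\mu)$ of Proposition~\ref{prop_mu_pm1}. There we have $w(\mu)^2 = z(\mu)-z(\mu_0)$, the pole has order at most $2$, and part (4) gives $\textnormal{grst}(\mathcal{G},\mu_0)=\textnormal{span}(\textnormal{im}(B_{-2})\cup\textnormal{im}(B_{-1}))$.

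\textbf{Inclusion $\subseteq$.} Part (2) of Proposition~\ref{prop_mu_pm1} already shows that every element of $\textnormal{im}(B_{-2})$ and $\textnormal{im}(B_{-1})$ is an eigenfunction with eigenvalue $z(\mu_0)$. The first half of the proof of Theorem~\ref{thm_resonant_state_good}, which covers $\mu_0=\pm1$, shows these elements are purely outgoing. By Lemma~\ref{lemma:cq4flfr8j6} they then have degree at most $1$.

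It remains to exclude the degree-one part, which is the main obstacle: at $\mu_0=\pm1$ a linear prefactor $a k+b$ is still annihilated by $\frac{A}{2\sqrt q}-z(\mu_0)$. To handle it I will use the explicit shape of $R_{\mathcal{G}}(\mu)f$ for $f\in C_c(\mathcal{V})$ and $\mu\neq\mu_0$ near $\mu_0$. By Proposition~\ref{proposition:cq0ku2e4d0} and the proof of Proposition~\ref{proposition:cq0kuqgmlp}, this function has constant type with parameter $\mu$. Thus high on a cusp it equals $a(\mu)(\sqrt q/\mu)^k$, and high on a funnel it equals $b(\mu)(\mu\sqrt q)^{-d}$, with coefficients $a(\mu),b(\mu)$ meromorphic in $w$ with poles of order at most $2$.

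Write $a(\mu)=a_{-2}w^{-2}+a_{-1}w^{-1}+\dots$ and expand $(\sqrt q/\mu)^k=(\sqrt q/\mu_0)^k\big(1-\tfrac{k}{\mu_0}\tfrac{d\mu}{dw}(0)\,w+O(w^2)\big)$. Then on the cusp, $B_{-2}f$ equals $a_{-2}(\sqrt q/\mu_0)^k$. The $w^{-1}$ coefficient, namely $B_{-1}f$, has degree-one part proportional to $a_{-2}\,k(\sqrt q/\mu_0)^k$; the funnels are analogous with $b_{-2}$.

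By Proposition~\ref{prop_mu_pm1}(3), $B_{-2}f$ is an $\ell^2$ eigenfunction. It is outgoing of constant type, so Proposition~\ref{prop_always_outgoing} makes it outgoing on all of every funnel and cusp. Since $|\mu_0|=1$, Proposition~\ref{prop_eigenfunction_zero} then forces $B_{-2}f$ to vanish there, so $a_{-2}=b_{-2}=0$. Hence $B_{-1}f$ is of constant type, and so is $B_{-2}f$ (trivially).

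\textbf{Inclusion $\supseteq$.} Let $u$ be an outgoing eigenfunction of constant type with exceptional locus $\mathcal{H}$. Define $u_\mu$ to agree with $u$ on $\mathcal{H}$ and to equal $c\,(\sqrt q/\mu)^k$, respectively $c\,(\mu\sqrt q)^{-d}$, with the same constants on each cusp and funnel of $\mathcal{G}\setminus\mathcal{H}$. Then $f_\mu:=\big(\frac{A}{2\sqrt q}-z(\mu)\big)u_\mu$ is supported in $\mathcal{H}'$, depends holomorphically on $\mu$, and vanishes at $\mu_0$. Since $w$ is a local coordinate, we may write $f_\mu=w\,g(w)$ with $g$ holomorphic and finitely supported.

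For $\mu\ne\mu_0$ close to $\mu_0$, Proposition~\ref{proposition:cq0ku2e4d0} gives $u_\mu=R_{\mathcal{G}}(\mu)f_\mu$. Expanding,
\begin{gather*}
u_\mu=\frac{B_{-2}g_0}{w}+B_{-1}g_0+B_{-2}g_1+O(w),
\end{gather*}
where $g(w)=g_0+g_1w+O(w^2)$. The left side is holomorphic at $w=0$, which forces $B_{-2}g_0=0$. Letting $\mu\to\mu_0$ then gives $u=B_{-1}g_0+B_{-2}g_1\in\textnormal{grst}(\mathcal{G},\mu_0)$.
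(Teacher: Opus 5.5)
Your proof is correct. The reverse inclusion is the paper's argument: the second half of the proof of Theorem~\ref{thm_resonant_state_good} with $M=1$, carried out more cleanly in the coordinate $w$. The forward inclusion, however, takes a genuinely different route.

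\textbf{The paper's route.} It works at the operator level with the factorization $R_{\mathcal{G}}(\mu)=(Q+F(\mu)+C(\mu))(I+K(\mu))^{-1}$. It first proves two facts for operators $E$ with image in $C(\mathcal{L}'')$:
\begin{itemize}
\item $F_0E=0$ forces $F(\mu)E\equiv 0$, via invertibility of $(1-(\sqrt{q}\mu)^{-2})I+(\sqrt{q}\mu)^{-2}M$ at $\mu=\pm 1$;
\item $C_{-1}E=0$ forces $C(\mu)E\equiv 0$.
\end{itemize}
It then combines the order-two pole bound with Proposition~\ref{prop_eigenfunction_zero} to kill all cross terms. This ends with the explicit singular coefficients $QB_{-2}$ and $QB_{-1}+F_0B_{-1}+C_{-1}B_0$.

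\textbf{Your route.} You argue pointwise on the ends instead. For $\mu\neq\mu_0$, $R_{\mathcal{G}}(\mu)f$ is of constant type with parameter $\mu$. So a $k$-linear prefactor in the $w^{-1}$ coefficient can only come from $a_{-2}$ times the first-order Taylor term of $(\sqrt{q}/\mu)^k$. Since $B_{-2}f\in\ell^2$, this forces $a_{-2}=b_{-2}=0$.

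\textbf{What each route buys.} Yours avoids the injectivity computations and the cross-term bookkeeping. It uses only the shape of $R_{\mathcal{G}}(\mu)f$ off the pole together with Proposition~\ref{prop_mu_pm1}, not the internal structure of the parametrix. It also makes your preliminary appeals to Theorem~\ref{thm_resonant_state_good} and Lemma~\ref{lemma:cq4flfr8j6} unnecessary, since the expansion itself shows that $B_{-1}f$ and $B_{-2}f$ have degree at most one. In return, the paper's route gives explicit formulas for the singular Laurent coefficients in terms of the parametrix pieces.

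\textbf{Two minor remarks.} First, you should state that the exceptional locus of $R_{\mathcal{G}}(\mu)f$ can be chosen independently of $\mu$. By Lemma~\ref{lemma:cq4fkr0wif} it can be taken to be the smallest connected subgraph containing $\mathcal{L}''\cup\mathrm{supp}f$, and this is what makes $a(\mu)$ and $b(\mu)$ well defined. Second, the detour through Proposition~\ref{prop_always_outgoing} is not needed: the $\ell^2$ norm on the ends of $\mathcal{G}\setminus\mathcal{H}$ already forces $a_{-2}=b_{-2}=0$.
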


\begin{proof}
    Recall that $\mathcal{L}$ is the compact core, $\mathcal{L}'$ is all vertices of distance at most one from the compact core, and $\mathcal{L}''$ is all vertices of distance at most two from the compact core. Let $\mathcal{F}'$ denote all vertices on any funnel and at distance at least one from the root of the funnel (and recall that the root of every funnel lies in $\mathcal{L}' \setminus \mathcal{L}$). Similarly, let $\mathcal{C}'$ denote all vertices on any cusp and at distance at least one from the root of the cusp (and, similarly, recall that the root of every cusp lies in $\mathcal{L}' \setminus \mathcal{L}$). In particular we have $\mathcal{V} = \mathcal{L}' \sqcup \mathcal{F}' \sqcup \mathcal{C}'$. 

    The proof makes use of the explicit formula for the resolvent as $P(\mu) (I + K(\mu))^{-1}$. Recall that $P(\mu)$ can be decomposed into three parts; see \eqref{eqn_P_mu}. One part is constant, call it $Q$, for which the image of any element only depends on the restriction of that element to $\mathcal{L}'$, and its image is supported on $\mathcal{L}'$. Additionally we have a part $F(\mu)$ coming from the funnels for which the image of any element only depends on its restriction to $\mathcal{F}'$ and has image supported on $\mathcal{F}'$, and a part $C(\mu)$ coming from the cusps for which the image of any element only depends on its restriction to $\mathcal{C}'$ and has image supported on $\mathcal{C}'$.

    The components $F(\mu)$ and $C(\mu)$ are given by a very explicit formula. We first discuss $F(\mu)$. Suppose $f$ is a function supported on $\mathcal{L}''$. Let $\mathcal{F}_j'$ be all vertices on the funnel $\mathcal{F}_j$ at distance at least one from the root. Then $\mathcal{F}_j' \cap \mathcal{L}''$ consists of $q$ vertices $v^{(j)}_1, \dots, v^{(j)}_q$ which are all at distance one from the root. Note that $\mathcal{F}_j'$ itself consists of $q$ funnels with roots $v^{(j)}_1, \dots, v^{(j)}_q$. Since $F(\mu)$ is holomorphic at $\mu = \pm 1$, we can express the power series expansion of $F(\mu)$ via differentiation. Suppose $w$ is a vertex on the funnel of $\mathcal{F}_j'$ with root $v^{(j)}_i$. We see that 
    \begin{gather*} 
    [F(\mu)f](w) = -\frac{2}{\mu - q^{-1}\mu^{-1}} \Big( \frac{f(v_i^{(j)})}{(\sqrt{q} \mu)^{d(o_{\mathcal{F}_j}, w) - 1}} + \frac{\sum_{k \neq i} f(v_k^{(j)})}{(\sqrt{q} \mu)^{d(o_{\mathcal{F}_j}, w) + 1}} \Big).
    \end{gather*}
    The key observation is the following. If we know that $[F(\mu_0) f](w) = 0$ for all $w \in \mathcal{F}_j'$, then we must have that $f(v_i^{(j)}) = 0$ for all $i$. The former condition is equivalent to the vector $[f(v_1^{(j)}), \dots, f(v_q^{(j)}]^T$ being in the kernel of the matrix $(1 - (\sqrt{q} \mu)^{-2}) I + (\sqrt{q} \mu)^{-2} M$ where $M$ is the matrix of all ones. Note that the eigenvalues of $M$ are $q$ and $0$. In order for $(1 - (\sqrt{q} \mu)^{-2}) I + (\sqrt{q} \mu)^{-2} M$ to have non-trivial kernel, we must have that either $(1 - (\sqrt{q} \mu)^{-2}) = 0$ or $(1 - (\sqrt{q} \mu)^{-2}) = -q (\sqrt{q} \mu)^{-2}$. We can check that $\mu = \pm 1$ does not satisfy either of these equations, which implies that $[f(v_1^{(j)}), \dots, f(v_q^{(j)})]^T$ is identically zero. This would then imply that not only is $[F(\mu_0) f](w) = 0$, but $[F(\mu) f](w) \equiv 0$ as a function of $\mu$. The desired conclusion which we shall use later in the proof is the following: if $E$ is an operator whose image lies in $\mathcal{L}''$, and $F(\mu_0) E = 0$ with $\mu_0 = \pm 1$, then $F(\mu) E = 0$. Phrased another way, if we write
    \begin{gather*}
        F(\mu) = F_0 + F_1 (\mu - \mu_0) + 
        F_2 (\mu - \mu_0)^2 + \dots
    \end{gather*}
    with $\mu_0 = \pm 1$, then if $F_0 E = 0$ with $E$ as before, then $F_j E = 0$ for all $j$.

    We can perform a nearly identical analysis on $C(\mu)$. As before, suppose $f$ is supported on $\mathcal{L}''$. Let $\mathcal{C}_j'$ denote all vertices on the cusp $\mathcal{C}_j$ at distance at least one from the root. Then $\mathcal{C}_j' \cap \mathcal{L}''$ consists of a single vertex, $v^{(j)}$, which is distance one from the root. Suppose $w$ is some vertex on $\mathcal{C}_j$. Then
    \begin{gather*}
        -\frac{\mu - \mu^{-1}}{2} [C(\mu) f](w) = f(v^{(j)}) (\frac{\sqrt{q}}{\mu})^{d(w, v^{(j)})}.
    \end{gather*}
    We thus see that if $-\frac{\mu - \mu^{-1}}{2} [C(\mu_0) f] = 0$, then in fact $f(v^{(j)}) = 0$ for all $j$, and thus $-\frac{\mu - \mu^{-1}}{2} [C(\mu) f] \equiv 0$. We can write
    \begin{gather*}
        C(\mu) = \frac{C_{-1}}{\mu - \mu_0} + C_0 + C_1(\mu - \mu_0) + \dots
    \end{gather*}
    with $\mu_0 = \pm 1$. We thus see that if $E$ is an operator whose image lies in $\mathcal{L}''$ and such that $C_{-1} E = 0$, then $C_j E = 0$ for every $j$.

    We next recall from the proof of Proposition \ref{prop_bound_multiplicity} that if we write
    \begin{gather*}
      (I + K(\mu))^{-1} = \sum_{j = 1}^J \frac{B_{-j}}{(\mu - \mu_0)^j} + R_H(\mu), 
    \end{gather*}
    we must have that each $B_{-j}$ has image lying in $\mathcal{L}''$. 

    We now explicitly examine the resolvent
    \begin{gather*}
      R_{\mathcal{G}}(\mu) = \left(Q + \sum_{j = 0}^\infty F_j (\mu - \mu_0)^j + \sum_{j = -1}^\infty C_j (\mu - \mu_0)^j\right) \left(\sum_{j = -J}^\infty B_j (\mu - \mu_0)^j\right)
    \end{gather*}
    with $\mu_0 = \pm 1$. We know from Proposition \ref{prop_mu_pm1} that we have a pole of order at most 2 at $\mu_0$. We must therefore have that $C_{-1} B_k = 0$ for any $k \leq -2$, and hence $C_j B_k = 0$ for any $k \leq -2$ and any $j$. Similarly we must have $F_0 B_k = 0$ for any $k \leq -3$, and hence $F_j B_k = 0$ for any $k \leq -3$ and any $j$. Similarly, $Q B_k = 0$ for any $k \leq -3$. 

    We now consider the $(\mu - \mu_0)^{-2}$ term in the expansion, which would be
    \begin{align*}
        &Q B_{-2} + F_0 B_{-2} + F_1 B_{-3} + \dots + F_{J-2} B_{-J} + C_{-1} B_{-1} + C_0 B_{-2} + \dots + C_{J-2} B_{-J} \\
        = & Q B_{-2} + F_0 B_{-2} + C_{-1} B_{-1}. 
    \end{align*}
    We know that any non-zero element in the image must be an $\ell^2$-eigenfunction, and hence must be identically zero on each funnel and cusp by Proposition \ref{prop_eigenfunction_zero}. Therefore, we must have that $F_0 B_{-2} = C_{-1} B_{-1} = 0$, and thus $F_j B_{-2} = C_{\ell} B_{-1} = 0$ for all $j$ and $\ell$. Finally we examine the $(\mu - \mu_0)^{-1}$ term in the expansion, which is
    \begin{align*}
        & Q B_{-1} + F_0 B{-1} + F_1 B_{-2} + \dots + F_{J-1} B_{-J} + C_{-1} B_0 + C_0 B_{-1} + \dots + C_{J-1} B_{-J} \\
        = & Q B_{-1} + F_0 B_{-1} + C_{-1} B_0. 
    \end{align*}
    It is clear from this formula that anything in the image is purely outgoing of constant type. We thus see that all generalized resonant states must be eigenfunctions which are purely outgoing of constant type and have eigenvalue $z(\mu_0)$. To show the converse direction, we can use the exact same argument that we used in the second half of the proof of Theorem \ref{thm_resonant_state_good}. 

\end{proof}

\section{Examples and computations} \label{sec_examples}

\subsection{The resonance matrix}
We now discuss how we may use the results of the previous section to explicitly compute the resonances and generalized resonant states. Let $A_{\mathcal{L}}$ be the adjacency matrix of the induced subgraph corresponding to the compact core $\mathcal{L}$. Let $B(\mu)$ be the diagonal matrix with entry $(v, v)$ equal to $c_v \frac{\sqrt{q}}{\mu} + f_v \frac{1}{\sqrt{q} \mu}$ where
\begin{gather*}
    c_v = \sum_{\substack{\textnormal{edge $e$ containing $v$,} \\
    \textnormal{$e$ attached to a cusp}}} \frac{\mathcal{S}(v)}{\mathcal{S}(e)}, \hspace{10mm} f_v = \sum_{\substack{\textnormal{edge $e$ containing $v$,} \\
    \textnormal{$e$ attached to a funnel}}} \frac{\mathcal{S}(v)}{\mathcal{S}(e)}.
\end{gather*}
We define the \textit{resonance matrix} as
\begin{gather*}
  H(\mu) \coloneqq  \frac{1}{2 \sqrt{q}}(A_{\mathcal{L}} + B(\mu)) - z(\mu) I. 
\end{gather*}
The following is immediate:
\begin{proposition}
    The resonances are exactly those $\mu_0 \in \mathbb{C}^\times$ such that $\textnormal{det}(H(\mu_0)) = 0$. If $\mu_0$ is a resonance, then the associated resonant states correspond to elements in the kernel of $H(\mu_0)$. 
\end{proposition}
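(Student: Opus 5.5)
The plan is to reduce to resonant states of constant type and use the characterization from Section~\ref{sec_resonant_states}. By Theorem~\ref{thm_resonant_states} together with Lemma~\ref{lemma:cq4flfr8j6}, $\mu_0$ is a resonance exactly when there is a nonzero purely outgoing $u$ of parameter $\mu_0$ with $\big(\frac{A}{2\sqrt q}-z(\mu_0)\big)u=0$. For $\mu_0\neq\pm1$, if $\mathrm{grst}\neq 0$ one takes the last nonzero iterate $(\frac{A}{2\sqrt q}-z(\mu_0))^{J-1}u$, which is again purely outgoing and is an eigenfunction. Lemma~\ref{lemma:cq4flfr8j6} then says this $u$ is of constant type. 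Proposition~\ref{prop_always_outgoing} upgrades this so that $u$ satisfies the outgoing condition on the entirety of every funnel and cusp, i.e.\ the exceptional locus may be taken to be $\mathcal{L}$ itself. Conversely, the Corollary after Lemma~\ref{lemma:cq4flfr8j6} (and Theorem~\ref{thm_bad_points} at $\pm1$) shows that any such nonzero function forces $\mu_0$ to be a resonance. So the statement reduces to showing that these globally outgoing eigenfunctions are in bijection with $\ker H(\mu_0)$ via restriction to $\mathcal{L}$.

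Next comes the computation. Given a global constant-type outgoing eigenfunction $u$, on a cusp attached to $v\in\mathcal{L}$ via edge $e$, the value at the cusp root is $c\sqrt q/\mu_0$ times the continuation of the boundary value. Writing the eigen-equation on the cusp root and using the two-neighbour recurrence shows that the root value equals $u(v)\frac{\sqrt q}{\mu_0}$. This is because the constant-type ansatz $c(\sqrt q/\mu_0)^k$ satisfies the recurrence at the root exactly when it extends consistently to $v$, which is the computation already done in Proposition~\ref{prop_always_outgoing}. Similarly, on a funnel the root value is $u(v)\frac{1}{\sqrt q\mu_0}$. Substituting into $\big(\frac{A}{2\sqrt q}-z(\mu_0)\big)u(v)=0$ for $v\in\mathcal{L}$, the edges from $v$ into cusps and funnels contribute $\frac{\mathcal S(v)}{\mathcal S(e)}$ times these root values, giving exactly $c_v\frac{\sqrt q}{\mu_0}u(v)+f_v\frac{1}{\sqrt q\mu_0}u(v)$. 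Hence $H(\mu_0)(u|_{\mathcal L})=0$. Conversely, any $x\in\ker H(\mu_0)$ extends uniquely by these formulas to an eigenfunction that is outgoing of constant type. The equations hold on funnels and cusps by construction and on $\mathcal L$ by $H(\mu_0)x=0$. Injectivity of restriction is clear, since the extension is determined by $u|_{\mathcal L}$.

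The main obstacle is bookkeeping at the interface. First, roots of funnels and cusps lie in $\mathcal{L}'\setminus\mathcal{L}$, so "root value" must be matched carefully with the stabilizer weights $\mathcal S(v)/\mathcal S(e)$. For orbifold funnels, one must also check that the weighted adjacency still yields the factor $q$ at each non-root funnel vertex, which follows from the degree formula \eqref{eqn_degree}. Second, at $\mu_0=\pm1$ the Jordan-block issue disappears, because Theorem~\ref{thm_bad_points} already restricts attention to constant-type eigenfunctions, so the same argument applies verbatim.
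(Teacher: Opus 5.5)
Your proposal is correct and follows essentially the paper's own argument. You reduce via Theorems \ref{thm_resonant_state_good}, \ref{thm_bad_points} and Lemma \ref{lemma:cq4flfr8j6} to nonzero outgoing eigenfunctions of constant type, use Proposition \ref{prop_always_outgoing} to get the outgoing condition on each entire funnel and cusp, read off $H(\mu_0)\,u|_{\mathcal L}=0$ from the root values $\frac{\sqrt q}{\mu_0}u(v)$ and $\frac{1}{\sqrt q\,\mu_0}u(v)$, and obtain the converse by outgoing extension of a kernel vector.

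The one thing to tidy is your opening equivalence, which needs ``of constant type'' at $\mu_0=\pm1$. On the modular curve of Section \ref{sec_modular_curve} there is an outgoing eigenfunction of degree one at $\mu=1$, although $\mu=1$ is not a resonance there; your last paragraph in effect already restricts to constant type.
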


\begin{proof}
  First off we remark that Property (1) of Proposition~\ref{prop:cq4hwetoj6} and Proposition~\ref{prop_mu_pm1} imply that any time $\mu_0$ is a resonance, the set of resonant states associated to $\mu_0$ is non-trivial. By Theorems~\ref{thm_resonant_state_good} and \ref{thm_bad_points} (along with Lemma~\ref{lemma:cq4flfr8j6}), we know that resonant states are necessarily outgoing of constant type. However, Proposition~\ref{prop_always_outgoing} implies that outgoing eigenfunctions of constant type satisfy the outgoing condition on the entirety of every funnel and cusp. This means that for such outgoing eigenfunctions of constant type, the value on a given funnel or cusp is completely determined by the value at the vertex of $\mathcal{L}$ that the funnel/cusp is attached to. More specifically, if $f$ is an outgoing eigenfunction of constant type with eigenvalue $z(\mu_0)$, then given a vertex $v \in \mathcal{L}$, on every vertex $w$ on a cusp adjacent to $v$ we must have that $f(w) = \frac{\sqrt{q}}{\mu_0} f(v)$, and on every vertex $u$ on a funnel adjacent to $v$ we must have $f(u) = \frac{1}{\sqrt{q} \mu_0} f(v)$. Therefore, $f|_{\mathcal{L}}$ would lie in the kernel of $H(\mu_0)$. Conversely, suppose a non-zero function $f$ were in the kernel of $H(\mu_0)$. We can extend $f$ to a function on all of $\mathcal{G}$ by simply requiring that it satisfy the outgoing condition on the entirety of every funnel and cusp, and it is clear that such a function would be an outgoing eigenfunction of constant type.
\end{proof}

\begin{corollary} \label{cor_num_resonances}
    The number of resonances, not counted with multiplicity, is at most $2|\mathcal{L}|$. 
\end{corollary}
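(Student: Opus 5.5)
By the preceding proposition, the resonances are exactly the zeros $\mu_0\in\mathbb{C}^\times$ of $\det H(\mu)$. The plan is therefore to bound the number of zeros of this determinant, viewed as a function of $\mu$, by clearing denominators and counting the degree of the resulting polynomial.

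First I record the $\mu$-dependence of $H(\mu)$. The matrix $A_{\mathcal{L}}$ does not depend on $\mu$. Each diagonal entry of $B(\mu)$ has the form $\beta_v\mu^{-1}$ with $\beta_v=c_v\sqrt{q}+f_v/\sqrt{q}$ a constant. Finally $z(\mu)=\frac{\mu+\mu^{-1}}{2}$. Hence every diagonal entry of $\mu H(\mu)$ equals
\begin{gather*}
-\tfrac12\mu^2+\tfrac{a_{vv}}{2\sqrt q}\,\mu+\tfrac{\beta_v}{2\sqrt q}-\tfrac12 ,
\end{gather*}
a polynomial of degree exactly $2$, and every off-diagonal entry of $\mu H(\mu)$ is a constant multiple of $\mu$.

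Next I set $n=|\mathcal{L}|$ and $p(\mu):=\det(\mu H(\mu))=\mu^{n}\det H(\mu)$. Then $p$ is a polynomial in $\mu$. Expanding the determinant as a sum over permutations, each product of entries has degree at most $2n$, and the identity permutation alone contributes degree exactly $2n$, with leading coefficient $(-\tfrac12)^{n}\neq0$. So $p$ is a nonzero polynomial of degree exactly $2n$.

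For $\mu\neq0$, $\det H(\mu)=0$ holds if and only if $p(\mu)=0$. Since a nonzero polynomial of degree $2n$ has at most $2n$ distinct roots, there are at most $2|\mathcal{L}|$ resonances, counted without multiplicity.

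The only point needing care is that $p$ is not identically zero, so that the degree count is meaningful; the leading term $(-\tfrac12)^n\mu^{2n}$ settles this. The permutation expansion also has to be set up in the orbifold-weighted setting: $A_{\mathcal{L}}$ is not symmetric there, but its entries are still constants, so the degree count goes through unchanged.
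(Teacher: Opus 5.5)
Your proposal is correct and follows the same route as the paper: clear the $\mu^{-1}$ factors in $H(\mu)$ and bound the degree of the polynomial $\mu^{|\mathcal{L}|}\det H(\mu)$ by $2|\mathcal{L}|$. You also check that the leading term is $(-\tfrac12)^{|\mathcal{L}|}\mu^{2|\mathcal{L}|}$, so the polynomial is not identically zero, which makes explicit a point the paper's proof leaves implicit.
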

\begin{proof}
    The determinant of $H(\mu)$ is a polynomial in $\mu$ and $\mu^{-1}$, and thus can be written in the form $\mu^k f(\mu)$ for some $k \in \mathbb{Z}$ and some polynomial $f$. Since each entry in $H(\mu)$ can be written as $\mu^\ell g(\mu)$ where $g$ is a polynomial of degree at most 2, we conclude that $f$ has degree at most $2|\mathcal{L}|$. 
\end{proof}

\subsection{The tree} \label{sec_example_tree}
We already computed in Section \ref{sec_resolvent_tree} that the resonances of the tree correspond to $\pm \frac{1}{\sqrt{q}}$. Furthermore, the explicit formula for the resolvent in \eqref{eq:cq4enzzhjz} tells us that the generalized resonant states for $\frac{1}{\sqrt{q}}$ are constant functions, and for $-\frac{1}{\sqrt{q}}$ they are the functions of the form $(-1)^{d(o, v)} c$ for some $c$ and choice of origin $o$. We now quickly recompute this using the results of the previous section. We can think of the tree as a compact core consisting of a single vertex $o$ together with $(q+1)$ attached funnels. Therefore
\begin{gather*}
  H(\mu) = \frac{1}{2 \sqrt{q}} \left( \frac{q+1}{\sqrt{q} \mu}\right) - \frac{\mu + \mu^{-1}}{2} = -\frac{(\mu - q^{-1} \mu^{-1})}{2}. 
\end{gather*}
This clearly has zeros at $\mu = \pm \frac{1}{\sqrt{q}}$. When $\mu = \frac{1}{\sqrt{q}}$, assuming the value at $o$ is 1, then the value on each funnel is $(\frac{1}{\sqrt{q} \frac{1}{\sqrt{q}}})^k = 1$, and we can do the same analysis for $\mu = -\frac{1}{\sqrt{q}}$ and rederive the corresponding resonant state.

\subsection{Parabolic cylinder}

This consists of a single vertex $o$ with one attached cusp and $q$ attached funnels. The resonance matrix is thus
\begin{gather*}
  H(\mu) = \frac{1}{2 \sqrt{q}} \left(\frac{q}{\sqrt{q} \mu} + \frac{\sqrt{q}}{\mu}\right) - \frac{\mu + \mu^{-1}}{2} = \frac{\mu - \mu^{-1}}{2}. 
\end{gather*}
Thus the resonances are $\pm 1$. The resonant state for $\mu_0 = 1$ is 1 at $o$, $q^{k/2}$ at height $k$ on the cusp, and $q^{-k/2}$ at height $k$ on the funnels. Similarly, the resonant state for $\mu_0 = -1$ is 1 at $o$, $(-1)^k q^{k/2}$ at height $k$ on the cusp, and $(-1)^k q^{-k/2}$ at height $k$ on the funnels.

\subsection{Hyperbolic cylinder} \label{sec_hyperbolic_cylinder}

Consider the $(q+1)$-regular hyperbolic cylinder with central geodesic of length $N$. Then the resonance matrix is the $N \times N$ circulant matrix whose first row is
\begin{gather*}
  \left[\frac{q-1}{2 q \mu} - \frac{\mu + \mu^{-1}}{2}, \frac{1}{2 \sqrt{q}}, 0, \dots, 0, \frac{1}{2 \sqrt{q}}\right].
\end{gather*}
Standard facts about circulant matrices imply that the determinant of this matrix is
\begin{gather*}
  \prod_{j = 0}^{N-1} \left(\frac{q-1}{2 q \mu} - \frac{\mu + \mu^{-1}}{2} + \frac{1}{\sqrt{q}} \cos\left(\frac{2 \pi j}{N}\right)\right). 
\end{gather*}
Thus resonances occur whenever one can find $\mu$ satisfying
\begin{gather*}
  \frac{q-1}{2 q \mu} - \frac{\mu + \mu^{-1}}{2} = -\frac{1}{\sqrt{q}} \cos\left(\frac{2 \pi j}{N}\right)
\end{gather*}
Solving directly shows that solutions correspond to $\mu = \frac{1}{\sqrt{q}} e^{2 \pi i j/N}$. As $N \to \infty$ these equidistribute on the circle of radius $\frac{1}{\sqrt{q}}$. 

\subsection{Funnels and embedded eigenvalues: a counterexample} \label{sec_counterexample}

In the context of geometrically finite hyperbolic surfaces, it is a result of Patterson \cite{Pat75} for hyperbolic surfaces and Lax-Phillips \cite{lax_phillips_82} for general hyperbolic manifolds that, in the presence of funnels, there are no embedded $L^2$-eigenfunctions (i.e.\@ $L^2$-eigenfunctions with eigenvalue $\lambda \geq \frac{1}{4}$ in the hyperbolic surfaces case); see also \cite{WW23} for analogous results in the higher rank setting. The analogous statement in the graph setting, namely that a geometrically finite graph with at least one funnel has no $\ell^2$-eigenfunctions of the adjacency operator with eigenvalue in $[-2 \sqrt{q}, 2 \sqrt{q}]$, is not true as the example below shows.

Consider the graph in Figure \ref{fig_funnel_embedded}, and the corresponding function specified in the figure. It is clear that this eigenfunction is in $\ell^2(\mathcal{V})$ and the eigenvalue is $-2$, which lies in the tempered range $[-2 \sqrt{2}, 2 \sqrt{2}]$.

\begin{figure}[ht]
    \centering
    \includegraphics[width=0.3\textwidth]{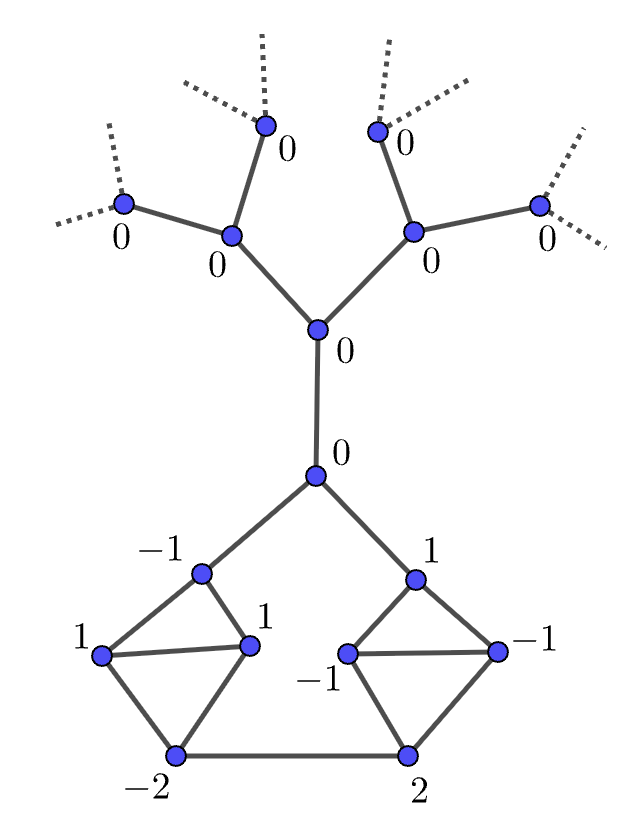}
    

    \caption{An example of a $3$-regular graph with a funnel which has an embedded eigenvalue.}
    \label{fig_funnel_embedded}
\end{figure}

\subsection{Modular curve} \label{sec_modular_curve}
Consider the modular curve from Section \ref{sec_arithmetic_lattice}. It consists of a single vertex attached $q+1$ times to a cusp. Therefore its resonance matrix is
\begin{gather*}
    H(\mu) = \frac{1}{2 \sqrt{q}}\Big(\frac{(q+1) \sqrt{q}}{\mu}\Big) - \frac{\mu + \mu^{-1}}{2}. 
\end{gather*}
This has zeros at $\mu = \pm \sqrt{q}$. The resonant states for $\mu = \sqrt{q}$ are the constant functions, and for $\mu = -\sqrt{q}$ are the functions which are of the form $c (-1)^k$ where $k$ is the height on the cusp. 

\subsection{Elliptic curves} \label{sec_elliptic_curves}
    
Takahashi \cite{takahashi} (specifically Theorem 5 therein) describes explicitly how to compute the arithmetic graph of groups described in Section \ref{sec_arithmetic_lattice} corresponding to an elliptic curve over a finite field. We discuss two examples.

The first example is taken from 2.4.4 of \cite{serre_trees}, together with Theorem 5 of \cite{takahashi}. Consider the (affine) elliptic curve 
    \begin{gather}
        y^2 + y = x^3 + x + 1 \label{eqn_first_elliptic}
    \end{gather}
    over $\mathbb{F}_2$. The corresponding graph with stabilizer functions is given in Figure \ref{fig_serre}, corresponding to choosing the point $P$ to be the unique point at infinity of the projectivization of this elliptic curve.

\begin{figure}[htbp]
    \centering
    \begin{minipage}{0.48\textwidth}
        \centering
        \includegraphics[width=\textwidth]{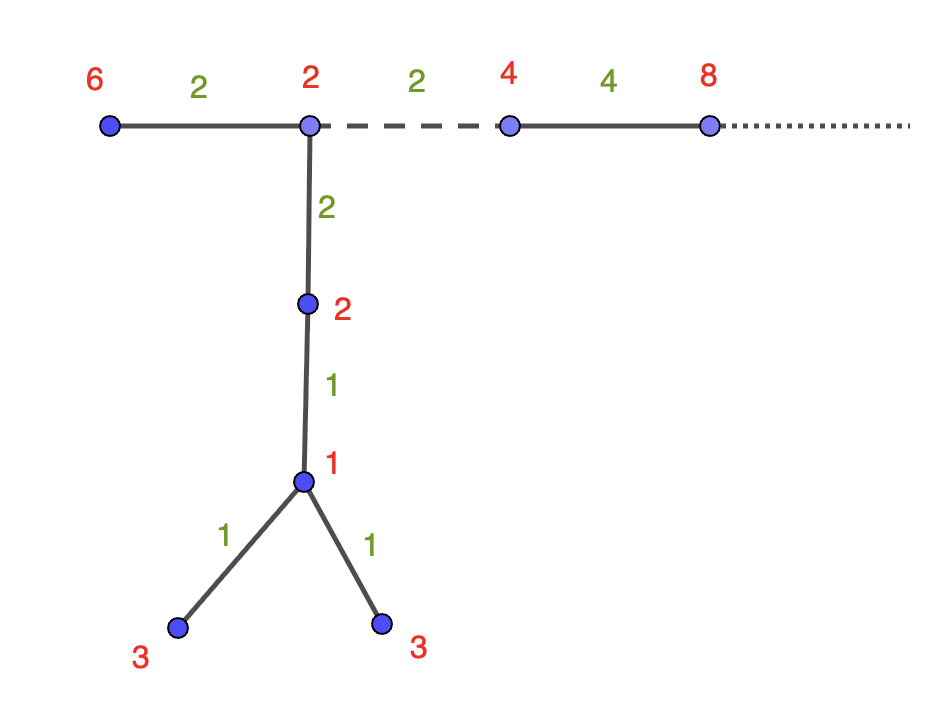}
    \end{minipage}
    \hfill
    \begin{minipage}{0.48\textwidth}
        \centering
        \includegraphics[width=\textwidth]{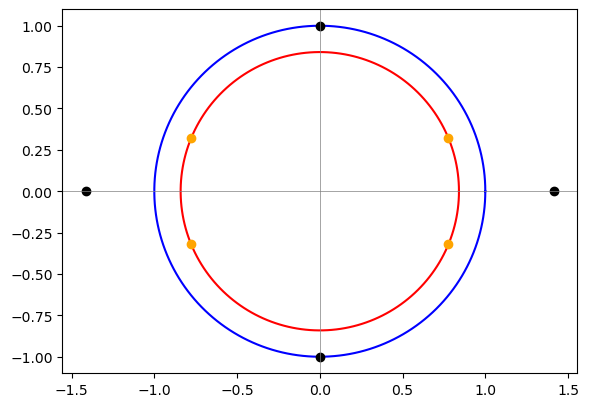}
    \end{minipage}
    \caption{The left shows the stabilizer size data for the quotient graph of groups corresponding to \eqref{eqn_first_elliptic}. The right shows the resonance plot, with black dots corresponding to $\ell^2$-eigenvalues, and orange dots corresponding to zeros of the Hasse-Weil zeta function.}
    \label{fig_serre} 
\end{figure}
    
    We find that the determinant of the resonance matrix is:
    \begin{gather*}
        (\mu^2 + 1)^2 (\mu^2 - 2) (2 \mu^4 - 2 \mu^2 + 1).
    \end{gather*}
    The resonance at $\mu = \sqrt{q}$ corresponds to the constant function, and the resonance at $\mu = -\sqrt{q}$ corresponds to the function of the form $(-1)^d$ where $d = 0$ or $1$ based on the bipartite coloring of the graph. These are clearly $\ell^2$-eigenvalues.

    We see that we also have resonances at $\mu = \pm i$. In computing the corresponding resonant states, we find that they are in fact $\ell^2$-eigenfunctions (both $\mu = i$ and $\mu = -i$ have two associated $\ell^2$-eigenfunctions).

    Finally, we are left with the polynomial $2 \mu^4 - 2 \mu^2 + 1$. This in fact corresponds to the numerator of the Hasse-Weil zeta function associated to the elliptic curve (or equivalently, the Dedekind zeta function attached to the function field of the elliptic curve). The Hasse-Weil zeta function of a (projective) curve over a finite field $\mathbb{F}_q$ is defined as
    \begin{gather*}
        Z_X(T) := \exp\Big( \sum_{r = 1}^\infty \frac{N_r}{r} T^r \Big),
    \end{gather*}
    where $N_r$ is the number of points on $X$ over the field $\mathbb{F}_{q^r}$. Weil \cite{weil} proved several remarkable facts about such zeta functions. One is that
    \begin{gather*}
        Z_X(T) = \frac{P(T)}{(1-T) (1-q T)},
    \end{gather*}
    where $P(T)$ is a polynomial of degree $2 g$ where $g$ is the genus of $X$. Second is that all zeros of the polynomial $P(T)$ lie on the circle of radius $\frac{1}{\sqrt{q}}$; this is the analogue of the Riemann hypothesis in this setting (but is indeed known to be true!).

    We can compute the Hasse-Weil zeta function of the projective closure of our elliptic curve, and we get
    \begin{gather*}
        Z_X(T) = \frac{2 T^2 - 2 T + 1}{(1-T)(1-2T)}.
    \end{gather*}
    Letting $P(T)$ denote the numerator, we find that $P(\mu^2)$ is exactly equal to the unaccounted for polynomial in the determinant of the resonance matrix. The Riemann hypothesis (Weil's theorem) implies that all the associated resonances lie on the circle of radius $\frac{1}{2^{\frac{1}{4}}}$. 

    We now compute another example. The second example is based on the example accompanying Figure 5 in \cite{takahashi}. Consider the elliptic curve 
    \begin{gather}
        y^2 = x^3 + x + 1 \label{eqn_elliptice_curve_formula2}
    \end{gather} 
    over $\mathbb{F}_3$. The corresponding graph with stabilizer functions is given in Figure \ref{fig_takahashi_example}. 

\begin{figure}[htbp]
    \centering
    \begin{minipage}{0.48\textwidth}
        \centering
        \includegraphics[width=\textwidth]{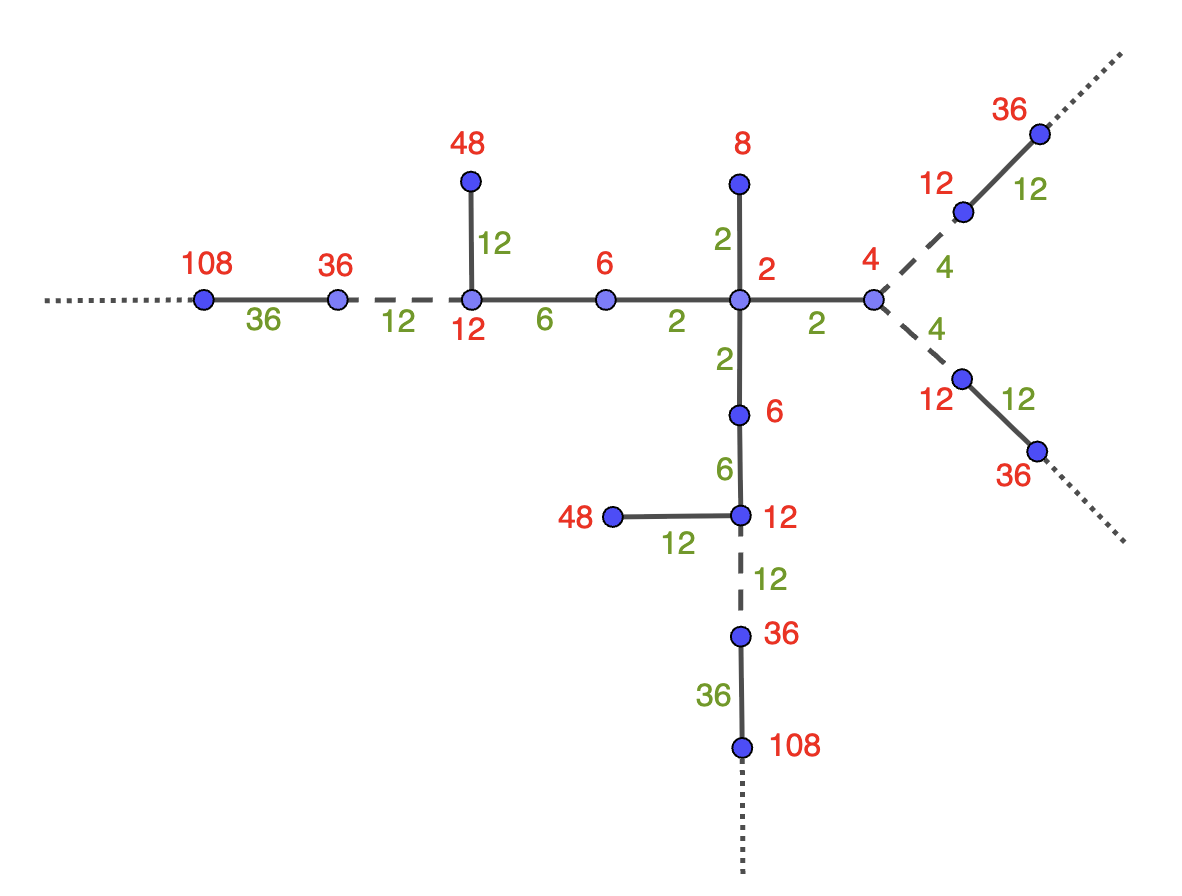}
    \end{minipage}
    \hfill
    \begin{minipage}{0.48\textwidth}
        \centering
        \includegraphics[width=\textwidth]{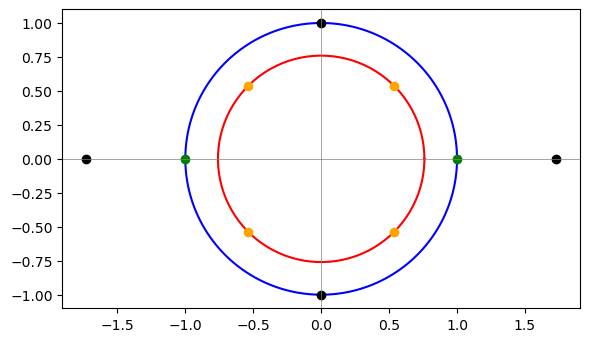}
    \end{minipage}
    \caption{The left shows stabilizer size data for the quotient graph of groups corresponding to \eqref{eqn_elliptice_curve_formula2}. The right shows the resonance plot. In addition to the $\ell^2$-eigenvalues (in black), and the resonances corresponding to the zeros of the Hasse-Weil zeta function (in orange), we have additional ``topological'' resonances at $\mu = \pm 1$ (in green).}
    \label{fig_takahashi_example}
\end{figure}

    We find that the resonances correspond to zeros of the polynomial
    \begin{gather*}
        (\mu^2 + 1)^2 (\mu - 1)^2 (\mu + 1)^2 (\mu^2 - 3) (3 \mu^4 + 1).
    \end{gather*}
    We again get the ``trivial'' resonances at $\mu = \pm \sqrt{3}$. We also compute that we get $\ell^2$-eigenvalues at $\mu = \pm i$ (each of multiplicity 2). We also have resonances at $\mu = \pm 1$ (each of multiplicity 2). However, we compute that they are in fact not $\ell^2$-eigenfunctions.

    On the other hand, the Hasse-Weil zeta function of the projective closure of this curve is
    \begin{gather*}
        \frac{3 T^2 + 1}{(1-T)(1-3T)}.
    \end{gather*}
    Again, setting $T = \mu^2$, the numerator of the Hasse-Weil zeta function shows up in the polynomial governing the resonances, and all the associated resonances lie on the circle of radius $\frac{1}{3^{\frac{1}{4}}}$.

\subsection{Arithmetic lattices} \label{sec_arithmetic2}
The resonances of the modular curve $\textnormal{SL}(2, \mathbb{Z}) \backslash \mathbb{H}$ come in three types (see, e.g., \cite{iwaniec}):
\begin{enumerate}
    \item A trivial resonance at $s = 1$ (corresponding to the constant function).
    \item Resonances corresponding to the other $L^2$-eigenvalues. By the Selberg conjecture, these are expected to all lie on the line $\textnormal{Re}(s) = \frac{1}{2}$.
    \item Resonances corresponding to poles of the Eisenstein series/scattering matrix, which is given explicitly by
    \begin{gather*}
        \phi(s) = \frac{\xi(2s - 1)}{\xi(2s)},
    \end{gather*}
    where $\xi$ is the completed Riemann zeta function. By the Riemann hypothesis, this is expected to have all of its poles lying on $\textnormal{Re}(s) = \frac{1}{4}$.
\end{enumerate}

More generally, let $F$ be a number field, and let $\mathcal{O}$ be its ring of integers. Let $M_F^\infty$ denote the set of infinite places of $F$, and for each $v \in M_F^\infty$ let $F_v$ denote the associated local field (i.e. $F_v \simeq \mathbb{R}$ or $\mathbb{C}$). Then the group $\textnormal{SL}(2, \mathcal{O})$ is a non-uniform lattice in $\prod \limits_{v \in M^\infty_F} \textnormal{SL}(2, F_v)$. We thus obtain the locally symmetric space
\begin{gather}
    \textnormal{SL}(2, \mathcal{O}) \backslash \prod \limits_{v \in M^\infty_F} \textnormal{SL}(2, F_v) / \prod \limits_{v \in M^\infty_F} K_v, \label{eqn_locally_symmetric_space}
\end{gather}
where $K_v$ is the maximal compact subgroup of $\textnormal{SL}(2, F_v)$. This is a quotient of the symmetric space $(\mathbb{H}^2)^r \times (\mathbb{H}^3)^{c}$ where $r$ is the number of real embeddings of $F$, and $2c$ is the number of complex embeddings (which come in $c$ complex conjugate pairs). The cusps of this locally symmetric space correspond to elements in the class group of $F$. Spectral theory in this setting is more complicated in part due to the fact that one must consider the full algebra generated by the Laplacians in each factor. It is better to pass to the representation-theoretic perspective; see e.g. \cite{gelbart}. Without going into too much detail, resonances in this context, properly defined and parametrized, correspond to joint eigenvalues of joint $L^2$-eigenfunctions of all these partial Laplacians. By the generalized Ramanujan conjecture such eigenvalues are expected to be either trivial or tempered. Additionally we have resonances corresponding to poles of the meromorphic continuation of the constant term of the Eisenstein series/scattering determinant. The Langlands-Shahidi method \cite{shahidi} relates the Eisenstein series to the Dedekind zeta function of the number field, and the generalized Riemann hypothesis predicts that its zeros (which give poles of the scattering determinant) all lie on the critical line. Finally there are ``topological'' resonances at the special value ($s = \frac{1}{2}$ for hyperbolic surfaces) whose multiplicity is expressed in terms of the class number and the trace of the scattering matrix; for the modular curve this value happens to be zero.

We now wish to describe how we are observing the same phenomenon in the examples from Section \ref{sec_elliptic_curves}. In the set-up described in Section \ref{sec_arithmetic_lattice}, we start with a smooth projective algebraic curve $C$ over a finite field $\mathbb{F}_q$ and a point $P$ on $C$ (which we think of as a ``point at $\infty$''). The analogue of \eqref{eqn_locally_symmetric_space} is \eqref{eqn_function_field_quotient}. Thus in analogy with the number field setting, we should expect the resonances to be of the following form:
\begin{enumerate}
    \item Trivial resonances at $\pm \sqrt{q}$ corresponding to trivial $\ell^2$-eigenfunctions.
    \item Resonances corresponding to the other $\ell^2$-eigenvalues which all lie on the unit circle (the analogue of the Selberg conjecture) as a consequence of Drinfeld's proof of the Langlands correspondence for $\textnormal{GL}(2, \mathbb{F}_q(C))$ \cite{drinfeld2, drinfeld1}. 
    \item Resonances on the circle of radius $\frac{1}{q^{\frac{1}{4}}}$ corresponding to square roots of zeros of the Hasse-Weil zeta function of $C$. The connection between resonances and the Hasse-Weil zeta function (the function field analogue of the Dedekind zeta function) should come from the connection of them both to Eisenstein series (via the Lax-Phillips scattering theory and the Langlands-Shahidi method, respectively). The fact that they all lie on this circle is a consequence of Weil's proof of the Riemann hypothesis in this setting \cite{weil}.
    \item Resonances at $\mu = \pm 1$ whose multiplicity is related to the number of cusps and the functional equation of the zeta function/trace of the scattering matrix.
\end{enumerate}
We plan to make this correspondence precise in follow-up work connecting resonances with the Lax-Phillips scattering theory in this setting.

\subsection{Large random cusped graphs and the Phillips-Sarnak conjecture} \label{sec_large_random_cusps}

We begin by showing some numerics of large random geometrically finite graphs with cusps. To generate the graphs below in Figure \ref{fig_numerics_cusps}, which were generated using Maple \cite{maple}, we started with a random $7$-regular graph on 500 vertices (thus $q = 6$). We then randomly chose $c$ edges which we removed and replaced with a cusp at each endpoint of the removed edge. We then computed those $\mu$ such that the associated resonance matrix becomes non-invertible. On the graph we show the unit circle (in blue), the circle of radius $\frac{1}{\sqrt{q}}$ (in green), the points $\pm \sqrt{q}$ (in black) and the points $\pm \frac{1}{\sqrt{q}}$ (in green), and we show a histogram plot of the distribution of magnitudes of the resonances. We remark that, by the Kesten-McKay law, a large random finite regular graph will have nearly all of its resonances (which correspond to eigenvalues) on the unit circle (which, with respect to adjacency operator eigenvalue, corresponds to the Ramanujan range $[-2\sqrt{q}, 2 \sqrt{q}]$).

Visually, we observe that, upon adding cusps, the resonances appear to either get pushed onto the real axis, or close to the imaginary axis (but away from 0). However if we keep adding more cusps, then all of the resonances are on the real line. We do not have a good theoretic explanation for this phenomenon. As we continue adding more and more cusps, the resonances are pushed closer and closer to $\pm \sqrt{q}$. This makes sense because if we replace all edges by cusps, then we get a bunch of disconnected graphs consisting of one vertex attached to $q+1$ cusps. From the perspective of spectral theory, this is essentially the same thing as the modular curve (Section \ref{sec_modular_curve}) which we computed to have resonances only at $\pm \sqrt{q}$.

\begin{figure}[p] 
    \centering
    
    \begin{subfigure}[b]{0.48\textwidth}
        \centering
        \includegraphics[width=\textwidth, keepaspectratio=true]{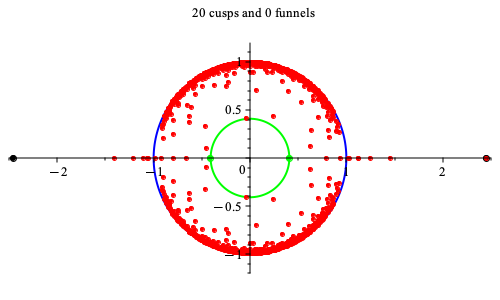}
    \end{subfigure}
    \hfill
    \begin{subfigure}[b]{0.48\textwidth}
        \centering
        \includegraphics[width=\textwidth, keepaspectratio=true]{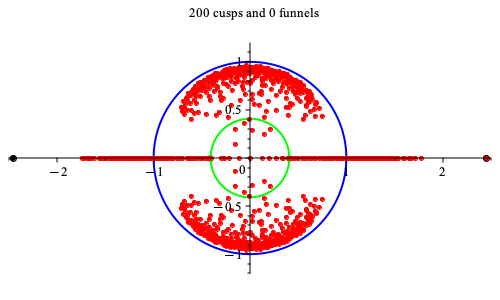}
        
    \end{subfigure}
    \vfill 

    \begin{subfigure}[b]{0.48\textwidth}
        \centering
        \includegraphics[width=\textwidth, height=0.17\textheight]{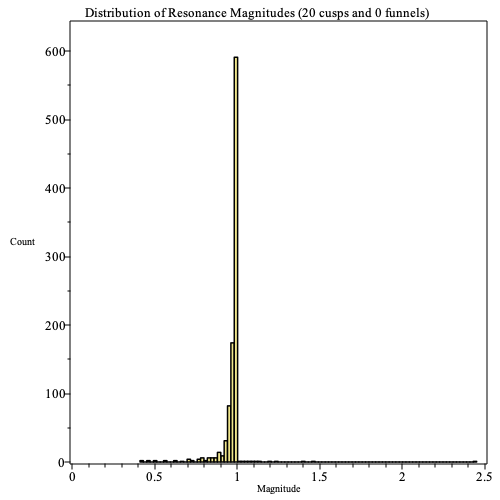}
        
    \end{subfigure}
    \hfill
    \begin{subfigure}[b]{0.48\textwidth}
        \centering
        \includegraphics[width=\textwidth, height=0.17\textheight]{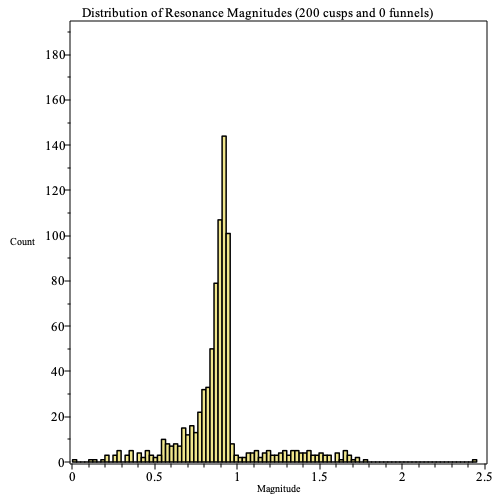}
        
    \end{subfigure}
    \vfill

    \begin{subfigure}[b]{0.48\textwidth}
        \centering
        \includegraphics[width=\textwidth, keepaspectratio=true]{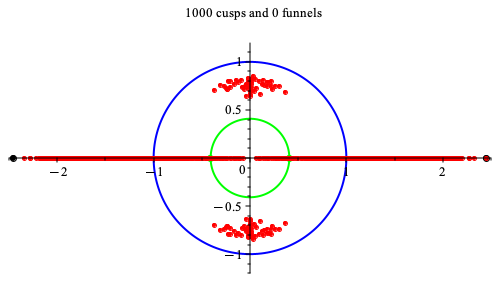}
        
    \end{subfigure}
    \hfill
    \begin{subfigure}[b]{0.48\textwidth}
        \centering
        \includegraphics[width=\textwidth, keepaspectratio=true]{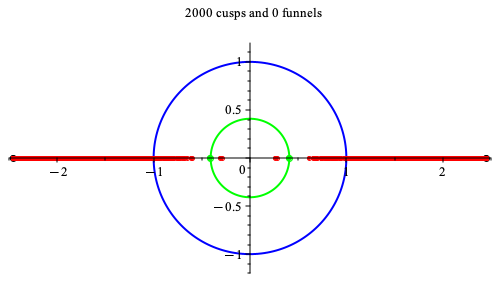}
        
    \end{subfigure}
    \vfill

    \begin{subfigure}[b]{0.48\textwidth}
        \centering
        \includegraphics[width=\textwidth, height=0.17\textheight]{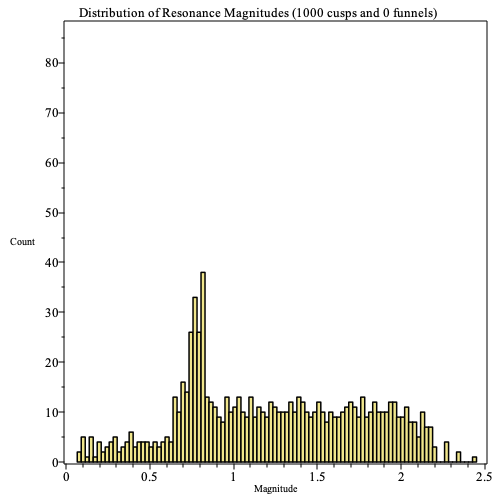}
        
    \end{subfigure}
    \hfill
    \begin{subfigure}[b]{0.48\textwidth}
        \centering
        \includegraphics[width=\textwidth, height=0.17\textheight]{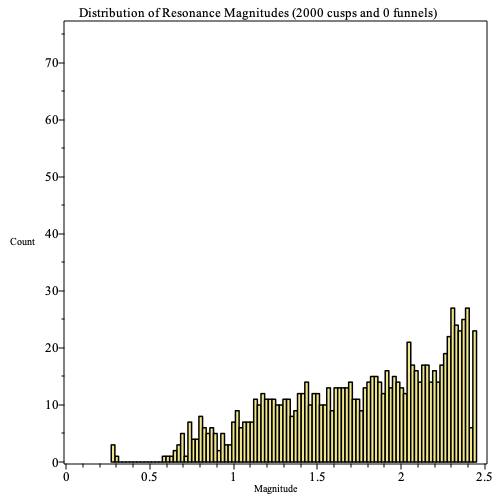}
        
    \end{subfigure}
    \vfill

    \begin{subfigure}[b]{0.48\textwidth}
        \centering
        \includegraphics[width=\textwidth]{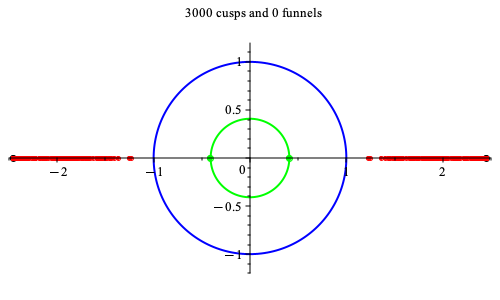}
        
    \end{subfigure}
    \hfill
    \begin{subfigure}[b]{0.48\textwidth}
        \centering
        \includegraphics[width=\textwidth]{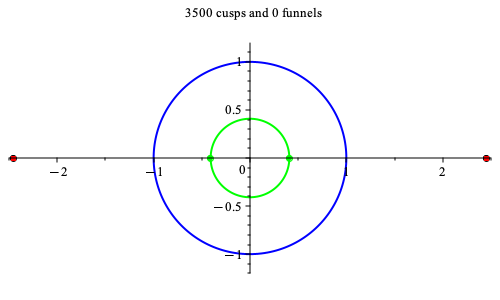}
        
    \end{subfigure}
    \vfill

    \begin{subfigure}[b]{0.48\textwidth}
        \centering
        \includegraphics[width=\textwidth, height=0.17\textheight]{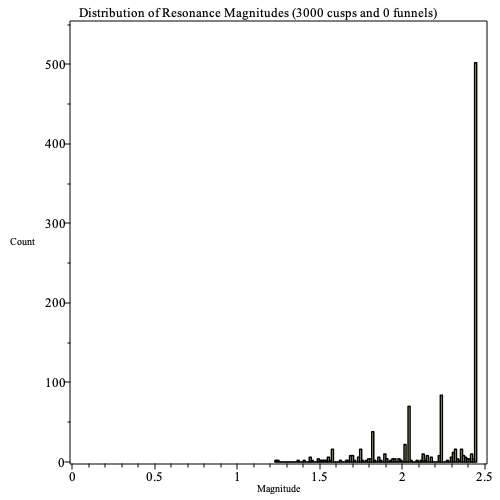}
        
    \end{subfigure}
    \hfill
    \begin{subfigure}[b]{0.48\textwidth}
        \centering
        \includegraphics[width=\textwidth, height=0.17\textheight]{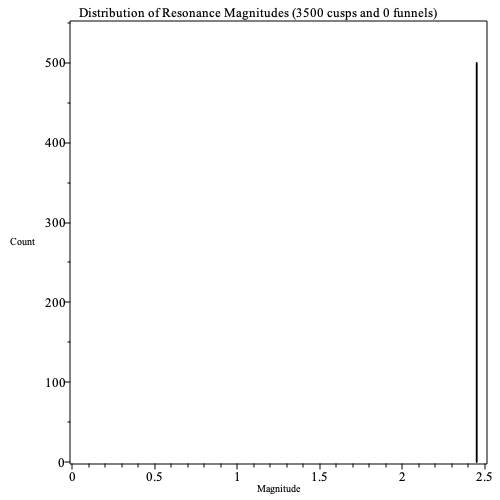}
        
    \end{subfigure}
    \vfill

    \caption{Distribution of resonances on random 7-regular graph with 500 vertices and growing number of cusps.}
    \label{fig_numerics_cusps}
\end{figure}

In the setting of arithmetic cusped hyperbolic surfaces, it is known that there is an abundance of embedded $L^2$-eigenvalues, i.e. eigenvalues above $\frac{1}{4}$ with associated eigenfunction lying in $L^2$ \cite{selberg}. In terms of the parametrization $\lambda = s(1-s)$, this corresponds to resonances lying on the line $\textnormal{Re}(s) = \frac{1}{2}$. Phillips-Sarnak \cite{phillips_sarnak1, phillips_sarnak2, phillips_sarnak3} showed that under generic perturbations, any given embedded $L^2$-eigenvalue ``dissolves'' into a resonance which is not an $L^2$-eigenvalue, i.e. it moves off of the line $\textnormal{Re}(s) = \frac{1}{2}$. Motivated by this result, they conjectured that for a generic hyperbolic surface with cusps, there are only finitely many embedded $L^2$-eigenvalues. 

Returning to the setting of cusped geometrically finite graphs, Proposition \ref{prop_eigenfunction_zero} tells us that we have an $\ell^2$-eigenvalue on the unit circle if and only if we have an eigenfunction of $\frac{A}{2 \sqrt{q}}$ with eigenvalue in $[-1, 1]$ that vanishes identically on all cusps.
In terms of the resonance matrix, this latter condition is easy to check: we would need that if $\mu_0$ is on the unit circle, then elements in the kernel of $H(\mu_0)$ are zero at the vertices in the compact core attached to cusps.
On the other hand, at least heuristically, the property of a resonant state being identically zero on all of the vertices attached to cusps should be expected to be highly non-generic. Any given geometrically finite cusped graph is going to have only finitely many embedded $\ell^2$-eigenvalues. However, we should expect that for generic cusped graphs, these eigenvalues should make up a vanishingly small fraction of the total number of resonances. We may thus pose the following version of the Phillips-Sarnak conjecture.

Let $\mathcal{G}$ be a geometrically finite cusped graph, and let $\textnormal{Res}_{\textnormal{emb}}(\mathcal{G})$ denote the set (with multiplicities) of $\ell^2$-eigenvalues of $\frac{A}{2 \sqrt{q}}$ acting on $\mathcal{G}$ lying in $[-1, 1]$. Let $\textnormal{Res}(\mathcal{G})$ denote the set (with multiplicities) of all resonances. Let $\mathcal{M}_{n, c}$ denote the (finite) set of all $(q+1)$-regular geometrically finite graphs with compact core consisting of $n$ vertices and with $c$ cusps. Let $\mathcal{G}_{n, c}$ denote an element sampled uniformly at random from $\mathcal{M}_{n, c}$.

\begin{conjecture} [Phillips-Sarnak conjecture analogue]
    Suppose $n_j$ and $c_j$ sequences such that $n_j \to \infty$ and $c_j \geq 1$. Then
    \begin{gather*}
        \mathbb{E} \Big( \frac{|\textnormal{Res}_{\textnormal{emb}}(\mathcal{G}_{n_j, c_j})|}{|\textnormal{Res}(\mathcal{G}_{n_j, c_j})|} \Big) \to 0
    \end{gather*}
    as $j \to \infty$.
\end{conjecture}

\subsection{Large random funneled graphs and the Jakobson-Naud conjecture} \label{sec_large_random_funnel}

Let $X$ be a geometrically finite hyperbolic surface with funnels but no cusps, i.e. convex cocompact. Let $\delta$ be the rightmost resonance; note that $\delta$ necessarily lies on the real line. Define
\begin{gather*}
    G(X) := \inf \Big\{ \sigma \in \mathbb{R} : \# \{\textnormal{resonances $s$ with } \textnormal{Re}(s) \geq \sigma \} < \infty \Big\}.
\end{gather*}
In \cite{jakobson_naud} Jakobson and Naud conjecture that $G(X) = \frac{\delta}{2}$. In words, we can interpret this conjecture as saying that if the biggest resonance is $\delta$, then ``almost all'' other resonances lie to the left of the line $\textnormal{Re}(s) = \frac{\delta}{2}$ (and infinitely many resonances lie close to this line). This conjecture remains open.

We wish to formulate an analogue of this conjecture for geometrically finite graphs with funnels but no cusps, i.e. convex cocompact (see Proposition \ref{prop_convex_cocompact}). Since there are only finitely many resonances, it does not make sense to study the analogous quantity to $G(X)$. It should instead be a statement about sequences of convex cocompact graphs. Referring back to Table \ref{table_hyperbolic_regular}, the formulation of the Jakobson-Naud conjecture is in terms of the ``$s$-parametrization'', but our conventions for parametrizing resonances in the graph setting is more analogous to the ``$r$-parametrization''. Switching to the analogue of the ``$s$-parametrization'' amounts to scaling all resonances by a factor of $\sqrt{q}$. Thus if $R^{\textnormal{max}}$ is the biggest resonance, then the Jakobson-Naud conjecture in this setting would be concerned with resonances $\mu$ such that $\sqrt{q} |\mu| > \sqrt{\sqrt{q} R^{\textnormal{max}}}$, i.e. $|\mu| > q^{-\frac{1}{4}} \sqrt{R^{\textnormal{max}}}$.

Suppose $\mathcal{G}_n$ is a sequence of $(q+1)$-regular convex cocompact graphs. Let $\mathcal{L}_n$ denote the compact core of $\mathcal{G}_n$. We say that $\mathcal{G}_n$ \textit{Benjamini-Schramm converges} to the $(q+1)$-regular tree if, for every $R > 0$, the fraction of vertices in $\mathcal{L}_n$ whose local injectivity radius is less than $R$ goes to zero as $n \to \infty$. Let $R_n^{\textnormal{max}}$ denote the greatest modulus of any element in $\textnormal{Res}(\mathcal{G}_n)$. 

\begin{conjecture}[Jakobson-Naud conjecture analogue, deterministic version] \label{JN_conj_weak}
    Suppose $\mathcal{G}_n$ is a sequence of $(q+1)$-regular convex cocompact connected graphs which Benjamini-Schramm converges to the $(q+1)$-regular tree. Then, 
    \begin{gather*}
        \lim_{n \to \infty} \frac{\# \{ \mu \in \textnormal{Res}(\mathcal{G}_n) : |\mu| > q^{-\frac{1}{4}} \sqrt{R_n^{\textnormal{max}}} \}}{\#\textnormal{Res}(\mathcal{G}_n)} = 0,
    \end{gather*}
    and for every $\varepsilon > 0$, 
    \begin{gather*}
        \liminf_{n \to \infty} \frac{\# \{ \mu \in \textnormal{Res}(\mathcal{G}_n) : |\mu| > q^{-\frac{1}{4}} \sqrt{R_n^{\textnormal{max}}} - \varepsilon \}}{\#\textnormal{Res}(\mathcal{G}_n)} > 0.
    \end{gather*}
\end{conjecture}

\begin{conjecture}[Jakobson-Naud conjecture analogue, random version] \label{JN_conj_strong}
    Suppose $\mathcal{G}_n$ is a sequence of random $(q+1)$-regular convex cocompact graphs, with $\mathcal{G}_n$ selected uniformly at random from the finite set of $(q+1)$-regular convex cocompact graphs with compact core of size $a_n$ and with $b_n$ funnels. Suppose $a_n \to \infty$. Then for every $\varepsilon > 0$, we have
    \begin{gather*}
        \# \{ \mu \in \textnormal{Res}(\mathcal{G}_n) : |\mu| > q^{-\frac{1}{4}} \sqrt{R_n^{\textnormal{max}}} + \varepsilon \} = O(1)
    \end{gather*}
    with high probability as $n \to \infty$.
\end{conjecture}

Note that Conjecture \ref{JN_conj_weak} would imply that under Benjamini-Schramm convergence, we have that for every $\varepsilon > 0$,
\begin{gather*}
    \# \{ \mu \in \textnormal{Res}(\mathcal{G}_n) : |\mu| > q^{-\frac{1}{4}} \sqrt{R_n^{\textnormal{max}}} + \varepsilon \} = o(\#\textnormal{Res}(\mathcal{G}_n)),
\end{gather*}
whereas Conjecture \ref{JN_conj_strong} says that we can upgrade the $o(\#\textnormal{Res}(\mathcal{G}_n))$ to $O(1)$ with high probability if our sequence is chosen at random.

These conjectures are consistent with what is already known in certain extremal cases. For example, convex cocompact graphs without funnels are simply finite graphs. In that case, Conjecture \ref{JN_conj_weak} follows from the Kesten-McKay law. More precisely, for sequences of finite regular graphs we know that $R_n^{\textnormal{max}} = \sqrt{q}$, and the Kesten-McKay law says that under the assumption of Benjamini-Schramm convergence, the distribution of resonances (i.e. eigenvalues) converges weakly to an explicit measure supported entirely on the unit circle (in this case $q^{-\frac{1}{4}} \sqrt{R_n^{\textnormal{max}}} = 1$). Conjecture \ref{JN_conj_strong} in this case follows from Friedman's proof of Alon's conjecture \cite{friedman}, namely that random regular graphs have optimal spectral gap with high probability.

At the other extreme, the only connected convex cocompact graph having as many funnels as possible relative to the size of the compact core is the infinite tree. In that case all of the resonances are $\pm \frac{1}{\sqrt{q}}$, so $R_n^{\textnormal{max}} = \frac{1}{\sqrt{q}}$, and $q^{-\frac{1}{4}} \sqrt{R_n^{\textnormal{max}}} = \frac{1}{\sqrt{q}}$. The normalized distribution of resonances converges to (and is identically equal to) $\frac{1}{2}(\delta_{\frac{1}{\sqrt{q}}} + \delta_{-\frac{1}{\sqrt{q}}})$. Thus Conjecture \ref{JN_conj_weak} is trivially true in this case.

A slightly more interesting example is furnished by sequences of larger and larger hyperbolic cylinders as studied in Section \ref{sec_hyperbolic_cylinder}. In that case it was found that all resonances lie on the circle of radius $\frac{1}{\sqrt{q}}$, and in fact the distribution of resonances converges to the uniform measure on the circle of radius $\frac{1}{\sqrt{q}}$. Such sequences clearly Benjamini-Schramm converge to the regular tree, and Conjecture \ref{JN_conj_weak} is evidently seen to hold. 

These explicit examples lead us to pose the following question.

\begin{question}
    Suppose $\mathcal{G}_n$ is a sequence of $(q+1)$-regular convex cocompact graphs. Let $\nu_n$ denote the distribution of resonances of $\mathcal{G}_n$, normalized to be a probability measure. 
    \begin{enumerate}
        \item What conditions on $\mathcal{G}_n$ guarantee that $\nu_n$ weakly converges?
        \item In particular, letting $f_n$ denote the number of funnels of $\mathcal{G}_n$, does Benjamini-Schramm convergence to the $(q+1)$-regular tree plus the convergence of $\frac{f_n}{|\mathcal{L}_n|}$ imply that $\nu_n$ weakly converges?
        \item What measures can arise as weak limits of $\nu_n$?
    \end{enumerate}
\end{question}

Figure \ref{fig_numerics_funnels} below shows numerical evidence for Conjectures \ref{JN_conj_weak} and \ref{JN_conj_strong}. We generate random 7-regular graphs with funnels by the same procedure described in the previous section (where, of course, we add funnels in place of cusps). The black circle in the resonance plots shows the spectral gap predicted by the Jakobson-Naud conjecture. In the histogram plots, the solid red line represents the biggest resonance, and the dotted red line represents the predicted spectral gap. We indeed see that as one adds any number of funnels, most resonances lie within the predicted circle, with many of them lying very close to this circle itself. The numerical support for the veracity of the Jakobson-Naud conjecture in this setting is quite striking, in particular if one compares with the numerical experiments for convex cocompact hyperbolic surfaces where in the numerically accessible regime only very vague signatures of the Jakobson-Naud conjecture could be observed (see, \cite[\S5.3]{BW16} and \cite[Appendix]{DBW19}).
    
Our formulation of Conjecture \ref{JN_conj_strong} is in part inspired by the work of Bordenave, Lelarge, and Massoulié \cite{bordenave_et_al}. Therein, the authors study the distribution of eigenvalues of the non-backtracking random walk on Erdős-Réyni random graphs, and more generally the Stochastic Block Model. Their plots for the distribution of eigenvalues for such models (see Figure 1 of \cite{bordenave_et_al}) bear a striking resemblance to the distribution of resonances for random funneled graphs, particularly when the number of funnels is proportional to the number of vertices (such as the case of 1000 funnels in Figure \ref{fig_numerics_funnels}). They prove, among other things, that if the largest eigenvalue has magnitude $c$, then for every $\varepsilon > 0$ we have that 
    \begin{gather*}
        \#\{ \textnormal{eigenvalues } \lambda : |\lambda| \geq \sqrt{c} + \varepsilon\} = O(1)
    \end{gather*}
    with high probability as the number of vertices goes to $\infty$. This is analogous to Conjecture \ref{JN_conj_strong}.  

\begin{figure}[p] 
    \centering
    
    \begin{subfigure}[b]{0.48\textwidth}
        \centering
        \includegraphics[width=\textwidth, keepaspectratio=true]{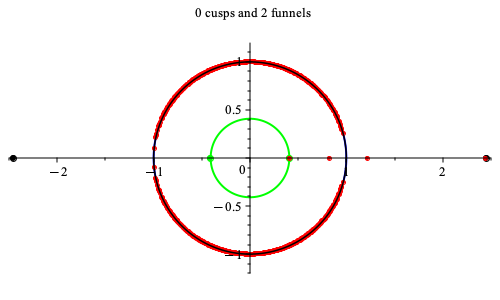}
    \end{subfigure}
    \hfill
    \begin{subfigure}[b]{0.48\textwidth}
        \centering
        \includegraphics[width=\textwidth, keepaspectratio=true]{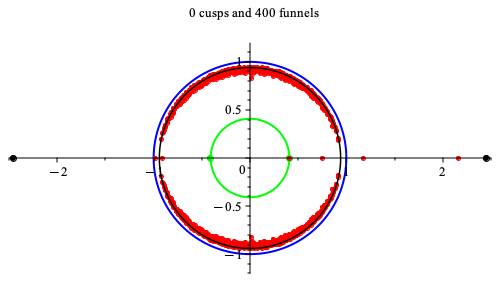}
        
    \end{subfigure}
    \vfill 

    \begin{subfigure}[b]{0.48\textwidth}
        \centering
        \includegraphics[width=\textwidth, height=0.17\textheight]{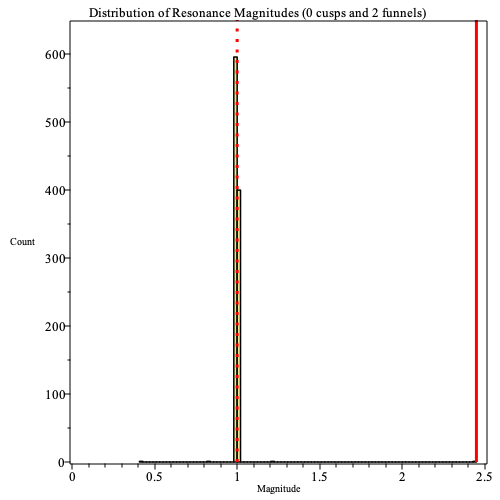}
        
    \end{subfigure}
    \hfill
    \begin{subfigure}[b]{0.48\textwidth}
        \centering
        \includegraphics[width=\textwidth, height=0.17\textheight]{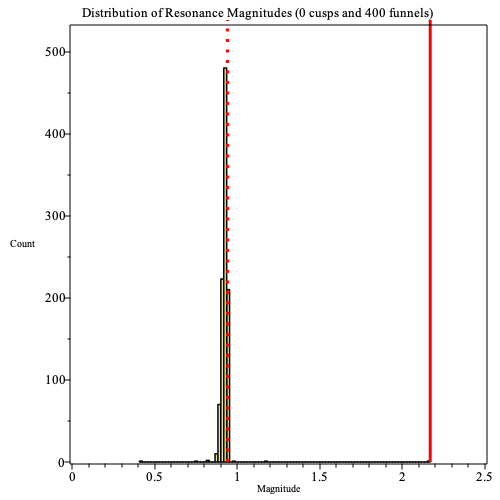}
        
    \end{subfigure}
    \vfill

    \begin{subfigure}[b]{0.48\textwidth}
        \centering
        \includegraphics[width=\textwidth, keepaspectratio=true]{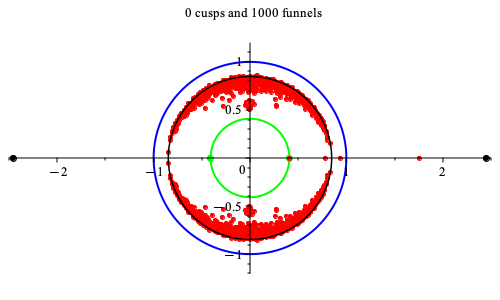}
        
    \end{subfigure}
    \hfill
    \begin{subfigure}[b]{0.48\textwidth}
        \centering
        \includegraphics[width=\textwidth, keepaspectratio=true]{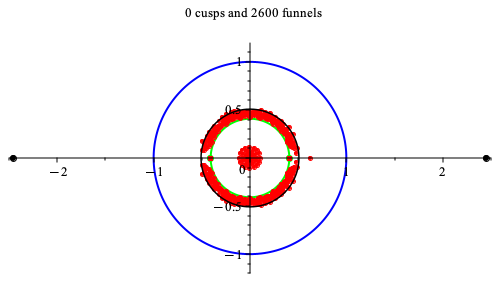}
        
    \end{subfigure}
    \vfill

    \begin{subfigure}[b]{0.48\textwidth}
        \centering
        \includegraphics[width=\textwidth, height=0.17\textheight]{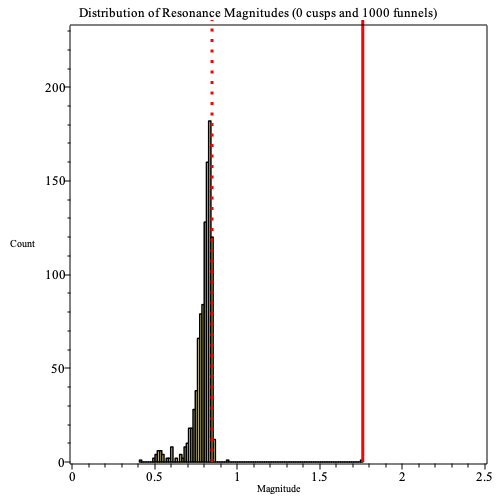}
        
    \end{subfigure}
    \hfill
    \begin{subfigure}[b]{0.48\textwidth}
        \centering
        \includegraphics[width=\textwidth, height=0.17\textheight]{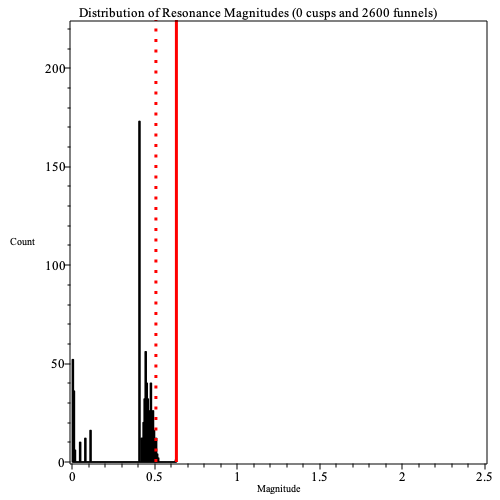}
        
    \end{subfigure}
    \vfill

    \begin{subfigure}[b]{0.48\textwidth}
        \centering
        \includegraphics[width=\textwidth]{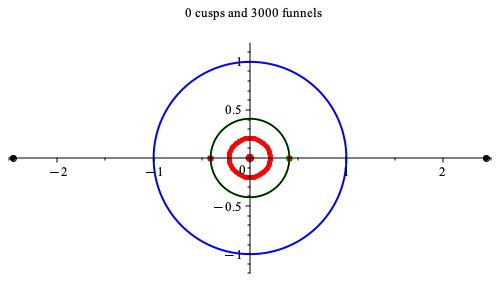}
        
    \end{subfigure}
    \hfill
    \begin{subfigure}[b]{0.48\textwidth}
        \centering
        \includegraphics[width=\textwidth]{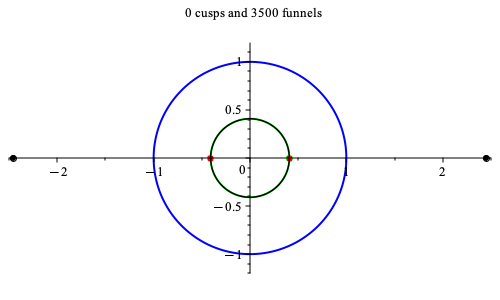}
        
    \end{subfigure}
    \vfill

    \begin{subfigure}[b]{0.48\textwidth}
        \centering
        \includegraphics[width=\textwidth, height=0.17\textheight]{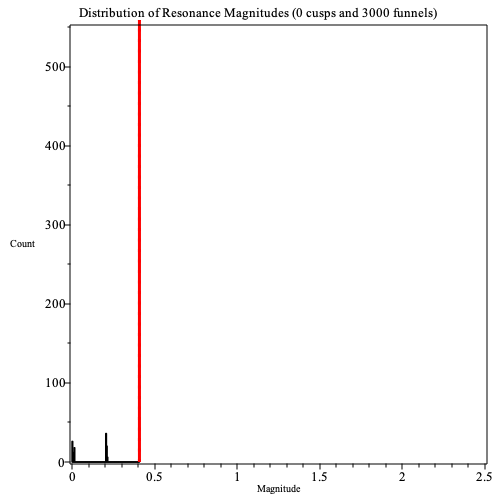}
        
    \end{subfigure}
    \hfill
    \begin{subfigure}[b]{0.48\textwidth}
        \centering
        \includegraphics[width=\textwidth, height=0.17\textheight]{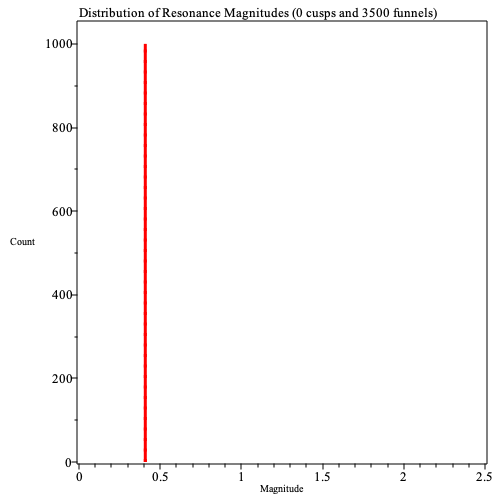}
        
    \end{subfigure}
    \vfill

    \caption{Distribution of resonances on random 7-regular graph with 500 vertices and growing number of funnels.}
    \label{fig_numerics_funnels}
\end{figure}

\section*{Acknowledgements}
We thank Bartosz Trojan for helpful conversations at an early stage of this project as well as Joachim Hilgert for many encouraging discussions about resonances on graphs. We are also grateful to Frédéric Paulin for bringing his paper \cite{paulin} to our attention, Frédéric Naud for discussions on the Jakobson-Naud conjecture, and Lior Alon for guidance on the metric graphs literature. We thank Michael Magee and Joe Thomas with whom we had useful conversations particularly related to bounding the number of resonances. Finally, we thank Farrell Brumley and Radu Toma, who helped us understand the connection between resonances, Eisenstein series, and zeta functions. 

The authors used Gemini and ChatGPT at various stages of development of this paper for informal discussions, brainstorming, searching the literature, image and code generation, and proofreading. In particular, conversations with Gemini were helpful in discovering the identity in \eqref{eqn_power_resolvent}, formulating Proposition \ref{prop_mu_pm1}, in the analysis in Section \ref{sec_hyperbolic_cylinder}, and in finding the example in Section \ref{sec_counterexample}. Gemini was used to generate Figures \ref{fig_geometrically_finite}, \ref{fig_cylinders}, and \ref{fig_exceptional} and to help write the Maple code used to generate Figures \ref{fig_numerics_cusps} and \ref{fig_numerics_funnels}. ChatGPT was used to proofread the paper. However, none of the text of the paper was AI generated, and all mathematical output of AI was only used as suggestions and was independently assessed and verified by the authors. The authors take full responsibility for the mathematical correctness of the paper.

    \printbibliography 
\end{document}